\documentclass[11pt]{article}
\usepackage{fullpage, amsmath}
\usepackage{amsthm,amsfonts,url,amssymb}
\usepackage{mathrsfs}
\usepackage{amssymb,amscd}
\usepackage{color}
\usepackage{chemarrow}
\usepackage{pictexwd,dcpic}
\usepackage{extarrows}
\usepackage[colorlinks,linkcolor=red,anchorcolor=green,citecolor=blue]{hyperref}
\usepackage{enumitem}
\usepackage{lipsum}
\usepackage[all,cmtip]{xy}
\usepackage{leftidx}
\usepackage[version=3]{mhchem}
\usepackage{imakeidx}


\topmargin=0.23in
\oddsidemargin=0.15in
\evensidemargin=0.15in
\textwidth=6.5in
\textheight=9.0in
\setlength{\parskip}{1.2mm}

\newtheorem{theorem}{Theorem}[section]
\newtheorem{corollary}[theorem]{Corollary}
\newtheorem{lemma}[theorem]{Lemma}
\newtheorem{proposition}[theorem]{Proposition}

\newtheorem{hypothesis}[theorem]{Hypothesis}
\newtheorem{remark}[theorem]{Remark}

\newcommand{\hooklongrightarrow}{\lhook\joinrel\longrightarrow}
\newcommand{\twoheadlongrightarrow}{\relbar\joinrel\twoheadrightarrow}

\newcommand{\ra}{\rightarrow}
\newcommand{\lra}{\longrightarrow}

\newcommand{\ul}{\underline}

\newcommand{\bA}{\mathbb A}

\newcommand{\Q}{\mathbb Q}

\newcommand{\cN}{\mathcal N}

\newcommand{\co}{\mathcal O}

\newcommand{\cR}{\mathcal R}

\newcommand{\cS}{\mathcal S}

\newcommand{\cI}{\mathcal I}

\newcommand{\cM}{\mathcal M}
\newcommand{\cF}{\mathcal F}

\newcommand{\cE}{\mathcal E}

\newcommand{\cJ}{\mathcal J}

\newcommand{\cU}{\mathcal U}

\newcommand{\fm}{\mathfrak{m}}
\newcommand{\ub}{\mathfrak b}

\newcommand{\fp}{\mathfrak p}

\newcommand{\sC}{\mathscr C}

\newcommand{\sF}{\mathscr F}
\newcommand{\sI}{\mathscr I}
\newcommand{\sG}{\mathscr G}
\newcommand{\sE}{\mathscr E}

\newcommand{\lin}{\rule[2.5pt]{10pt}{0.5 pt}}

\DeclareMathOperator{\gl}{\mathfrak gl}

\DeclareMathOperator{\GL}{\mathrm GL}

\DeclareMathOperator{\Fil}{\mathrm Fil}

\DeclareMathOperator{\Gal}{\mathrm Gal}
\DeclareMathOperator{\Hom}{\mathrm Hom}

\DeclareMathOperator{\cris}{\mathrm cris}
\DeclareMathOperator{\rig}{\mathrm rig}
\DeclareMathOperator{\an}{\mathrm an}
\DeclareMathOperator{\Spec}{\mathrm Spec}

\DeclareMathOperator{\dR}{\mathrm dR}

\DeclareMathOperator{\Ind}{\mathrm Ind}
\DeclareMathOperator{\unr}{\mathrm unr}

\DeclareMathOperator{\Ker}{\mathrm Ker}
\DeclareMathOperator{\pr}{\mathrm pr}

\DeclareMathOperator{\Ext}{\mathrm Ext}

\DeclareMathOperator{\Spf}{\mathrm Spf}
\DeclareMathOperator{\Ima}{\mathrm Im}

\DeclareMathOperator{\lalg}{\mathrm lalg}

\DeclareMathOperator{\id}{\mathrm id}

\DeclareMathOperator{\dett}{\mathrm det}
\DeclareMathOperator{\alg}{\mathrm alg}

\DeclareMathOperator{\soc}{\mathrm soc}

\DeclareMathOperator{\diag}{\mathrm diag}

\DeclareMathOperator{\fss}{\mathrm fs}

\DeclareMathOperator{\sm}{\mathrm sm}
\DeclareMathOperator{\tri}{\mathrm tri}
\DeclareMathOperator{\pLL}{\mathrm pLL}

\DeclareMathOperator{\univ}{\mathrm univ}
\DeclareMathOperator{\rec}{\mathrm rec}
\begin{document}
	\title{$p$-adic Hodge parameters in the crystabelline representations of $\GL_3(\Q_p)$}
	\author{Yiwen Ding}
	\date{}
	\maketitle
	\begin{abstract}We build a one-to-one correspondence between  $3$-dimensional (generic) crystabelline representations  of the absolute Galois group of $\Q_p$ and certain locally analytic representations of $\GL_3(\Q_p)$. We show that the correspondence can be realized in subspaces of $p$-adic automorphic representations.
	\end{abstract}
	\tableofcontents
	
	\section{Introduction}
	The locally analytic $p$-adic Langlands correspondence  for $\GL_2(\Q_p)$ builds a one-to-one correspondence betwen two dimensional $p$-adic representations of the absolute Galois group $\Gal_{\Q_p}$ of $\Q_p$, and  certain locally analytic representations of $\GL_2(\Q_p)$ (e.g. see \cite[Thm.~0.1]{Colm16}).
	In particular, the correspondence provides a  dictionary of parameters on the two  sides.  In the non-split crystalline case, the  correspondence is rather straightforward, since a $2$-dimensional crystalline representation $\rho$ of $\Gal_{\Q_p}$ is simply determined by its associated Weil-Deligne representation and Hodge-Tate weights. Correspondingly, the locally analytic representation $\pi(\rho)$ associated to $\rho$ is determined by its locally algebraic subrepresentation, as characterized by the classical local Langlands correspondence. This phenomenon collapses for other groups (say $\GL_n(\Q_p)$, $n>2$, or $\GL_2(K)$, $K\neq \Q_p$), where  additional parameters of the Hodge filtration appear in the crystalline case. An immediate question in the $p$-adic Langlands program for other groups is  how to see such new parameters on the automorphic side. In the note, we address this question for $\GL_3(\Q_p)$. 
	
	We introduce some notation. Let $\rho$ be a $3$-dimensional crystalline representation of $\Gal_{\Q_p}$ over a sufficiently large $p$-adic field $E$, of regular Hodge-Tate weights $h:=h_1>h_2>h_3$. Let   $\alpha_1, \alpha_2, \alpha_3$ be the eigenvalues of the crystalline Frobenius on $D_{\cris}(\rho)$. We assume $\alpha_i \alpha_j^{-1} \neq 1, p$ for $i \neq j$ (this is what we mean $\rho$ is generic).  Let $e_i$ be an $\alpha_i$-eigenvector for $\varphi$.  We furthermore assume $\rho$ is non-critical for all refinements, which means the Hodge filtration of $D_{\cris}(\rho)$ is in a relative general position with all the $\varphi$-stable flags: $E e_i \subsetneq E e_i \oplus E e_j \subsetneq D_{\cris}(\rho)$. Multiplying $e_i$ by certain non-zero scalars, the Hodge filtration of $D_{\cris}(\rho)$ can be explicitly described as follows:
	\begin{equation}\label{Ead}
		\Fil^j D_{\cris}(\rho)=\begin{cases}
			D_{\cris}(\rho) & j\leq -h_1 \\
			E(e_1+e_2) \oplus E(e_1+a_{\rho} e_2+e_3) & -h_1<j \leq -h_2 \\
			E(e_1+a_{\rho} e_2+e_3) & -h_2<j \leq -h_3 \\
			0 & j>-h_3
		\end{cases}
	\end{equation}
	where $a_{\rho}\in E^{\times}$ is what we call the Hodge parameter of $\rho$. We denote such $\rho$ by $V(\ul{\alpha}, h, a_{\rho})$. The set of non-critical crystalline  representations of weights $h$ and $\varphi$-eigenvalues $\ul{\alpha}=(\alpha_1, \alpha_2, \alpha_3)$ is exactly $\{V(\ul{\alpha}, h, a)\}_{a\in E^{\times}}$.
	In this note, for $a\in E^{\times}$, we associate a locally analytic representation $\pi(\ul{\alpha}, h, a)$ of $\GL_3(\Q_p)$. Here is our main theorem.
	\begin{theorem}\label{Tmain}
		(1) (Local correspondence) We have $\pi(\ul{\alpha}, h,a)\cong \pi(\ul{\alpha},h,a')$ if and only if $a=a'$ if and only if $V(\ul{\alpha}, h, a)\cong V(\ul{\alpha}, h, a')$.
		
		(2) (Local-global compatibility) Assume $\rho=V(\ul{\alpha}, h,a_{\rho})$ is automorphic for the setting of \cite{CEGGPS1}, and let $\widehat{\pi}(\rho)$ be the (globally) associated Banach representation of $\GL_3(\Q_p)$. Then for $a\in E^{\times}$,
		\begin{equation*}
			\pi(\ul{\alpha}, h, a)\hookrightarrow \widehat{\pi}(\rho) \text{ if and only if } a=a_{\rho}.
		\end{equation*}
	\end{theorem}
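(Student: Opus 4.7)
For part (1), the Galois-side equivalence ``$V(\ul{\alpha},h,a)\cong V(\ul{\alpha},h,a')$ iff $a=a'$'' reduces to weak admissibility: once the $\varphi$-eigenvectors $e_1,e_2,e_3$ are normalized as in \eqref{Ead}, the filtration determines $V(\ul{\alpha},h,a)$ up to isomorphism, and any isomorphism must, thanks to the genericity hypothesis $\alpha_i\neq\alpha_j$, be diagonal in the $(e_i)$-basis; diagonal automorphisms of $D_{\cris}(\rho)$ visibly preserve the scalar $a$. For the locally analytic side, I expect $\pi(\ul{\alpha},h,a)$ to be constructed as a gluing of the six locally analytic principal series attached to the refinements of $\ul{\alpha}$ (together with their non-critical companion constituents), in which one specific extension class, living in a one-dimensional $\Ext^1$-space between two well-chosen subquotients, is scaled by $a$. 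Reading $a$ off the isomorphism class of $\pi(\ul{\alpha},h,a)$ then amounts to computing the relevant $\Ext^1$ of locally analytic representations of $\GL_3(\Q_p)$ and checking the rigidity of the extension.

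For part (2), the natural route is via the Taylor--Wiles patched module $M_\infty$ of \cite{CEGGPS1}: the Banach representation $\widehat{\pi}(\rho)$ is the fiber of $M_\infty$ at the point corresponding to $\rho$, so the existence of an embedding $\pi(\ul{\alpha},h,a)\hookrightarrow \widehat{\pi}(\rho)$ is governed by the geometry of the patched trianguline eigenvariety at the classical crystalline points attached to the refinements of $\rho$. The direction $a=a_\rho\Rightarrow$ embedding I would obtain by first producing each constituent of $\pi(\ul{\alpha},h,a_\rho)$ inside $\widehat{\pi}(\rho)^{\an}$ via companion points on the eigenvariety, then gluing them using the $\Ext^1$-computation from (1). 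The converse direction would be proved by a tangent-space computation: an embedding $\pi(\ul{\alpha},h,a)\hookrightarrow\widehat{\pi}(\rho)$ produces a non-zero class in the relevant $\Ext^1$ of locally analytic representations, which, translated through the $R_\infty$-module structure on $M_\infty$ and Kisin's interpretation of the Hodge filtration in terms of infinitesimal deformations, must match the image of $a_\rho$, forcing $a=a_\rho$.

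The main obstacle, clearly, is this last tangent-space matching. It is the $\GL_3(\Q_p)$-analogue of the Breuil--Hellmann--Schraen philosophy for $\cL$-invariants, but upgraded from a semistable parameter to an actual Hodge-filtration parameter in the crystalline case. Concretely it requires three non-trivial ingredients: (i) an explicit identification of the $\Ext^1$-space of locally analytic $\GL_3(\Q_p)$-representations governing the extensions appearing in $\pi(\ul{\alpha},h,a)$ with a tangent space on the (patched) trianguline variety; (ii) a parallel Galois-side computation identifying the same tangent direction with the derivative of $a_\rho$ along a crystalline deformation path; (iii) compatibility of these two identifications through the patching functor, which is the automorphic input and where the hypotheses of \cite{CEGGPS1} are used. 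Assembling (i)--(iii) is where I expect almost all the technical work to lie, and carrying it out will also fix the precise structure of $\pi(\ul{\alpha},h,a)$ up to the one-parameter family indexed by $a$.
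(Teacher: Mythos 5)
Your proposal correctly locates the general framework — patched modules, the trianguline eigenvariety, companion points, $\Ext^1$-computations, and a tangent-space comparison between the automorphic and Galois sides — but it is missing the specific structural ideas that actually make the argument close, and your own closing paragraph essentially concedes this. There are three gaps worth naming concretely. First, the paper does not parametrize $\pi(\ul{\alpha},h,a)$ by ``one specific extension class in a one-dimensional $\Ext^1$-space scaled by $a$.'' Instead, the Hodge parameter is reinterpreted as a line $E[\iota_D]\subset\Hom_{(\varphi,\Gamma)}(D_1,C_1)$, where $\iota_D: D_1\hookrightarrow D \twoheadrightarrow C_1$ and $\dim_E\Hom(D_1,C_1)=2$ (Proposition~\ref{PHodge1}). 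The automorphic object is then an extension living in the $10$-dimensional space $\Ext^1_{\GL_3(\Q_p)}(\pi_{\alg}(\ul{\alpha},\lambda),\pi_1(\ul{\alpha},\lambda))$, and the dependence on $a$ is encoded not by scaling a single class but through the maps $j^{\pm}$ built out of the two maximal parabolic inductions from $\GL_2\times\GL_1$ and $\GL_1\times\GL_2$ together with the $p$-adic local Langlands correspondence for $\GL_2(\Q_p)$ applied to $D_1$ and $C_1$. Second, the key bridging device is the set of \emph{higher intertwining pairs} $\sI_{\iota}\subset\Ext^1(D_1,D_1)\times\Ext^1(C_1,C_1)$ (Theorem~\ref{Tint2}, Theorem~\ref{ThIW1}), which encodes $\iota$ (hence $a$) in terms of first-order deformations; this is what translates the Hodge filtration into infinitesimal data both the Galois and automorphic sides can see, and it is what your step (ii) ``derivative of $a_\rho$ along a crystalline deformation path'' is gesturing at but does not supply. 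Third, for local-global the proof uses a ``thickening ideal'' $\cI_v$ and the exact sequence (\ref{Ethick}) together with a careful local-global compatibility on the $P_1$- and $P_2$-parabolic Jacquet--Emerton modules (Corollary~\ref{CLG}), plus the non-embedding Lemma~\ref{Lkey1}; the converse direction hinges on the $R_\infty$-action factoring (or failing to factor) through $\overline{R}^\square_{\rho,\sF}$, which is the precise form your ``tangent-space matching'' takes. Without $\iota_D$, $\sI_\iota$, the $j^\pm$ maps via $p$-adic LL for $\GL_2$, and the thickening mechanism, the proposal is a roadmap rather than an argument, and I don't see how one could carry out steps (i)--(iii) in your outline without effectively rediscovering these constructions.
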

	\begin{remark}\label{Rinto}
		(1)		Let $\Pi_{\infty}$ be the patched Banach representation  over the patched Galois deformation ring $R_{\infty}$  (for a certain $3$-dimensional mod $p$ representation $\overline{\rho}$ of $\Gal_{\Q_p}$) of \cite{CEGGPS1}. Our local-global compatibility result is in fact obtained for a more general setting. We  show that if there is a maximal ideal $\fm$ of $R_{\infty}[1/p]$ associated to $\rho$ (which implies $\rho$ has mod $p$ reduction isomorphic to $\overline{\rho}$) such that 
		$\Pi_{\infty}[\fm]^{\lalg} \neq 0$, then for $a\in E^{\times}$,
		\begin{equation*}
			\pi(\ul{\alpha}, h, a)\hookrightarrow \Pi_{\infty}[\fm] \text{ if and only if } a=a_{\rho}.
		\end{equation*}
		The assumption $\Pi_{\infty}[\fm]^{\lalg} \neq 0$ is equivalent to that $\rho$ appears on the patched eigenvariety associated to $\Pi_{\infty}$ (where the equivalence  is easy in our non-critical case, but noting it also holds in the critical case by \cite{BHS2} \cite{BHS3}).
		
		(2) For the case where $\rho$ is critical for some refinements or if $\rho$ has irregular Sen weights,  one can  associate to $\rho$ a semi-simple locally analytic representation of $\GL_3(\Q_p)$ that determines $\rho$ as in \cite{Br13I} \cite{Wu21}. Together with our construction of $\pi(\ul{\alpha}, h, a)$, this establishes a one-to-one correspondence between $3$-dimensional generic crystabelline $\Gal_{\Q_p}$-representations and their corresponding locally analytic representations of $\GL_3(\Q_p)$.
		
		(3) Let $\pi_{\alg}(\ul{\alpha}, h)$ be the locally algebraic representation of $\GL_3(\Q_p)$ associated to $\ul{\alpha}$ and $h$ (which is the $\pi_{\alg}(\ul{\alpha}, \lambda)$ in (\ref{Eintinj})). A key feature of $\pi(\ul{\alpha}, h, a)$ is that it contains two copies of $\pi_{\alg}(\ul{\alpha}, h)$, one in the socle, another one in the cosocle (see (\ref{Eipic1})):
		\begin{equation*}
			\pi_{\alg}(\ul{\alpha}, h)\cong \soc_{\GL_3(\Q_p)}  \pi(\ul{\alpha}, h, a) \hooklongrightarrow \pi(\ul{\alpha}, h, a) \buildrel{\kappa}\over\twoheadlongrightarrow \mathrm{cosoc}_{\GL_3(\Q_p)} \pi(\ul{\alpha}, h,a)\cong \pi_{\alg}(\ul{\alpha}, h).
		\end{equation*}
		The Hodge parameter $a\in E^{\times}$ is encoded in the extension of (the cosocle) $\pi_{\alg}(\ul{\alpha}, h)$ by $\Ker(\kappa)$. A similar pattern of extensions (with locally algebraic representations in the cosocle) has appeared in the semi-stable non-crystalline case (cf. \cite{Br16}\cite{BD1}, tracing back to \cite{Br04}). This work grows out of the finding of the ``surplus" locally algebraic constituents in the non-critical case in \cite[\S~Appendix C]{Ding15}. Note that the existence of such a constituent was first proved by Hellmann-Hernandez-Schraen in the split case for $\GL_3(\Q_p)$.

		(4) The representation $\pi(\ul{\alpha},h,a_{\rho})$ is a proper subrepresentation of $\widehat{\pi}(\rho)^{\an}$ (the locally analytic subrepresentation of $\widehat{\pi}(\rho)$), which has many more constituents. In fact, some of these constituents are already described in \cite[\S~5.3]{BH2}. We remark that even when amalgamating our representation $\pi(\ul{\alpha},h,a_{\rho})$ with $\pi(\rho)^{\fss}$ of \cite{BH2}, the result  should still give only a proper subrepresentation of $\widehat{\pi}(\rho)^{\an}$. For example, one  may expect that $\widehat{\pi}(\rho)^{\an}$ contains some (mysterious) supersingular constituent(s) (cf. \cite{Ding15}).

		(5) We finally remark the results in the note are obtained for (more general) crystabelline representations.
	\end{remark}
	We explain the construction of $\pi(\ul{\alpha}, h, a)$ and the proof of Theorem \ref{Tmain}. We first give a reinterpretation of the Hodge parameter. Let $D:=D_{\rig}(\rho)$  be the associated $(\varphi, \Gamma)$-module of rank $3$ over the Robba ring $\cR_E$. Recall an ordering of $\ul{\alpha}$ corresponds to a non-critical triangulation of $D$.  Let $D_1$ (resp. $C_1$) be the rank $2$ saturated $(\varphi, \Gamma)$-submodule (resp. quotient)  of $D$ corresponding to $\alpha_1$ and $\alpha_2$. So $D_1$ and $C_1$ depend on the choice of two $\varphi$-eigenvalues, but in fact any choice will work.  Then $D$ has the following two forms (where a line means a non-split extension with the left object the sub, and the right the quotient)
	\begin{equation*}
		\Big[\underbrace{\cR_E(\unr(\alpha_1)z^{h_1}) \lin \cR_E(\unr(\alpha_2) z^{h_2})}_{D_1} \lin \cR_E(\unr(\alpha_3) z^{h_3})\Big]
	\end{equation*}
	\begin{equation*}
		\Big[ \cR_E(\unr(\alpha_3) z^{h_1})\lin\underbrace{\cR_E(\unr(\alpha_1)z^{h_2}) \lin \cR_E(\unr(\alpha_2) z^{h_3})}_{C_1} \Big].
	\end{equation*}
	Let $\iota_D\in \Hom_{(\varphi, \Gamma)}(D_1,C_1)$ be the natural (injective) composition
	\begin{equation*}
		\iota_D: 	D_1 \hooklongrightarrow D \twoheadlongrightarrow C_1.
	\end{equation*}
	\begin{proposition}\label{Pintro}
		We have $\dim_E \Hom_{(\varphi, \Gamma)}(D_1,C_1)=2$, and $D$ is determined by the line $E[\iota_D]\subset \Hom_{(\varphi, \Gamma)}(D_1,C_1)$ (together with $\alpha_3$).
	\end{proposition}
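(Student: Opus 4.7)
First, I verify that $\iota_D$ is injective: the saturated rank-$1$ sub of $D_1$ is $\cR_E(\unr(\alpha_1)z^{h_1})$ (the first piece of its triangulation), which differs in its unramified part from $\cR_E(\unr(\alpha_3)z^{h_1}) = \ker(D \twoheadrightarrow C_1)$ by genericity ($\alpha_1 \neq \alpha_3$), forcing $D_1 \cap \ker(D \twoheadrightarrow C_1) = 0$. For the upper bound $\dim_E \Hom_{(\varphi,\Gamma)}(D_1, C_1) \leq 2$, I write the non-split short exact sequences $0 \to A \to D_1 \to B \to 0$ and $0 \to A' \to C_1 \to B' \to 0$ with $A = \cR_E(\unr(\alpha_1)z^{h_1})$, $B = \cR_E(\unr(\alpha_2)z^{h_2})$, $A' = \cR_E(\unr(\alpha_1)z^{h_2})$, $B' = \cR_E(\unr(\alpha_2)z^{h_3})$. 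Standard $\Hom$-computations for rank-$1$ modules under the genericity hypothesis $\alpha_i\alpha_j^{-1} \neq 1, p$ give $\Hom(A,A') \cong \Hom(B,B') \cong E$ (each spanned by an appropriate power of $t$) and $\Hom(A,B') = \Hom(B,A') = 0$. Any $\phi \in \Hom(D_1, C_1)$ therefore sends $A$ into $A'$ (since the alternative target $B'$ receives no nonzero map from $A$), giving a pair $(\lambda, \mu) \in E^2$ encoding $\phi|_A$ and the induced map $B \to B'$. The vanishing of $\Hom(B,A')$ shows that this assignment is injective, proving the upper bound.

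Next, for the lower bound, I exhibit two linearly independent maps. The first is $\iota_D$ itself, whose pair $(\lambda_0, \mu_0)$ lies in $(E^\times)^2$: the restriction $A \hookrightarrow D_1 \hookrightarrow D \twoheadrightarrow C_1$ lands in $A'$ via the nonzero power $t^{h_1-h_2}$, and the induced map on $B$ is nonzero by a symmetric argument. The second map exploits the full non-criticality assumption: among the six orderings of $\ul{\alpha}$, the ordering $(\alpha_3, \alpha_2, \alpha_1)$ produces a second non-critical triangulation of $C_1$ with parameters $(\unr(\alpha_2)z^{h_2}, \unr(\alpha_1)z^{h_3})$. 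In particular, $C_1$ contains a saturated rank-$1$ sub isomorphic to $B$, and the composition $D_1 \twoheadrightarrow B \hookrightarrow C_1$ gives a nonzero $\phi_2 \in \Hom(D_1, C_1)$ with $\lambda = 0, \mu \neq 0$, linearly independent from $\iota_D$.

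For the determination of $D$, the datum of $\alpha_3$ fixes $K := \cR_E(\unr(\alpha_3)z^{h_1})$, and any candidate $D$ sits as an extension $0 \to K \to D \to C_1 \to 0$, giving a class $[D] \in \Ext^1(C_1, K)$. For a representative $\iota$ in the line $E[\iota_D]$, the inclusion $D_1 \hookrightarrow D$ (which must exist if $D$ is to realize $\iota$ as in the statement) is equivalent to the vanishing $\iota^*[D] = 0$ in $\Ext^1(D_1, K)$. A long-exact-sequence computation analogous to the one above shows $\dim_E \Ext^1(C_1, K) = 2$, and a direct verification shows $\ker \iota^*$ is one-dimensional; since rescaling $[D]$ within this kernel corresponds to the $E^\times$-action of automorphisms of $K$, the line $E[\iota_D]$ (together with $\alpha_3$) determines a unique isomorphism class of $D$. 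The main technical obstacle throughout is the bookkeeping of connecting and pullback maps between various $H^1$-groups of generic rank-$1$ $(\varphi,\Gamma)$-modules; in particular, proving that $\ker \iota^*$ has dimension exactly one — rather than zero or two — is what pins down the bijection between Hodge parameters and lines.
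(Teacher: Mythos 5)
Your computation of $\dim_E \Hom_{(\varphi,\Gamma)}(D_1,C_1)=2$ is correct, and it takes a slightly different route from the paper: you analyze the Hom-group directly via the two-step filtrations of $D_1$ and $C_1$, showing any map must be upper-triangular and hence encoded by $(\lambda,\mu)\in E^2$, whereas the paper runs a d\'evissage on $C_1\otimes_{\cR_E}D_1^\vee$. Your choice of basis $\{\iota_D,\phi_2\}$ (with $\phi_2 = \alpha_2$ in the paper's notation) also differs from the paper's $\{\alpha_1,\alpha_2\}$, but both are legitimate. One small note: to see $\lambda_0\neq 0$ you invoke that $A=\cR_E(\phi_1 z^{h_1})$ and $\ker(D\twoheadrightarrow C_1)=\cR_E(\phi_3 z^{h_1})$ are distinct saturated rank-one submodules of $D$, hence have trivial intersection --- this is fine, but worth spelling out.

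For the determination of $D$, you work with the presentation $0\to K\to D\to C_1\to 0$ (with $K=\cR_E(\phi_3 z^{h_1})$) and the pullback $\iota_D^*:\Ext^1(C_1,K)\to\Ext^1(D_1,K)$, whereas the paper works with the other presentation $0\to D_1\to D\to \cR_E(\phi_3 z^{h_3})\to 0$ and the cup-product $\Ext^1(\cR_E(\phi_3 z^{h_3}),D_1)\times\Hom(D_1,C_1)\to\Ext^1(\cR_E(\phi_3 z^{h_3}),C_1)$. These are genuinely parallel (dual) approaches and either one works. However, the crux of the entire proposition is the assertion that $\dim_E\ker\iota_D^*=1$, and here you write only ``a direct verification shows'' without supplying the verification. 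This is a real gap, not a routine check: it amounts to the non-degeneracy of the bilinear pairing $\Ext^1(C_1,K)\times\Hom(D_1,C_1)\to\Ext^1(D_1,K)$, which is exactly the content of the paper's carefully established non-degeneracy of its cup-product (proved there via a commutative diagram showing $[\alpha_1]^\perp$ and $[\alpha_2]^\perp$ are linearly disjoint). In your setting the argument would go: compute $\ker\alpha_i^*$ for $i=1,2$ by d\'evissage (each is one-dimensional, identified with the image of $\Ext^1$ from the relevant quotient of $C_1$), show the two kernels intersect trivially (an extension lying in both would contain both rank-one subs $\cR_E(\phi_1 z^{h_2})$ and $\cR_E(\phi_2 z^{h_2})$, hence all of $C_1$, hence be split), deduce $\alpha_1^*,\alpha_2^*$ are not proportional, and conclude that $\iota_D^*=c_1\alpha_1^*+c_2\alpha_2^*$ with $c_1,c_2\neq 0$ is nonzero. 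You should also note explicitly that $[D]\neq 0$ in $\Ext^1(C_1,K)$ (if it were zero, $D\cong K\oplus C_1$ would admit a critical refinement, contradicting Hypothesis \ref{Hnonc}); without that your kernel computation only gives an upper bound on the line.
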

	The $E$-vector space $\Hom_{(\varphi, \Gamma)}(D_1,C_1)$ is in fact spanned by the two (non-injective) maps: $D_1 \twoheadrightarrow \cR_E(\unr(\alpha_i)z^{h_2})\hookrightarrow C_1$ for $i=1,2$. We give a quick explanation why it is the right parameter space for $a_{\rho}$ in (\ref{Ead}). An injection $\iota: D_1 \hookrightarrow C_1$ induces a map of filtered $\varphi$-modules $\iota: D_{\cris}(D_1) \ra D_{\cris}(C_1)$. Denoting by $\Fil^{\max}$ the unique non-trivial Hodge filtration for $D_{\cris}(D_1)$ and $D_{\cris}(C_1)$, the map $\iota$ then carries exactly the information on the relative position of the two lines $\iota(\Fil^{\max} D_{\cris}(D_1))$ and $\Fil^{\max} D_{\cris}(C_1)$ in $D_{\cris}(C_1)$. When $[\iota]\in E[\iota_D]$, we see the parameter for the relative position is just $a_{\rho}$ (see Remark \ref{Rfphi} for more details). 
	
	Let $\iota\in \Hom_{(\varphi, \Gamma)}(D_1, C_1)$ be an injection. We first associate to $\iota$ a set of pairs of certain deformations of $D_1$ and $C_1$, which will be crucially used in our construction of $\pi(\ul{\alpha}, h, a)$. Let $\sI_{\iota}$ be the set of pairs $(\widetilde{D}_1, \widetilde{C}_1)$, that we call \textit{higher intertwining pairs} (see Theorem \ref{Tint2} (2) below), satisfying 
	\begin{itemize}
		\item $\widetilde{D}_1 \in \iota^-(\Ext^1_{(\varphi, \Gamma)}(C_1,D_1))\subset \Ext^1_{(\varphi, \Gamma)}(D_1,D_1)$ and $\widetilde{C}_1\in  \iota^+(\Ext^1_{(\varphi, \Gamma)}(C_1,D_1))\subset \Ext^1_{(\varphi, \Gamma)}(C_1, C_1)$, where $\iota^-$ and $\iota^+$ denote the natural pull-back and push-forward maps respectively,
		\item there exists $\tilde{\iota}: \widetilde{D}_1 \hookrightarrow \widetilde{C}_1$ such that the following diagram commutes
		\begin{equation*}\begin{CD}
				0 @>>> D_1 @>>> \widetilde{D}_1 @>>> D_1 @>>> 0 \\
				@. @V\iota VV @V\tilde{\iota} VV @V \iota VV \\
				0 @>>> C_1 @>>> \widetilde{C}_1 @>>> C_1 @>>> 0.
			\end{CD}
		\end{equation*}
	\end{itemize}
The following proposition summarizes some properties of $ \iota^-(\Ext^1_{(\varphi, \Gamma)}(C_1,D_1))$ and $ \iota^+(\Ext^1_{(\varphi, \Gamma)}(C_1,D_1))$.
\begin{proposition}
	 For an injection $\iota\in \Hom_{(\varphi, \Gamma)}(D_1,C_1)$, $$\dim_E\iota^-(\Ext^1_{(\varphi, \Gamma)}(C_1, D_1))=\dim_E \iota^+(\Ext^1_{(\varphi, \Gamma)}(C_1,D_1))=3.$$ Moreover, $\iota^-(\Ext^1_{(\varphi, \Gamma)}(C_1,D_1)) \supset \Ext^1_g(D_1,D_1)$ (resp. $\iota^+(\Ext^1_{(\varphi, \Gamma)}(C_1,D_1))\supset \Ext^1_g(C_1,C_1)$), the subspace of de Rham deformations, and any trianguline deformation contained in 	$\iota^-(\Ext^1_{(\varphi, \Gamma)}(C_1,D_1))$	(resp. $\iota^+(\Ext^1_{(\varphi, \Gamma)}(C_1,D_1))$) is de Rham.
\end{proposition}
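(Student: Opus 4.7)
The plan is to first establish the dimension of the ambient $\Ext^1$ via an Euler-characteristic computation, then show the kernel of $\iota^\pm$ is one-dimensional, and finally deduce the $\Ext^1_g$-containment and the trianguline rigidity from the genericity hypotheses.

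First, setting $M := C_1 \otimes_{\cR_E} D_1^\vee$ (rank $4$), the Euler--Poincar\'e formula gives $\chi(M) = -4$. The four graded constituents of $M$ are characters of the form $\unr(\alpha_i/\alpha_j) z^{h_a - h_b}$; the genericity assumption ($\alpha_i/\alpha_j \neq 1, p$) together with the regularity of $h$ ensures that none of these equals $\mathbf{1}$ or $\chi_{\cyc}$, so $H^0(M) = H^2(M) = 0$ and hence $\dim_E \Ext^1_{(\varphi, \Gamma)}(C_1, D_1) = 4$.

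Next, to compute $\dim \ker(\iota^-)$, I would work in the enlarged abelian category of $(\varphi, \Gamma)$-modules over $\cR_E$ admitting torsion and consider the short exact sequence $0 \to D_1 \xrightarrow{\iota} C_1 \to Q_\iota \to 0$, where $Q_\iota$ is an Artinian torsion cokernel. Applying $\Hom(-, D_1)$ yields
\begin{equation*}
0 \to \Hom(C_1, D_1) \to \Hom(D_1, D_1) \to \Ext^1(Q_\iota, D_1) \to \Ext^1(C_1, D_1) \xrightarrow{\iota^-} \Ext^1(D_1, D_1),
\end{equation*}
with $\Hom(C_1, D_1) = 0$ and $\Hom(D_1, D_1) = E$ by genericity. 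A direct cocycle computation using the explicit Jordan--H\"older filtration of $Q_\iota$ yields $\dim \Ext^1(Q_\iota, D_1) = 2$, whence $\dim \ker(\iota^-) = 1$ and $\dim \iota^-(\Ext^1(C_1, D_1)) = 3$. The symmetric argument (apply $\Hom(C_1, -)$ to the same sequence) gives $\dim \ker(\iota^+) = 1$.

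For the inclusion $\Ext^1_g(D_1, D_1) \subset \iota^-(\Ext^1(C_1, D_1))$, I would take any de Rham deformation $\widetilde{D}_1$ of $D_1$ and explicitly construct an extension $\widetilde{C} \in \Ext^1(C_1, D_1)$ together with a lift $\widetilde{\iota} : \widetilde{D}_1 \to \widetilde{C}$ extending $\iota$, either by gluing the filtered isocrystal deformation associated to $\widetilde{D}_1$ with the fixed crystalline data of $C_1$, or by checking that the obstruction to such a lift lies in a higher $\Ext$ that vanishes on the de Rham subspace by Bloch--Kato duality. The symmetric construction handles $\Ext^1_g(C_1, C_1) \subset \iota^+(\Ext^1(C_1, D_1))$. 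For the trianguline claim, suppose $\widetilde{D}_1 \in \iota^-(\Ext^1(C_1, D_1))$ is trianguline with characters $\widetilde{\delta}_1, \widetilde{\delta}_2$ lifting $\delta_1, \delta_2$. Via $\widetilde{\iota}$ the triangulation embeds into the corresponding $\widetilde{C}$, whose Sen polynomial is the classical $(X - h_1)(X - h_2)^2(X - h_3)$; matching this with the Sen data of $\widetilde{\delta}_1, \widetilde{\delta}_2$ forces the infinitesimal HT-weight variations of $\widetilde{\delta}_i$ to vanish. Hence each $\widetilde{\delta}_i$ is an unramified deformation of $\delta_i$ and in particular crystalline, and the genericity of the ratio $\delta_1/\delta_2 = \unr(\alpha_1/\alpha_2) z^{h_1 - h_2}$ gives $\dim \Ext^1_{(\varphi, \Gamma)}(\widetilde{\delta}_2, \widetilde{\delta}_1) = 1 = \dim \Ext^1_g(\widetilde{\delta}_2, \widetilde{\delta}_1)$, so $\widetilde{D}_1$ is automatically de Rham.

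The main obstacle I anticipate is the kernel computation in the second step: the torsion cokernel $Q_\iota$ depends on the choice of injection $\iota$, and a clean cocycle-level treatment of $\Ext^*(Q_\iota, D_1)$ requires careful bookkeeping in the torsion-enlarged category of $(\varphi, \Gamma)$-modules. A possibly less delicate alternative, should the direct treatment of $Q_\iota$ prove awkward, is to bypass it entirely by decomposing $\Ext^1(C_1, D_1)$ through the two character filtrations of $C_1$ and $D_1$ and tracking $\iota^\pm$ explicitly through the four rank-$1$ $\Ext^1$'s.
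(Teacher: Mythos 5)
There is a genuine gap in the trianguline rigidity step, and the kernel computation is left unverified.

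The trianguline argument does not work as written. You propose to embed the triangulation $\cR_{E[\epsilon]/\epsilon^2}(\widetilde{\delta}_1)\subset\widetilde{D}_1$ into $\widetilde{C}\in\Ext^1(C_1,D_1)$ via $\widetilde{\iota}$ and deduce from the Sen polynomial of $\widetilde{C}$ that the infinitesimal HT-weight variations of $\widetilde{\delta}_i$ vanish. But $\widetilde{C}$ is an extension of $C_1$ by $D_1$, not a self-extension, so it carries no $E[\epsilon]/\epsilon^2$-structure and its Sen polynomial is only defined over $E$, where it is the fixed $(X-h_1)(X-h_2)^2(X-h_3)$. Crucially, the $E$-level Sen polynomial of $\cR_{E[\epsilon]/\epsilon^2}(\widetilde{\delta}_1)$ (viewed as a rank-$2$ $\cR_E$-module) is $(X-h_1)^2$ no matter what the infinitesimal weight variation of $\widetilde{\delta}_1$ is; the variation only manifests as nilpotence of the Sen operator, not in the characteristic polynomial over $E$. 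So the comparison you invoke carries no information about the weight variation. This is not a cosmetic issue: $\Ext^1_{\iota}(D_1,D_1)$ does contain a one-dimensional space of (necessarily non-trianguline) deformations with \emph{nonzero} HT-weight variation, so whatever argument one gives must use triangulinity in an essential way. The paper's proof achieves this by a dimension count: intersecting $\Ext^1_{\iota}(D_1,D_1)$ with $\Ext^1_{\sF_1}(D_1,D_1)$, the diagram (\ref{Edivi1}) shows the image in $\Ext^1(D_1,\cR_E(\phi_2z^{h_2}))$ would lie in $\Ext^1(C_1,\cR_E(\phi_2z^{h_2}))\cap\Ext^1(\cR_E(\phi_2z^{h_2}),\cR_E(\phi_2z^{h_2}))$, which is one-dimensional by \cite[Lem.~5.5.9]{BD2}, contradicting the surjectivity of $\Ext^1_{\iota}(D_1,D_1)\ra\Ext^1(C_1,\cR_E(\phi_2z^{h_2}))$ that one would have if $\Ext^1_{\iota}\subset\Ext^1_{\sF_1}$.

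On the dimension count: your primary route via the torsion cokernel $Q_\iota$ hinges on the unverified claim $\dim_E\Ext^1(Q_\iota,D_1)=2$, which is not obviously independent of the length $h_1-h_3$ of $Q_\iota$ and requires a genuine computation in the torsion-enlarged category. You flag this yourself, and your fallback (tracking $\iota^{\pm}$ through the character filtrations of $D_1$ and $C_1$) is precisely the paper's route: the paper builds the commutative diagram (\ref{Edivi1}) and shows the right vertical map is injective using \cite[Lem.~5.1.1]{BD2}, then further d\'evissages the left vertical map to see its image is the one-dimensional $\Ext^1_g(\cR_E(\phi_1z^{h_1}),\cR_E(\phi_1z^{h_1}))$. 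For the containment $\Ext^1_g(D_1,D_1)\subset\Ext^1_{\iota}(D_1,D_1)$ the paper is again more concrete than your sketch: it establishes $\dim_E\Ext^1_g(C_1,D_1)\geq 3$ from the exact sequence $H^1_g\hookrightarrow H^1\ra H^1(\Gal_{\Q_p},W^+_{\dR}(D_1\otimes C_1^\vee))$ (the last term being one-dimensional), notes $\Ker\iota^-_g=\Ker\iota^-$ is one-dimensional, and concludes surjectivity of $\iota^-_g$ onto $\Ext^1_g(D_1,D_1)$ by dimension count.
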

The motivation to study such pairs is from the following theorem.
	\begin{theorem}\label{Tint2}
		 (1) For injections $\iota$, $\iota'\in \Hom_{(\varphi, \Gamma)}(D_1, C_1)$, $[\iota]\in E[\iota']$ if and only if $\sI_{\iota}=\sI_{\iota'}$.
		
		(2) Let $D$ be given as above Proposition \ref{Pintro}, then $\sI_{\iota_D}$ is equal to the set of pairs $(\widetilde{D}_1, \widetilde{C_1})\in \Ext^1_{(\varphi, \Gamma)}(D_1, D_1) \times \Ext^1_{(\varphi, \Gamma)}(C_1,C_1)$ such that there exists a deformation $\widetilde{D}$ of $D$ over $\cR_{E[\epsilon]/\epsilon^2}$ which sits in both of the following two exact sequences (of $(\varphi, \Gamma)$-modules over $\cR_{E[\epsilon]/\epsilon^2}$)
		\begin{equation*}
			0 \lra \widetilde{D}_1 \lra \widetilde{D} \lra \cR_{E[\epsilon]/\epsilon^2}(\unr(\alpha_3)z^{h_3}) \lra 0,
		\end{equation*}
		\begin{equation*}
			0 \lra\cR_{E[\epsilon]/\epsilon^2}(\unr(\alpha_3)z^{h_1})  \lra \widetilde{D} \lra \widetilde{C}_1 \lra 0.
		\end{equation*}
	\end{theorem}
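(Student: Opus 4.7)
The plan is to establish part (2) first and then derive part (1) from it. The core of (2) is the equivalence between $\sI_{\iota_D}$ (defined in terms of two separate ``deformation halves'' plus a compatibility map $\tilde\iota$) and the set of pairs coming from a single deformation $\widetilde{D}$ of $D$.

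For the inclusion ``deformation of $D$ gives a pair in $\sI_{\iota_D}$'', I would take $\tilde\iota$ to be the composition $\widetilde{D}_1 \hookrightarrow \widetilde{D} \twoheadrightarrow \widetilde{C}_1$; this reduces modulo $\epsilon$ to $\iota_D$ by construction, so the diagram in the second bullet of the definition commutes (and $\tilde\iota$ is automatically injective since $\iota_D$ is). For the pull-back condition $[\widetilde{D}_1] \in \iota_D^-(\Ext^1_{(\varphi, \Gamma)}(C_1, D_1))$ and its symmetric push-forward analogue, I would extract from the double-filtered structure of $\widetilde{D}$ a common class $X \in \Ext^1(C_1, D_1)$ (realized, say, as an appropriate subquotient of $\widetilde{D}$ viewed as a $(\varphi, \Gamma)$-module over $\cR_E$) whose pull-back along $\iota_D$ recovers $[\widetilde{D}_1]$ and whose push-forward recovers $[\widetilde{C}_1]$.

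For the reverse inclusion, the goal is to construct $\widetilde{D}$ from $(\widetilde{D}_1, \widetilde{C}_1, \tilde\iota)$. The strategy has two steps. First, use the pull-back/push-forward conditions together with the existence of $\tilde\iota$ to produce a single common class $X \in \Ext^1(C_1, D_1)$ with $\iota_D^-(X) = [\widetilde{D}_1]$ and $\iota_D^+(X) = [\widetilde{C}_1]$; separately, the two conditions only provide candidates $X_1, X_2$ up to $\Ker(\iota_D^{\pm})$, and the compatibility enforced by $\tilde\iota$ is what aligns them into one class. Second, from $X$ (a rank-$4$ extension of $C_1$ by $D_1$ over $\cR_E$), I would glue in the fixed data $\cR_E(\unr(\alpha_3)z^{h_1}) \subset D$ and $D \twoheadrightarrow \cR_E(\unr(\alpha_3)z^{h_3})$ to build a rank-$3$ $(\varphi, \Gamma)$-module $\widetilde{D}$ over $\cR_{E[\epsilon]/\epsilon^2}$ deforming $D$, whose two canonical filtrations realize the prescribed exact sequences. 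The main obstacle is the extraction of the common $X$: it requires careful bookkeeping of $\Ker(\iota_D^{\pm})$ and analysis of how $\tilde\iota$ interpolates between the two halves. The dimension statement from the preceding proposition ($\dim_E \iota_D^{\pm}(\Ext^1(C_1, D_1)) = 3$) plausibly forces $\iota_D^{\pm}$ to be injections on the relevant subspaces, which should simplify this alignment significantly.

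Finally, for part (1), the ``only if'' direction is immediate because scaling $\iota$ by $c \in E^\times$ scales $\iota^{\pm}$ (preserving their images) and replaces admissible $\tilde\iota$ by $c\tilde\iota$, so $\sI_\iota = \sI_{c\iota}$. For the ``if'' direction, I invoke Proposition \ref{Pintro}: each injection $\iota$ (up to scalar) together with $\alpha_3$ determines a $(\varphi, \Gamma)$-module $D^\iota$ with $\iota_{D^\iota}$ proportional to $\iota$, and part (2) identifies $\sI_\iota$ with the set of pairs arising from deformations of $D^\iota$. If $[\iota]$ and $[\iota']$ are non-proportional, then $D^\iota \not\cong D^{\iota'}$ by Proposition \ref{Pintro}, and I would exhibit a deformation $\widetilde{D}$ of $D^\iota$ whose induced pair does not come from any deformation of $D^{\iota'}$, yielding $\sI_\iota \neq \sI_{\iota'}$.
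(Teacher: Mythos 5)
Your overall strategy (reduce to a common class $M\in\Ext^1(C_1,D_1)$ for part (2), then derive (1) from (2) via the bijection $\iota\leftrightarrow D$) captures the right shape, but several crucial steps are left as sketches where the paper resolves genuine obstructions.

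For part (2), the paper does not construct $\widetilde{D}$ from $X$ by ``gluing'' at all. It works entirely in linear algebra on the spaces $\Ext^1_{\sF}(D,D)$ and $\Ext^1_{\sG}(D,D)$: after twisting so that $\kappa_{\sF,2}(\widetilde{D})=0$, membership of $\widetilde{D}$ in $\Ext^1_{\sG}(D,D)$ is rephrased (via Lemma \ref{Lpar1}) as lying in the kernel of $\Ext^1(D,D)\to \Ext^1(\cR_E(\phi_3z^{h_1}),C_1)$, then traced back through a preimage $M_1\in\Ext^1(D,D_1)$ to the condition that $M_1$ lands in $\Ext^1(C_1,D_1)$. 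The linchpin, which your sketch never touches, is the injectivity of the push-forward $\iota_D:\Ext^1(\cR_E(\phi_3z^{h_1}),D_1)\to\Ext^1(\cR_E(\phi_3z^{h_1}),C_1)$, which the paper obtains from \cite[Lem.~5.1.1]{BD2}; without it the kernel condition on $\widetilde{D}$ does not reduce to a condition on $M_1$. Your proposed alternative, explicitly building a rank-$3$ deformation $\widetilde{D}$ from a rank-$4$ extension $X\in\Ext^1(C_1,D_1)$ by ``gluing in the fixed data,'' would require checking that the two prescribed filtrations coexist on a single object, and you give no argument for that. Likewise the ``alignment'' step — producing a single $X$ from the two candidates $X_1,X_2$ up to $\Ker\iota^{\pm}$ via $\tilde\iota$ — is asserted, not proved; note that $\iota_D^{\pm}$ each have a one-dimensional kernel (the maps are $4\to 3$), so ``$\iota_D^{\pm}$ are injections on the relevant subspaces'' is not correct, and the alignment is a real issue.

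For part (1), the ``if'' direction as you have written it is circular: you say you would ``exhibit a deformation of $D^{\iota}$ whose induced pair does not come from any deformation of $D^{\iota'}$, yielding $\sI_{\iota}\neq\sI_{\iota'}$'' — but producing that witness is exactly the statement to be proved. The paper instead derives this immediately and directly: the projection of $\sI_{\iota}$ to the first coordinate is precisely $\Ext^1_{\iota}(D_1,D_1)$, so $\sI_{\iota}=\sI_{\iota'}$ forces $\Ext^1_{\iota}(D_1,D_1)=\Ext^1_{\iota'}(D_1,D_1)$, and Proposition \ref{Ppairing}~(3) concludes $\iota'\in E^{\times}\iota$. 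This does not require passing through part (2) or constructing the module $D^{\iota}$ at all, and it avoids the (unaddressed) question of whether an arbitrary injective $\iota$ gives rise to a $D^{\iota}$ satisfying Hypothesis \ref{Hnonc}.
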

	In fact, we show that $\iota$ can be detected by a single element $(\widetilde{D}_1, \widetilde{C}_1)\in \sI_{\iota}$ with $\widetilde{D}_1$ \textit{non-trianguline}. 
	
	Now we move to the $\GL_3(\Q_p)$-side. We introduce a bit more notation.  Let $\lambda:=(h_1-2,h_2-1,h_3)$, $L(\lambda)$ be the algebraic representation of $\GL_3(\Q_p)$ of highest weight $\lambda$. Let $\pi_{\alg}(\ul{\alpha}, \lambda):=(\Ind_{B^-}^{\GL_3} \unr(p^2\alpha_1) \boxtimes \unr(p \alpha_2) \boxtimes \unr(\alpha_3))^{\infty} \otimes_E L(\lambda)$, which is no other than the locally algebraic representation associated to $D$. By the classical intertwining property, the smooth induction remains unchanged if one alters the ordering of $\alpha_i$. We can also consider the locally analytic parabolic induction, and we have
	\begin{equation}\label{Eintinj}
		\pi_{\alg}(\ul{\alpha}, \lambda) \hooklongrightarrow \big(\Ind_{B^-}^{\GL_3} \unr(p^2\alpha_1)z^{h_1-2}\boxtimes \unr(p \alpha_2) z^{h_2-1}\boxtimes \unr(\alpha_3)z^{h_3}\big)^{\an}=: I(\ul{\alpha}, \lambda).
	\end{equation}
	The representation $I(\ul{\alpha}, \lambda)$ now depends on the ordering of $\ul{\alpha}$.  For $w$ running through $S_3$, we obtain six locally analytic principal series $I(w(\ul{\alpha}), \lambda)$. The structure of $I(w(\ul{\alpha}), \lambda)$ is clear by \cite{OS} (which has the same pattern as the dual Verma module). For our application, we are mostly interested in the constituents \textit{right after} the locally algebraic $\pi_{\alg}(\ul{\alpha}, \lambda)$: the socle of $I(w(\ul{\alpha}), \lambda)/\pi_{\alg}(\ul{\alpha},\lambda)$ has the form $\sC(s_1,w) \oplus \sC(s_2,w)$ (which are distinct and topologically irreducible). We refer to \S~\ref{S3.1} for the precise definition. These representations are not all distinct when varying $w$: $\sC(s_i, w)\cong \sC(s_j, w')$ if and only if $s_i=s_j$ and $w'\in \{w, s_k w\}$ with $s_k\neq s_i$. Let $\cS:=\{\sC(s_i,w)\}$, where $w$ has minimal length in $\{w,s_k w\}$ for $s_k\neq s_i$. So $\sharp \cS=6$. By amalgamating the six $I(w(\ul{\alpha}), \lambda)$ as much as possible, the resulting representation contains a unique subrepresentation $\pi_1(\ul{\alpha}, \lambda)$ of the form $[\pi_{\alg}(\ul{\alpha},\lambda) \lin \oplus_{\sC\in \cS} \sC]$  (with socle $\pi_{\alg}(\ul{\alpha}, \lambda)$).
	
	We discuss a bit more on parabolic inductions. Let $\pi(D_1)$ be the locally analytic representation of $\GL_2(\Q_p)$ associated to $D_1$, $\pi_{\alg}(D_1)$ be its locally algebraic vector. Consider the parabolic induction $\big(\Ind_{P_1^-}^{\GL_3} (\pi(D_1) \otimes_E \varepsilon^{-1} \circ \dett) \boxtimes \unr(\alpha_3)z^{h_3}\big)^{\an}$, whose socle is $\pi_{\alg}(\ul{\alpha}, \lambda)$. Again we just look at the constituents right after the locally algebraic $\pi_{\alg}(\ul{\alpha}, \lambda)$, and it turns out that we encounter exactly three of those in $\cS$: $\{\sC(s_1, 1), \sC(s_1,s_1), \sC(s_2, 1)\}=:\cS^-$. And the parabolic induction contains a unique subrepresentation $\pi_1(\ul{\alpha},\lambda)^-$ of the form $[\pi_{\alg}(\ul{\alpha}, \lambda) \lin \oplus_{\sC\in \cS^-} \sC]$. Replacing $D_1$ by $C_1$ and $P_1$ by $P_2$, and by a similar discussion, $(\Ind_{P_2^-}^{\GL_3} \unr(\alpha_3)z^{h_1} \varepsilon^{-2} \boxtimes \pi(C_1))^{\an}$ gives exactly the other three constituents $\cS^+:=\{\sC(s_2,s_2), \sC(s_2,s_2s_1), \sC(s_1, s_1s_2)\}$, and contains a unique representation $\pi_1(\ul{\alpha}, \lambda)^+$ of the form $[\pi_{\alg}(\ul{\alpha}, \lambda) \lin \oplus_{\sC\in \cS^+} \sC]$. We have hence 
	\begin{equation*}
		\pi_1(\ul{\alpha}, \lambda) \cong \pi_1(\ul{\alpha}, \lambda)^- \oplus_{\pi_{\alg}(\ul{\alpha},\lambda)} \pi_1(\ul{\alpha}, \lambda)^+.
	\end{equation*}
	Using the $p$-adic Langlands correspondence for $\GL_2(\Q_p)$ and taking the corresponding parabolic inductions, we can obtain two compositions (which are injective)
	\begin{multline*}
		j^-:	\Ext^1_{(\varphi, \Gamma)}(D_1,D_1) \xlongrightarrow{\pLL} \Ext^1_{\GL_2(\Q_p)}(\pi_{\alg}(D_1),\pi(D_1))  \\
		\xlongrightarrow{\Ind}\Ext^1_{\GL_3(\Q_p)}(\pi_{\alg}(\ul{\alpha}, \lambda), \pi_1(\ul{\alpha},\lambda)^-)\hooklongrightarrow \Ext^1_{\GL_3(\Q_p)}(\pi_{\alg}(\ul{\alpha}, \lambda), \pi_1(\ul{\alpha},\lambda)),
	\end{multline*}
	\begin{multline*}
		j^+:	\Ext^1_{(\varphi, \Gamma)}(C_1,C_1) \xlongrightarrow{\pLL} \Ext^1_{\GL_2(\Q_p)}(\pi_{\alg}(C_1),\pi(C_1))  \\
		\xlongrightarrow{\Ind}\Ext^1_{\GL_3(\Q_p)}(\pi_{\alg}(\ul{\alpha}, \lambda), \pi_1(\ul{\alpha},\lambda)^+)\hooklongrightarrow \Ext^1_{\GL_3(\Q_p)}(\pi_{\alg}(\ul{\alpha}, \lambda), \pi_1(\ul{\alpha},\lambda)).
	\end{multline*}
	
	We can now give the definition of $\pi(\ul{\alpha}, h, a)$ in Theorem \ref{Tmain}, but we change the notation to $\pi(\ul{\alpha}, \lambda, \iota)$ to be consistent with the above discussion. Let $(\widetilde{D}_1, \widetilde{C}_1)\in \sI_{\iota}$ with $\widetilde{D}_1$ non-trianguline (which actually implies $\widetilde{C}_1$ non-trianguline either).  Consider $j^-(\widetilde{D}_1) \oplus_{\pi_1(\ul{\alpha}, \lambda)} j^+(\widetilde{C}_1)$, which is an extension of $\pi_{\alg}(\ul{\alpha}, \lambda)^{\oplus 2}$ by $\pi_1(\ul{\alpha}, \lambda)$ with the following form 
	\begin{equation}\label{Eipic2}
		\begindc{\commdiag}[100] 
		\obj(0,0)[a]{$\pi_{\alg}(\ul{\alpha},\lambda)$}
		\obj(8,-5)[b]{$\sC(s_1,1)$}
		\obj(8,-3)[c]{$\sC(s_1,s_1)$}
		\obj(8,-1)[d]{$\sC(s_2,1)$}
		\obj(8,1)[e]{$\sC(s_1, s_1s_2)$}
		\obj(8,3)[f]{$\sC(s_2,s_2)$}
		\obj(8,5)[g]{$\sC(s_2,s_2s_1)$}
		\obj(16,3)[i]{$\pi_{\alg}(\ul{\alpha},\lambda)$}
		\obj(16,-3)[h]{$\pi_{\alg}(\ul{\alpha},\lambda)$}
		\mor{a}{b}{}[+1,3]
		\mor{a}{c}{}[+1,3]
		\mor{a}{d}{}[+1,3]
		\mor{a}{e}{}[+1,3]
		\mor{a}{f}{}[+1,3]
		\mor{a}{g}{}[+1,3]
		\mor{h}{b}{}[+1,3]
		\mor{h}{c}{}[+1,3]
		\mor{h}{d}{}[+1,3]
		\mor{i}{e}{}[+1,3]
		\mor{i}{f}{}[+1,3]
		\mor{i}{g}{}[+1,3]
		\enddc
	\end{equation}
	Note that any  subextension of $\pi_{\alg}(\ul{\alpha}, \lambda)$ by $\pi_1(\ul{\alpha}, \lambda)$, of $j^-(\widetilde{D}_1) \oplus_{\pi_1(\ul{\alpha}, \lambda)} j^+(\widetilde{D}_1)$, is \textit{non-split}. It follows from the fact that  $j^-(\widetilde{D}_1)$ and $j^+(\widetilde{C}_1)$ are linearly independent for non-trianguline $\widetilde{D}_1$ and $\widetilde{C}_1$, simply because $\cS^-$ and $\cS^+$ are disjoint (as illustrated in (\ref{Eipic2})). Roughly speaking, this implies the Galois intertwining pair $(\widetilde{D}_1, \widetilde{C}_1)$ (for $\iota$) does \textit{not} intertwine on the automorphic side. 
	One can show that there is a unique extension, which is our $\pi(\ul{\alpha}, \lambda, \iota)$, of $\pi_{\alg}(\ul{\alpha}, \lambda)$ by $\pi_1(\ul{\alpha}, \lambda)$ satisfying that $\pi(\ul{\alpha}, \lambda, \iota)\subset j^-(\widetilde{D}_1') \oplus_{\pi_1(\ul{\alpha}, \lambda)} j^+(\widetilde{C}_1')$ for all $(\widetilde{D}_1', \widetilde{C}_1')\in \sI_{\iota}$ with $\widetilde{D}_1'$ non-trianguline. In fact, by suitably normalizing the maps $j^-$ and $j^+$, $\pi(\ul{\alpha},\lambda,\iota)$ can be defined to be the representation associated to $j^-(\iota^-([M]))-j^+(\iota^+([M]))$ for a non-de Rham $M\in \Ext^1_{(\varphi, \Gamma)}(C_1,D_1)$. Moreover, one can show  that $\pi(\ul{\alpha}, \lambda, \iota)$ determines $\sI_{\iota}$ hence $\iota$. This proves Theorem \ref{Tmain} (1). By construction, $\pi(\ul{\alpha}, \lambda, \iota)$ has the following form
	\begin{equation}\label{Eipic1}
		\begindc{\commdiag}[100] 
		\obj(0,0)[a]{$\pi_{\alg}(\ul{\alpha},\lambda)$}
\obj(8,-5)[b]{$\sC(s_1,1)$}
\obj(8,-3)[c]{$\sC(s_1,s_1)$}
\obj(8,-1)[d]{$\sC(s_2,1)$}
\obj(8,1)[e]{$\sC(s_1, s_1s_2)$}
\obj(8,3)[f]{$\sC(s_2,s_2)$}
\obj(8,5)[g]{$\sC(s_2,s_2s_1)$}
		\obj(16,0)[h]{$\pi_{\alg}(\ul{\alpha},\lambda)$}
		\mor{a}{b}{}[+1,3]
		\mor{a}{c}{}[+1,3]
		\mor{a}{d}{}[+1,3]
		\mor{a}{e}{}[+1,3]
		\mor{a}{f}{}[+1,3]
		\mor{a}{g}{}[+1,3]
		\mor{h}{b}{}[+1,3]
		\mor{h}{c}{}[+1,3]
		\mor{h}{d}{}[+1,3]
		\mor{h}{e}{}[+1,3]
		\mor{h}{f}{}[+1,3]
		\mor{h}{g}{}[+1,3]
		\enddc
	\end{equation}
	
	We discuss our local-global compatibility result. We use the setting in Remark \ref{Rinto} (1). Let $D:=D_{\rig}(\rho)$. We only explain why $\pi(\ul{\alpha}, \lambda, \iota_D)\hookrightarrow \Pi_{\infty}[\fm]$. To $(\widetilde{D}_1, \widetilde{C}_1)\in \sI_{\iota_D}$ with $\widetilde{D}_1$ non-trianguline,  Theorem \ref{Tint2} (2)  associates a deformation $\widetilde{D}$ of $D$. Let $\cI$ be a ``thickening" ideal of $\fm$ associated to $\widetilde{D}$, so there is an exact sequence
	\begin{equation}\label{Ethick}
		0 \lra \Pi_{\infty}[\fm] \lra \Pi_{\infty}[\cI] \lra \Pi_{\infty}[\fm].
	\end{equation}
	Based on some local-global compatibility results on the Jacquet-Emerton modules of $\Pi_{\infty}^{R_{\infty}-\an}[\cI]$, one can show the two presentations of $\widetilde{D}$ in Theorem \ref{Tint2} (2) give rise to two  injections $j^-(\widetilde{D}_1)\hookrightarrow \Pi_{\infty}[\cI]$ and $j^+(\widetilde{D}_1)\hookrightarrow \Pi_{\infty}[\cI]$, which amalgamate to an injection (as $j^-(\widetilde{D}_1)$ and $j^+(\widetilde{C}_1)$ don't intertwine)
	\begin{equation*}
		j^-(\widetilde{D}_1) \oplus_{\pi_1(\ul{\alpha}, \lambda)} j^+(\widetilde{C}_1) \hooklongrightarrow \Pi_{\infty}[\cI].
	\end{equation*}
	Moreover, one can show that  the two copies of $\pi_{\alg}(\ul{\alpha}, \lambda)$ in the cosocle of $ 	j^-(\widetilde{D}_1) \oplus_{\pi_1(\ul{\alpha}, \lambda)} j^+(\widetilde{C}_1)$ (see (\ref{Eipic2}))  are both sent to the single  $\pi_{\alg}(\ul{\alpha}, \lambda)$ in the second  $\Pi_{\infty}[\fm]$ in (\ref{Ethick}). Consequently,  a certain subextension of $\pi_{\alg}(\ul{\alpha}, \lambda)$ by $\pi_1(\ul{\alpha},\lambda)$, of 	$j^-(\widetilde{D}_1) \oplus_{\pi_1(\ul{\alpha}, \lambda)} j^+(\widetilde{C}_1)$, has to inject into $\Pi_{\infty}[\fm]$. Letting $(\widetilde{D}_1, \widetilde{C}_1)$ vary, it is not so difficult to see the subextension has to be $\pi(\ul{\alpha}, \lambda, \iota_D)$. See the proof of Theorem \ref{Tmain2} for details.

	Remark that our arguments in fact do not rely on the full strength of the $p$-adic Langlands correspondence for $\GL_2(\Q_p)$ (for example, we don't use Colmez's functor). Most of the arguments can generalize to $\GL_n(\Q_p)$ (or even $\GL_n(K)$ for a finite extension $K$ of $\Q_p$). We will report the $\GL_n$-case in an upcoming work, which of course covers the results in the note. However, we find it convenient (for both the reader and the author) to first discuss the $\GL_3(\Q_p)$-case.
	
	We refer to the body of the context for more detailed and precise statements with slightly different notation.
	
	\subsection*{Acknowledgement}
	I thank Zicheng Qian, Zhixiang Wu for helpful discussions. I especially thank Christophe Breuil for the discussions and his interest, which  substantially accelerates the work, and for his comments on a preliminary version of the note. This work is supported by   the NSFC Grant No. 8200800065, No. 8200907289 and No. 8200908310.
	%
	
	
	

	\section{Hodge filtration and higher intertwining}\label{S1}
	Let $D$ be a $(\varphi, \Gamma)$-module of rank $3$ over $\cR_E$, the $E$-coefficient Robba ring for $\Q_p$. Assume $D$ is crystabelline  of regular  Hodge-Tate-Sen weights $h=(h_1> h_2> h_3)$.  We discuss the relation between the Hodge filtration of $D$ and certain paraboline deformations of $D$. Throughout the section, we write $\Ext^i$ (and $\Hom=\Ext^0$)  without ``$(\varphi, \Gamma)$" in the subscript for the $i$-th extension group of $(\varphi, \Gamma)$-modules (cf. \cite{Liu07}).

	\subsection{Hodge filtration}\label{S2.1}As $D$ is crystabelline, there exist smooth characters $\phi_1, \phi_2, \phi_3$ of $\Q_p^{\times}$ such that $D[\frac{1}{t}]\cong \oplus_{i=1}^3 \cR_E(\phi_i)[\frac{1}{t}]$. We assume $D$ is \textit{generic}, that is $\phi_i \phi_j^{-1}\neq 1, |\cdot |^{\pm 1}$ for $i \neq j$. An ordering of $\phi_1$, $\phi_2$, $\phi_3$ is refereed to as a \textit{refinement} of $D$. For $w\in S_3$, the refinement $(\phi_{w^{-1} (1)}, \phi_{w^{-1}(2)},\phi_{w^{-1}(3)})$ is called \textit{non-critical} if $D$ admits a triangulation of parameter $w(\ul{\phi})z^{h}$, where $w(\ul{\phi})=\phi_{w^{-1} (1)} \boxtimes \phi_{w^{-1}(2)} \boxtimes \phi_{w^{-1}(3)}$ is the associated smooth character of $T(\Q_p)$. We recall it means $D$ is isomorphic to a successive extension of $\cR_{E}(\phi_{w^{-1}(i)} z^{h_i})$ for $i=1, 2, 3$. We also call $w(\ul{\phi})$ a refinement of $D$.
	Throughout the note, we assume the following hypothesis
	\begin{hypothesis}\label{Hnonc}
		Assume all the refinements of $D$ are non-critical. 
	\end{hypothesis}
	Let $D_1$ (resp. $C_1$) be the submodule of $D$ (resp. the quotient of $D$) given by an extension of $\cR_E(z^{h_2} \phi_2)$ (resp. $\cR_E(z^{h_3} \phi_2)$) by $\cR_E(z^{h_1} \phi_1)$ (resp. by $\cR_E(z^{h_2} \phi_1)$). By the assumption, it is easy to see both $D_1$ and $C_1$ are non-split. 
	Denote by $\iota_D$ the composition $D_1 \hookrightarrow D \twoheadrightarrow C_1$, and it is clear that $\iota_D$ is injective (using $\Hom(D_1, \cR_E(z^{h_1} \phi_3))=0$). 
	\begin{proposition} \label{PHodge1}
		(1) $\dim_E \Hom(D_1, C_1)=2$.
		
		(2) The  cup-product
		\begin{equation*}
			\Ext^1(\cR_E(\phi_3 z^{h_3}),D_1) \times \Hom(D_1, C_1) \lra \Ext^1(\cR_E(\phi_3 z^{h_3}),C_1)
		\end{equation*}
		is non-degenerate. Moreover, $[D]$ is exactly the line orthogonal to $[\iota_D]$. In particular, $D$ is determined by $\ul{\phi}$, $h$ and $\iota_D$. 
	\end{proposition}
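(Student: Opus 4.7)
Proof plan. For part (1), I would first establish the upper bound $\dim_E \Hom(D_1, C_1) \leq 2$ via the natural embedding $\Hom(D_1, C_1) \hookrightarrow \Hom_{\cR_E[\frac{1}{t}]}(D_1[\frac{1}{t}], C_1[\frac{1}{t}])$ (in the category of $\cR_E[\frac{1}{t}]$-linear $(\varphi, \Gamma)$-modules): since $\phi_1, \phi_2$ are distinct generic smooth characters, both $D_1[\frac{1}{t}]$ and $C_1[\frac{1}{t}]$ are isomorphic to $\cR_E(\phi_1)[\frac{1}{t}] \oplus \cR_E(\phi_2)[\frac{1}{t}]$, whose $(\varphi, \Gamma)$-equivariant endomorphism algebra is $E \times E$ (off-diagonal Hom spaces vanish by genericity). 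The lower bound comes from exhibiting two linearly independent elements: the injection $\iota_D$ gives one, and a second, linearly independent element can be extracted from the ``other'' refinement $(\phi_2, \phi_1, \phi_3)$ of $D$ (which exists by Hypothesis \ref{Hnonc}), producing an alternate rank-$2$ quotient $C_1'$ and, by comparison of the two $[\frac{1}{t}]$-decompositions, a map $D_1 \to C_1$ that projects onto a different isotypic component than $\iota_D$ at the $[\frac{1}{t}]$-level.

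For part (2), the key algebraic input is the identity $\iota_{D,*}[D] = 0$. Since $\iota_D$ factors as $D_1 \hookrightarrow D \xrightarrow{\pi} C_1$, the pushforward extension
$$0 \to C_1 \to D \oplus_{D_1} C_1 \to \cR_E(\phi_3 z^{h_3}) \to 0$$
admits a $(\varphi, \Gamma)$-equivariant splitting: the section $\overline{d} \in D/D_1 = \cR_E(\phi_3 z^{h_3}) \mapsto [(d, -\pi(d))] \in D \oplus_{D_1} C_1$ is well-defined. Next I would compute $\dim_E \Ext^1(\cR_E(\phi_3 z^{h_3}), D_1) = \dim_E \Ext^1(\cR_E(\phi_3 z^{h_3}), C_1) = 2$ via the Euler-Poincar\'e formula $\chi = -2$, combined with the vanishing $\Hom(\cR_E(\phi_3 z^{h_3}), \bullet) = 0$ (composition factors have smooth parts in $\{\phi_1, \phi_2\}$, disjoint from $\phi_3$) and $\Ext^2(\cR_E(\phi_3 z^{h_3}), \bullet) = 0$ (via Tate duality this reduces to a Hom vanishing for the cyclotomic twist, again by genericity). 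Hence $\iota_{D,*}$ is a map of $2$-dimensional spaces with $[D]$ in its kernel.

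The main obstacle is pinning down $\ker(\iota_{D,*}) = E[D]$, equivalently that $\iota_{D,*}$ has rank $1$. I would analyze the long exact sequence of $\Ext$-groups attached to $0 \to D_1 \xrightarrow{\iota_D} C_1 \to Q \to 0$, where the cokernel $Q = C_1/\iota_D(D_1)$ is a $t$-torsion $(\varphi, \Gamma)$-module (as $\iota_D$ becomes an isomorphism after inverting $t$) encoding the relative Hodge position of $D_1$ and $C_1$; computing $\Hom(\cR_E(\phi_3 z^{h_3}), Q)$ and $\Ext^1(\cR_E(\phi_3 z^{h_3}), Q)$ from the torsion structure of $Q$, together with $\chi = 0$ for torsion modules, pins the rank of $\iota_{D,*}$ down to $1$, whence $\ker \iota_{D,*} = E[D]$. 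Running the same analysis with an arbitrary $[\iota] \in \Hom(D_1, C_1)$ in place of $[\iota_D]$ yields the full non-degeneracy of the cup-product pairing. Finally, $D$ is determined by $\ul{\phi}, h, \iota_D$ since $[D]$ is the unique (up to scalar) generator of $\ker \iota_{D,*}$, and an extension class up to scalar determines the corresponding extension up to isomorphism.
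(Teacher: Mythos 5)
Your part (1) is essentially sound: the upper bound via the embedding $\Hom(D_1,C_1)\hookrightarrow \Hom(D_1[\tfrac 1t],C_1[\tfrac 1t])\cong E\times E$ (using genericity of $\phi_1,\phi_2$) is a valid alternative to the paper's dévissage of $C_1\otimes D_1^\vee$. However the description of the second basis element is off: changing the refinement of $D$ does \emph{not} produce a different rank-$2$ quotient $C_1'$ (the saturated subquotients attached to $\{\phi_1,\phi_2\}$ are independent of the ordering); the two independent elements are simply $\iota_D$ and one of the ``factor-through'' maps $\alpha_i:D_1\twoheadrightarrow\cR_E(\phi_i z^{h_2})\hookrightarrow C_1$, as the paper constructs directly. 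In part (2), your verification that $\iota_{D,*}[D]=0$ via the explicit splitting $\bar d\mapsto[(d,-\pi(d))]$ of the pushout is correct and equivalent to the paper's observation that $[D]$ dies already in $\Ext^1(\cR_E(\phi_3 z^{h_3}),D)$ (the class of a short exact sequence is killed by pushforward along the inclusion of its sub); the Euler--Poincar\'e dimension counts are also fine.

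The genuine gap is in the heart of (2): you never actually prove $\ker\iota_{D,*}$ is one-dimensional, nor non-degeneracy. Your plan is to compute $\Hom(\cR_E(\phi_3 z^{h_3}),Q)$ where $Q=C_1/\iota_D(D_1)$, but this is only sketched; $Q$ is a $t$-torsion $(\varphi,\Gamma)$-module whose structure depends in a nontrivial way on the relative position of the two Hodge filtrations (in effect on $\iota_D$ itself), and $\chi=0$ for torsion modules gives only $\dim\Hom=\dim\Ext^1$, not their common value. More seriously, the proposal to ``run the same analysis with arbitrary $[\iota]$'' cannot by itself yield non-degeneracy even if it did establish $\operatorname{rank}\iota_*=1$ for every nonzero $\iota$: if the two one-dimensional kernels $[\alpha_1]^\perp$ and $[\alpha_2]^\perp$ happened to coincide, every $\iota_*$ would still have rank $1$ yet the pairing would be degenerate. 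What is actually needed --- and what the paper proves --- is the stronger statement that $[\alpha_1]^\perp$ and $[\alpha_2]^\perp$ are \emph{distinct} lines, obtained by identifying $[\alpha_i]^\perp$ explicitly with $\Ext^1(\cR_E(\phi_3 z^{h_3}),\cR_E(\phi_j z^{h_1}))$ ($j\ne i$) via the commutative diagram of cup-products factoring $\alpha_i$ through the one-dimensional intermediate object $\cR_E(\phi_i z^{h_2})$. Once those two lines are known to be disjoint, both non-degeneracy and $\ker\iota_{D,*}=E[D]$ follow; your outline does not supply that disjointness.
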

	\begin{proof}
		Consider $C_1 \otimes_{\cR_E} D_1^{\vee}$, which is isomorphic to a successive extension of $\cR_E(\phi_i\phi_j^{-1} z^{h_{i+1}-h_{j}})$ for $i, j\in \{1,2\}$. By d\'evissage, it is easy to see $\dim_E \Hom(D_1,C_1)\leq 2$. For $i=1,2$, let 
		\begin{equation}\label{Ealphai}
			\alpha_i:=D_1 \twoheadlongrightarrow \cR_E(z^{h_2} \phi_i) \hooklongrightarrow C_1
		\end{equation}
		which are obviously linearly independent. (1) follows. 
		
		We have a commutative diagram of cup-products
		\begin{equation*}
			\begin{CD}	
				\Ext^1(\cR_E(\phi_3 z^{h_3}),\cR_E(\phi_1 z^{h_2})) @. \ \  \times @. \  \ \Hom(\cR_E(\phi_1 z^{h_2}), \cR_E(\phi_1 z^{h_2})) @>>> \Ext^1(\cR_E(\phi_3 z^{h_3}),\cR_E(\phi_1 z^{h_2})) \\
				@AAA @. @V \sim VV @| \\
				\Ext^1(\cR_E(\phi_3 z^{h_3}),D_1) @. \ \  \times @. \  \ \Hom(D_1, \cR_E(\phi_1 z^{h_2})) @>>> \Ext^1(\cR_E(\phi_3 z^{h_3}),\cR_E(\phi_1 z^{h_2})) \\
				@| @. @VVV @VVV \\
				\Ext^1(\cR_E(\phi_3 z^{h_3}),D_1) @. \ \  \times @. \  \ \Hom(D_1, C_1) @>>> \Ext^1(\cR_E(\phi_3 z^{h_3}),C_1).
			\end{CD}
		\end{equation*}
		The top pairing (of one dimensional spaces) is trivially perfect. We deduce 
		$$[\alpha_1]^{\perp}=\Ext^1(\cR_E(\phi_3 z^{h_3}), \cR_E(\phi_2 z^{h_1})) (\subset \Ext^1(\cR_E(\phi_3 z^{h_3}),D_1)).$$ Similarly, we have $[\alpha_2]^{\perp}=\Ext^1(\cR_E(\phi_3 z^{h_3}), \cR_E(\phi_1 z^{h_1}))$. In particular, $[\alpha_1]^{\perp}$ and $[\alpha_2]^{\perp}$ are linearly disjoint. The first part of (2) follows.  As $\iota_D$ factors through $D$, the map induced by the pairing $\langle -, \iota_D\rangle$ is given by the following  composition
		\begin{equation*}
			\Ext^1(\cR_E(\phi_3 z^{h_3}), D_1) \lra 	\Ext^1(\cR_E(\phi_3 z^{h_3}), D) \lra 	\Ext^1(\cR_E(\phi_3 z^{h_3}), C_1).\end{equation*}
		The first map sends  $[D]$ to zero, hence $\langle D, \iota_D\rangle=0$. This finishes the proof. 
	\end{proof}
	\begin{remark}\label{Rfphi}
		By the proposition, the map $\iota_D$ determines the Hodge filtration of $D$. We can actually see this in an explicit way. For simplicity, we assume $D$ is crystalline. In this case, $\phi_i=\unr(\alpha_i)$ where $\alpha_i$ are the eigenvalues of $\varphi$ on $D_{\cris}(D)$. Let $e_i\in D_{\cris}(D)$ be an $\alpha_i$-eigenvector. As $D_1$ is non-split, by multiplying $e_1$, $e_2$ by non-zero scalars, we can and do assume $\Fil^{\max} D_{\cris}(D_1)=\Fil^j D_{\cris}(D_1)$, $-h_1<j\leq -h_2$, is generated by $e_1+e_2$. As $D$ is non-critical for all refinements, by multiplying $e_3$ by a non-zero scalar, we can and do assume $\Fil^{\max} D_{\cris}(D)=\Fil^j D_{\cris}(D)$, $-h_2 < j \leq -h_3$, is generated by $e_1+a_D e_2+e_3$ for some $a_D \in E^{\times}$. The Hodge filtration on $D_{\cris}(D)$ is in fact determined by the parameter $a_D$:
		\begin{equation*}
			\Fil^j D_{\cris}(D)=\begin{cases}
				D_{\cris}(D) & j\leq -h_1 \\
				E(e_1+e_2) \oplus E(e_1+a_D e_2+e_3) & -h_1<j \leq -h_2 \\
				E(e_1+a_D e_2+e_3) & -h_2<j \leq -h_3 \\
				0 & j>-h_3
			\end{cases}
		\end{equation*}
		With these basis, $\Fil^{\max} D_{\cris}(C_1)=\Fil^j D_{\cris}(C_1)$, $-h_2<j \leq -h_3$, is generated by $e_1+a_D e_2$. The map $\iota_D$ induces a map $D_{\cris}(D_1) \ra D_{\cris}(C_1)$ which sends exactly $e_i$ to $e_i$. We see $a_D$ can be read out from the relative position of the two lines $\iota_D(\Fil^{\max} D_{\cris}(D_1))$ and $\Fil^{\max} D_{\cris}(C_1)$. It is then not difficult to see $a_D$ and $\iota_D$ determine each other. 
	\end{remark}
	
	\subsection{Higher intertwining pairs}
	We show that $\iota_D$ in $\Hom(D_1,C_1)$ can be detected by a set of pairs of certain deformations of $D_1$ and $C_1$, associated to $D$.

	For de Rham $(\varphi, \Gamma)$-modules $M$ and $N$, let  $\Ext^1_g(M,N)$ denote the subspace of de Rham extensions.  We identify each element in $\Ext^1(M,M)$ with a $(\varphi, \Gamma)$-module $\widetilde{M}$ over $\cR_{E[\epsilon]/\epsilon^2}$ where $\epsilon$ acts via $\widetilde{M}\twoheadrightarrow M \xrightarrow{\id} M \hookrightarrow M$. Let $\Ext^1_{g'}(M,M)$ be the subspace of de Rham deformations up to twist by characters (over $E[\epsilon]/\epsilon^2$).

	We quickly recall some facts on the deformations of $D_1$ (similar statements holding for $C_1$ as well).
	Let $\sF_1$ (resp. $\sF_2$) be the filtration $\cR_{E}(\phi_1 z^{h_1}) \subset D_1$ (resp. $\cR_E(\phi_2 z^{h_1}) \subset D_1$), and $\Ext^1_{\sF_i}(D_1,D_1)$ be subspace of trianguline  deformations with respect to the filtration $\sF_i$.  For $\widetilde{D}_1\in \Ext^1_{\sF_i}(D_1,D_1)$ (viewed as a $(\varphi, \Gamma)$-module over $\cR_{E[\epsilon]/\epsilon^2}$), there exist characters $\widetilde{\delta}_{\sF_i,1}, \widetilde{\delta}_{\sF_i,2}: \Q_p^{\times} \ra (E[\epsilon]/\epsilon^2)^{\times}$, $\widetilde{\delta}_{\sF_i,1}\equiv \delta_{\sF_i,1}:=\phi_i z^{h_1} \pmod{\epsilon}$, and $\widetilde{\delta}_{\sF_i,2}\equiv \delta_{\sF_i,2}:=\phi_j z^{h_2}$ (for $j \neq i$) such that $\widetilde{D}_1$ is isomorphic, as $(\varphi, \Gamma)$-module over $\cR_{E[\epsilon]/\epsilon^2}$, to an extension of $\cR_{E[\epsilon]/\epsilon^2}(\widetilde{\delta}_{\sF_i,2})$ by $\cR_{E[\epsilon]/\epsilon^2}(\widetilde{\delta}_{\sF_i,1})$.  We call  the character  $\widetilde{\delta}_{\sF_i}:=\widetilde{\delta}_{\sF_i,1} \boxtimes \widetilde{\delta}_{\sF_i,2}$ of $T(\Q_p)$ a trianguline parameter of  $\widetilde{D}_1$.  The following proposition is well-known.

	\begin{proposition}\label{PGL21}(1) $\dim_E \Ext^1(D_1,D_1)=5$,  $\dim_E \Ext^1_g(D_1,D_1)=2$ and $\dim_E \Ext^1_{\sF_i}(D_1,D_1)=4$ for $i=1,2$. 
		
		(2) There is a natural exact sequence
		\begin{equation*}
			0 \ra \Ext^1_{g'}(D_1,D_1) \ra \Ext^1_{\sF_1}(D_1,D_1) \oplus \Ext^1_{\sF_2}(D_1,D_1) \ra \Ext^1(D_1,D_1) \ra 0.
		\end{equation*}
		
		(3) For $i=1,2$, the map 
		$\Ext^1_{\sF_i}(D_1,D_1) \ra\Ext^1_{T(\Q_p)}(\delta_{\sF_i}, \delta_{\sF_i})$, sending $\widetilde{D}_1$ to its trianguline parameter, is a bijection, where $\delta_{\sF_i}:=\delta_{\sF_i,1}\boxtimes \delta_{\sF_i,2}$.
	%
	%
	\end{proposition}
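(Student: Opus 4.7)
My plan is to first establish (3), from which the dimensions in (1) follow by standard cohomology computations, and then assemble (2) as an intersection-and-dimension argument.

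For (3), the map $\widetilde{D}_1 \mapsto \widetilde{\delta}_{\sF_i}$ is well-defined on isomorphism classes. For injectivity, two deformations with the same trianguline parameter can both be written as extensions of $\cR_{E[\epsilon]/\epsilon^2}(\widetilde{\delta}_{\sF_i,2})$ by $\cR_{E[\epsilon]/\epsilon^2}(\widetilde{\delta}_{\sF_i,1})$, and their difference lies in the reduction-mod-$\epsilon$ kernel
\begin{equation*}
\epsilon \cdot \Ext^1(\cR_E(\delta_{\sF_i,2}), \cR_E(\delta_{\sF_i,1})) \subset \Ext^1(\cR_{E[\epsilon]/\epsilon^2}(\widetilde{\delta}_{\sF_i,2}), \cR_{E[\epsilon]/\epsilon^2}(\widetilde{\delta}_{\sF_i,1})).
\end{equation*}
Such a class can be absorbed by modifying the splitting $D_1 \hookrightarrow \widetilde{D}_1$ by an element of $\Hom(D_1, \epsilon D_1) = \Hom(D_1, D_1)$, yielding an isomorphism between the two deformations. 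For surjectivity, given $\widetilde{\delta}_{\sF_i}$, the obstruction to lifting the class $[D_1] \in \Ext^1(\cR_E(\delta_{\sF_i,2}), \cR_E(\delta_{\sF_i,1}))$ to an extension over $\cR_{E[\epsilon]/\epsilon^2}$ lies in an $\Ext^2$ of rank-$1$ $(\varphi, \Gamma)$-modules, which vanishes by genericity and Tate duality.

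From (3), $\dim_E \Ext^1_{\sF_i}(D_1, D_1) = 2 \dim_E \Hom_{\cont}(\Q_p^\times, E) = 4$ (two parameters $\val$ and $\log_p$ in each $\Q_p^\times$-factor). For $\dim_E \Ext^1(D_1, D_1) = 5$, I apply the Euler--Poincar\'e formula for $(\varphi, \Gamma)$-modules over $\cR_E$. Genericity gives $\Hom(D_1, D_1) = E$ (every endomorphism preserves the unique saturated rank-$1$ sub and acts by a scalar that must coincide on sub and quotient by indecomposability) and $\Ext^2(D_1, D_1) \cong \Hom(D_1, D_1(\chi_{\cyc}))^\vee = 0$, so $1 - \dim \Ext^1 + 0 = -\rk(D_1)^2 = -4$. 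For $\dim_E \Ext^1_g(D_1, D_1) = 2$, I apply the Bloch--Kato-type formula for de Rham $(\varphi, \Gamma)$-modules to $D_1 \otimes D_1^\vee$, whose Hodge--Tate weights are $\{h_2-h_1,\,0,\,0,\,h_1-h_2\}$: the formula gives $\dim H^0 + \dim D_{\dR}/\Fil^0 + \dim H^0(\,\cdot\,(\chi_{\cyc})) = 1 + 1 + 0 = 2$.

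For (2), the natural maps are the diagonal $[\widetilde{D}_1] \mapsto ([\widetilde{D}_1], [\widetilde{D}_1])$ and the difference $(a,b) \mapsto a - b$, and exactness reduces to the identifications $\Ext^1_{\sF_1} \cap \Ext^1_{\sF_2} = \Ext^1_{g'}(D_1, D_1)$ and $\Ext^1_{\sF_1} + \Ext^1_{\sF_2} = \Ext^1(D_1, D_1)$ (both as subspaces of $\Ext^1(D_1,D_1)$). The inclusion $\Ext^1_{g'} \subset \Ext^1_{\sF_1} \cap \Ext^1_{\sF_2}$ is formal: a de Rham deformation admits lifts of both non-critical refinements (since they vary $p$-adic analytically over the de Rham deformation space) and character twists trivially preserve both triangulations. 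Conversely, if $\widetilde{D}_1$ is trianguline for both $\sF_1$ and $\sF_2$, genericity forces the two lifted parameters to split $\widetilde{D}_1[1/t]$ as a direct sum of rank-$1$ pieces over $\cR_{E[\epsilon]/\epsilon^2}[1/t]$, from which one extracts a character twist making $\widetilde{D}_1$ de Rham. Surjectivity of the difference map is then a dimension count: one computes $\dim \Ext^1_{g'} = \dim \Ext^1_g + 1 = 3$ (the extra dimension coming from non-de Rham character twists such as $x \mapsto 1 + \epsilon \log_p|x|$, noting that the $\val$-twist is already de Rham), and $4 + 4 - 3 = 5 = \dim \Ext^1(D_1, D_1)$.

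The main obstacle I anticipate is the reverse inclusion $\Ext^1_{\sF_1} \cap \Ext^1_{\sF_2} \subset \Ext^1_{g'}$: showing that two admissible trianguline lifts force the deformation to be de Rham up to a character twist requires careful use of non-criticality (the statement fails in critical situations) together with genericity to rule out pathological overlap of the two refinements. The remaining steps are essentially formal consequences of standard $(\varphi, \Gamma)$-module cohomology once (3) is in hand.
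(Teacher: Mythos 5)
The paper gives no proof of this proposition at all — it is introduced with "The following proposition is well-known" and no argument or citation is supplied — so there is nothing to match your proof against; I will just assess it directly.

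Your reconstruction is the standard one and is essentially sound: EP and Bloch--Kato (with Tate duality and genericity to kill the $H^0$ and $H^2$ terms) give the numbers $5$ and $2$ in (1); for (3) the obstruction to lifting $[D_1]$ lies in $\Ext^2(\cR_E(\delta_{\sF_i,2}),\cR_E(\delta_{\sF_i,1}))=0$, and your count $\dim\Ext^1_{g'}=3$ and the identification $\Ext^1_{\sF_1}\cap\Ext^1_{\sF_2}=\Ext^1_{g'}$, $\Ext^1_{\sF_1}+\Ext^1_{\sF_2}=\Ext^1$ give (2). Two places deserve tightening. First, your injectivity argument in (3) is phrased in terms of ``modifying the splitting $D_1\hookrightarrow\widetilde D_1$ by an element of $\Hom(D_1,\epsilon D_1)$,'' which is imprecise: $\widetilde D_1$ is not split and $\Hom(D_1,\epsilon D_1)$ is not what acts. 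The cleaner mechanism is that $\Ext^1(\cR_E(\delta_{\sF_i,2}),\cR_E(\delta_{\sF_i,1}))$ is $1$-dimensional and spanned by $[D_1]$ itself, hence the ambiguity in the lifted extension class is $\epsilon\cdot E[D_1]$; scaling the sub line by $1+a\epsilon$ absorbs it, and (equivalently) the map $\Ext^1(\cR_E(\delta_{\sF_i,2}),\cR_E(\delta_{\sF_i,1}))\to\Ext^1(D_1,D_1)$ is identically zero by the long exact sequence, so the fiber of the parameter map is trivial. (You should also note that the lifted filtration, when it exists, is unique — a torsor under $\Hom(\cR_E(\delta_{\sF_i,1}),\cR_E(\delta_{\sF_i,2}))=0$ — so the parameter map is in fact well defined.) Second, the inclusion $\Ext^1_{g'}\subset\Ext^1_{\sF_1}\cap\Ext^1_{\sF_2}$ is not ``formal'' as you claim: one needs that a de Rham first-order deformation of a generic non-critical crystabelline $D_1$ is again crystabelline and that both $\varphi$-stable lines give trianguline filtrations. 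That holds here (de Rham $\Rightarrow$ potentially semistable and $N\varphi=p\varphi N$ together with $\alpha_i/\alpha_j\neq p$ force $N=0$, then lift the two orderings of the $\varphi$-eigenspaces), but it is a genuine argument, in the same circle of ideas as the non-criticality results of Bella{\"i}che--Chenevier which are the real content behind the ``well-known.'' You rightly flag the reverse inclusion as the delicate step; the key is that the two triangulations force the two Sen-weight derivatives to coincide, whence the parameter deformation is smooth up to a central twist.
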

\begin{remark}\label{Rint01}
	For $\widetilde{D}_1\in \Ext^1_{g'}(D_1,D_1)$, there exist a continuous character $\psi \in \Hom(\Q_p^{\times}, E)$ and smooth characters $\psi_i\in \Hom_{\sm}(\Q_p^{\times},E)$ such that both $\phi_1 z^{h_1}(1+(\psi_1+\psi/2)\epsilon) \boxtimes \phi_2 z^{h_2}(1+(\psi_2 +\psi/2)\epsilon)$ and $\phi_2 z^{h_1}(1+(\psi_2+\psi/2)\epsilon) \boxtimes \phi_1 z^{h_2}(1+(\psi_1 +\psi/2)\epsilon)$ are trianguline parameters of $\widetilde{D}_1$. We denote by 
	\begin{equation}\label{Ekappa}\kappa: \Ext^1_{g'}(D_1,D_1) \lra \Hom(\Q_p^{\times},E) \times \Hom_{\sm}(\Q_p^{\times},E)^{\oplus 2}
		\end{equation}the map sending $\widetilde{D}_1$ to $(\psi, \psi_1, \psi_2)$. Remark that the map is compatible with the map in (3). 
\end{remark}
	
Now let $\sF$ be the filtration $D_1 \subset D$. A deformation $\widetilde{D}$ of $D$ over $\cR_{E[\epsilon]/\epsilon^2}$ is called an \textit{$\sF$-deformation}, if there exist a deformation $\widetilde{D}_1$ of $D_1$ over $\cR_{E[\epsilon]/\epsilon^2}$ and a deformation  $\cR_{E[\epsilon]/\epsilon^2}(\phi_3z^{h_3}(1+\psi \epsilon))$ of $\cR_E(\phi_3z^{h_3})$ over $\cR_{E[\epsilon]/\epsilon^2}$ (where $\psi\in \Hom(\Q_p^{\times},E)$), such that $\widetilde{D}$ is isomorphic to an extension of $\cR_{E[\epsilon]/\epsilon^2}(\phi_3z^{h_3}(1+\psi \epsilon))$ by $\widetilde{D}_1$. Denote by $\Ext^1_{\sF}(D,D)\subset \Ext^1(D,D)$ the subspace of $\sF$-deformations.
	\begin{lemma}\label{Lpar1}
		The subspace $\Ext^1_{\sF}(D,D)$ is the kernel of the following composition of surjective maps
		\begin{equation}\label{Epar1}
			\Ext^1(D,D) \twoheadlongrightarrow \Ext^1(D_1, D) \twoheadlongrightarrow\Ext^1(D_1, \cR_E(\phi_3 z^{h_3})).
		\end{equation}
		In particular, $\dim_E\Ext^1_{\sF}(D,D)=8$. 
	\end{lemma}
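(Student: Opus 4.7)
The plan is to prove three things in sequence: the kernel of the composition equals $\Ext^1_{\sF}(D,D)$, both arrows in the composition are individually surjective, and the resulting dimension is $8$.

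For the kernel characterization, I would trace the composition explicitly. An element $\widetilde{D} \in \Ext^1(D,D)$ is an extension $0 \to D \to \widetilde{D} \to D \to 0$ over $\cR_E$ (where the sub is $\epsilon\widetilde{D}$); its pullback along $D_1 \hookrightarrow D$ is $\widetilde{D}|_{D_1} := \widetilde{D} \times_D D_1 \in \Ext^1(D_1, D)$, and its further pushforward along $D \twoheadrightarrow Q := \cR_E(\phi_3 z^{h_3})$ is $\widetilde{D}|_{D_1}/D_1 \in \Ext^1(D_1, Q)$, fitting in $0 \to Q \to \widetilde{D}|_{D_1}/D_1 \to D_1 \to 0$. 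The key claim to establish is: this last extension splits as $(\varphi,\Gamma)$-modules if and only if $\widetilde{D}$ admits an $\sF$-structure. If $\widetilde{D}$ has an $\sF$-structure with subdeformation $\widetilde{D}_1 \subset \widetilde{D}$, then $\widetilde{D}_1 \subset \widetilde{D}|_{D_1}$ (as $\widetilde{D}_1$ reduces to $D_1$ modulo $\epsilon$), and $\widetilde{D}_1/D_1 \hookrightarrow \widetilde{D}|_{D_1}/D_1$ is a splitting via $\widetilde{D}_1/D_1 \xrightarrow{\sim} D_1$. Conversely, given a $(\varphi,\Gamma)$-splitting $s: D_1 \to \widetilde{D}|_{D_1}/D_1$, I would take $\widetilde{D}_1$ to be the preimage of $s(D_1)$ in $\widetilde{D}|_{D_1}$; a direct computation with a $1$-cocycle representative of $\widetilde{D}$ shows that $\widetilde{D}_1$ is $(\varphi,\Gamma)$-stable, flat over $E[\epsilon]/\epsilon^2$, and reduces to the correct $D_1 \subset D$, yielding the $\sF$-structure.

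For the two surjectivities, I would apply the long exact sequences of $\Ext$ attached to $0 \to D_1 \to D \to Q \to 0$: applying $\Hom(-, D)$ reduces surjectivity of the first arrow to $\Ext^2(Q, D) = 0$, and applying $\Hom(D_1, -)$ reduces surjectivity of the second to $\Ext^2(D_1, D_1) = 0$. Both vanishings follow from Tate duality for $(\varphi,\Gamma)$-modules combined with genericity: by d\'evissage each reduces to an $\Ext^2$ between pairs of rank-one modules of the form $\cR_E(\phi_i z^{h_i})$, which Tate-dualizes to a $\Hom$ that vanishes because no ratio $\phi_i \phi_j^{-1}$ matches the required cyclotomic twist (using $\phi_i \phi_j^{-1} \neq 1, |\cdot|^{\pm 1}$ together with regularity of the Hodge--Tate weights).

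For the dimension, I would apply Euler--Poincar\'e over $\Q_p$. Since $\End(D) = E$ by genericity and $\Ext^2(D,D) = 0$ by the same Tate duality argument, $\dim_E \Ext^1(D,D) = 1 + 9 + 0 = 10$. Similarly $\Hom(D_1, Q) = 0$ (by d\'evissage on $0 \to \cR_E(\phi_1 z^{h_1}) \to D_1 \to \cR_E(\phi_2 z^{h_2}) \to 0$ and vanishing of $\Hom$ between the graded pieces under genericity) and $\Ext^2(D_1, Q) = 0$, giving $\dim_E \Ext^1(D_1, Q) = 0 + 2 + 0 = 2$. Combined with the kernel characterization and surjectivity, this yields $\dim_E \Ext^1_{\sF}(D,D) = 10 - 2 = 8$. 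The main obstacle is the first step: rigorously checking that a $(\varphi,\Gamma)$-splitting on the pushforward corresponds bijectively to an $\sF$-subdeformation of $\widetilde{D}$, which requires careful bookkeeping of the $\epsilon$-action through the pullback $\widetilde{D}|_{D_1}$ and verifying that $(\varphi,\Gamma)$-equivariance of $s$ translates exactly into the cocycle condition making $\widetilde{D}_1$ a $(\varphi,\Gamma)$-submodule.
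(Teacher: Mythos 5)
Your proposal is correct and follows essentially the same route as the paper's (very terse) proof: the kernel characterization is what the author dismisses as "follows by definition," your reduction of surjectivity to $\Ext^2$-vanishings via long exact sequences and Tate duality is the "d\'evissage and genericity" the paper invokes, and the Euler--Poincar\'e computations giving $\dim_E\Ext^1(D,D)=10$ and $\dim_E\Ext^1(D_1,\cR_E(\phi_3 z^{h_3}))=2$ are exactly the two numbers the paper cites. The only point worth flagging is that $\End(D)=E$ (needed for $\dim_E\Ext^1(D,D)=10$) is not a direct consequence of genericity alone but also uses Hypothesis \ref{Hnonc} to rule out a direct-sum decomposition of $D$; this is standard but deserves a word.
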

	\begin{proof}
		The surjectivity follows easily from d\'evissage and the fact $D$ is generic. The first part follows by definition. We have $\dim_E \Ext^1(D,D)=10$ and $\dim_E \Ext^1(D_1, \cR_E(\phi_3 z^{h_3}))=2$. The second part follows.
	\end{proof}
	\begin{proposition}\label{Ppar1}
		There is a natural exact sequence ($\cR$ for $\cR_E$)
		\begin{equation*}
			0 \ra \Ext^1(\cR(\phi_3 z^{h_3}),D_1) / E ([D]) \xrightarrow{j} \Ext^1_{\sF}(D,D) \ra  \Ext^1(D_1, D_1) \times \Ext^1(\cR(\psi_3z^{h_3}), \cR(\psi_3 z^{h_3})) \ra 0.
		\end{equation*}
	\end{proposition}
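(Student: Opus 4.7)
The plan is to construct the two maps, verify vanishing of the composition and the identification $\ker(\pi) = \Ima(j)$ by a direct analysis of trivial-graded-piece deformations, and conclude surjectivity of the right map by a dimension count via Lemma \ref{Lpar1} and Proposition \ref{PGL21}.

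For the right map $\pi$, I would first observe that a lift $\widetilde{D}_1 \subset \widetilde{D}$ of $D_1$ (when it exists) is unique: the difference of two lifts would be a morphism $D_1 \to \cR_E(\phi_3 z^{h_3})$, which vanishes by genericity. This makes the assignment $\widetilde{D} \mapsto (\widetilde{D}_1, \widetilde{D}/\widetilde{D}_1)$ well-defined. For the left map $j$, I would use that $E[\epsilon]/\epsilon^2$ is flat over $E$ to identify classes in $\Ext^1_{\cR_{E[\epsilon]/\epsilon^2}}(\cR_E(\phi_3 z^{h_3}) \otimes E[\epsilon]/\epsilon^2,\; D_1 \otimes E[\epsilon]/\epsilon^2)$ reducing to $[D]$ modulo $\epsilon$ with the affine space of classes $[D] + \epsilon[M]$ for $[M] \in \Ext^1(\cR_E(\phi_3 z^{h_3}), D_1)$, each of which gives an $\sF$-deformation with manifestly trivial graded pieces; this defines $j([M])$.

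The crux of the proof is then that every element of $\ker(\pi)$ arises this way, and that $j([M]) = j([M'])$ iff $[M] - [M'] \in E[D]$. Since $\End(D_1) = \End(\cR_E(\phi_3 z^{h_3})) = E$ by genericity, any isomorphism between two such trivial-graded-piece deformations lifting $\id_D$ is determined by a pair of scalars $(1 + a\epsilon, 1 + b\epsilon)$ acting on the sub and the quotient respectively; a short push--pull computation shows that its effect on the extension class is $[M] \mapsto [M] + (a - b)[D]$. This simultaneously yields the factoring of $j$ through the quotient by $E[D]$, its injectivity on that quotient, and the exactness $\ker(\pi) = \Ima(j)$. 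The vanishing $\pi \circ j = 0$ is built into the construction.

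Finally, surjectivity of $\pi$ follows by a dimension count. Lemma \ref{Lpar1} gives $\dim_E \Ext^1_{\sF}(D,D) = 8$; Proposition \ref{PGL21} gives $\dim_E \Ext^1(D_1, D_1) = 5$; the standard computation for rank-one $(\varphi,\Gamma)$-modules gives $\dim_E \Ext^1(\cR_E(\phi_3 z^{h_3}), \cR_E(\phi_3 z^{h_3})) = 2$; a d\'evissage on the filtration $\cR_E(\phi_1 z^{h_1}) \subset D_1$, using genericity to kill the boundary and $\Ext^2$ terms, yields $\dim_E \Ext^1(\cR_E(\phi_3 z^{h_3}), D_1) = 2$. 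Moreover, Hypothesis \ref{Hnonc} forces $[D] \neq 0$, since otherwise $D \cong D_1 \oplus \cR_E(\phi_3 z^{h_3})$ would admit no sub of the form $\cR_E(\phi_3 z^{h_1})$ (by weight and genericity), making the refinement starting with $\phi_3$ critical. Hence the source of $j$ is $1$-dimensional, and $8 = 1 + 7$ forces $\pi$ surjective. The main point to be careful about is the precise parametrization of automorphisms of trivial-graded-piece deformations lifting $\id_D$; once this is in hand, the rest is bookkeeping.
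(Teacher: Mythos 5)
Your proof is correct, but it takes a genuinely different route from the paper's. The paper derives the sequence purely homologically: starting from Lemma \ref{Lpar1} (which identifies $\Ext^1_{\sF}(D,D)$ as the kernel of a composition of two surjections), it splices together the long exact sequences attached to the two filtrations of $D$ to obtain
\begin{equation*}
0 \to \Hom(\cR(\phi_3 z^{h_3}), \cR(\phi_3 z^{h_3})) \to \Ext^1(\cR(\phi_3 z^{h_3}), D_1) \to \Ext^1(\cR(\phi_3 z^{h_3}),D) \to \Ext^1_{\sF}(D,D) \to \Ext^1(D_1,D_1) \to 0,
\end{equation*}
then observes that the cokernel $V$ of $j$ is an extension of $\Ext^1(D_1,D_1)$ by $\Ext^1(\cR(\phi_3 z^{h_3}),\cR(\phi_3 z^{h_3}))$, and uses a second dévissage (the sequence labelled (\ref{Eexa1})) to show $V$ splits. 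You instead build both maps by hand: $\pi$ sends a filtered deformation $\widetilde D$ to its (unique, by genericity) filtration data, and $j$ sends $[M]$ to the deformation $[D]+\epsilon[M]$ inside the affine space of $\cR_{E[\epsilon]/\epsilon^2}$-extensions with trivial graded pieces; exactness at the middle term is then obtained by classifying these trivial-graded-piece deformations and computing the effect of their automorphism group $\{(1+a\epsilon,1+b\epsilon)\}$ on extension classes via push--pull, which is exactly translation by $(a-b)[D]$. What your approach buys is transparency: one sees directly why the quotient by $E[D]$ appears and that it encodes nothing more than the ambiguity in trivializing the sub and quotient of $\widetilde D$, and you also make explicit the fact (implicit in the paper's ``$0\to\Hom\to\cdots$'') that $[D]\neq 0$ as a consequence of Hypothesis \ref{Hnonc}. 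The trade-off is that you must check a few things the paper gets for free from naturality: that the assignment $\widetilde D\mapsto(\widetilde D_1,\widetilde D/\widetilde D_1)$ and the map $j$ are in fact $E$-linear, which you do not spell out (though this follows once one identifies everything inside $\Ext^1(D,D)$). Your dimension count at the end ($8 = 1 + 7$, using $\dim\Ext^1(\cR(\phi_3 z^{h_3}),D_1)=2$ and $[D]\neq 0$) is the same bookkeeping that underlies the paper's splitting of $V$, just packaged differently.
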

	\begin{proof}
		By Lemma \ref{Lpar1} and d\'evissage, there is an exact sequence
		\begin{multline*}
			0 \lra \Hom(\cR_E(\phi_3 z^{h_3}), \cR_E(\phi_3 z^{h_3})) 
			\lra \Ext^1(\cR_E(\phi_3 z^{h_3}), D_1) \\ \lra \Ext^1(\cR_E(\phi_3 z^{h_3}),D) \lra \Ext^1_{\sF}(D,D) \lra \Ext^1(D_1,D_1) \ra 0.
		\end{multline*}
		The image of the first map is $E([D])$,  and the cokernel of the second map is $\Ext^1(\cR_E(\phi_3 z^{h_3}), \cR_E(\phi_3 z^{h_3}))$. Letting $V$ be the cokernel of $j$, we have an exact sequence
		\begin{equation*}
			0 \lra \Ext^1(\cR_E(\phi_3 z^{h_3}), \cR_E(\phi_3 z^{h_3})) \lra V \lra \Ext^1(D_1, D_1) \lra 0.
		\end{equation*}Meanwhile, the composition (\ref{Epar1}) factors through
		\begin{equation*}
			\Ext^1(D,D) \twoheadlongrightarrow \Ext^1(D, \cR_E(\phi_3 z^{h_3}) \twoheadlongrightarrow\Ext^1(D_1, \cR_E(\phi_3 z^{h_3})).
		\end{equation*}
		We deduce an exact sequence
		\begin{multline}\label{Eexa1}
			0 \lra \Hom(D_1, D_1) 
			\lra \Ext^1(\cR_E(\phi_3 z^{h_3}), D_1) \\ \lra \Ext^1(D,D_1) \lra \Ext^1_{\sF}(D,D) \lra \Ext^1(\cR_E(\phi_3 z^{h_3}), \cR_E(\phi_3 z^{h_3})) \lra 0.
		\end{multline}
		Hence $V$ is actually split. The proposition follows.
	\end{proof}
	\begin{remark}\label{Rkappf}(1) We identify $\Ext^1(\cR_E(\phi_3z^{h_3}), \cR_E(\phi_3 z^{h_3}))$ with $\Hom(\Q_p^{\times},E)$, and denote by $\kappa_{\sF}$ the induced surjection
		\begin{equation*}
			\kappa_{\sF}=(\kappa_{\sF,1}, \kappa_{\sF,2}): \Ext^1_{\sF}(D,D) \twoheadlongrightarrow  \Ext^1(D_1, D_1) \times \Ext^1(\cR_E(\phi_3z^{h_3}), \cR_E(\phi_3 z^{h_3})). 
		\end{equation*}For $\widetilde{D}\in \Ext^1_{\sF}(D,D)$ of the form $[\widetilde{D}_1 \lin \cR_{E[\epsilon]/\epsilon^2}(\phi_3z^{h_3} (1+\psi \epsilon))]$, $\kappa_{\sF}$ sends $\widetilde{D}$ to $(\widetilde{D}_1, \psi)$.
		
		(2) Let $\sG$ be the filtration $\cR_E(\phi_3 z^{h_1}) \subset D$, and fix $\cR_E(\phi_3 z^{h_1}) \hookrightarrow D \twoheadrightarrow C_1$. We define $\sG$-deformations of $D$ in a similar way, and denote by $\Ext^1_{\sG}(D,D)$ the subspace of $\sG$-deformations, which is of dimension $8$. Similarly, we have an exact sequence
		\begin{equation*}
			0 \ra \Ext^1(C_1,\cR_E(\phi_3 z^{h_1})) / E ([D]) \ra \Ext^1_{\sG}(D,D) \xrightarrow{\kappa_{\sG}}  \Ext^1(C_1, C_1) \times \Hom(\Q_p^{\times},E) \ra 0.
		\end{equation*}
	\end{remark}
	For $\iota\in \Hom(D_1,C_1)$. Consider the pull-back and push-forward maps:
	\begin{equation*}
		\iota^-: \Ext^1(C_1, D_1) \lra \Ext^1(D_1, D_1), \ \iota^+: \Ext^1(C_1, D_1) \lra \Ext^1(C_1, C_1).
	\end{equation*}
	Put
	\begin{equation*}
		\Ext^1_{\iota}(D_1,D_1):=\iota^-(\Ext^1(C_1, D_1)), \ \Ext^1_{\iota}(C_1, C_1):=\iota^+(\Ext^1(C_1,D_1)).
	\end{equation*}
	\begin{lemma}\label{Lalphai}
		For $i\in \{1,2\}$, we have $\dim_E \Ext^1_{\alpha_i}(D_1, D_1) =2$. Moreover,
		\begin{equation*}
		\Ext^1_{\alpha_1}(D_1, D_1) \cap \Ext^1_{\alpha_2}(D_1,D_1)=0. 
		\end{equation*}
		The same holds with $D_1$ replaced by $C_1$.
	\end{lemma}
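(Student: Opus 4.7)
The plan is to analyse $\alpha_i^-$ through the factorisation $\alpha_i = \iota_i \circ \pi_i$, where, with $\{i,j\}=\{1,2\}$, $\pi_i\colon D_1\twoheadrightarrow \cR_E(z^{h_2}\phi_i)$ is the quotient projection in the triangulation $\sF_j$ of $D_1$ (sub $\cR_E(z^{h_1}\phi_j)$), and $\iota_i\colon \cR_E(z^{h_2}\phi_i)\hookrightarrow C_1$ is the analogous inclusion in a triangulation of $C_1$; each such triangulation exists by Hypothesis~\ref{Hnonc}. This yields the factorisation
\begin{equation*}
\alpha_i^-\colon\ \Ext^1(C_1,D_1)\ \xrightarrow{\iota_i^-}\ \Ext^1(\cR_E(z^{h_2}\phi_i),D_1)\ \xrightarrow{\pi_i^-}\ \Ext^1(D_1,D_1).
\end{equation*}

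I would first show that $\iota_i^-$ is surjective: applying $\Hom(-,D_1)$ to $0\to \cR_E(z^{h_2}\phi_i)\to C_1\to \cR_E(z^{h_3}\phi_j)\to 0$, the only obstruction sits in $\Ext^2(\cR_E(z^{h_3}\phi_j),D_1)$, which by Tate duality is dual to $\Hom(D_1,\cR_E(z^{h_3}\phi_j\chi_{\cyc}))$ and vanishes by the genericity of $D$ together with the distinctness of $h_1,h_2,h_3$. Next I would show $\pi_i^-$ is injective: applying $\Hom(-,D_1)$ to $0\to \cR_E(z^{h_1}\phi_j)\to D_1\to \cR_E(z^{h_2}\phi_i)\to 0$, the kernel of $\pi_i^-$ equals the cokernel of the restriction $\Hom(D_1,D_1)\to \Hom(\cR_E(z^{h_1}\phi_j),D_1)$. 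Both spaces are one dimensional (the former by genericity, i.e. $\End(D_1)=E$; the latter generated by the canonical inclusion coming from $\sF_j$, since any nonzero map from a rank one module lands in the socle) and the restriction sends $\id$ to that inclusion, so the cokernel vanishes. A short d\'evissage plus the Euler--Poincar\'e formula gives $\dim_E\Ext^1(\cR_E(z^{h_2}\phi_i),D_1)=2$, and combining the two steps yields $\dim_E\Ext^1_{\alpha_i}(D_1,D_1)=2$.

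For the intersection, the same LES identifies $\pi_i^-(\Ext^1(\cR_E(z^{h_2}\phi_i),D_1))$ with the kernel of the restriction map $\Ext^1(D_1,D_1)\to \Ext^1(\cR_E(z^{h_1}\phi_j),D_1)$; unwinding this, the kernel consists precisely of those $\widetilde D_1\in \Ext^1_{\sF_j}(D_1,D_1)$ with trivially deformed sub character, i.e.\ $\widetilde\delta_{\sF_j,1}=\delta_{\sF_j,1}$. Hence any $\widetilde D_1\in \Ext^1_{\alpha_1}(D_1,D_1)\cap \Ext^1_{\alpha_2}(D_1,D_1)$ lies in $\Ext^1_{\sF_1}\cap \Ext^1_{\sF_2}=\Ext^1_{g'}(D_1,D_1)$ by Proposition~\ref{PGL21}(2); writing $\kappa(\widetilde D_1)=(\psi,\psi_1,\psi_2)$ as in Remark~\ref{Rint01}, the two triviality conditions become $\psi_1+\psi/2=0=\psi_2+\psi/2$, and substituting back into the formulas for the trianguline parameters there shows that all four characters $\widetilde\delta_{\sF_k,l}$ collapse to their undeformed values. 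Proposition~\ref{PGL21}(3) then forces $\widetilde D_1=0$, so the intersection is trivial.

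The argument for $C_1$ is formally dual: one replaces the pullbacks $\iota_i^-,\pi_i^-$ by the push-forwards $\iota_i^+,\pi_i^+$, applies $\Hom(C_1,-)$ in place of $\Hom(-,D_1)$ to the analogous short exact sequences, and concludes the intersection step via the obvious analogue of Remark~\ref{Rint01} for $C_1$. The main technical point throughout is verifying the various $\Hom$ and $\Ext^2$ vanishings; none are deep, but each requires a careful check against $\phi_a\phi_b^{-1}\neq 1, |\cdot|^{\pm 1}$ together with the distinctness of the Hodge-Tate weights, so that $\chi_{\cyc}$ cannot conspire with the Hodge-Tate-Sen data of $D$.
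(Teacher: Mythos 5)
Your proof is correct and follows essentially the same route as the paper's: you factor $\alpha_i^-$ through the surjection $\Ext^1(C_1,D_1)\twoheadrightarrow \Ext^1(\cR_E(z^{h_2}\phi_i),D_1)$ and the injection into $\Ext^1(D_1,D_1)$, compute the middle group has dimension $2$, and identify $\Ext^1_{\alpha_i}(D_1,D_1)$ with the kernel of the restriction to $\cR_E(z^{h_1}\phi_j)$. The paper leaves all of this as ``easily by d\'evissage'' and ``easily deduce''; your fleshing-out of the first part via Tate duality (the weight remark is actually superfluous, since $|\cdot|$ being non-algebraic already kills the relevant $\Hom$'s independently of the $h_i$), and of the intersection via Proposition~\ref{PGL21}(2)--(3) and Remark~\ref{Rint01} — deducing $\psi_1+\psi/2=\psi_2+\psi/2=0$ and hence a split trianguline parameter — is a valid and natural way to cash out those phrases.
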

	\begin{proof}
		We only prove it for $D_1$, with $C_1$ being similar. The map $\alpha_i^-$ factors through
		\begin{equation*}
			\Ext^1(C_1, D_1) \twoheadlongrightarrow \Ext^1(\cR_E(\phi_i z^{h_2}), D_1) \hooklongrightarrow \Ext^1(D_1, D_1)
		\end{equation*}
		where the corresponding  surjectivity and injectivity follow easily by d\'evissage. The first part then follows from the fact
		\begin{equation*}
			\dim_E \Ext^1(\cR_E(\phi_i z^{h_2}), D_1)=2.
		\end{equation*}
		We also see $\Ext^1_{\alpha_i}(D_1,D_1)$ is just the kernel of $\Ext^1(D_1, D_1) \ra \Ext^1(\cR_E(\phi_j z^{h_1}), D_1)$ for $j\neq i$. We then easily deduce $\Ext^1_{\alpha_1}(D_1, D_1) \cap \Ext^1_{\alpha_2}(D_1,D_1)=0$. 
	\end{proof}
	\begin{proposition}\label{Ppairing}
		Let $\iota\in \Hom(D_1, C_1)$ be an injection.
		
		(1)  $\dim_E \Ext^1_{\iota}(D_1, D_1)=\dim_E \Ext^1_{\iota}(C_1,C_1)=3$.
		
		(2) $\Ext_g^1(D_1, D_1) \subset \Ext^1_{\iota}(D_1,D_1)$ and $\Ext_g^1(C_1, C_1) \subset \Ext^1_{\iota}(C_1,C_1)$. Moreover, any trianguline deformation in $\Ext^1_{\iota}(D_1,D_1)$ (resp. in $\Ext^1_{\iota}(C_1,C_1)$) is de Rham.
		
		(3) For $\iota'\in \Hom(D_1,C_1)$, $\Ext^1_{\iota'}(D_1, D_1)=\Ext^1_{\iota}(D_1,D_1)$ if and only if  $\Ext^1_{\iota'}(C_1, C_1)=\Ext^1_{\iota}(C_1,C_1)$ if and only if $\iota'=a \iota$ for some $a\in E^{\times}$.
	\end{proposition}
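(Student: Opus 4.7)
The plan is to work throughout with the short exact sequence $0 \to D_1 \xrightarrow{\iota} C_1 \to Q \to 0$, where $Q$ is the (torsion) cokernel of $\iota$, and to reinterpret $\iota^-$ and $\iota^+$ as boundary-type maps by applying $\RHom(-, D_1)$ (respectively $\RHom(C_1, -)$). The relevant $\Hom$ and $\Ext^2$ groups vanish thanks to the genericity hypothesis on $D$ combined with Tate duality, and the remaining dimensions are controlled by Liu's Euler-Poincar\'e formula.

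For (1), applying $\Hom(-, D_1)$ yields
\begin{equation*}
0 \to \Hom(Q, D_1) \to \Hom(C_1, D_1) \to \Hom(D_1, D_1) \to \Ext^1(Q, D_1) \to \Ext^1(C_1, D_1) \xrightarrow{\iota^-} \Ext^1(D_1, D_1) \to \cdots.
\end{equation*}
I would verify $\Hom(Q, D_1) = 0$ (torsion into torsion-free), $\Hom(C_1, D_1) = 0$ (d\'evissage and genericity), $\Hom(D_1, D_1) = E$, and $\dim_E \Ext^1(C_1, D_1) = 4$ (Euler-Poincar\'e plus the genericity-based vanishing of $\Ext^2$ via Tate duality), so that computing $\dim_E \Ext^1(Q, D_1) = 2$ gives $\dim \ker(\iota^-) = 1$ and hence $\dim_E \Ext^1_\iota(D_1, D_1) = 3$. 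Applying $\Hom(C_1, -)$ to the same short exact sequence handles $\iota^+$ symmetrically.

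For (2), to establish $\Ext^1_g(D_1, D_1) \subset \Ext^1_\iota(D_1, D_1)$: given $\tilde D_1 \in \Ext^1_g(D_1, D_1)$, my plan is to construct a de Rham deformation $\tilde C_1$ of $C_1$ and a lift $\tilde\iota : \tilde D_1 \to \tilde C_1$ of $\iota$ by functoriality of $D_{\pst}$ on the filtered $(\varphi, N, \Gal)$-module side; the resulting commutative square produces the required $M \in \Ext^1(C_1, D_1)$ with $\iota^-(M) = \tilde D_1$. For the ``trianguline implies de Rham'' claim, the strategy is a dimension count: both $\Ext^1_\iota(D_1, D_1)$ and $\Ext^1_{\sF_i}(D_1, D_1)$ contain the $2$-dimensional $\Ext^1_g(D_1, D_1)$, and $\dim \Ext^1_\iota + \dim \Ext^1_{\sF_i} - \dim \Ext^1(D_1, D_1) = 3 + 4 - 5 = 2$, so it suffices to show $\Ext^1_\iota + \Ext^1_{\sF_i} = \Ext^1(D_1, D_1)$. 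To exhibit an element of $\Ext^1_\iota$ outside $\Ext^1_{\sF_i}$, I would use the decomposition $\iota^- = c_1 \alpha_1^- + c_2 \alpha_2^-$ (writing $\iota = c_1 \alpha_1 + c_2 \alpha_2$ with $c_1 c_2 \neq 0$, possible since $\iota$ is injective) together with the observations $\Ext^1_{\alpha_1} \subset \Ext^1_{\sF_2}$ and $\Ext^1_{\alpha_2} \subset \Ext^1_{\sF_1}$, which follow (as in the proof of Lemma \ref{Lalphai}) from the fact that $\ker \alpha_j$ is precisely the ``other'' saturated rank-one submodule of $D_1$. The $C_1$-case is symmetric.

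For (3), one direction is immediate: $\iota' = a\iota$ gives $(\iota')^- = a \iota^-$ and $(\iota')^+ = a \iota^+$, so both images coincide. Conversely, suppose $\iota, \iota'$ are linearly independent injections with $\Ext^1_{\iota'}(D_1, D_1) = \Ext^1_\iota(D_1, D_1)$. Then $\iota$ and $\iota'$ form a basis of $\Hom(D_1, C_1)$, so $\alpha_1, \alpha_2$ are linear combinations of $\iota, \iota'$; by additivity of pullback, $\Ext^1_{\alpha_j} \subset \Ext^1_\iota + \Ext^1_{\iota'} = \Ext^1_\iota$, whence $\Ext^1_{\alpha_1} + \Ext^1_{\alpha_2} \subset \Ext^1_\iota$. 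But the left-hand side has dimension $4$ by Lemma \ref{Lalphai} while $\dim \Ext^1_\iota = 3$, a contradiction. The $C_1$-version follows by the same argument applied to $\iota^+$, so both equivalences reduce to $\iota' \in E \iota$. The main obstacle I anticipate is the ``trianguline implies de Rham'' claim in (2), where establishing $\Ext^1_\iota + \Ext^1_{\sF_i} = \Ext^1(D_1, D_1)$ requires a careful linear-algebra analysis of how the various subspaces $\Ext^1_{\alpha_j}$ sit inside the triangulation subspaces $\Ext^1_{\sF_i}$.
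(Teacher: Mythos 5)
Your part (3) matches the paper's argument essentially step for step (basis change, $\Ext^1_{\alpha_1}+\Ext^1_{\alpha_2}\subset\Ext^1_\iota$, dimension $4>3$). Parts (1) and (2) take genuinely different routes, each with a real gap. In (1) you invoke the long exact $\Ext$-sequence for $0\to D_1\xrightarrow{\iota}C_1\to Q\to 0$ and assert $\dim_E\Ext^1(Q,D_1)=2$ without justification; that computation is precisely where the work lies, and since $Q$ is torsion you would also have to first extend the $\Ext$-formalism beyond the finite free setting of \cite{Liu07} (e.g.\ via \cite{KPX}) before the sequence is even available. The paper avoids $Q$ altogether, instead using the commutative diagram (\ref{Edivi1}), d\'evissage, and \cite[Lem.~5.1.1]{BD2} to pin down $\dim\Ext^1_\iota(D_1,D_1)=3$ directly.

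The more serious gap is in (2). The step ``the resulting commutative square produces the required $M\in\Ext^1(C_1,D_1)$ with $\iota^-(M)=\widetilde D_1$'' does not follow: the existence of a compatible lift $\widetilde\iota:\widetilde D_1\to\widetilde C_1$ is a \emph{consequence} of $\widetilde D_1\in\iota^-(\Ext^1(C_1,D_1))$ (that is the content of Proposition~\ref{Pamal1}), but the square by itself only yields the identity $\iota_+(\widetilde D_1)=\iota^*(\widetilde C_1)$ in $\Ext^1(D_1,C_1)$, which is the wrong $\Ext$-group. To salvage the $D_{\pst}$ idea you would need to construct $M$ directly on the filtered-module side, using that $\iota$ becomes an isomorphism of the underlying $\varphi$-modules so that the non-filtered pull-back is invertible, and then supply a compatible filtration -- a genuinely different and more delicate argument than what you wrote. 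The paper's route is a clean dimension count: it shows $\dim_E\Ext^1_g(C_1,D_1)\geq 3$ via $H^1(\Gal_{\Q_p},W_{\dR}^+(D_1\otimes C_1^\vee))$ and then uses $\dim\ker\iota^-=1$ from (1) to force $\iota_g^-$ to be surjective. Finally, for the ``trianguline implies de Rham'' claim, your sketch via $\iota=c_1\alpha_1+c_2\alpha_2$ and $\Ext^1_{\alpha_j}\subset\Ext^1_{\sF_i}$ is a promising idea but, as you anticipate, stops short: pushing it through only gives $\Ext^1_{\alpha_1}\subset\Ext^1_{g'}(D_1,D_1)$, which is not yet a contradiction since $\dim\Ext^1_{g'}=3>2=\dim\Ext^1_{\alpha_1}$; the paper closes this using (\ref{Edivi1}) together with \cite[Lem.~5.5.9]{BD2}.
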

	\begin{proof}
		We only prove it for $D_1$ with $C_1$ being similar. Note first $\dim_E\Ext^1(C_1, D_1)=4$. Consider the commutative diagram
		\begin{equation}\label{Edivi1}
			\begin{CD}
				0 @>>> \Ext^1(C_1, \cR_E(\phi_1 z^{h_1})) @>>> \Ext^1(C_1, D_1) @>>> \Ext^1(C_1, \cR_E(\phi_2 z^{h_2})) @>>> 0 \\ @. @VVV @V \iota^- VV @VVV @. \\
				0 @>>> \Ext^1(D_1, \cR_E(\phi_1 z^{h_1})) @>>> \Ext^1(D_1, D_1) @>>> \Ext^1(D_1,  \cR_E(\phi_2 z^{h_2}))@>>> 0.
			\end{CD}
		\end{equation}
By d\'evissage and 	using \cite[Lem.~5.1.1]{BD2}, it is not difficult to see  the right vertical map is injective.  We can furthermore d\'evissage the left vertical map of (\ref{Edivi1}):
		\begin{equation*}
			\begin{CD}
				\Ext^1(\cR_E(\phi_2 z^{h_3}), \cR_E(\phi_1 z^{h_1})) @>>> \Ext^1(C_1, \cR_E(\phi_1 z^{h_1})) @>>> \Ext^1(\cR_E(\phi_1 z^{h_2}), \cR_E(\phi_1 z^{h_1}))\\
				@VVV @VVV @VVV \\
				\Ext^1(\cR_E(\phi_2 z^{h_2}), \cR_E(\phi_1 z^{h_1})) @> 0 >> \Ext^1(D_1, \cR_E(\phi_1 z^{h_1})) @>>> \Ext^1(\cR_E(\phi_1 z^{h_1}), \cR_E(\phi_1 z^{h_1})).
			\end{CD}
		\end{equation*}
		As $h_2<h_1$, the image of the right vertical map is exactly $\Ext^1_g(\cR_E(\phi_1 z^{h_1}), \cR_E(\phi_1 z^{h_1}))$, which is one  dimensional.  Together with $\dim_E \Ext^1(C_1, \cR_E(\phi_2 z^{h_2}))=2$, (1) follows.

		For a $(\varphi, \Gamma)$-module $M$ over $\cR_E$, denote by $W_{\dR}^+(M)$ the associated $B_{\dR,E}^+$-representation of $\Gal_{\Q_p}$, where $B_{\dR,E}^+:=B_{\dR}^+ \otimes_{\Q_p} E$. We have a tautological  exact sequence
		\begin{equation*}
			H^1_g(D_1 \otimes_{\cR_E} C_1^{\vee}) \hooklongrightarrow H^1(D_1 \otimes_{\cR_E} C_1^{\vee}) \lra H^1(\Gal_K, W_{\dR}^+(D_1 \otimes_{\cR_E} C_1^{\vee})).
		\end{equation*}
		It is not difficult to see $\dim_E H^1(\Gal_K, W_{\dR}^+(D_1 \otimes_{\cR_E} C_1^{\vee}))=1$, hence $\dim_E\Ext^1_g(C_1,D_1)=\dim_E H^1_g(D_1 \otimes_{\cR_E} C_1^{\vee}) \geq 3$. As $\iota^-$ obviously induces
		\begin{equation*}
			\iota_g^-:	\Ext^1_g(C_1, D_1) \lra \Ext^1_g(D_1,D_1),
		\end{equation*}
		by comparing the dimensions, we see $\Ker \iota_g^-=\Ker \iota^-$ is one dimensional and $\iota_g^-$ is surjective (and $\dim_E \Ext^1_g(C_1,D_1)=3$). 
		
		We have seen $\Ext^1_g(D_1,D_1)\subset \Ext^1_{\iota}(D_1,D_1) \cap \Ext^1_{\sF_1}(D_1,D_1)$. If it is not an equality, by comparing the dimension, $\Ext^1_{\iota}(D_1,D_1) \cap \Ext^1_{\sF_1}(D_1,D_1)=\Ext^1_{\iota}(D_1,D_1)$. Let $\widetilde{D}_1\in \Ext^1_{\iota}(D_1,D_1) \cap \Ext^1_{\sF_1}(D_1,D_1)$. By (\ref{Edivi1}), the image of $\widetilde{D}_1$ in $\Ext^1(D_1, \cR_E(\phi_2 z^{h_2}))$ lies in $\Ext^1(C_1, \cR_E(\phi_2z^{h_2}))\cap \Ext^1(\cR_E(\phi_2z^{h_2}), \cR_E(\phi_2 z^{h_2}))$, which, by \cite[Lem.~5.5.9]{BD2}, is isomorphic to the one dimensional $\Ext^1_g(\cR_E(\phi_2 z^{h_2}), \cR_E(\phi_2 z^{h_2}))$ hence is not equal to the whole $\Ext^1(C_1, \cR_E(\phi_2 z^{h_2}))$. Thus we have  $\Ext^1_{\iota}(D_1,D_1)\cap \Ext^1_{\sF_1}(D_1,D_1) \neq \Ext^1_{\iota}(D_1,D_1)$.  The same  holds with $\sF_1$ replaced by $\sF_2$, and  (2) follows. 
		
		 Finally,  consider the cup-product
		\begin{equation*}
			\Ext^1(C_1,D_1) \times \Hom(D_1,C_1) \lra \Ext^1(D_1,D_1).
		\end{equation*}
		Suppose $\iota'\notin E[\iota]$, then $\iota'$ and $\iota$ form a basis of $\Hom(D_1, C_1)$. If $\Ext^1_{\iota'}(D_1, D_1)=\Ext^1_{\iota}(D_1,D_1)$, we then easily deduce $\Ext^1_{\alpha_i}(D_1, D_1) \subset \Ext^1_{\iota}(D_1, D_1)$ for all $i=\{1,2\}$. However, by Lemma \ref{Lalphai}, $\dim_E \big(\Ext^1_{\alpha_1}(D_1, D_1)+\Ext^1_{\alpha_2}(D_1, D_1)\big)=4$, contradiction.
	\end{proof}
Denote by $\kappa: \Ext^1_g(D_1,D_1)\ra \Hom_{\sm}(\Q_p^{\times},E)^{\oplus 2}$ the (bijective) map induced by (\ref{Ekappa}). And we have a similar bijection $\kappa: \Ext^1_g(C_1, C_1) \twoheadrightarrow \Hom_{\sm}(\Q_p^{\times}, E)^2$.
	\begin{proposition}\label{Pamal1}
		For $M\in \Ext^1_g(C_1, D_1)$, $\kappa \circ \iota_g^- (M)=\kappa \circ \iota_g^+(M)$.
	\end{proposition}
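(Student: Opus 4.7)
The plan is to pass through the Fontaine functor $D_{\pst}$, which is exact on de Rham $(\varphi,\Gamma)$-modules, and reduce the claim to a direct matrix computation on the Fontaine-module side. The key preliminary identification: for a de Rham deformation $\widetilde{D}_1\in\Ext^1_g(D_1,D_1)$, the smooth character $\psi_i\in\Hom_{\sm}(\Q_p^\times,E)$ in $\kappa(\widetilde{D}_1)$ — characterized by Remark \ref{Rint01} via the trianguline parameter $\phi_iz^{h_k}(1+\psi_i\epsilon)$ — is determined by its value $\psi_i(p)$, which in turn equals the relative infinitesimal perturbation of the $\varphi$-eigenvalue $\alpha_i=\phi_i(p)$ on the $\alpha_i$-isotypic rank-1 summand of $D_{\pst}(\widetilde{D}_1)$ (the weight $z^{h_k}$ contributes only to the Hodge filtration, not to $\varphi$ on $D_{\pst}$). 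The same description applies verbatim to $\widetilde{C}_1\in\Ext^1_g(C_1,C_1)$, with $\psi_i$ still attached intrinsically to $\phi_i$.

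Given this, I would apply $D_{\pst}$ to $M\in\Ext^1_g(C_1,D_1)$: by exactness, one obtains an extension $0\to D_{\pst}(D_1)\to D_{\pst}(M)\to D_{\pst}(C_1)\to 0$ of $(\varphi,N,\Gal)$-modules. Fix $\varphi$-eigenbases $\{v_1,v_2\}$ of $D_{\pst}(D_1)$ and $\{w_1,w_2\}$ of $D_{\pst}(C_1)$ with $\varphi v_i=\alpha_iv_i$, $\varphi w_i=\alpha_iw_i$; by distinctness of eigenvalues (genericity of $\ul{\phi}$), the map $\iota_*:D_{\pst}(D_1)\to D_{\pst}(C_1)$ is diagonal, say $\iota_*(v_i)=c_iw_i$ with $c_i\in E^\times$. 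Let $\varphi_1\in\Hom_E(D_{\pst}(C_1),D_{\pst}(D_1))$ be the off-diagonal part of $\varphi$ on $D_{\pst}(M)$ in the additive splitting $D_{\pst}(M)=D_{\pst}(D_1)\oplus D_{\pst}(C_1)$, and write $\varphi_1(w_i)=\gamma_{i1}v_1+\gamma_{i2}v_2$. Using the fiber-product description $D_{\pst}(\iota^-(M))=D_{\pst}(M)\times_{D_{\pst}(C_1)}D_{\pst}(D_1)$: in the $E[\epsilon]/\epsilon^2$-basis $V_i:=(0,v_i)$ (with $\epsilon V_i=(v_i,0)$), a direct computation yields $\varphi V_i=(\alpha_i+c_i\gamma_{ii}\epsilon)V_i+c_i\gamma_{ij}\epsilon V_j$. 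Dually, using $D_{\pst}(\iota^+(M))=D_{\pst}(M)\sqcup_{D_{\pst}(D_1)}D_{\pst}(C_1)$ in the basis $W_i:=((0,w_i),0)$, one finds $\varphi W_i=(\alpha_i+c_i\gamma_{ii}\epsilon)W_i+c_j\gamma_{ij}\epsilon W_j$. The $\alpha_i$-diagonal coefficient is $c_i\gamma_{ii}$ in both cases, so $\psi_i^-(p)=\psi_i^+(p)=c_i\gamma_{ii}/\alpha_i$; since smooth characters of $\Q_p^\times$ are determined by their values at $p$, $\psi_i^-=\psi_i^+$, giving $\kappa\circ\iota_g^-(M)=\kappa\circ\iota_g^+(M)$.

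The main obstacle is the identification of $\kappa$ with the $D_{\pst}$-side eigenvalue perturbation in the first paragraph, which requires carefully unwinding Remark \ref{Rint01} and translating between the $(\varphi,\Gamma)$-module formalism over $\cR_{E[\epsilon]/\epsilon^2}$ and the Fontaine-module formalism (in particular checking that the $\alpha_i$-isotypic rank-1 $\cR_{E[\epsilon]/\epsilon^2}$-sub is faithfully tracked by the $\alpha_i$-generalized-eigenspace in $D_{\pst}$ under the deformation). Once this dictionary is in hand, the equality reduces to the elementary observation that the two compositions $\varphi_1\iota_*$ (governing the deformation class $\iota^-(M)$) and $\iota_*\varphi_1$ (governing $\iota^+(M)$) share the same diagonal entries in the respective $\varphi$-eigenbases, both equal to $c_i\gamma_{ii}$ at $\alpha_i$, by the computation above.
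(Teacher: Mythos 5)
Your proposal is correct, and it takes a genuinely different route from the paper's proof. The paper works directly on the $(\varphi,\Gamma)$-module side: it exploits the canonical $\cR_{E[\epsilon]/\epsilon^2}$-linear injection $\tilde{\iota}\colon \iota_g^-(M)\hookrightarrow\iota_g^+(M)$ coming from the defining pullback/pushforward diagram, shows $\tilde{\iota}$ respects the trianguline filtrations so that it restricts to maps between the corresponding rank-$1$ subquotients $\cR_{E[\epsilon]/\epsilon^2}(\phi_i z^{h_k}(1+\psi_i\epsilon))\hookrightarrow\cR_{E[\epsilon]/\epsilon^2}(\phi_i z^{h_{k+1}}(1+\psi_i'\epsilon))$, and then reads off $\psi_i=\psi_i'$ because such an injection over $\cR_{E[\epsilon]/\epsilon^2}$ forces the smooth parts to agree. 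You instead push everything through $D_{\pst}$ and phrase the statement as a matrix identity: since $\iota_*$ is diagonal in the $\varphi$-eigenbases (genericity, $\alpha_1\neq\alpha_2$), the off-diagonal Frobenius perturbations $\varphi_1\iota_*$ and $\iota_*\varphi_1$ share the same diagonal entries, and for smooth additive characters of $\Q_p^\times$ (which are one-dimensional, determined by the value at $p$) this pins down $\kappa$ on both sides. This is a valid alternative and makes the mechanism transparent as linear algebra on Fontaine modules; it even yields the explicit common value $c_i\gamma_{ii}/\alpha_i$ for $\psi_i(p)$, which the paper's structural argument does not produce. The trade-off, as you correctly flag, is that your route needs the translation between $\kappa$ (defined via trianguline parameters over $\cR_{E[\epsilon]/\epsilon^2}$) and $\varphi$-eigenvalue deformations on $D_{\pst}$; this dictionary is standard for non-critical crystabelline $(\varphi,\Gamma)$-modules with distinct eigenvalues, but it does have to be checked (in particular that for $\Ext^1_g$ the weight part of the parameter is rigid so only the smooth part moves, and that the $\alpha_i$-generalized eigenspace over $E[\epsilon]/\epsilon^2$ tracks the rank-$1$ trianguline subquotient). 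The paper avoids this overhead entirely by never leaving the $(\varphi,\Gamma)$-module category. Both arguments ultimately reduce to the same underlying fact --- that $\iota$ respects the triangulation and hence identifies the $\alpha_i$-isotypic directions --- just expressed in two different categories.
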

	\begin{proof}
		By definition, there is a natural injection $\tilde{\iota}: \iota_g^-(M) \hookrightarrow \iota_g^+(M)$ which sits in the following commutative diagram
		\begin{equation*}
			\begin{CD}
				0 @>>> D_1 @>>> \iota_g^-(M) @>>> D_1 @>>> 0 \\
				@. @V \iota VV @V \tilde{\iota} VV @V \iota VV \\
				0 @>>> C_1 @>>> \iota_g^+(M) @>>> C_1 @>>> 0.
			\end{CD}
		\end{equation*}
		It is easy to see $\tilde{\iota}$ is moreover $\cR_{E[\epsilon]/\epsilon^2}$-linear if $\iota_g^-(M)$ and $\iota_g^+(M)$ are equipped with the natural $\cR_{E[\epsilon]/\epsilon^2}$-action. Suppose $\kappa \circ \iota_g^-(M)=(\psi_1,\psi_2)$ and $\kappa \circ \iota_g^+(M)=(\psi_1',\psi_2')$. Then $\iota_g^-(M)$ (resp. $\iota_g^+(M)$) is isomorphic, as $(\varphi, \Gamma)$-module over $\cR_{E[\epsilon]/\epsilon^2}$, to an extension of $\cR_{E[\epsilon]/\epsilon^2}(\phi_2 z^{h_2} (1+\psi_2 \epsilon))$ \big(resp. of $\cR_{E[\epsilon]/\epsilon^2}(\phi_2 z^{h_{3}}(1+\psi'_2 \epsilon))$\big)  by $\cR_{E[\epsilon]/\epsilon^2}(\phi_1 z^{h_1}(1+\psi_1\epsilon))$ \big(resp. $\cR_{E[\epsilon]/\epsilon^2}(\phi_1 z^{h_2}(1+\psi'_1 \epsilon))$\big). It is not difficult to see $\tilde{\iota}$ induces injections $\cR_{E[\epsilon]/\epsilon^2}(\phi_i z^{h_i} (1+\psi_i \epsilon)) \hookrightarrow \cR_{E[\epsilon]/\epsilon^2}(\phi_i z^{h_{i+1}} (1+\psi_i' \epsilon))$ of $(\varphi, \Gamma)$-modules over $\cR_{E[\epsilon]/\epsilon^2}$. Hence $\psi_i=\psi_i'$ for $i=1,2$.
	\end{proof}
	For an injection $\iota\in \Hom(D_1,C_1)$, we define $\sI_{\iota}$ to be following set:
	\begin{equation*}
		\{(\widetilde{D}_1, \widetilde{C}_1)\in \Ext^1_{\iota}(D_1,D_1) \times \Ext^1_{\iota}(C_1,C_1)\ |\ \exists M\in \Ext^1(C_1, D_1) \text{ with } \iota^-(M)=\widetilde{D}_1, \iota^+(M)=\widetilde{C}_1\}. 
	\end{equation*}
	If $\iota=\iota_D$ for some $D$ as in \S~\ref{S2.1}, we write $\sI_D:=\sI_{\iota_D}$. By Proposition \ref{Ppairing} (3) and Proposition \ref{PHodge1} (2), we have:
	\begin{corollary}
		We have $\sI_{\iota}=\sI_{\iota'}$ if and only if $\iota'=a \iota$ for some $a\in E^{\times}$. In particular, for non critical $D$ and $D'$, which both have  Sen weights $(h_1, h_2,h_3)$ and a refinement $\phi$, then $\sI_D=\sI_{D'}$ if and only if $D\cong D'$.
	\end{corollary}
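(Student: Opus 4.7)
The plan is to deduce the corollary directly from Propositions \ref{Ppairing}(3) and \ref{PHodge1}(2); no new ideas are needed beyond bookkeeping. For the forward implication of the first claim, if $\iota' = a\iota$ with $a \in E^\times$ then $(\iota')^\pm = a\,\iota^\pm$, so replacing any witness $M \in \Ext^1(C_1,D_1)$ for a pair in $\sI_\iota$ by $a^{-1}M$ shows the same pair lies in $\sI_{\iota'}$, giving $\sI_\iota = \sI_{\iota'}$. Conversely, by construction the first projection $\sI_\iota \twoheadrightarrow \Ext^1_\iota(D_1,D_1)$ is surjective (given $\widetilde{D}_1 = \iota^-(M)$, the pair $(\widetilde{D}_1, \iota^+(M))$ lies in $\sI_\iota$), so $\sI_\iota = \sI_{\iota'}$ forces $\Ext^1_\iota(D_1,D_1) = \Ext^1_{\iota'}(D_1,D_1)$, and Proposition \ref{Ppairing}(3) then delivers $\iota' \in E^\times \iota$.

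For the second claim, I first note that under Hypothesis \ref{Hnonc} and the shared refinement, the sub $D_1 \subset D$ and quotient $D \twoheadrightarrow C_1$ are the distinguished rank-$2$ pieces associated to the ordering $(\phi_1,\phi_2,\phi_3)$, hence are canonically attached to the ordered datum; the same distinguished $D_1$ and $C_1$ serve for both $D$ and $D'$, so $\iota_D$ and $\iota_{D'}$ sit in the common space $\Hom(D_1,C_1)$. Given an isomorphism $f\colon D \xrightarrow{\sim} D'$, the functoriality of these pieces yields an endomorphism $f|_{D_1}$ of $D_1$ and an endomorphism $\bar f$ of $C_1$; since $D_1$ and $C_1$ are non-split extensions of distinct characters one has $\End(D_1) = \End(C_1) = E$, so $f|_{D_1} = \lambda\cdot\id$ and $\bar f = \mu\cdot\id$ for some $\lambda, \mu \in E^\times$. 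The commutative diagram
\begin{equation*}
\begin{CD}
D_1 @>>> D @>>> C_1 \\
@V \lambda VV @V f VV @V \mu VV \\
D_1 @>>> D' @>>> C_1
\end{CD}
\end{equation*}
then forces $\iota_{D'} = (\mu/\lambda)\iota_D$, whence $\sI_D = \sI_{D'}$ by the first claim. Conversely, $\sI_D = \sI_{D'}$ forces $E[\iota_D] = E[\iota_{D'}]$ by the first claim, and Proposition \ref{PHodge1}(2), which asserts that $D$ is determined by $\ul{\phi}$, $h$, and the line $E[\iota_D]$, yields $D \cong D'$.

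There is no serious obstacle: the statement is a clean packaging of the two cited propositions. The only point requiring mild care is the identification, in the second claim, of the ambient $\Hom(D_1,C_1)$ for $D$ and for $D'$, and the observation that rescaling $\iota_D$ does not affect the isomorphism class of $D$—both of which are built into the noncritical, generic setup and into Proposition \ref{PHodge1}(2).
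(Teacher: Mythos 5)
Your proof is correct and takes essentially the same route the paper intends: the paper simply asserts that the corollary follows from Proposition~\ref{Ppairing}(3) and Proposition~\ref{PHodge1}(2), and your argument is the natural unpacking of that remark (the projection of $\sI_\iota$ onto $\Ext^1_\iota(D_1,D_1)$ together with Proposition~\ref{Ppairing}(3) for the first claim; the orthogonality in Proposition~\ref{PHodge1}(2) plus the scalar-endomorphism functoriality for the second).
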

	Now let $D$ be as in \S~\ref{S2.1}, and $\iota_D: D_1 \hookrightarrow C_1$ be the associated injection. 
	\begin{theorem}[Higher intertwining]\label{ThIW1}Let $\widetilde{D}\in \Ext^1_{\sF}(D, D)$ with $\kappa_{\sF}(\widetilde{D})=(\widetilde{D}_1, \psi)$. The followings are equivalent:
		\begin{enumerate}
			\item  $\widetilde{D} \in \Ext^1_{\sF}(D,D) \cap \Ext^1_{\sG}(D,D)$.
			\item  $\widetilde{D}_1 \otimes_{\cR_{E[\epsilon]/\epsilon^2}} \cR_{E[\epsilon]/\epsilon^2}(1-\psi \epsilon)\in \Ext^1_{\iota_D}(D_1,D_1)$.
		\end{enumerate}		
		Moreover, if the equivalent conditions hold, then $\kappa_{\sG,2}(\widetilde{D})=\psi$ and	 there exists $M\in \Ext^1(C_1,D_1)$ such that $\widetilde{D}_1=\iota_D^-(M) \otimes_{\cR_{E[\epsilon]/\epsilon^2}} \cR_{E[\epsilon]/\epsilon^2}(1+\psi \epsilon)$ and $\kappa_{\sG,1}(\widetilde{D})=\iota_D^+(M)\otimes_{\cR_{E[\epsilon]/\epsilon^2}}\cR_{E[\epsilon]/\epsilon^2}(1+\psi \epsilon)$.
	\end{theorem}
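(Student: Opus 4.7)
The plan is a twist reduction to $\psi=0$, a Hom-computation to force $\kappa_{\sG,2}(\widetilde D)=\psi$, and a snake-lemma construction producing $M$. For the twist reduction, note that tensoring over $\cR_{E[\epsilon]/\epsilon^2}$ with $\cR_{E[\epsilon]/\epsilon^2}(1-\psi\epsilon)$ is an auto-equivalence preserving $\Ext^1_\sF(D,D)$, $\Ext^1_\sG(D,D)$, and $\Ext^1_{\iota_D}(D_1,D_1)$, commuting with $\iota_D^\pm$, and shifting both $\kappa_{\sF,2}$ and $\kappa_{\sG,2}$ by $-\psi$ (while sending $\kappa_{\sF,1}(\widetilde D)$ to $\widetilde D_1(1-\psi\epsilon)$ and $\kappa_{\sG,1}(\widetilde D)$ to $\widetilde C_1(1-\psi\epsilon)$). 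All four assertions of the theorem transform compatibly, so I may and do assume $\psi=0$.

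\emph{Forcing $\psi'=0$.} Suppose $\widetilde D\in \Ext^1_\sG(D,D)$ with $\kappa_{\sG,2}(\widetilde D)=\psi'$. The composition
\[
\cR_{E[\epsilon]/\epsilon^2}(\phi_3 z^{h_1}(1+\psi'\epsilon))\hookrightarrow \widetilde D\twoheadrightarrow \cR_{E[\epsilon]/\epsilon^2}(\phi_3 z^{h_3})
\]
has mod-$\epsilon$ reduction equal to a non-zero scalar multiple of $t^{h_1-h_3}\colon\cR_E(\phi_3 z^{h_1})\to \cR_E(\phi_3 z^{h_3})$. A direct computation of $H^0(\cR_{E[\epsilon]/\epsilon^2}(z^{h_3-h_1}(1-\psi'\epsilon)))$ shows that the existence of such a lift (with non-zero mod-$\epsilon$ reduction) is obstructed by the class of $\psi'$ in $\Hom_{\cont}(\Q_p^\times,E)\cong H^1((\varphi,\Gamma);\cR_E)$. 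Hence $\psi'=0$, establishing $\kappa_{\sG,2}(\widetilde D)=\psi$ after un-twisting.

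\emph{Construction of $M$.} With $\psi=\psi'=0$, any element of $\widetilde D_1\cap\cR_{E[\epsilon]/\epsilon^2}(\phi_3 z^{h_1})\subset\widetilde D$ is $\epsilon$-torsion (its mod-$\epsilon$ image lies in $D_1\cap\cR_E(\phi_3 z^{h_1})=0$) and hence already vanishes inside $\epsilon\widetilde D\cong D$ by the same argument. Consequently $\widetilde D_1$ injects into $\widetilde C_1:=\widetilde D/\cR_{E[\epsilon]/\epsilon^2}(\phi_3 z^{h_1})$, lifting $\iota_D$ on both the sub- and quotient-$D_1$ constituents of $\widetilde D_1$. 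A snake-lemma chase in the commutative diagram
\[
\begin{array}{ccccccccc}
0 & \to & D_1 & \to & \widetilde D_1 & \to & D_1 & \to & 0 \\
& & \downarrow\iota_D & & \downarrow\tilde\iota & & \downarrow\iota_D & & \\
0 & \to & C_1 & \to & \widetilde C_1 & \to & C_1 & \to & 0
\end{array}
\]
then produces the required $M\in\Ext^1(C_1,D_1)$ with $\iota_D^-(M)=\widetilde D_1$ and $\iota_D^+(M)=\widetilde C_1$; explicitly, $M$ is represented by the intermediate submodule of $\widetilde C_1$ containing $\widetilde D_1$ with cokernel the torsion module $C_1/\iota_D(D_1)$, uniquely determined by the two filtrations on $\widetilde D$.

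\emph{Converse and main obstacle.} For $(2)\Rightarrow(1)$, I run the construction backwards: starting from $M$ with $\iota_D^-(M)=\widetilde D_1$, I set $\widetilde C_1:=\iota_D^+(M)$ and amalgamate the $\sF$-presentation ending in $\widetilde D_1$ with the $\sG$-presentation beginning with $\cR_{E[\epsilon]/\epsilon^2}(\phi_3 z^{h_1})$ using Proposition~\ref{Ppar1} (applied to both filtrations) to produce a $\widetilde D\in \Ext^1_\sF(D,D)\cap\Ext^1_\sG(D,D)$ with the required parameters; the formulas in the ``moreover'' part are then immediate by construction. The main technical obstacle is the snake-lemma extraction of $M$: one must verify that the intermediate object constructed from the two presentations is genuinely an extension class in $\Ext^1(C_1,D_1)$ respecting the full $(\varphi,\Gamma)$-module structure and simultaneously realising both pull-back and push-out identifications, not merely a sub-quotient at the $\cR_{E[\epsilon]/\epsilon^2}$-module level. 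By contrast, the twist bookkeeping and the rank-one Hom computation, while essential, are routine.
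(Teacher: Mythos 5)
Your twist reduction matches the paper, but the rest of your argument follows a genuinely different route---explicit module chases instead of the paper's Ext-group calculus---and this route has a real gap. The paper proves the equivalence $(1)\Leftrightarrow(2)$ entirely inside Ext-groups: using $\kappa_{\sF,2}(\widetilde D)=0$ and the exact sequence (\ref{Eexa1}) it picks a preimage $M_1\in\Ext^1(D,D_1)$ of $\widetilde D$, identifies the composition (\ref{0}) on such elements with (\ref{EDtoM}), and concludes that condition $(1)$ holds iff $M_1\in\Ext^1(C_1,D_1)$, i.e.\ iff $\widetilde D_1\in\Ext^1_{\iota_D}(D_1,D_1)$; the pivot is the injectivity of the push-forward $\iota_D\colon\Ext^1(\cR_E(\phi_3 z^{h_1}),D_1)\to\Ext^1(\cR_E(\phi_3 z^{h_1}),C_1)$, cited from \cite[Lem.~5.1.1]{BD2}, which makes both implications drop out of the same chain of equivalences. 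Your proposal never invokes this input, and the converse $(2)\Rightarrow(1)$ is accordingly left at ``run the construction backwards,'' which is exactly what the Ext-group formulation is designed to make rigorous.

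The concrete gap is in the snake-lemma extraction of $M$. Your diagram gives $0\to Q\to\mathrm{coker}\,\tilde\iota\to Q\to 0$ with $Q=C_1/\iota_D(D_1)$, and you take $M$ to be ``the intermediate submodule of $\widetilde C_1$ containing $\widetilde D_1$ with cokernel $Q$''. That description does not single out the right module: the preimage in $\widetilde C_1$ of the sub-copy of $Q$, namely $M':=\tilde\iota(\widetilde D_1)+C_1$ with $C_1$ the sub of $\widetilde C_1$, is an intermediate submodule whose two cokernels are $Q$, yet its extension structure is $0\to C_1\to M'\to D_1\to 0$, an element of $\Ext^1(D_1,C_1)$ rather than $\Ext^1(C_1,D_1)$, hence useless for realizing $\widetilde D_1$ as $\iota_D^-(M)$. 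The correct $M$ must meet the sub-$C_1$ of $\widetilde C_1$ precisely in $\iota_D(D_1)$ while surjecting onto the quotient $C_1$; this is a different condition that your characterization does not enforce. You flag ``verifying the intermediate object is in $\Ext^1(C_1,D_1)$'' as the main obstacle, but the problem is more basic: the object you name is in general the wrong one. Separately, the ``Forcing $\psi'=0$'' step rests on an unproved injectivity of a cup-product $H^1(\cR_E)\to\Ext^1(\cR_E(\phi_3 z^{h_1}),\cR_E(\phi_3 z^{h_3}))$; this is of the same nature as the \cite{BD2} input and should be justified, especially since the paper's ``moreover'' assertions can instead be read off directly from the explicit amalgamated construction of $\widetilde D$ from $M$, which produces $\kappa_{\sG,2}(\widetilde D)=\psi$ and $\kappa_{\sG,1}(\widetilde D)=\iota_D^+(M)$ in one stroke.
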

	\begin{proof}
		Twisting by $1-\psi \epsilon$, we assume $\kappa_{\sF,2}(\widetilde{D})=0$. Using a similar statement in Lemma \ref{Lpar1} (for $\sG$), $\widetilde{D}\in \Ext^1_{\sG}(D,D)$  if and only if it lies in the kernel of the composition
		\begin{equation}\label{0}
			\Ext^1(D,D) \lra \Ext^1(\cR_E( \phi_3 z^{h_1}), D) \lra \Ext^1(\cR_E(\phi_3 z^{h_1}), C_1).
		\end{equation}
		As $\kappa_{\sF,2}(\widetilde{D})=0$, $\widetilde{D}$ lies in the image of $\Ext^1(D,D_1) \ra \Ext^1_{\sF}(D,D)$ (see (\ref{Eexa1})), and we let $M_1\in \Ext^1(D,D_1) $ be a preimage of $\widetilde{D}$. Consider the composition 
		\begin{equation*}
			\Ext^1(D,D_1) \hooklongrightarrow	\Ext^1(D,D) \lra \Ext^1(\cR_E(\phi_3 z^{h_1}), D) \lra \Ext^1(\cR_E( \phi_3 z^{h_1}), C_1).
		\end{equation*}
		It is straightforward to see it is equal to the composition
		\begin{equation}\label{EDtoM}
			\Ext^1(D,D_1) \lra \Ext^1(\cR_E(\phi_3 z^{h_1}), D_1) \xlongrightarrow{\iota_D} \Ext^1(\cR_E(\phi_3 z^{h_1}), C_1).
		\end{equation}
		So $\widetilde{D}$ lies in the kernel of (\ref{0}) if and only if $M_1$ is sent to zero via (\ref{EDtoM}). However, using d\'evissage and \cite[Lem.~5.1.1]{BD2}, the push-forward map $\iota_D$ in (\ref{EDtoM}) is injective. We see (under the assumption $\psi=0$) the condition 1 is equivalent to that   $M_1$ lies in the kernel of the first map of (\ref{EDtoM}), that is equal to $\Ext^1(C_1, D_1)$ by d\'evissage. This is furthermore equivalent to that  $\widetilde{D}_1$ lies in the image of the composition 
		\begin{equation}\label{EiotaD1}
			\Ext^1(C_1,D_1) \hooklongrightarrow \Ext^1(D,D_1) \lra \Ext^1(D_1,D_1),
		\end{equation}
		which is no other than the pull-back map induced by $\iota_D$.  The other parts are straightforward. 
	\end{proof}
	\section{Locally analytic representations of $\GL_3(\Q_p)$}
	Let $D$ be as in  \S~\ref{S2.1}, and let $\lambda=(\lambda_1, \lambda_2, \lambda_3):=(h_1-2,h_2-1,h_3)$. We associate to $D$ a locally analytic representation $\pi(\ul{\phi}, \lambda, \iota_D)$ of $\GL_3(\Q_p)$. We show that the construction gives a one-to-one correspondence between $\{\pi(\ul{\phi}, \lambda, \iota_D)\}$ and $\{D\}$.
	
	\subsection{Preliminaries and notation}\label{S3.1}
	For a smooth character $\delta=\delta_1 \boxtimes \delta_2 \boxtimes \delta_3: T(\Q_p) \ra E^{\times}$, denote by $\jmath(\delta):=\delta_1|\cdot|^{-2} \boxtimes \delta_2 |\cdot|^{-1} \boxtimes \delta_3$, where $|\cdot|$ is the $p$-adic norm with $|p|=p^{-1}$. Let $\varepsilon:=z |\cdot|: \Q_p^{\times} \ra E^{\times}$ be the cyclotomic character.   
	
	Let $T$ be the torus subgroup of $\GL_3$ , $B\supset T$ be the Borel subgroup of upper triangular matrices. For a standard parabolic subgroup $P$ of  $\GL_3$ (containing $B$), denote by $P^-$ its opposite parabolic subgroup.  For a weight $\mu$ of $\gl_3$, denote by $M^-(\mu):=\text{U}(\gl_3)  \otimes_{\text{U}(\ub^-)} \mu$ (with $\ub^-$ the Lie algebra of $B^-$), and let $L^-(\mu)$ be its unique simple quotient. If $\mu$ is anti-dominant, then $L^-(\mu)$ is finite dimensional isomorphic to $L(-\mu)^{\vee}$, where $L(-\mu)$ is the algebraic representation of highest weight $-\mu$ with respect to $B$. We use the same notation for $\GL_2$ when there is no ambiguity.
	
	Let $D$ be as in the Section \ref{S2.1}. For a refinement $w(\ul{\phi})$ of $D$, consider the locally algebraic representation $(\Ind_{B^-}^{\GL_3} \jmath(w(\ul{\phi})))^{\infty} \otimes_EL(\lambda)$, which turns out to be all isomorphic, that we denote by $\pi_{\alg}(\ul{\phi}, \lambda)$. In fact, the smooth induction $(\Ind_{B^-}^{\GL_3} \jmath(\ul{\phi}))^{\infty}$ is just the representation corresponding to the Weil-Deligne representation $\oplus_{i=1}^3 \phi_3$ via the classical local Langlands correspondence.

	For $w\in S_3$ and a simple reflection $s_i$, put (where ``$\cF$" is the notation for Orlik-Strauch representations as in \cite{OS})
	\begin{equation*}
		\sC(s_i,w):=\cF_{B^-}^{\GL_3}\big(L^-(-s_i \cdot \lambda), \jmath(w(\ul{\phi}))\big) \cong \cF_{P_j^-}^{\GL_3}\big(L^-(-s_i \cdot \lambda), (\Ind_{B^- \cap L_j}^{L_j} \jmath(w(\ul{\phi})))^{\infty}\big),
	\end{equation*}
	where $j\neq i$, $P_1^-=\begin{pmatrix}
		\GL_2 & 0 \\  
		* & \GL_1
	\end{pmatrix}$ and $P_2^-=\begin{pmatrix}
		\GL_1 & 0 \\  
		* & \GL_2
	\end{pmatrix}$.
	By \cite[Thm.]{OS}, $\sC(s_i,w)\cong \sC(s_k, w')$ if and only if $s_i=s_k$ and $w'=s_jw$ with $s_j \neq s_i$.   Letting $s_i$ and $w$ vary and assuming $w$ has minimal length in the class $\{w, s_jw\}_{s_j\neq s_i}$, there are exactly  $6$ distinct representations $\cS:=\{\sC(s_1, 1), \sC(s_1,s_1), \sC(s_1, s_1s_2), \sC(s_2,1), \sC(s_2,s_2), \sC(s_2, s_1s_2)\}$. 
	
	For $w\in S_3$, consider the principal series
	\begin{equation*}
		(\Ind_{B^-}^{\GL_3} \jmath(w(\ul{\phi})) z^{\lambda})^{\an}\cong \cF_{B^-}^{\GL_3}\big(M^-(-\lambda), \jmath(w(\ul{\phi}))\big).
	\end{equation*}
	It contains a unique subrepresentation $\pi_1(\ul{\phi},\lambda, w)$ of the form $[\pi_{\alg}(\ul{\phi},\lambda) \lin (\sC(s_1,w) \oplus \sC(s_2,w))]$, where each sub-extension $[\pi_{\alg}(\ul{\phi},\lambda) \lin \sC(s_i, w)]$ is non-split. Let $\pi_1(\ul{\phi}, \lambda)$ be the unique quotient of $\oplus_{w\in S_3} \pi_1(\ul{\phi}, \lambda, w)$ of socle $\pi_{\alg}(\ul{\phi},\lambda)$, which is an extension of $\oplus_{\sC\in \cS} \sC$ by $\pi_{\alg}(\ul{\phi},\lambda)$ with each sub-extension $[\pi_{\alg}(\ul{\phi}, \lambda) \lin \sC]$ non-split. 
	
	Now let $D_1\subset D$ be as in \S~\ref{S2.1}, and $\pi(D_1)$ be the locally analytic representation of $\GL_2(\Q_p)$ associated to $D_1$. We recall the structure of $\pi(D_1)$. Let $\lambda^1:=(h_1-1, h_2)$. For $i,j\in \{1,2\}$, $i\neq j$, let  $I_{\sF_i}:=(\Ind_{B^-}^{\GL_2} z^{\lambda^1}(\phi_i |\cdot|^{-1} \boxtimes \phi_j))^{\an}$, and $I_{\sF_i,0}:=(\Ind_{B^-}^{\GL_2} z^{\lambda^1}(\phi_i |\cdot|^{-1} \boxtimes \phi_j))^{\lalg}\cong (\Ind_{B^-}^{\GL_2} \phi_i |\cdot|^{-1}\boxtimes \phi_j)^{\infty} \otimes_E L(\lambda^1)$. We have $I_{\sF_1,0}\cong I_{\sF_2,0}=:\pi_{\alg}(D_1)$. We will use the notation $I_{\sF_i,0}$ when we want to emphasize its relation with $I_{\sF_i}$. 
	The representation $\pi_{\alg}(D_1)$ is in fact the  locally algebraic subrepresentation of $\pi(D_1)$. As $D_1$ is non-split, $\pi(D_1)\cong I_{\sF_1}\oplus_{\pi_{\alg}(D_1)} I_{\sF_2}$. Letting  $ \sC(s,\sF_i) := (\Ind_{B^-}^{\GL_2} z^{s\cdot \lambda^1}(\phi_i |\cdot|^{-1} \boxtimes \phi_j))^{\an}$ for $i,j\in \{1,2\}$, $i\neq j$ (recall $\sF_i$ denotes a refinement of $D_1$), then $\pi(D_1)$  has the form 
	$\big[\pi_{\alg}(D_1) \lin \sC(s,\sF_1) \oplus \sC(s,\sF_2)\big]$.
	
	Consider $(\Ind_{P_1^-}^{\GL_3} (\pi(D_1) \otimes_E \varepsilon^{-1}\circ \dett) \boxtimes \phi_3 z^{h_3})^{\an}$ which is isomorphic to
	\begin{equation*}
		(\Ind_{B^-}^{\GL_3} \jmath(\ul{\phi}) z^{\lambda})^{\an} \oplus_{(\Ind_{P_1^-}^{\GL_3} (\pi_{\alg}(D_1) \otimes_E \varepsilon^{-1}\circ \dett) \boxtimes \phi_3 z^{h_3})^{\an}}(\Ind_{B^-}^{\GL_3} \jmath(s_1(\ul{\phi}))z^{\lambda})^{\an}.
	\end{equation*}
	By \cite[Thm.]{OS},  we see that the constituents $\sC\in \cS$, that the induction contains, are exactly $\cS_{\sF}:=\{\sC(s_1, 1), \sC(s_1, s_1) ,\sC(s_2,1)\}$, each with multiplicity one. Moreover, it 
	contains a unique subrepresentation $\pi_1(\ul{\phi},\lambda)^-$, that is an extension of $\sC(s_1, 1) \oplus \sC(s_1, s_1) \oplus \sC(s_2,1)$ by $\pi_{\alg}(\ul{\phi},\lambda)$, with each sub-extension $[\pi_{\alg}(\ul{\phi},\lambda) \lin \sC]$ non-split.

	Similarly, let $\pi(C_1)$ be the locally analytic representation of $\GL_2(\Q_p)$ associated to $C_1$, and $\cS_{\sG}:=\{\sC(s_2,s_2), \sC(s_2, s_2s_1), \sC(s_1, s_1s_2)\}$. Then $\sC\in \cS$ appears in  $(\Ind_{P_2^-}^{\GL_3} \phi_3 z^{h_1-2}|\cdot |^{-2} \boxtimes \pi(C_1))^{\an}$ if and only if $\sC \in \cS_{\sG}$. And if so, it has multiplicity one. Moreover, $(\Ind_{P_2^-}^{\GL_3} \phi_3 z^{h_1-2}|\cdot |^{-2} \boxtimes \pi(C_1))^{\an}$ contains a unique subrepresentation $\pi_1(\ul{\phi},\lambda)^+$, that is an extension of $\oplus_{\sC\in \cS_{\sG}} \sC$ by $\pi_{\alg}(\ul{\phi}, \lambda)$, with each subextension $[\pi_{\alg}(\ul{\phi},\lambda)\lin \sC]$ non split. By the discussions, we have
	\begin{lemma}
		$\pi_1(\ul{\phi},\lambda)\cong \pi_1(\ul{\phi},\lambda)^+ \oplus_{\pi_{\alg}(\ul{\phi},\lambda)} \pi_1(\ul{\phi},\lambda)^-$.
	\end{lemma}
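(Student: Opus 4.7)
The plan is to identify the amalgamation $\pi_1(\ul{\phi}, \lambda)^+ \oplus_{\pi_{\alg}(\ul{\phi}, \lambda)} \pi_1(\ul{\phi}, \lambda)^-$ with $\pi_1(\ul{\phi}, \lambda)$ by checking that the former satisfies the defining universal property of the latter, namely: it is a quotient of $\oplus_{w \in S_3} \pi_1(\ul{\phi}, \lambda, w)$ whose socle is $\pi_{\alg}(\ul{\phi}, \lambda)$ and whose cosocle is $\oplus_{\sC \in \cS} \sC$, with every sub-extension $[\pi_{\alg}(\ul{\phi}, \lambda) \lin \sC]$ non-split.

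First I would determine the structure of the amalgamation. Both $\pi_1(\ul{\phi}, \lambda)^-$ and $\pi_1(\ul{\phi}, \lambda)^+$ have socle $\pi_{\alg}(\ul{\phi}, \lambda)$ by construction, so the amalgamation along this common socle is well defined. Since $\cS = \cS_{\sF} \sqcup \cS_{\sG}$ is a disjoint union, the amalgamation is an extension of $\oplus_{\sC \in \cS} \sC$ by $\pi_{\alg}(\ul{\phi}, \lambda)$ in which each sub-extension $[\pi_{\alg}(\ul{\phi}, \lambda) \lin \sC]$ is non-split, this property being inherited from whichever of $\pi_1(\ul{\phi}, \lambda)^\pm$ contains $\sC$ in its cosocle.

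Next I would exhibit the amalgamation as a quotient of $\oplus_{w \in S_3} \pi_1(\ul{\phi}, \lambda, w)$ with socle $\pi_{\alg}(\ul{\phi}, \lambda)$. The identification recalled in the setup,
\begin{equation*}
(\Ind_{P_1^-}^{\GL_3} (\pi(D_1) \otimes_E \varepsilon^{-1} \circ \dett) \boxtimes \phi_3 z^{h_3})^{\an} \cong (\Ind_{B^-}^{\GL_3} \jmath(\ul{\phi}) z^{\lambda})^{\an} \oplus_{\lalg} (\Ind_{B^-}^{\GL_3} \jmath(s_1(\ul{\phi})) z^{\lambda})^{\an},
\end{equation*}
realizes $\pi_1(\ul{\phi}, \lambda)^-$ as a quotient of $\pi_1(\ul{\phi}, \lambda, 1) \oplus \pi_1(\ul{\phi}, \lambda, s_1)$ with socle $\pi_{\alg}(\ul{\phi}, \lambda)$ and cosocle $\oplus_{\sC \in \cS_{\sF}} \sC$. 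An analogous argument for the $P_2^-$-induction realizes $\pi_1(\ul{\phi}, \lambda)^+$ as a quotient of the corresponding $\pi_1(\ul{\phi}, \lambda, w)$'s (with $w$ such that $w^{-1}(1) = 3$) with socle $\pi_{\alg}(\ul{\phi}, \lambda)$ and cosocle $\oplus_{\sC \in \cS_{\sG}} \sC$. Gluing along $\pi_{\alg}(\ul{\phi}, \lambda)$, the amalgamation $\pi_1(\ul{\phi}, \lambda)^+ \oplus_{\pi_{\alg}(\ul{\phi}, \lambda)} \pi_1(\ul{\phi}, \lambda)^-$ becomes a quotient of $\oplus_{w \in S_3} \pi_1(\ul{\phi}, \lambda, w)$ whose socle is still $\pi_{\alg}(\ul{\phi}, \lambda)$. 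By the defining property of $\pi_1(\ul{\phi}, \lambda)$ this quotient must coincide with $\pi_1(\ul{\phi}, \lambda)$.

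The main delicate point is to verify that the socle of the amalgamation is exactly $\pi_{\alg}(\ul{\phi}, \lambda)$, i.e. that amalgamating $\pi_1(\ul{\phi}, \lambda)^+$ and $\pi_1(\ul{\phi}, \lambda)^-$ along $\pi_{\alg}(\ul{\phi}, \lambda)$ does not introduce extra socle constituents. This follows from the non-splitness of each sub-extension $[\pi_{\alg}(\ul{\phi}, \lambda) \lin \sC]$ on both sides together with the disjointness of $\cS_{\sF}$ and $\cS_{\sG}$: no $\sC \in \cS$ can split off the amalgamation as a direct summand of its socle, so the socle is as prescribed. The remaining combinatorial verifications (matching the 6 elements of $\cS$ with the cosocle constituents contributed by the six $\pi_1(\ul{\phi}, \lambda, w)$ under the identifications $\sC(s_i, w) \cong \sC(s_i, s_j w)$ for $s_j \neq s_i$) are routine via the Orlik–Strauch description.
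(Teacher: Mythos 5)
Your proof is correct and matches the paper's implicit argument (the paper simply says ``By the discussions, we have'' and offers no further detail). The key observations you make---that $\cS = \cS_{\sF} \sqcup \cS_{\sG}$ so the amalgamation inherits the non-split sub-extensions, that the socle stays $\pi_{\alg}(\ul{\phi},\lambda)$, and that the amalgamation is a quotient of $\oplus_{w\in S_3}\pi_1(\ul{\phi},\lambda,w)$ via the parabolic-induction-in-stages identifications---are exactly the content the author leaves to the reader.
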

	\subsection{Extension groups}
	We collect some facts on the extension groups of locally analytic representations of $\GL_2(\Q_p)$ and of $\GL_3(\Q_p)$. We invite the reader to compare these with the extension groups of $(\varphi, \Gamma)$-modules considered in \S~\ref{S1}.
	
	\subsubsection{$\GL_2(\Q_p)$.}
	The following proposition  is well-known (e.g. by an easier variation of the arguments in \cite[\S~3.2.2]{BD1} or the proof of Proposition \ref{PExt2} below). 
	\begin{proposition}\label{PExt1}(1) We have a natural  exact sequence
		\begin{multline*}
			0 \lra \Ext^1_{\GL_2(\Q_p)}(\pi_{\alg}(D_1), \pi(D_1)) \lra \Ext^1_{\GL_2(\Q_p)}(\pi_{\alg}(D_1), \pi(D_1)) \\
			\lra \oplus_{i=1}^2\Ext^1_{\GL_2(\Q_p)}(\pi_{\alg}(D_1), \sC(s,\sF_i)) \lra 0.
		\end{multline*}
		Moreover,  we have $\dim_E \Ext^1_{\GL_2(\Q_p)}(\pi_{\alg}(D_1), \pi_{\alg}(D_1))=3$, $\dim_E \Ext^1_{\GL_2(\Q_p)}(\pi_{\alg}(D_1), \pi(D_1)) =5$, and $\dim_E \Ext^1_{\GL_2(\Q_p)}(\pi_{\alg}(D_1), \sC(s,\sF_i))=1$.

		(2) For $i\in \{1,2\}$, there is a natural exact sequence
		\begin{equation*}
			0 \ra \Ext^1_{\GL_2(\Q_p)}(\pi_{\alg}(D_1),\pi_{\alg}(D_1)) \ra \Ext^1_{\GL_2(\Q_p)}(\pi_{\alg}(D_1),  I_{\sF_i}) 
			\ra \Ext^1_{\GL_2(\Q_p)}(\pi_{\alg}(D_1), \sC(s, \sF_i)) \ra 0,
		\end{equation*}
	hence $\dim_E \Ext^1_{\GL_2(\Q_p)}(\pi_{\alg}(D_1),  I_{\sF_i}) =4$.
	\end{proposition}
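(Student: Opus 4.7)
The plan is to obtain both exact sequences as the long exact sequences resulting from applying $\Hom_{\GL_2(\Q_p)}(\pi_{\alg}(D_1),-)$ to the short exact sequences
\begin{equation*}
0 \lra \pi_{\alg}(D_1) \lra I_{\sF_i} \lra \sC(s,\sF_i) \lra 0 \qquad \text{(for (2))}
\end{equation*}
and, using the amalgamation $\pi(D_1) \cong I_{\sF_1} \oplus_{\pi_{\alg}(D_1)} I_{\sF_2}$,
\begin{equation*}
0 \lra \pi_{\alg}(D_1) \lra \pi(D_1) \lra \sC(s,\sF_1) \oplus \sC(s,\sF_2) \lra 0 \qquad \text{(for (1))},
\end{equation*}
and then truncating after the relevant $\Ext^1$-term. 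The vanishing $\Hom_{\GL_2(\Q_p)}(\pi_{\alg}(D_1),\sC(s,\sF_i)) = 0$ is immediate from the Orlik--Strauch realization: $\sC(s,\sF_i)$ is built from the dual Verma $L^-(-s\cdot\lambda^1)$, which is infinite-dimensional, and hence $\sC(s,\sF_i)$ has no locally algebraic subrepresentation. This already produces the injection of $\Ext^1_{\GL_2(\Q_p)}(\pi_{\alg}(D_1),\pi_{\alg}(D_1))$ into the middle $\Ext^1$ of each sequence.

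The remaining work is the dimension count, which simultaneously pins down the middle $\Ext^1$ and forces surjectivity at the right end. The value $\dim_E \Ext^1_{\GL_2(\Q_p)}(\pi_{\alg}(D_1),\pi_{\alg}(D_1)) = 3$ is the standard self-extension computation for the generic crystabelline locally algebraic representation associated to $D_1$ (matching $\dim_E \Ext^1_g(D_1,D_1) + 1 = 2+1$ on the Galois side via Proposition~\ref{PGL21}). For the other two Ext groups, I would invoke the adjunction between locally analytic parabolic induction and the Emerton--Jacquet functor, precisely in the style of \cite{BD1} \S3.2.2: this identifies $\Ext^1_{\GL_2(\Q_p)}(\pi_{\alg}(D_1), I_{\sF_i})$ with the tangent space of trianguline deformations $\Ext^1_{\sF_i}(D_1, D_1)$, of dimension $4$ by Proposition~\ref{PGL21}; the quotient $\Ext^1_{\GL_2(\Q_p)}(\pi_{\alg}(D_1), \sC(s,\sF_i))$ then has dimension $1$, capturing the ``companion'' direction beyond de Rham deformations.

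With these three dimensions in hand, surjectivity at the right end of each sequence is forced by numerology: in (2) the injection has cokernel of dimension $4-3=1$ and the target $\Ext^1(\pi_{\alg},\sC(s,\sF_i))$ is $1$-dimensional, so the connecting map must vanish and the map to $\Ext^1(\pi_{\alg},\sC(s,\sF_i))$ is onto; the same count with $5 = 3+2$ handles (1). The main obstacle is the Emerton--Jacquet adjunction computation yielding $\dim_E \Ext^1_{\GL_2(\Q_p)}(\pi_{\alg}(D_1), I_{\sF_i}) = 4$: one must carefully identify the Jacquet--Emerton module of $\pi_{\alg}(D_1)$, verify the parabolic induction functor is essentially exact on the relevant $\Ext^1$, and use genericity of $\ul{\phi}$ (together with $\phi_i\phi_j^{-1} \neq 1,|\cdot|^{\pm 1}$) to exclude spurious extensions coming from other characters on $T(\Q_p)$. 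As the paper remarks, this is a routine variant of \cite{BD1} \S3.2.2, or can be read off from the proof of Proposition~\ref{PExt2} below.
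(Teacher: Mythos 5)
Your dévissage set-up is sound: applying $\Hom_{\GL_2(\Q_p)}(\pi_{\alg}(D_1),-)$ to $0 \to \pi_{\alg}(D_1) \to I_{\sF_i} \to \sC(s,\sF_i) \to 0$ and to $0 \to \pi_{\alg}(D_1) \to \pi(D_1) \to \sC(s,\sF_1)\oplus\sC(s,\sF_2) \to 0$, using $\Hom(\pi_{\alg}(D_1),\sC(s,\sF_i))=0$ for injectivity at the left, is exactly the right framework, and your substitution of the Emerton--Jacquet dimension count $\dim_E\Ext^1_{\GL_2(\Q_p)}(\pi_{\alg}(D_1),I_{\sF_i})=4$ (i.e.\ the bijectivity of $\zeta_i$ of (\ref{EindGL2})) for the $\Ext^2$-vanishing argument the paper uses in Proposition~\ref{PExt2} is a legitimate alternative. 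The issue is circularity in how you close the argument. You write that the quotient $\Ext^1_{\GL_2(\Q_p)}(\pi_{\alg}(D_1),\sC(s,\sF_i))$ \emph{then} has dimension $1$, and then in the next paragraph cite ``the target $\Ext^1(\pi_{\alg},\sC(s,\sF_i))$ is $1$-dimensional'' as one of the ``three dimensions in hand'' that force surjectivity. But from the long exact sequence and the dimensions $3$ and $4$, all you actually know is that the \emph{image} of $\Ext^1(\pi_{\alg}(D_1),I_{\sF_i})\to\Ext^1(\pi_{\alg}(D_1),\sC(s,\sF_i))$ is $1$-dimensional; this does not show the target is $1$-dimensional, nor does it give surjectivity, unless you have an independent upper bound $\dim_E\Ext^1(\pi_{\alg}(D_1),\sC(s,\sF_i))\le 1$.

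What is missing is exactly that independent input, which the paper supplies in the $\GL_3$ case by two routes it combines: (a) $\Ext^2_Z(\pi_{\alg},\pi_{\alg})=0$ via Schraen (making the last map surjective directly from the long exact sequence), and (b) $\dim_E\Ext^1(\pi_{\alg},\sC)=1$ via a variation of \cite[Lem.~2.28]{Ding7}. To make your version rigorous you need the $\GL_2$-analogue of one of these: either compute $\dim_E\Ext^1_{\GL_2(\Q_p)}(\pi_{\alg}(D_1),\sC(s,\sF_i))=1$ directly (Orlik--Strauch/Schraen, using genericity of $\phi_1\phi_2^{-1}$ to rule out other contributions), or show $\Ext^2_{Z_1}(\pi_{\alg}(D_1),\pi_{\alg}(D_1))=0$. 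Once either is in place your dimension-counting argument does close both (1) and (2) as you describe. As it stands, the surjectivity at the right end and the last of the three claimed dimensions are derived from each other.
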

For $i\in \{1,2\}$, consider the following composition (recalling $\delta_{\sF_i}=\phi_i z^{h_1} \boxtimes \phi_j z^{h_2}$ for $j\in \{1,2\}$, $j\neq i$)
\begin{equation}\label{EindGL2}
\zeta_i:		\Ext^1_{T(\Q_p)}(\delta_{\sF_i}, \delta_{\sF_i}) \lra \Ext^1_{\GL_2(\Q_p)}(I_{\sF_i}, I_{\sF_i}) \lra \Ext^1_{\GL_2(\Q_p)}(\pi_{\alg}(D_1), I_{\sF_i}),
\end{equation}
where the first map is obtained by applying the functor $(\Ind_{B^-}^{\GL_2}- \otimes_E (\varepsilon^{-1} \boxtimes 1))^{\an}$ and the second is the pull-back map (with respect to a fixed embedding $\pi_{\alg}(D_1)\hookrightarrow I_{\sF_i}$). Using Schraen's spectral sequence \cite{Sch11}, it is not difficult to see the composition is bijective. The composition can also be obtained by Emertons's functor $I_{B^-}^{\GL_2}$ in \cite{Em11}. Recall for $\widetilde{\delta}_{\sF_i}\in \Ext^1_{T(\Q_p)}(\delta_{\sF_i}, \delta_{\sF_i})$, $I_{B^-}^{\GL_2}(\widetilde{\delta}_{\sF_i}(\varepsilon^{-1}\boxtimes 1))$ is the closed subrepresentation of $(\Ind_{B^-}^{\GL_2} \widetilde{\delta}_{\sF_i}(\varepsilon^{-1} \boxtimes 1))^{\an}$ generated by (where $\delta_B$ denotes the modulus character of $B(\Q_p)$)
\begin{equation*}
	\widetilde{\delta}_{\sF_i}(\varepsilon^{-1} \boxtimes 1)\delta_B \hooklongrightarrow J_B\big((\Ind_{B^-}^{\GL_2} \widetilde{\delta}_{\sF_i}(\varepsilon^{-1} \boxtimes 1))^{\an}\big) \hooklongrightarrow (\Ind_{B^-}^{\GL_2} \widetilde{\delta}_{\sF_i}(\varepsilon^{-1} \boxtimes 1))^{\an}.
\end{equation*} There is a natural surjection $I_{B^-}^{\GL_2} \big(\widetilde{\delta}_{\sF_i}(\varepsilon^{-1} \boxtimes 1)\big)\twoheadrightarrow  I_{\sF_i,0} \cong \pi_{\alg}(D_1)$, and let $W$ be its kernel, which is clearly a closed subrepresentation of $I_{\sF_i}$. Then the image of $\widetilde{\delta}_{\sF_i}$ under  (\ref{EindGL2}) is just the push-forward of  $I_{B^-}^{\GL_2} \big(\widetilde{\delta}_{\sF_i}(\varepsilon^{-1} \boxtimes 1)\big)$ along $W\hookrightarrow I_{\sF_i}$. Finally, the inverse of (\ref{EindGL2}) can be described using the Jacquet-Emerton functor: for any extension $\pi\in \Ext^1_{\GL_2(\Q_p)}(\pi_{\alg}(D_1), I_{\sF_i})$,  its Jacquet-Emerton module $J_B(\pi)$ (cf. \cite{Em11}) contains a unique self-extension of $\delta_{\sF_i}(\varepsilon^{-1} \boxtimes 1) \delta_B$, which is just the inverse image of $\pi$ twisted by $(\varepsilon^{-1} \boxtimes 1) \delta_B$. 

Denote by $\Ext^1_{g'}(\delta_{\sF_i},\delta_{\sF_i})\subset \Ext^1_{T(\Q_p)}(\delta_{\sF_i}, \delta_{\sF_i})$ the subspace of extensions which are locally algebraic up to twist by a certain character of $Z(\Q_p)$ (over $E[\epsilon]/\epsilon^2$). Then $\zeta_i$ restricts to a bijection
\begin{equation*}
	\zeta_i: \Ext^1_{g'}(\delta_{\sF_i},\delta_{\sF_i}) \xlongrightarrow{\sim} \Ext^1_{\GL_2(\Q_p)}(\pi_{\alg}(D_1), \pi_{\alg}(D_1)).
\end{equation*}
Let $\xi$ denote the map 
\begin{eqnarray*}
\xi:	\Ext^1_{T(\Q_p)}(\delta_{\sF_1}, \delta_{\sF_1}) &\xlongrightarrow{\sim}& \Ext^1_{T(\Q_p)}(\delta_{\sF_2}, \delta_{\sF_2})\\
(\phi_1z^{h_1}(1+\psi_{1} \epsilon)) \boxtimes (\phi_2z^{h_2}(1+\psi_{2}\epsilon)) &\mapsto& (\phi_2z^{h_1}(1+\psi_{2} \epsilon)) \boxtimes (\phi_1z^{h_1}(1+\psi_{1}\epsilon)).
\end{eqnarray*}
By the classical intertwining, we have 
\begin{equation}\label{Eint00}
	\zeta_1 =\zeta_2 \circ \xi : \Ext^1_{g'}(\delta_{\sF_1},\delta_{\sF_1}) \xlongrightarrow{\sim} \Ext^1_{\GL_2(\Q_p)}(\pi_{\alg}(D_1), \pi_{\alg}(D_1)).
\end{equation}

Denote by $\Ext^1_{\sF_i}(\pi_{\alg}(D_1), \pi(D_1))$ \big(resp.  $\Ext^1_{g'}(\pi_{\alg}(D_1), \pi(D_1))$, resp,  $\Ext^1_{g}(\pi_{\alg}(D_1), \pi(D_1))$\big) the \ image \ via \ the \ push-forward (injective) \ map \ of  $\Ext^1_{\GL_2(\Q_p)}(\pi_{\alg}(D_1), I_{\sF_i})$ \big(resp.  of $\Ext^1_{\GL_2(\Q_p)}(\pi_{\alg}(D_1), \pi_{\alg}(D_1))$, resp.  of $\Ext^1_{\lalg}(\pi_{\alg}(D_1), \pi_{\alg}(D_1))$, ``$\lalg$" denoting the locally algebraic extensions\big). Then we have an exact sequence (compare with Proposition \ref{PGL21} (2))
\begin{equation}\label{Efern1}
	0 \ra \Ext^1_{g'}(\pi_{\alg}(D_1), \pi(D_1)) \ra \oplus_{i=1,2}\Ext^1_{\sF_i}(\pi_{\alg}(D_1), \pi(D_1))\ra \Ext^1_{\GL_2(\Q_p)}(\pi_{\alg}(D_1), \pi(D_1)) \ra 0.
\end{equation}

	\subsubsection{$\GL_3(\Q_p)$.}
	In this section, to distinguish, we use $Z$, $T$, $B$ to denote subgroups of $\GL_3$, and $Z_1$, $T_1$, $B_1$ the corresponding subgroups of $\GL_2$. 
	\begin{proposition}\label{PExt2}
		(1)  There is a natural exact sequence 
		\begin{multline} \label{EdiviGL3}
			0 \lra \Ext^1_{\GL_3(\Q_p)}(\pi_{\alg}(\ul{\phi}, \lambda), \pi_{\alg}(\ul{\phi},\lambda)) \lra \Ext^1_{\GL_3(\Q_p)}(\pi_{\alg}(\ul{\phi}, \lambda), \pi_1(\ul{\phi},\lambda))\\
			 \lra \oplus_{\sC \in \cS} \Ext^1_{\GL_3(\Q_p)}(\pi_{\alg}(\ul{\phi},\lambda), \sC) \lra 0,
		\end{multline}
		where $\dim_E \Ext^1_{\GL_3(\Q_p)}(\pi_{\alg}(\ul{\phi}, \lambda), \pi_{\alg}(\ul{\phi},\lambda))=4$, $\dim_E \Ext^1_{\GL_3(\Q_p)}(\pi_{\alg}(\ul{\phi},\lambda), \sC) =1$ for $\sC\in \cS$, and (hence) $\dim_E \Ext^1_{\GL_3(\Q_p)}(\pi_{\alg}(\ul{\phi},\lambda),\pi_1(\ul{\phi},\lambda))=10$.
		
		(2) There  is an exact sequence (where the same holds with $-$, $\cS_{\sF}$ replaced by $+$, $\cS_{\sG}$)
		\begin{multline*}
			0 \lra \Ext^1_{\GL_3(\Q_p)}(\pi_{\alg}(\ul{\phi}, \lambda), \pi_{\alg}(\ul{\phi},\lambda)) \lra \Ext^1_{\GL_3(\Q_p)}(\pi_{\alg}(\ul{\phi}, \lambda), \pi_1(\ul{\phi},\lambda)^-)\\
			 \lra \oplus_{\sC \in \cS_{\sF}} \Ext^1_{\GL_3(\Q_p)}(\pi_{\alg}(\ul{\phi},\lambda), \sC) \lra 0,
		\end{multline*}
	with $\dim_E \Ext^1_{\GL_3(\Q_p)}(\pi_{\alg}(\ul{\phi}, \lambda), \pi_1(\ul{\phi},\lambda)^-)=7$. 
		In particular, 
		$$\Ext^1_{\GL_3}(\pi_{\alg}(\ul{\phi},\lambda), \pi_{\alg}(\ul{\phi},\lambda))\xrightarrow{\sim} \Ext^1_{\GL_3}(\pi_{\alg}(\ul{\phi}, \lambda), \pi_1(\ul{\phi},\lambda)^+) \cap  \Ext^1_{\GL_3}(\pi_{\alg}(\ul{\phi}, \lambda), \pi_1(\ul{\phi},\lambda)^-).$$	
			
			(3) For $w\in S_3$, $\dim_E\Ext^1_{\GL_3(\Q_p)}(\pi_{\alg}(\ul{\phi},\lambda), \pi_1(\ul{\phi},\lambda,w))=6$, and there is an exact sequence
			\begin{multline*}
				0 \lra \Ext^1_{\GL_3(\Q_p)}(\pi_{\alg}(\ul{\phi},\lambda), \pi_{\alg}(\ul{\phi},\lambda)) \lra \Ext^1_{\GL_3(\Q_p)}(\pi_{\alg}(\ul{\phi},\lambda),\pi_1(\ul{\phi},\lambda,w)) 
				\\
				\lra \oplus_{i=1,2} \Ext^1_{\GL_3(\Q_p)}(\pi_{\alg}(\ul{\phi},\lambda),\sC(s_i,w))\lra 0.
			\end{multline*}
	\end{proposition}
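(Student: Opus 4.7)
My plan is to deduce all three parts from the long exact sequence of $\Ext^{\bullet}_{\GL_3(\Q_p)}(\pi_{\alg}(\ul{\phi},\lambda), -)$ attached to the natural short exact sequences
\begin{equation*}
0 \lra \pi_{\alg}(\ul{\phi},\lambda) \lra \pi_1(\ul{\phi},\lambda)^? \lra \bigoplus_{\sC \in \cS^?} \sC \lra 0,
\end{equation*}
where $(\pi_1^?, \cS^?)$ is $(\pi_1(\ul{\phi},\lambda), \cS)$ for (1), either $(\pi_1(\ul{\phi},\lambda)^-, \cS_{\sF})$ or $(\pi_1(\ul{\phi},\lambda)^+, \cS_{\sG})$ for (2), and $(\pi_1(\ul{\phi},\lambda, w), \{\sC(s_1,w),\sC(s_2,w)\})$ for (3). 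Each of these is already built into the constructions in \S~\ref{S3.1}.

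The first step is to establish the base-case dimensions. The vanishing $\Hom_{\GL_3(\Q_p)}(\pi_{\alg}(\ul{\phi},\lambda), \sC) = 0$ for $\sC \in \cS$ is immediate from (absolute) irreducibility and non-isomorphism. For $\dim_E \Ext^1_{\GL_3(\Q_p)}(\pi_{\alg}(\ul{\phi},\lambda), \pi_{\alg}(\ul{\phi},\lambda)) = 4$ and $\dim_E \Ext^1_{\GL_3(\Q_p)}(\pi_{\alg}(\ul{\phi},\lambda),\sC(s_i,w)) = 1$, I would invoke the Jacquet-Emerton functor adjunction (\cite{Em11}) together with Schraen's spectral sequence (\cite{Sch11}), following the template of \cite[\S~3.2.2]{BD1} and \cite{Br13I}. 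On the torus side this identifies the self-$\Ext^1$ with locally algebraic self-deformations of $\jmath(\ul{\phi})z^{\lambda}\delta_B$, of dimension $4$ under the genericity hypothesis (essentially three smooth directions plus a $\log$-twist), and identifies each $\Ext^1(\pi_{\alg}, \sC(s_i, w))$ with a one-dimensional space of extensions between the corresponding simple Verma-like quotients in the Orlik-Strauch presentation.

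With these inputs, the long exact sequence reads
\begin{equation*}
0 \lra \Ext^1(\pi_{\alg}, \pi_{\alg}) \lra \Ext^1(\pi_{\alg}, \pi_1^?) \lra \bigoplus_{\sC \in \cS^?} \Ext^1(\pi_{\alg}, \sC) \lra \Ext^2(\pi_{\alg}, \pi_{\alg}),
\end{equation*}
with left injectivity automatic from the $\Hom$-vanishing. The substantive point is surjectivity of the connecting map, equivalently the vanishing in $\Ext^2(\pi_{\alg},\pi_{\alg})$ of the Yoneda product between the extension class of $\pi_1^?$ and each non-zero class in $\Ext^1(\pi_{\alg},\sC)$. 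I would obtain this by computing $\Ext^1(\pi_{\alg}, \pi_1^?)$ directly from the explicit parabolic-induction realizations in \S~\ref{S3.1}: for $\pi_1(\ul{\phi},\lambda)^-$ via the $P_1^-$-induction $(\Ind_{P_1^-}^{\GL_3} (\pi(D_1) \otimes_E \varepsilon^{-1}\circ \dett) \boxtimes \phi_3 z^{h_3})^{\an}$, for $\pi_1(\ul{\phi},\lambda)^+$ via the analogous $P_2^-$-induction, for $\pi_1(\ul{\phi},\lambda,w)$ via the single principal series $(\Ind_{B^-}^{\GL_3} \jmath(w(\ul{\phi}))z^{\lambda})^{\an}$, and for $\pi_1(\ul{\phi},\lambda)$ via their amalgamation. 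In each case a $J_B$-adjunction computation yields $\dim_E \Ext^1(\pi_{\alg}, \pi_1^?) = 4 + |\cS^?|$, which combined with the injection into $\bigoplus_{\sC \in \cS^?} \Ext^1(\pi_{\alg},\sC)$ forces surjectivity and delivers the stated exact sequences and dimensions.

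The intersection claim in (2) is then formal from (1) and (2). The pushout presentation $\pi_1(\ul{\phi},\lambda) \cong \pi_1(\ul{\phi},\lambda)^+ \oplus_{\pi_{\alg}(\ul{\phi},\lambda)} \pi_1(\ul{\phi},\lambda)^-$ yields $\pi_1(\ul{\phi},\lambda)/\pi_1(\ul{\phi},\lambda)^- \cong \bigoplus_{\sC \in \cS_{\sG}} \sC$ and $\pi_1(\ul{\phi},\lambda)/\pi_1(\ul{\phi},\lambda)^+ \cong \bigoplus_{\sC \in \cS_{\sF}} \sC$, so by the $\Hom$-vanishing the push-forwards $\Ext^1(\pi_{\alg}, \pi_1(\ul{\phi},\lambda)^{\pm}) \hookrightarrow \Ext^1(\pi_{\alg}, \pi_1(\ul{\phi},\lambda))$ are injective. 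Under the projection onto $\bigoplus_{\sC \in \cS}\Ext^1(\pi_{\alg}, \sC)$ provided by (1), the image of $\Ext^1(\pi_{\alg}, \pi_1^-)$ lands in the $\cS_{\sF}$-summands and that of $\Ext^1(\pi_{\alg}, \pi_1^+)$ in the $\cS_{\sG}$-summands; since $\cS = \cS_{\sF} \sqcup \cS_{\sG}$, the intersection projects to zero and hence lies in $\Ext^1(\pi_{\alg}, \pi_{\alg})$, which by (2) is a subspace of both $\Ext^1(\pi_{\alg}, \pi_1(\ul{\phi},\lambda)^{\pm})$. The main obstacle I anticipate is the joint package of (i) the self-$\Ext^1$ dimension $4$ and (ii) the surjectivity of the connecting maps, both of which hinge on a careful torus-side computation via the $J_B$-adjunction applied to the parabolic-induction realizations; the remainder is a diagram chase.
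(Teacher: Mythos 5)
Your approach is correct but takes a genuinely different route from the paper's for the key surjectivity step. The paper proves the last maps are surjective by passing to the fixed-central-character groups $\Ext^\bullet_Z$, where \cite[\S~4.3]{Sch11} gives $\Ext^2_Z(\pi_{\alg},\pi_{\alg})\cong\Ext^2_{Z,\lalg}(\pi_{\alg},\pi_{\alg})=0$; the exact sequence is then trivially right-exact in the $\Ext_Z$-world, and the comparison diagram between $\Ext_Z$ and $\Ext_{\GL_3}$ (together with $\dim\Ext^1_Z(\pi_{\alg},\sC)=\dim\Ext^1_{\GL_3}(\pi_{\alg},\sC)=1$ from \cite[Lem.~2.28]{Ding7} and the dimension $4$ from \cite[Lem.~3.16]{BD1}) transports surjectivity. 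You instead establish surjectivity by a head-on dimension count of $\Ext^1(\pi_{\alg},\pi_1^?)$ via Jacquet--Emerton adjunction and Schraen's spectral sequence applied to the explicit parabolic realizations; this is exactly what the paper does \emph{later} in Remark~\ref{RGL3tri} and around~(\ref{Epar2})--(\ref{Etwist}), so your method is consistent with, and independently validated by, the paper's own constructions — it just isn't the route taken in the proof of the proposition. The paper's $\Ext^2_Z$-vanishing trick is slicker and uniform, while your approach has the advantage of directly producing the explicit $\zeta$-type parametrizations that the rest of the section needs anyway. One small caveat you should make explicit: for $\pi_1^?=\pi_1(\ul{\phi},\lambda)$ itself there is no single principal series or parabolic induction to which a $J_B$-adjunction applies, since $\pi_1$ is the amalgam $\pi_1^-\oplus_{\pi_{\alg}}\pi_1^+$; the cleanest fix is to do (2) and (3) first and then deduce (1) by observing that the composites $\Ext^1(\pi_{\alg},\pi_1^\pm)\to\Ext^1(\pi_{\alg},\pi_1)\to\oplus_{\sC\in\cS}\Ext^1(\pi_{\alg},\sC)$ jointly surject because $\cS=\cS_{\sF}\sqcup\cS_{\sG}$ — precisely the disjointness you already exploit for the intersection claim.
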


	\begin{proof}
All the sequences are obtained by d\'evissage. And it suffices to show the last maps are surjective. We use $\Ext^i_Z$ to denote the subgroup of extensions with a fixed central character. 		By \cite[\S~4.3]{Sch11}, $\Ext^i_Z(\pi_{\alg}(\ul{\phi},\lambda), \pi_{\alg}(\ul{\phi},\lambda))\cong \Ext^i_{Z,\lalg}(\pi_{\alg}(\ul{\phi},\lambda), \pi_{\alg}(\ul{\phi},\lambda))$ for $i\leq 2$. Hence \begin{equation}\label{Edimalg}\dim_E \Ext^i_Z(\pi_{\alg}(\ul{\phi},\lambda), \pi_{\alg}(\ul{\phi},\lambda))=\begin{cases} 2 & i=1 \\ 0 & i=2\end{cases}.\end{equation}
By d\'evissage, we get an exact sequence
	\begin{equation*}
	0 \ra \Ext^1_{Z}(\pi_{\alg}(\ul{\phi}, \lambda), \pi_{\alg}(\ul{\phi},\lambda)) \ra \Ext^1_{Z}(\pi_{\alg}(\ul{\phi}, \lambda), \pi_1(\ul{\phi},\lambda))
	\ra \oplus_{\sC \in \cS} \Ext^1_{Z}(\pi_{\alg}(\ul{\phi},\lambda), \sC) \ra 0.
\end{equation*}
By an easier variation of the proof of  \cite[Lem.~2.28]{Ding7}, for $\sC\in \cS$, $$\dim_E \Ext^1_{Z}(\pi_{\alg}(\ul{\phi},\lambda),\sC)=\dim_E \Ext^1_{\GL_3(\Q_p)}(\pi_{\alg}(\ul{\phi},\lambda),\sC)=1.$$ Together with the commutative diagram
\begin{equation*}
	\begin{CD}
		\Ext^1_{Z}(\pi_{\alg}(\ul{\phi}, \lambda), \pi_1(\ul{\phi},\lambda))
		@>>> \oplus_{\sC \in \cS} \Ext^1_{Z}(\pi_{\alg}(\ul{\phi},\lambda), \sC) \\
		@VVV @VVV \\
		\Ext^1_{\GL_3(\Q_p)}(\pi_{\alg}(\ul{\phi}, \lambda), \pi_1(\ul{\phi},\lambda))
		@>>> \oplus_{\sC \in \cS} \Ext^1_{\GL_3(\Q_p)}(\pi_{\alg}(\ul{\phi},\lambda), \sC),
	\end{CD}
\end{equation*}
(the surjectivity in)  (\ref{EdiviGL3}) follows. By (\ref{Edimalg}) and similar arguments in \cite[Lem.~3.16]{BD1}, $$\dim_E \Ext^1_{\GL_3(\Q_p)}(\pi_{\alg}(\ul{\phi},\lambda), \pi_{\alg}(\ul{\phi},\lambda))=4.$$ (1) follows. 
 (2) and (3) follow by similar arguments. 
	\end{proof}

	\begin{remark}\label{RGL3tri}We have
		\begin{multline*}
		\zeta_w:	\Ext^1_{T(\Q_p)}\big(\jmath(w(\ul{\phi}))z^{\lambda},\jmath(w(\ul{\phi}))z^{\lambda}\big)\xlongrightarrow{\Ind}  \Ext^1_{\GL_3(\Q_p)}\big((\Ind_{B^-}^{\GL_3} \jmath(w(\ul{\phi}))z^{\lambda})^{\an},(\Ind_{B^-}^{\GL_3} \jmath(w(\ul{\phi}))z^{\lambda})^{\an}\big)\\
			\lra \Ext^1_{\GL_3(\Q_p)}\big(\pi_{\alg}(\ul{\phi},\lambda), (\Ind_{B^-}^{\GL_3} \jmath(w(\ul{\phi}))z^{\lambda})^{\an}\big) \xlongleftarrow{\sim}\Ext^1_{\GL_3(\Q_p)}(\pi_{\alg}(\ul{\phi},\lambda),\pi_1(\ul{\phi},\lambda,w))
		\end{multline*}
		where the composition of the first two maps is an isomorphism by \cite[(4.38)]{Sch11}, and the third isomorphism follows from \cite[Lem.~2.26]{Ding7}.
Similarly as in the discussion below Proposition \ref{PExt1},  	the inverse of $\zeta_w$ can be described by applying Jacquet-Emerton functor. 
	\end{remark}
	For $w\in S_3$, we put $\Ext^1_{w}(\pi_{\alg}(\ul{\phi},\lambda), \pi_1(\ul{\phi},\lambda))$ to be the image of 
\begin{equation*}
	\Ext^1_{\GL_3(\Q_p)}(\pi_{\alg}(\ul{\phi},\lambda), \pi_1(\ul{\phi},\lambda,w)) \hooklongrightarrow \Ext^1_{\GL_3(\Q_p)}(\pi_{\alg}(\ul{\phi},\lambda), \pi_1(\ul{\phi},\lambda)).
\end{equation*}
By Remark \ref{RGL3tri}, we have a natural isomorphism
\begin{equation*}
	\zeta_w: \Ext^1_{T(\Q_p)}\big(\jmath(w(\ul{\phi}))z^{\lambda},\jmath(w(\ul{\phi}))z^{\lambda}\big) \xlongrightarrow{\sim} \Ext^1_{w}(\pi_{\alg}(\ul{\phi},\lambda), \pi_1(\ul{\phi},\lambda)).
\end{equation*}
	We denote by $\Ext^1_{\sF}(\pi_{\alg}(\ul{\phi},\lambda), \pi_1(\ul{\phi},\lambda))$  (resp. $\Ext^1_{\sG}(\pi_{\alg}(\ul{\phi},\lambda), \pi_1(\ul{\phi},\lambda))$) the image of 
	\begin{equation*}
		i^- : \Ext^1_{\GL_3(\Q_p)}(\pi_{\alg}(\ul{\phi},\lambda), \pi_1(\ul{\phi},\lambda)^-)\hooklongrightarrow\Ext^1_{\GL_3(\Q_p)}(\pi_{\alg}(\ul{\phi},\lambda), \pi_1(\ul{\phi},\lambda))\end{equation*}
	\begin{equation*}
		\big(\text{resp. } 	i^+ : \Ext^1_{\GL_3(\Q_p)}(\pi_{\alg}(\ul{\phi},\lambda), \pi_1(\ul{\phi},\lambda)^+)\hooklongrightarrow\Ext^1_{\GL_3(\Q_p)}(\pi_{\alg}(\ul{\phi},\lambda), \pi_1(\ul{\phi},\lambda))\big).
	\end{equation*}
	By  Schraen's spectral sequence \cite[(4.38)]{Sch11}, there is a natural isomorphism
	\begin{multline}\label{Epar2}
		\Ext^1_{L_1(\Q_p)}\big((\pi_{\alg}(D_1) \otimes_E \varepsilon^{-1} \circ \dett) \boxtimes \phi_3 z^{h_3}, (\pi(D_1) \otimes_E \varepsilon^{-1} \circ \dett) \boxtimes \phi_3 z^{h_3}\big) \\ \xlongrightarrow{\sim} \Ext^1_{\GL_3(\Q_p)}\big(\pi_{\alg}(\ul{\phi}, \lambda), (\Ind_{P_1^-}^{\GL_3} (\pi(D_1) \otimes_E \varepsilon^{-1} \circ \dett) \boxtimes \phi_3 z^{h_3})^{\an}\big) \\
		\xlongrightarrow{\sim}  \Ext^1_{\GL_3(\Q_p)}(\pi_{\alg}(\ul{\phi},\lambda), \pi_1(\ul{\phi},\lambda)^-),
	\end{multline}
	where the first map is given by $(\Ind_{P_1^-}^{\GL_3}-)^{\an}$ composed with the pull-back map, and the second map is the restriction map, being an isomorphism by \cite[Lem.~2.26]{Ding7}. There is a natural bijection 
	\begin{multline}\label{Etwist}
		\Ext^1_{\GL_2(\Q_p)}(\pi_{\alg}(D_1), \pi(D_1)) 
		\times \Hom(\Q_p^{\times},E)\xlongrightarrow{\sim} \\	\Ext^1_{L_1(\Q_p)}\big((\pi_{\alg}(D_1) \otimes_E \varepsilon^{-1} \circ \dett) \boxtimes \phi_3 z^{h_3}, (\pi(D_1) \otimes_E \varepsilon^{-1} \circ \dett) \boxtimes \phi_3 z^{h_3}\big),
	\end{multline}
sending $(\pi, \psi)$ to $\pi\otimes_{E[\epsilon]/\epsilon^2}  \phi_3z^{h_3}(1+\psi \epsilon)$ (noting the elements in both sides can be equipped with a natural $E[\epsilon]/\epsilon^2$-structure). Taking the composition of (\ref{Epar2}) (\ref{Etwist}) and $i^-$, we obtain an isomorphism 
	\begin{equation*}
		j^-:	\Ext^1_{\GL_2(\Q_p)}(\pi_{\alg}(D_1), \pi(D_1)) 
		\times \Hom(\Q_p^{\times},E)\xlongrightarrow{\sim} \Ext^1_{\sF}(\pi_{\alg}(\ul{\phi},\lambda),\pi_1(\ul{\phi},\lambda)).
	\end{equation*}
	Similarly, we have an isomorphism
	\begin{equation*}
		j^+: \	\Ext^1_{\GL_2(\Q_p)}(\pi_{\alg}(C_1), \pi(C_1)) 
		\times \Hom(\Q_p^{\times},E)\xlongrightarrow{\sim} \Ext^1_{\sG}(\pi_{\alg}(\ul{\phi},\lambda),\pi_1(\ul{\phi},\lambda)).
	\end{equation*}
	For $i=1,2$, the refinement $\sF_i$ on $D_1$ (resp. $\sG_i$ on $C_1$) corresponds to a refinement, still denoted by $\sF_i$ (resp. $\sG_i$), on $D$. 
	\begin{proposition}\label{PExt3}
		For $i=1,2$, let $w\in S_3$ such that $w(\ul{\phi})$ is the refinement $\sF_i$ (resp. $\sG_i$), the map $j^-$ (resp. $j^+$) restricts to an isomorphism
		\begin{equation*}
			j^-: \Ext^1_{\sF_i}(\pi_{\alg}(D_1), \pi(D_1)) \times \Hom(\Q_p^{\times}, E) \xlongrightarrow{\sim} \Ext^1_{w}(\pi_{\alg}(\ul{\phi},\lambda), \pi_1(\ul{\phi},\lambda))
		\end{equation*}
		\begin{equation*}
			\text{\big(resp. }	j^+: \Ext^1_{\sG_i}(\pi_{\alg}(C_1), \pi(C_1)) \times \Hom(\Q_p^{\times},E)\xlongrightarrow{\sim} \Ext^1_{w}(\pi_{\alg}(\ul{\phi},\lambda), \pi_1(\ul{\phi},\lambda))\big).
		\end{equation*}
	\end{proposition}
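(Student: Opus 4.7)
The plan is to compare the restricted map $j^-$ with the isomorphism $\zeta_w$ from Remark \ref{RGL3tri}, after identifying both sides with the $\Ext^1$-group naturally attached to the parameter $\jmath(w(\ul{\phi}))z^{\lambda}$. Dimensions first: by Proposition \ref{PExt1}(2) the factor $\Ext^1_{\sF_i}(\pi_{\alg}(D_1), \pi(D_1))$ has dimension $4$, and $\Hom(\Q_p^{\times},E)$ has dimension $2$, so the domain of the restricted $j^-$ has dimension $6$; by Proposition \ref{PExt2}(3) combined with the injectivity of the pull-back $\Ext^1_{\GL_3(\Q_p)}(\pi_{\alg}(\ul{\phi},\lambda), \pi_1(\ul{\phi},\lambda,w)) \hookrightarrow \Ext^1_{\GL_3(\Q_p)}(\pi_{\alg}(\ul{\phi},\lambda), \pi_1(\ul{\phi},\lambda))$, the target $\Ext^1_w(\pi_{\alg}(\ul{\phi},\lambda), \pi_1(\ul{\phi},\lambda))$ also has dimension $6$. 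Since $j^-$ is already a bijection on the ambient spaces, its restriction is automatically injective, so once the image is shown to land in $\Ext^1_w$, surjectivity will follow from the matching dimensions.

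For the containment, I would use the transitivity of parabolic induction together with the presentation $I_{\sF_i} \cong (\Ind_{B_1^-}^{L_1} z^{\lambda^1}(\phi_i|\cdot|^{-1} \boxtimes \phi_j))^{\an}$. A direct twist computation yields
$$
\bigl(\Ind_{P_1^-}^{\GL_3} (I_{\sF_i} \otimes_E \varepsilon^{-1} \circ \dett) \boxtimes \phi_3 z^{h_3}\bigr)^{\an} \cong \bigl(\Ind_{B^-}^{\GL_3} \jmath(w(\ul{\phi})) z^{\lambda}\bigr)^{\an},
$$
where $w \in S_3$ is precisely the element such that $w(\ul{\phi})$ realizes the refinement $\sF_i$ on $D$ (explicitly $w=1$ for $i=1$ and $w=s_1$ for $i=2$). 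Hence for $(\widetilde{\pi},\psi) \in \Ext^1_{\GL_2(\Q_p)}(\pi_{\alg}(D_1), I_{\sF_i}) \times \Hom(\Q_p^{\times},E)$, the extension $j^-(\widetilde{\pi},\psi)$ is constructed by parabolic induction from $I_{\sF_i}$ (after the twist by $\phi_3 z^{h_3}(1+\psi\epsilon)$) followed by pull-back along $\pi_{\alg}(\ul{\phi},\lambda) \hookrightarrow (\Ind_{P_1^-}^{\GL_3}\cdots)^{\an}$, and it therefore defines a class in $\Ext^1_{\GL_3(\Q_p)}(\pi_{\alg}(\ul{\phi},\lambda), (\Ind_{B^-}^{\GL_3} \jmath(w(\ul{\phi}))z^{\lambda})^{\an})$.

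By Remark \ref{RGL3tri}, the natural pull-back map $\Ext^1_{\GL_3(\Q_p)}(\pi_{\alg}(\ul{\phi},\lambda), \pi_1(\ul{\phi},\lambda,w)) \xrightarrow{\sim} \Ext^1_{\GL_3(\Q_p)}(\pi_{\alg}(\ul{\phi},\lambda), (\Ind_{B^-}^{\GL_3}\jmath(w(\ul{\phi}))z^{\lambda})^{\an})$ is an isomorphism (via \cite[Lem.~2.26]{Ding7}). Combined with the previous paragraph, this implies that the image of $j^-|_{\Ext^1_{\sF_i}\times\Hom}$ lies in $\Ext^1_w(\pi_{\alg}(\ul{\phi},\lambda), \pi_1(\ul{\phi},\lambda))$. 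The case of $j^+$ is entirely parallel, replacing $P_1$, $D_1$, $\sF_i$ by $P_2$, $C_1$, $\sG_i$ and checking the analogous transitivity identity for the induction from $P_2^-$.

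The main technical obstacle is purely bookkeeping: one must carefully track the various twists---$\varepsilon^{-1}\circ\dett$, $\phi_3 z^{h_3}$, and the deformation parameter $\psi$---so that after parabolic induction from $L_1$ (resp.\ $L_2$) the resulting $T(\Q_p)$-parameter is exactly $\jmath(w(\ul{\phi}))z^{\lambda}$ deformed in a way that matches the image of the corresponding $T_1(\Q_p)$-parameter under $\zeta_i$ of \eqref{EindGL2}. Once this twist-matching is verified, the commutativity of the inductive diagrams combined with the uniqueness of the trianguline parameter identifies $j^-|_{\Ext^1_{\sF_i}\times\Hom}$ with $\zeta_w$ up to the isomorphism $\zeta_i \times \id$, giving the desired bijection.
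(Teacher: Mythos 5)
Your proof is correct and follows essentially the same route as the paper: the key step in both is the transitivity/twist identity $(\Ind_{P_1^-}^{\GL_3}(I_{\sF_i}\otimes_E\varepsilon^{-1}\circ\dett)\boxtimes\phi_3z^{h_3})^{\an}\cong(\Ind_{B^-}^{\GL_3}\jmath(w(\ul{\phi}))z^{\lambda})^{\an}$, which fits the restriction of $j^-$ into a commutative square with $\zeta_i\times\id$ and $\zeta_w$ and thereby identifies it with an isomorphism. Your opening dimension count (domain and target both of dimension $6$, injectivity inherited from the ambient $j^-$) is a valid but redundant safety net once that identification is in place; also, a minor terminological slip: the maps from $\Ext^1_{\GL_3(\Q_p)}(\pi_{\alg}(\ul\phi,\lambda),\pi_1(\ul\phi,\lambda,w))$ that you invoke are push-forwards along the inclusions $\pi_1(\ul\phi,\lambda,w)\hookrightarrow\pi_1(\ul\phi,\lambda)$ and $\pi_1(\ul\phi,\lambda,w)\hookrightarrow(\Ind_{B^-}^{\GL_3}\jmath(w(\ul\phi))z^{\lambda})^{\an}$ (what the paper calls ``restriction maps''), not pull-backs.
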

	\begin{proof}
		By the explicit construction (noting elements on the both sides come from Borel induced representations), we obtain the maps in the proposition. By Remark \ref{RGL3tri} and (\ref{EindGL2}), we have the following commutative diagram:
		\begin{equation*}
			\begin{CD}
				\Ext^1_{T_1(\Q_p)}(\delta_{\sF_i}, \delta_{\sF_i}) \times \Hom(\Q_p^{\times}, E) @> (\zeta_i, \id) > \sim > \Ext^1_{\GL_2(\Q_p)}(\pi_{\alg}(D_1), \pi(D_1)) \times \Hom(\Q_p^{\times}, E) \\ 
				@V \sim VV @V j^-VV \\
				\Ext^1_{T(\Q_p)}(\jmath(w(\ul{\phi}))z^{\lambda}, \jmath(w(\ul{\phi}))z^{\lambda}) @> \zeta_w > \sim > \Ext^1_{w}(\pi_{\alg}(\ul{\phi}, \lambda), \pi_1(\ul{\phi}, \lambda)),
			\end{CD}\end{equation*}
		where the left vertical map sends $(\delta_{\sF_i}((1+\psi_1 \epsilon) \boxtimes (1+\psi_2 \epsilon)), \psi_3)$ to $\jmath(w(\ul{\phi}))z^{\lambda}((1+\psi_1 \epsilon) \boxtimes (1+\psi_2 \epsilon) \boxtimes (1+\psi_3 \epsilon))$. The map $j^+$ also sits in a similar diagram. The proposition follows.
	\end{proof}

	\subsection{$p$-adic Langlands correspondence for  $\GL_3(\Q_p)$ in the crystabelline case}We construct a $\GL_3(\Q_p)$-representation that determines and depends on $D$. 
	\subsubsection{$p$-adic local Langlands correspondence for $\GL_2(\Q_p)$ revisited}
	We collect some facts on the $p$-adic Langlands correspondence for $\GL_2(\Q_p)$.

	Let $D_1$ be as in \S~\ref{S1}, and recall $\delta_{\sF_1}=\phi_1 z^{h_1-1} \boxtimes \phi_2 z^{h_2}$, $\delta_{\sF_2}=\phi_2 z^{h_1-1} \boxtimes \phi_1 z^{h_2}$. We fix $\pi_{\alg}(D_1)\hookrightarrow \pi(D_1)$ and $\pi_{\alg}(D_1)\hookrightarrow I_{\sF_i}$ (hence $I_{\sF_i}\hookrightarrow \pi(D_1)$). We have isomorphisms
	\begin{equation*}
		\rec_i: \Ext^1_{\sF_i}(D_1, D_1) \xlongrightarrow{\sim} \Ext^1_{T(\Q_p)}(\delta_{\sF_i}, \delta_{\sF_i}) \xlongrightarrow[\sim]{\zeta_i}  \Ext^1_{ \sF_i}(\pi_{\alg}(D_1), \pi(D_1))
	\end{equation*}
	which restricts to isomorphisms 
	\begin{equation*}
		\Ext^1_{g'}(D_1,D_1) \xlongrightarrow{\sim} \Ext^1_{g'} (\delta_{\sF_i}, \delta_{\sF_i}) \xlongrightarrow[\sim]{\zeta_i}  \Ext^1_{g,Z}(\pi_{\alg}(\ul{\phi}, \lambda), \pi(D_1)).
	\end{equation*}
The following lemma is a direct consequence of (\ref{Eint00}) and Remark \ref{Rint01}.
	\begin{lemma}\label{LdRnorm1}
		For $i=1,2$, 	$\rec_i|_{\Ext^1_{g'}(D_1,D_1)}$ are equal .
	\end{lemma}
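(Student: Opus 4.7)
The plan is to reduce the equality of $\rec_1$ and $\rec_2$ on $\Ext^1_{g'}(D_1,D_1)$ to the classical intertwining identity \eqref{Eint00} for $\zeta_1$ and $\zeta_2$, using the description of trianguline parameters in Remark \ref{Rint01} as the bridge between the two sides.

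First, I would take an arbitrary $\widetilde{D}_1 \in \Ext^1_{g'}(D_1,D_1)$ and invoke Remark \ref{Rint01} to write down its two trianguline parameters. By the bijection of Proposition \ref{PGL21}(3), the image of $\widetilde{D}_1$ in $\Ext^1_{T(\Q_p)}(\delta_{\sF_i}, \delta_{\sF_i})$ under the first half of $\rec_i$ is precisely $\widetilde{\delta}_{\sF_i}$. Concretely, there exist $\psi \in \Hom(\Q_p^{\times},E)$ and $\psi_1,\psi_2 \in \Hom_{\sm}(\Q_p^{\times},E)$ so that
\begin{equation*}
\widetilde{\delta}_{\sF_1} = \phi_1 z^{h_1}\bigl(1+(\psi_1+\psi/2)\epsilon\bigr) \boxtimes \phi_2 z^{h_2}\bigl(1+(\psi_2+\psi/2)\epsilon\bigr),
\end{equation*}
\begin{equation*}
\widetilde{\delta}_{\sF_2} = \phi_2 z^{h_1}\bigl(1+(\psi_2+\psi/2)\epsilon\bigr) \boxtimes \phi_1 z^{h_2}\bigl(1+(\psi_1+\psi/2)\epsilon\bigr).
\end{equation*}

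Next, I would compare these two parameters through the map $\xi: \Ext^1_{T(\Q_p)}(\delta_{\sF_1},\delta_{\sF_1}) \xrightarrow{\sim} \Ext^1_{T(\Q_p)}(\delta_{\sF_2},\delta_{\sF_2})$ defined just before \eqref{Eint00}. By direct inspection of the formula for $\xi$ (it swaps the two coordinates while dragging along the $\epsilon$-deformations), one sees immediately that $\xi(\widetilde{\delta}_{\sF_1}) = \widetilde{\delta}_{\sF_2}$. Since $\widetilde{D}_1 \in \Ext^1_{g'}(D_1,D_1)$, both $\widetilde{\delta}_{\sF_1}$ and $\widetilde{\delta}_{\sF_2}$ lie in the $g'$-subspaces where \eqref{Eint00} applies, so
\begin{equation*}
\rec_1(\widetilde{D}_1) = \zeta_1(\widetilde{\delta}_{\sF_1}) = \zeta_2\bigl(\xi(\widetilde{\delta}_{\sF_1})\bigr) = \zeta_2(\widetilde{\delta}_{\sF_2}) = \rec_2(\widetilde{D}_1).
\end{equation*}

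There is no genuine obstacle here, since all the real work has already been done: the nontrivial inputs are (a) the explicit matching of the two trianguline parameters from Remark \ref{Rint01}, and (b) the classical intertwining \eqref{Eint00}. The only subtlety worth flagging is that one must use the $g'$-version of the intertwining, so it is crucial that the embeddings $\pi_{\alg}(D_1) \hookrightarrow I_{\sF_i}$ used to define $\rec_i$ are compatible with the common locally algebraic subrepresentation $\pi_{\alg}(D_1) \hookrightarrow \pi(D_1)$ fixed at the beginning of the subsection; this compatibility is precisely what makes the three vertical arrows in the commutative triangle of identifications defining $\rec_i$ commute.
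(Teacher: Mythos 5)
Your proof is correct and is exactly the argument the paper has in mind: the paper's entire proof is the one-line remark that the lemma ``is a direct consequence of (\ref{Eint00}) and Remark \ref{Rint01},'' and you have simply written out that deduction --- extract the two trianguline parameters from Remark \ref{Rint01}, observe that $\xi$ carries $\widetilde{\delta}_{\sF_1}$ to $\widetilde{\delta}_{\sF_2}$, and apply (\ref{Eint00}). Your closing remark about compatibility of the fixed embeddings $\pi_{\alg}(D_1)\hookrightarrow I_{\sF_i}\hookrightarrow\pi(D_1)$ is a reasonable sanity check and matches the paper's setup.
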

By Lemma \ref{LdRnorm1} and Proposition \ref{PGL21} (2), $(\rec_1, \rec_2)$ ``glue" to a bijection
	\begin{equation}\label{Erec}
		\rec: \Ext^1_{(\varphi, \Gamma)}(D_1, D_1) \xlongrightarrow{\sim} \Ext^1_{\GL_2(\Q_p)}(\pi_{\alg}(D_1), \pi(D_1)).
	\end{equation}

	\subsubsection{Crystabelline correspondence for $\GL_3(\Q_p)$}
	Let $D$ be as in \S~\ref{S1}, and $\iota_D\in \Hom_{(\varphi, \Gamma)}(D_1, C_1)$ be associated to $D$, and $\sI_D$ be the associated set of higher intertwining pairs. Recall $\iota_D$ is determined by $\sI_D$. We associate to $(\ul{\phi}, \lambda, \iota_D)$ a locally analytic representation of $\GL_3(\Q_p)$. 
	
	Consider the compositions
	\begin{equation*}
		i^-: \Ext^1_{(\varphi, \Gamma)}(D_1, D_1) \xlongrightarrow[\sim]{\rec}  \Ext^1_{\GL_2(\Q_p)}(\pi_{\alg}(D_1), \pi(D_1)) \xlongrightarrow{j^-} \Ext^1_{\GL_3(\Q_p)}(\pi_{\alg}(\ul{\phi},\lambda), \pi_1(\ul{\phi},\lambda)),
	\end{equation*}
		\begin{equation*}
		i^+: \Ext^1_{(\varphi, \Gamma)}(C_1, C_1) \xlongrightarrow[\sim]{\rec}  \Ext^1_{\GL_2(\Q_p)}(\pi_{\alg}(C_1), \pi(C_1)) \xlongrightarrow{j^+} \Ext^1_{\GL_3(\Q_p)}(\pi_{\alg}(\ul{\phi},\lambda), \pi_1(\ul{\phi},\lambda)).
	\end{equation*}
	Recall we have $\Ext^1_g(D_1,D_1)\xrightarrow[\sim]{\kappa} \Hom_{\sm}(\Q_p,E)^{\oplus 2}$ and $\Ext^1_g(C_1,C_1)\xrightarrow[\sim]{\kappa} \Hom_{\sm}(\Q_p,E)^{\oplus 2}$. By the explicit description of the maps (see Remark \ref{Rint01}, Remark \ref{RGL3tri}), we have:
	\begin{lemma} \label{Lamal1}
		We have $i^- \circ \kappa^{-1}=i^+\circ \kappa^{-1}$. In particular, $i^-(\Ext^1_g(D_1,D_1))=i^+(\Ext^1_g(C_1,C_1))=:\Ext^1_{g,\phi_3}(\pi_{\alg}(\ul{\phi},\lambda), \pi_1(\ul{\phi},\lambda))$.
	\end{lemma}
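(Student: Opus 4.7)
The plan is to compute both compositions $i^{\pm}\circ\kappa^{-1}$ at the torus level using the explicit commutative diagram of Proposition \ref{PExt3}, and then identify the two resulting $T(\Q_p)$-character deformations inside the locally algebraic subspace $\Ext^1_{\GL_3(\Q_p)}(\pi_{\alg}(\ul{\phi},\lambda),\pi_{\alg}(\ul{\phi},\lambda))\subset\Ext^1_{\GL_3(\Q_p)}(\pi_{\alg}(\ul{\phi},\lambda),\pi_1(\ul{\phi},\lambda))$ by a $\GL_3$-analog of Lemma \ref{LdRnorm1}.

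First, for $(\psi_1,\psi_2)\in\Hom_{\sm}(\Q_p^\times,E)^{\oplus 2}$, by Remark \ref{Rint01} (with the universal twist $\psi$ equal to $0$) and Proposition \ref{PGL21}(3), $\widetilde{D}_1:=\kappa^{-1}(\psi_1,\psi_2)\in\Ext^1_g(D_1,D_1)$ has trianguline parameter $\phi_1z^{h_1}(1+\psi_1\epsilon)\boxtimes\phi_2z^{h_2}(1+\psi_2\epsilon)$ in the refinement $\sF_1$. Under $\rec=\zeta_1$ this lands in $\Ext^1_{g,Z}(\pi_{\alg}(D_1),\pi_{\alg}(D_1))$, and applying $j^-$ (with trivial $\Hom(\Q_p^\times,E)$-factor) together with the commutative diagram in Proposition \ref{PExt3} identifies $i^-(\widetilde{D}_1)$ with the image under $\zeta_w$ (for $w=1\in S_3$, in the sense of Remark \ref{RGL3tri}) of the smooth $T(\Q_p)$-character deformation
\[
\widetilde{\chi}_-:=\jmath(\ul{\phi})z^{\lambda}\bigl((1+\psi_1\epsilon)\boxtimes(1+\psi_2\epsilon)\boxtimes 1\bigr).
\]
Analogously, $\widetilde{C}_1:=\kappa^{-1}(\psi_1,\psi_2)\in\Ext^1_g(C_1,C_1)$ has trianguline parameter $\phi_1z^{h_2}(1+\psi_1\epsilon)\boxtimes\phi_2z^{h_3}(1+\psi_2\epsilon)$ in the refinement of $C_1$ starting with $\phi_1$, and $i^+(\widetilde{C}_1)$ is identified with the image under $\zeta_w$ (for the unique $w\in S_3$ with $w(\ul{\phi})=(\phi_3,\phi_1,\phi_2)$) of
\[
\widetilde{\chi}_+:=\jmath(w(\ul{\phi}))z^{\lambda}\bigl(1\boxtimes(1+\psi_1\epsilon)\boxtimes(1+\psi_2\epsilon)\bigr).
\]

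To finish, note that both $\widetilde{\chi}_{\pm}$ are purely smooth deformations, so $\zeta_w(\widetilde{\chi}_\pm)$ lie in the locally algebraic subspace $\Ext^1_{\GL_3(\Q_p)}(\pi_{\alg}(\ul{\phi},\lambda),\pi_{\alg}(\ul{\phi},\lambda))$ (realized as $\Ext^1_\sF\cap\Ext^1_\sG$ by Proposition \ref{PExt2}(2)). The $\GL_3$-analog of Lemma \ref{LdRnorm1} then asserts that on this subspace the bijections $\zeta_w$ for varying $w\in S_3$ are all mutually compatible: each one identifies $\Ext^1_{\GL_3(\Q_p)}(\pi_{\alg},\pi_{\alg})$ with a choice of smooth deformation $\psi^{(i)}\in\Hom_{\sm}(\Q_p^\times,E)$ for each $\phi_i$, together with a continuous central twist. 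This compatibility follows from classical intertwining for $(\Ind_{B^-}^{\GL_3}\jmath(w(\ul{\phi})))^{\sm}$ extended to first-order deformations, which can be established by applying Emerton's Jacquet functor $J_B$ to a locally algebraic self-extension of $\pi_{\alg}(\ul{\phi},\lambda)$, or equivalently by an easier variation of the Schraen spectral sequence computation used in the proof of Proposition \ref{PExt2}. Since $\widetilde{\chi}_-$ and $\widetilde{\chi}_+$ both record the same smooth deformation pattern (namely $\phi_1\mapsto 1+\psi_1\epsilon$, $\phi_2\mapsto 1+\psi_2\epsilon$, $\phi_3\mapsto 1$) with no central twist, they represent the same element of $\Ext^1_{\GL_3(\Q_p)}(\pi_{\alg},\pi_{\alg})$, yielding $i^-\circ\kappa^{-1}=i^+\circ\kappa^{-1}$. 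The ``in particular'' statement is then immediate from the bijectivity of $\kappa$ on both $\Ext^1_g(D_1,D_1)$ and $\Ext^1_g(C_1,C_1)$. The main technical obstacle is justifying the $\GL_3$-intertwining compatibility at the deformation level in the final step, i.e., verifying that classical intertwining commutes with passage to first-order smooth character deformations on the torus.
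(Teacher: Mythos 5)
Your proof is essentially correct and takes the same route the paper intends. The paper's own justification of this lemma is extremely terse ("by the explicit description of the maps, see Remark \ref{Rint01}, Remark \ref{RGL3tri}"), and your computation is precisely the unwinding of those remarks: you trace $(\psi_1,\psi_2)\in\Hom_{\sm}(\Q_p^\times,E)^{\oplus 2}$ through both $i^-\circ\kappa^{-1}$ and $i^+\circ\kappa^{-1}$ down to the torus-level character deformations $\widetilde{\chi}_-$ and $\widetilde{\chi}_+$ via Proposition \ref{PExt3} and Remark \ref{RGL3tri}, correctly observing that the $\Hom(\Q_p^\times,E)$-coordinate (the twist) is trivial in both. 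You also correctly identify that the crux is a $\GL_3$-analogue of the $\GL_2$ intertwining compatibility (\ref{Eint00})/Lemma \ref{LdRnorm1}: that the isomorphisms $\zeta_w$ (for $w\in S_3$), restricted to the locally algebraic subspace $\Ext^1_{\GL_3(\Q_p)}(\pi_{\alg}(\ul{\phi},\lambda),\pi_{\alg}(\ul{\phi},\lambda))$, all identify it with the space of smooth deformations of the $\phi_i$ up to a central twist, and do so compatibly with the Weyl-group permutation. This is exactly what the paper leaves implicit, and your two suggested justifications (applying the Jacquet--Emerton functor to a locally algebraic self-extension of $\pi_{\alg}(\ul{\phi},\lambda)$ and reading off its semisimplification, or arguing via Schraen's spectral sequence as in the proof of Proposition \ref{PExt2}) are both standard and adequate. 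The only caveat is that you label the intertwining compatibility the "main technical obstacle" rather than supplying a fully detailed verification; but this matches the level of detail in the paper's own remarks, so there is no genuine gap.
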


	Let $\iota: D_1\hookrightarrow C_1$.
	Consider the restriction of $i^-$ and $i^+$ to $\Ext^1_{\iota}(D_1, D_1)$ and $\Ext^1_{\iota}(C_1, C_1)$.
	\begin{lemma} \label{Lamal2}
		We have $i^-(\Ext^1_{\iota}(D_1,D_1))\cap i^+(\Ext^1_{\iota}(C_1,C_1))=\Ext^1_{g,\phi_3}(\pi_{\alg}(\ul{\phi},\lambda), \pi_1(\ul{\phi},\lambda))$.
	\end{lemma}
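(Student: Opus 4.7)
The inclusion $\supset$ is essentially automatic. By Lemma~\ref{Lamal1}, $\Ext^1_{g,\phi_3}(\pi_{\alg}(\ul{\phi},\lambda),\pi_1(\ul{\phi},\lambda))$ equals both $i^-(\Ext^1_g(D_1,D_1))$ and $i^+(\Ext^1_g(C_1,C_1))$; Proposition~\ref{Ppairing}(2) gives $\Ext^1_g(D_1,D_1)\subset \Ext^1_\iota(D_1,D_1)$ and $\Ext^1_g(C_1,C_1)\subset\Ext^1_\iota(C_1,C_1)$, so the right-hand side sits inside each of the two images on the left.

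For the reverse inclusion, let $\pi$ lie in the intersection. By construction $i^-$ factors through $\Ext^1_{\sF}(\pi_{\alg}(\ul{\phi},\lambda),\pi_1(\ul{\phi},\lambda))$ (the subspace coming from the induction $(\Ind_{P_1^-}^{\GL_3}-)^{\an}$) and similarly $i^+$ factors through $\Ext^1_{\sG}(\pi_{\alg}(\ul{\phi},\lambda),\pi_1(\ul{\phi},\lambda))$. Consequently $\pi\in \Ext^1_{\sF}\cap\Ext^1_{\sG}$, which by Proposition~\ref{PExt2}(2) equals $\Ext^1_{\GL_3(\Q_p)}(\pi_{\alg}(\ul{\phi},\lambda),\pi_{\alg}(\ul{\phi},\lambda))$.

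Write $\pi=i^-(D_1')$ for some $D_1'\in\Ext^1_\iota(D_1,D_1)$. Unwinding $j^-$ via Proposition~\ref{PExt3} (or more concretely, via the exact sequence in Proposition~\ref{PExt1}(1) applied to the inducing datum), the condition that the image of $\rec(D_1')$ in $\Ext^1_{\GL_3(\Q_p)}(\pi_{\alg}(\ul{\phi},\lambda),\pi_1(\ul{\phi},\lambda))$ lands in the algebraic subspace $\Ext^1_{\GL_3(\Q_p)}(\pi_{\alg}(\ul{\phi},\lambda),\pi_{\alg}(\ul{\phi},\lambda))$ is equivalent to $\rec(D_1')\in\Ext^1_{\GL_2(\Q_p)}(\pi_{\alg}(D_1),\pi_{\alg}(D_1))$. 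Through the bijection $\rec$ of (\ref{Erec}), this in turn translates into $D_1'\in\Ext^1_{g'}(D_1,D_1)$.

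Now $\Ext^1_{g'}(D_1,D_1)\subset \Ext^1_{\sF_1}(D_1,D_1)\cap\Ext^1_{\sF_2}(D_1,D_1)$ by the exact sequence of Proposition~\ref{PGL21}(2), so $D_1'$ is trianguline. But $D_1'$ also lies in $\Ext^1_\iota(D_1,D_1)$, and Proposition~\ref{Ppairing}(2) forces any trianguline element of $\Ext^1_\iota(D_1,D_1)$ to be de Rham. Hence $D_1'\in\Ext^1_g(D_1,D_1)$, and therefore $\pi=i^-(D_1')\in i^-(\Ext^1_g(D_1,D_1))=\Ext^1_{g,\phi_3}(\pi_{\alg}(\ul{\phi},\lambda),\pi_1(\ul{\phi},\lambda))$ by Lemma~\ref{Lamal1}. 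The one subtle point (the main obstacle) is verifying that the preimage in $\Ext^1_{\GL_2(\Q_p)}(\pi_{\alg}(D_1),\pi(D_1))\times\Hom(\Q_p^\times,E)$ under $j^-$ of a class in $\Ext^1_{\GL_3(\Q_p)}(\pi_{\alg},\pi_{\alg})$ forces the $\Ext^1_{\GL_2}$-component to be locally algebraic — this rests on matching up the explicit descriptions in Proposition~\ref{PExt3} with the filtration exact sequence of Proposition~\ref{PExt1}(1), but no new input beyond what the paper has already proved is needed.
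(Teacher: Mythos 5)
Your proposal is correct and follows essentially the same route as the paper's (very terse) proof, just with the intermediate steps spelled out. Both arguments use Proposition~\ref{PExt2}(2) to place the intersection inside the locally algebraic subspace, then invoke Proposition~\ref{PExt3} to transfer that condition back to the $\GL_2$-side and hence, via $\rec$, to $\Ext^1_{g'}(D_1,D_1)$, and finally apply Proposition~\ref{Ppairing}(2) together with Lemma~\ref{Lamal1} to conclude. One small imprecision: the inclusion $\Ext^1_{g'}(D_1,D_1)\subset \Ext^1_{\sF_1}(D_1,D_1)\cap\Ext^1_{\sF_2}(D_1,D_1)$ does not follow purely from the exactness of the sequence in Proposition~\ref{PGL21}(2); it follows from the structural fact (cf.\ Remark~\ref{Rint01}) that a $g'$-deformation admits trianguline parameters for \emph{both} refinements, so the map into $\Ext^1_{\sF_1}\oplus\Ext^1_{\sF_2}$ is (up to sign) the diagonal. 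This is precisely what the paper takes for granted, so it does not affect the validity, but the cited justification should be rephrased.
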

	\begin{proof}
	By Proposition \ref{PExt2} (2), $i^-(\Ext^1_{\iota}(D_1,D_1))\cap i^+(\Ext^1_{\iota}(C_1,C_1)) \subset \Ext^1_{\GL_3(\Q_p)}(\pi_{\alg}(\ul{\phi},\lambda),\pi_{\alg}(\ul{\phi},\lambda))$ hence is contained in $\Ext^1_w(\pi_{\alg}(\ul{\phi},\lambda), \pi_1(\ul{\phi},\lambda))$ for all $w\in S_3$. The lemma then follows from Proposition \ref{PExt3},  Proposition \ref{Ppairing} (2) and Lemma \ref{Lamal2}.
	\end{proof}
	 Let $\sI_{\iota}^0\subset \sI_{\iota}$ be the subset consisting of non-de Rham pairs. Let $(\widetilde{D}_1, \widetilde{C}_1) \in \sI_{\iota}^0$, and put $\pi(\ul{\phi}, \lambda, \iota)\in \Ext^1_{\GL_3(\Q_p)}(\pi_{\alg}(\ul{\phi},\lambda), \pi_1(\ul{\phi},\lambda))$ to be the isomorphism class of $i^-([\widetilde{D}_1])-i^+([\widetilde{C}_1])$. 
	\begin{lemma}\label{Lindep2}
		The representation $\pi(\ul{\phi}, \lambda, \iota)$ does not depend on the choice of $(\widetilde{D}_1, \widetilde{C}_1)\in \sI_{\iota}^0$.
	\end{lemma}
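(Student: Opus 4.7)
The plan is to study the $E$-linear map
\[ \Phi: \Ext^1(C_1, D_1) \longrightarrow \Ext^1_{\GL_3(\Q_p)}(\pi_{\alg}(\ul{\phi}, \lambda), \pi_1(\ul{\phi}, \lambda)), \qquad M \longmapsto i^-(\iota^-(M)) - i^+(\iota^+(M)), \]
show that it vanishes on the de Rham subspace $\Ext^1_g(C_1, D_1)$, and then conclude by a dimension count. Concretely, if $(\widetilde{D}_1, \widetilde{C}_1), (\widetilde{D}_1', \widetilde{C}_1') \in \sI_{\iota}^0$ come from $M, M' \in \Ext^1(C_1, D_1)$ via $\widetilde{D}_1 = \iota^-(M)$, $\widetilde{C}_1 = \iota^+(M)$ (and similarly for the primed pair), the two candidate classes defining $\pi(\ul{\phi}, \lambda, \iota)$ are $\Phi(M)$ and $\Phi(M')$, and isomorphism of the corresponding extensions of $\pi_{\alg}(\ul{\phi}, \lambda)$ by $\pi_1(\ul{\phi}, \lambda)$ is equivalent to $\Phi(M)$ and $\Phi(M')$ being proportional in the target.

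For the vanishing on $\Ext^1_g(C_1, D_1)$: given $M \in \Ext^1_g(C_1, D_1)$, one has $\iota_g^-(M) \in \Ext^1_g(D_1, D_1)$ and $\iota_g^+(M) \in \Ext^1_g(C_1, C_1)$. Proposition \ref{Pamal1} gives $\kappa(\iota_g^-(M)) = \kappa(\iota_g^+(M))$, and Lemma \ref{Lamal1} ($i^- \circ \kappa^{-1} = i^+ \circ \kappa^{-1}$) then forces $i^-(\iota_g^-(M)) = i^+(\iota_g^+(M))$, so $\Phi(M) = 0$.

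By the proof of Proposition \ref{Ppairing}, $\dim_E \Ext^1(C_1, D_1) = 4$ and $\dim_E \Ext^1_g(C_1, D_1) = 3$, so $\Phi$ factors through the one-dimensional quotient $\Ext^1(C_1, D_1)/\Ext^1_g(C_1, D_1)$ and its image $L$ is at most one-dimensional in $\Ext^1_{\GL_3(\Q_p)}(\pi_{\alg}(\ul{\phi}, \lambda), \pi_1(\ul{\phi}, \lambda))$. Any two non-de Rham $M, M'$ (exactly those giving pairs in $\sI_\iota^0$, since $\Ker \iota^- \subset \Ext^1_g(C_1, D_1)$) have non-zero classes in this quotient, whence $\Phi(M)$ and $\Phi(M')$ are either both zero (if $L = 0$) or non-zero scalar multiples of one another (if $L$ is one-dimensional); in either case the two extension classes define the same isomorphism class of representation. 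I do not anticipate a serious obstacle: the argument is essentially a dimension count resting on the de Rham compatibility already provided by Proposition \ref{Pamal1} and Lemma \ref{Lamal1}. The one point that requires care is the interpretation of ``does not depend on the choice''---it means up to isomorphism of the resulting representation, which in $\Ext^1$ corresponds to proportionality rather than pointwise equality.
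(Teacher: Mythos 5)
Your argument is correct and uses essentially the same ingredients as the paper's proof (Proposition \ref{Pamal1}, Lemma \ref{Lamal1}, and the dimension/de Rham facts in Proposition \ref{Ppairing}); the paper writes $[\widetilde{D}_1']=[x_0]+a[\widetilde{D}_1]$, $[\widetilde{C}_1']=[y_0]+a[\widetilde{C}_1]$ for a de Rham pair $(x_0,y_0)$ and $a\in E^{\times}$ and then applies $i^-([x_0])=i^+([y_0])$, which is the same step as your vanishing of $\Phi$ on $\Ext^1_g(C_1,D_1)$ followed by the one-dimensionality of $\Ext^1(C_1,D_1)/\Ext^1_g(C_1,D_1)$. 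Your care in noting that "does not depend" is interpreted up to isomorphism of the extension (i.e.\ proportionality of classes) matches the paper's use of $a\in E^{\times}$.
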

\begin{proof}
	Let $(\widetilde{D}_1', \widetilde{C}_1')\in \sI_{\iota}^0$. By Proposition \ref{Ppairing} (1) (2), there exist a de Rham pair $(x_0, y_0)\in \sI_{\iota}$ and $a\in E^{\times}$ such that $[\widetilde{D}_1']=[x_0]+a[\widetilde{D}_1]$ and $[\widetilde{C}_1']=[y_0]+a[\widetilde{C}_1]$.  By Proposition \ref{Pamal1} and Lemma \ref{Lamal1}, $i^-([x_0])=i^+[y_0]$, hence $i^-([\widetilde{D}_1'])-i^+([\widetilde{C}_1'])=a(i^-([\widetilde{D}_1])-i^+([\widetilde{C}_1]))$. The lemma follows. 
\end{proof}
	By definition, $\pi(\ul{\phi},\lambda, \iota)$ has the following structure
	\begin{equation*}	\bigg[\begindc{\commdiag}[100] 
		\obj(0,0)[a]{$\pi_{\alg}(\ul{\phi},\lambda)$}
		\obj(8,3)[b]{$\sC_{\sG}(\ul{\phi},\lambda)$}
		\obj(8,-3)[c]{$\sC_{\sF}(\ul{\phi},\lambda)$}
		\obj(16,0)[d]{$\pi_{\alg}(\ul{\phi},\lambda)$}
		\mor{a}{b}{}[+1,3]
		\mor{a}{c}{}[+1,3]
		\mor{d}{b}{}[+0,3]
		\mor{d}{c}{}[+1,3]
		\enddc\bigg] \ =\ 
		\bigg[	\begindc{\commdiag}[100] 
		\obj(0,0)[a]{$\pi_{\alg}(\ul{\phi},\lambda)$}
		\obj(8,-5)[b]{$\sC(s_1,1)$}
		\obj(8,-3)[c]{$\sC(s_1,s_1)$}
		\obj(8,-1)[d]{$\sC(s_1,s_1s_2)$}
		\obj(8,1)[e]{$\sC(s_2, s_2s_1)$}
		\obj(8,3)[f]{$\sC(s_2,s_2)$}
		\obj(8,5)[g]{$\sC(s_2,1)$}
		\obj(16,0)[h]{$\pi_{\alg}(\ul{\phi},\lambda)$}
		\mor{a}{b}{}[+1,3]
		\mor{a}{c}{}[+1,3]
		\mor{a}{d}{}[+1,3]
		\mor{a}{e}{}[+1,3]
		\mor{a}{f}{}[+1,3]
		\mor{a}{g}{}[+1,3]
		\mor{h}{b}{}[+1,3]
		\mor{h}{c}{}[+1,3]
		\mor{h}{d}{}[+1,3]
		\mor{h}{e}{}[+1,3]
		\mor{h}{f}{}[+1,3]
		\mor{h}{g}{}[+1,3]
		\enddc\bigg]
	\end{equation*}
	The representation $\pi(\ul{\phi}, \lambda, \iota)$ has the following ``coordinate-free"  description.
	\begin{proposition}\label{Pinter}
		We have $\pi(\ul{\phi},\lambda,\iota)=\cap_{(\widetilde{D}_1,\widetilde{C}_1) \in \sI_{\iota}^0} \Big(i^-(\widetilde{D}_1) \oplus_{\pi_1(\ul{\phi},\lambda)} i^+(\widetilde{C}_1)\Big)$, where the intersection is taken in the universal extension of $
		\Ext^1_{\GL_3(\Q_p)}(\pi_{\alg}(\ul{\phi},\lambda), \pi_1(\ul{\phi},\lambda)) \otimes_E \pi_{\alg}(\ul{\phi},\lambda)$ by $\pi_1(\ul{\phi},\lambda)$, and we also use $i^{\pm}(-)$ to denote the representation corresponding to the elements in the extension group.
	\end{proposition}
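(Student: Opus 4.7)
The plan is to translate the set-theoretic equality of subrepresentations of the universal extension into a linear-algebra equality of subspaces inside the ambient $E$-vector space $W := \Ext^1_{\GL_3(\Q_p)}(\pi_{\alg}(\ul{\phi},\lambda),\pi_1(\ul{\phi},\lambda))$. Under the standard dictionary between subrepresentations of the universal extension containing $\pi_1(\ul{\phi},\lambda)$ and subspaces of $W$, the amalgamated sum $i^-(\widetilde{D}_1') \oplus_{\pi_1(\ul{\phi},\lambda)} i^+(\widetilde{C}_1')$ corresponds to the $2$-dimensional plane $P_{\widetilde{D}_1',\widetilde{C}_1'} := E \cdot i^-([\widetilde{D}_1']) + E \cdot i^+([\widetilde{C}_1'])$, and $\pi(\ul{\phi},\lambda,\iota)$ corresponds to the line $E\cdot\bigl(i^-([\widetilde{D}_1])-i^+([\widetilde{C}_1])\bigr)$ for a fixed reference pair $(\widetilde{D}_1,\widetilde{C}_1)\in\sI_\iota^0$. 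It therefore suffices to establish the equality
\[
E\cdot\bigl(i^-([\widetilde{D}_1])-i^+([\widetilde{C}_1])\bigr) \;=\; \bigcap_{(\widetilde{D}_1',\widetilde{C}_1')\in\sI_\iota^0}P_{\widetilde{D}_1',\widetilde{C}_1'}.
\]

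For the inclusion ``$\subseteq$'', I would reuse the decomposition from the proof of Lemma~\ref{Lindep2}: any $(\widetilde{D}_1',\widetilde{C}_1')\in\sI_\iota^0$ can be written as $a(\widetilde{D}_1,\widetilde{C}_1)+(x_0,y_0)$ with $a\in E^\times$ and $(x_0,y_0)$ a de Rham element of $\sI_\iota$, and Lemma~\ref{Lamal1} gives $i^-([x_0])=i^+([y_0])$; hence $i^-([\widetilde{D}_1'])-i^+([\widetilde{C}_1'])=a\bigl(i^-([\widetilde{D}_1])-i^+([\widetilde{C}_1])\bigr)$ lies in $P_{\widetilde{D}_1',\widetilde{C}_1'}$.

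For the reverse inclusion, I would work inside the subspace $V:=i^-(\Ext^1_\iota(D_1,D_1))+i^+(\Ext^1_\iota(C_1,C_1))\subset W$. Combining Proposition~\ref{Ppairing}~(1)--(2), Lemma~\ref{Lamal2} and the injectivity of $i^\pm$ gives that $V_h:=i^-(\Ext^1_\iota(D_1,D_1))$ and $V_v:=i^+(\Ext^1_\iota(C_1,C_1))$ are both $3$-dimensional with $V_h\cap V_v=\Ext^1_{g,\phi_3}(\pi_{\alg}(\ul{\phi},\lambda),\pi_1(\ul{\phi},\lambda))$ of dimension $2$; since $\widetilde{D}_1$, $\widetilde{C}_1$ are non-de Rham, the vectors $i^-([\widetilde{D}_1])$ and $i^+([\widetilde{C}_1])$ are linearly independent modulo $V_h\cap V_v$, so fixing a basis $\{z_1,z_2\}$ of $\Ext^1_{g,\phi_3}$ yields a basis $\{i^-([\widetilde{D}_1]), i^+([\widetilde{C}_1]), z_1, z_2\}$ of the $4$-dimensional space $V$. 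Parametrizing the de Rham translates by $z_0\in\Ext^1_{g,\phi_3}$, which ranges over all of $\Ext^1_{g,\phi_3}$ as $(x_0,y_0)$ varies in the de Rham part of $\sI_\iota$ (using Lemma~\ref{Lamal1} together with the surjectivity of $\iota_g^-$ established in the proof of Proposition~\ref{Ppairing}), the relevant planes take the form $E(i^-([\widetilde{D}_1])+z_0)+E(i^+([\widetilde{C}_1])+z_0)$; writing $v=\alpha\, i^-([\widetilde{D}_1])+\beta\, i^+([\widetilde{C}_1])+\gamma$ with $\gamma\in\Ext^1_{g,\phi_3}$, a direct coefficient comparison shows that $v$ lies in this plane if and only if $\gamma=(\alpha+\beta)z_0$. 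Demanding this for every $z_0$ in the $2$-dimensional $\Ext^1_{g,\phi_3}$ forces $\alpha+\beta=0$ and $\gamma=0$, so $v\in E\cdot\bigl(i^-([\widetilde{D}_1])-i^+([\widetilde{C}_1])\bigr)$.

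The main obstacle is the bookkeeping needed to identify intersections of subrepresentations of the universal extension with intersections of subspaces of $W$, together with verifying that translating the reference pair by de Rham elements of $\sI_\iota$ produces a genuinely $2$-parameter family of distinct planes (as opposed to a lower-dimensional family); this in turn relies on the linear-algebra facts that $i^-$ and $i^+$ are injective, agree on de Rham inputs (Lemma~\ref{Lamal1}), and that $\iota_g^-$ surjects onto $\Ext^1_g(D_1,D_1)$.
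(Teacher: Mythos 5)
Your proof is correct and rests on the same inputs the paper uses: Lemma~\ref{Lindep2} for $\subseteq$, and Proposition~\ref{Ppairing}, Lemma~\ref{Lamal1}, Lemma~\ref{Lamal2}, Proposition~\ref{PExt2}~(2) for the linear-independence facts underlying $\supseteq$. The paper's argument for $\supseteq$ is slightly quicker --- it fixes a single nonzero de Rham translate $(x_0,y_0)$ and observes that a larger intersection would force the two two-dimensional planes to coincide, contradicting the linear independence of $i^-([\widetilde{D}_1])$, $i^+([\widetilde{C}_1])$ and $i^-([x_0])=i^+([y_0])$ --- whereas you carry out the coefficient computation over the full two-parameter family of planes and reach the same conclusion.
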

	\begin{proof}
	The  ``$\subset$" part follows from Lemma \ref{Lindep2}. For ``$\supset$",   let  $(\widetilde{D}_1, \widetilde{C}_1)\in \sI_{\iota}^0$, $(x_0, y_0)\in \sF_{\iota}$ be de Rham (and non-zero) and $(\widetilde{D}_1', \widetilde{C}_1') \in \sF_{\iota}^0$ such that $[\widetilde{D}_1']=[\widetilde{D}_1]+[x_0]$, $[\widetilde{C}_1']=[\widetilde{C}_1]+[y_0$. If $$\big(i^-(\widetilde{D}_1) \oplus_{\pi_1(\ul{\phi},\lambda)} i^+(\widetilde{C}_1)\big) \cap \big(i^-(\widetilde{D}_1') \oplus_{\pi_1(\ul{\phi},\lambda)} i^+(\widetilde{C}_1')\big)\supsetneq \pi(\ul{\phi},\lambda, \iota),$$  then $\big(i^-(\widetilde{D}_1) \oplus_{\pi_1(\ul{\phi},\lambda)} i^+(\widetilde{C}_1)\big) \cong \big(i^-(\widetilde{D}_1') \oplus_{\pi_1(\ul{\phi},\lambda)} i^+(\widetilde{C}_1')\big)$. However, by Proposition \ref{PExt2} (2), Lemma \ref{Lamal1} and \ref{Lamal2}, $i^-([\widetilde{D}_1])$, $i^+([\widetilde{C}_1])$ and $i^-([x_0])=i^+([y_0])$ are linearly independent, a contradiction. 
	\end{proof}
	\begin{theorem}
		For two injections $\iota, \iota': D_1 \hookrightarrow C_1$, $\pi(\ul{\phi}, \lambda, \iota)\cong \pi(\ul{\phi}, \lambda, \iota')$ if and only if $E[\iota]=E[\iota']\subset  \Hom_{(\varphi, \Gamma)}(D_1,C_1)$. In particular, for $D$ given as in \S~\ref{S1}, $\pi(\ul{\phi},\lambda, \iota_D)$ depends on and uniquely determines $D$.
	\end{theorem}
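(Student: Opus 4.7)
The ``if'' direction is immediate from the construction: if $E[\iota] = E[\iota']$, the Corollary following Proposition \ref{Pamal1} gives $\sI_\iota = \sI_{\iota'}$, so $\sI_\iota^0 = \sI_{\iota'}^0$ and, for any $(\widetilde D_1, \widetilde C_1)$ in this common set, both $[\pi(\ul\phi, \lambda, \iota)]$ and $[\pi(\ul\phi, \lambda, \iota')]$ are represented by $i^-([\widetilde D_1]) - i^+([\widetilde C_1])$.

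For the converse, fix a non-de Rham element $M_0 \in \Ext^1(C_1, D_1)$ (which exists since $\dim \Ext^1(C_1, D_1)/\Ext^1_g(C_1, D_1) = 1$) and consider the map
\[
\Phi \colon \Hom(D_1, C_1) \lra \Ext^1_{\GL_3(\Q_p)}(\pi_{\alg}(\ul\phi, \lambda), \pi_1(\ul\phi, \lambda)), \qquad \iota \longmapsto i^-(\iota^-(M_0)) - i^+(\iota^+(M_0)),
\]
which is $E$-linear since $\iota \mapsto \iota^{\pm}$ depends linearly on $\iota$. For any injection $\iota$, the pair $(\iota^-(M_0), \iota^+(M_0))$ lies in $\sI_\iota$ by definition; moreover, since the preimage of $\Ext^1_g(D_1, D_1)$ under $\iota^-$ coincides with $\Ext^1_g(C_1, D_1)$ (from the proof of Proposition \ref{Ppairing}(2)), the non-de Rham assumption on $M_0$ forces $\iota^-(M_0) \notin \Ext^1_g(D_1,D_1)$, hence $(\iota^-(M_0), \iota^+(M_0)) \in \sI_\iota^0$. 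Thus $\Phi(\iota)$ is exactly the extension class of $\pi(\ul\phi, \lambda, \iota)$ from Lemma \ref{Lindep2}. As $\pi_{\alg}(\ul\phi, \lambda)$ is irreducible, so $\End_{\GL_3(\Q_p)}(\pi_{\alg}(\ul\phi, \lambda)) = E$, an isomorphism $\pi(\ul\phi, \lambda, \iota) \cong \pi(\ul\phi, \lambda, \iota')$ translates into a relation $\Phi(\iota) = c\,\Phi(\iota')$ for some $c \in E^{\times}$, whence $\Phi(\iota - c\iota') = 0$; it therefore suffices to prove that $\Phi$ is injective.

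Suppose $\Phi(\iota'') = 0$ for some nonzero $\iota'' \in \Hom(D_1, C_1)$. If $\iota''$ is an injection, then the common value $i^-((\iota'')^-(M_0)) = i^+((\iota'')^+(M_0))$ lies in $i^-(\Ext^1_{\iota''}(D_1, D_1)) \cap i^+(\Ext^1_{\iota''}(C_1, C_1))$, which equals $\Ext^1_{g, \phi_3} = i^-(\Ext^1_g(D_1, D_1))$ by Lemmas \ref{Lamal1} and \ref{Lamal2}. The injectivity of $i^-$ gives $(\iota'')^-(M_0) \in \Ext^1_g(D_1, D_1)$, and by the kernel/surjectivity description of $(\iota'')^-$ recalled above, this forces $M_0 \in \Ext^1_g(C_1, D_1)$, contradicting the choice of $M_0$.

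If instead $\iota''$ is non-injective, then $\iota'' = a \alpha_k$ for some $a \in E^{\times}$ and $k \in \{1, 2\}$, with $\alpha_k$ the basis of $\Hom(D_1, C_1)$ from \eqref{Ealphai}, and one must show $\Phi(\alpha_k) \neq 0$. The plan is to adapt the argument of Lemma \ref{Lamal2} to the rank-one factorization $\alpha_k \colon D_1 \twoheadlongrightarrow \cR_E(z^{h_2}\phi_k) \hooklongrightarrow C_1$: the vanishing $i^-(\alpha_k^-(M_0)) = i^+(\alpha_k^+(M_0))$ lands in $\Ext^1_\sF \cap \Ext^1_\sG = \Ext^1_{\GL_3(\Q_p)}(\pi_{\alg}(\ul\phi,\lambda), \pi_{\alg}(\ul\phi,\lambda))$ by Proposition \ref{PExt2}(2), and membership in every $\Ext^1_w$ forces $\alpha_k^-(M_0)$ to be simultaneously trianguline for both refinements $\sF_1$ and $\sF_2$ via Proposition \ref{PExt3}, i.e.\ to lie in $\Ext^1_{g'}(D_1, D_1)$; combining this with the factoring $\alpha_k^- = \pi_k^- \circ j_k^-$ and the injectivity/compatibility statements of Lemma \ref{Lamal1} then traces the condition back to $M_0 \in \Ext^1_g(C_1, D_1)$, yielding the same contradiction. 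This last step is the principal obstacle, since Lemma \ref{Lamal2} is formulated only for injections and the rank-one factorization must be handled explicitly; once $\Phi$ is shown injective, $\iota = c \iota'$ follows, proving the first assertion of the theorem. The final assertion that $\pi(\ul\phi, \lambda, \iota_D)$ both depends on and determines $D$ is then a formal consequence, combining the first assertion with Proposition \ref{PHodge1}(2) which recovers $D$ from the data $(\ul\phi, h, [\iota_D])$.
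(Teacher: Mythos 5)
Your ``if'' direction and your treatment of the case where $\iota-c\iota'$ is injective are sound, and the latter contains the same key step as the paper (using Propositions \ref{PExt2}(2) and \ref{PExt3} to force a common value of $i^-$ and $i^+$ into the de~Rham part). The problem is the reduction to injectivity of $\Phi$ on all of $\Hom(D_1,C_1)$: this is strictly more than the theorem asks for, and it manufactures the very case you flag as ``the principal obstacle,'' namely $\iota-c\iota'$ proportional to $\alpha_1$ or $\alpha_2$. Your paragraph on that case is a sketch rather than a proof: Lemma \ref{Lamal2} and the kernel computation in the proof of Proposition \ref{Ppairing} (on which the injective case rests) are established only for injections, the maps ``$\pi_k^-$, $j_k^-$'' you invoke are undefined, and the implication $\alpha_k^-(M_0)\in\Ext^1_{g'}(D_1,D_1)\Rightarrow M_0\in\Ext^1_g(C_1,D_1)$ is not justified --- $\alpha_k^-$ factors through the two-dimensional $\Ext^1(\cR_E(\phi_k z^{h_2}),D_1)$ rather than having a one-dimensional de~Rham kernel, so the dimension count would have to be redone from scratch. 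As written, this is a genuine gap.

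The paper's proof never forms the difference $\iota-c\iota'$, so this case simply does not arise. From $\pi(\ul\phi,\lambda,\iota)\cong\pi(\ul\phi,\lambda,\iota')$ one gets $i^-(x)-i^+(y)=i^-(x')-i^+(y')$ with $(x,y)\in\sI_\iota^0$, $(x',y')\in\sI_{\iota'}^0$ (after rescaling $(x',y')$ by a nonzero scalar, which keeps it in $\sI_{\iota'}^0$), hence $i^-(x-x')=i^+(y'-y)$, and then $x-x'\in\Ext^1_g(D_1,D_1)$ exactly as in your injective case. Since $\Ext^1_g(D_1,D_1)\subset\Ext^1_\iota(D_1,D_1)\cap\Ext^1_{\iota'}(D_1,D_1)$ by Proposition \ref{Ppairing}(2), the non-de-Rham element $x=x'+(x-x')$ lies in both three-dimensional spaces; together with the two-dimensional $\Ext^1_g(D_1,D_1)$ this forces $\Ext^1_\iota(D_1,D_1)=\Ext^1_{\iota'}(D_1,D_1)$, contradicting Proposition \ref{Ppairing}(3) when $E[\iota]\neq E[\iota']$. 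If you wish to keep $\Phi$, the repair is to replace ``$\Phi(\iota-c\iota')=0\Rightarrow\iota=c\iota'$'' with: from $\Phi(\iota)=c\Phi(\iota')$ deduce $\iota^-(M_0)-c(\iota')^-(M_0)\in\Ext^1_g(D_1,D_1)$ and then run the dimension argument above on $\Ext^1_\iota(D_1,D_1)$ and $\Ext^1_{\iota'}(D_1,D_1)$ directly, so that $\alpha_k$ never enters.
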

	\begin{proof}
		Suppose $\pi(\ul{\phi}, \lambda, \iota)\cong \pi(\ul{\phi}, \lambda, \iota')$, and $E[\iota] \neq E[\iota']$. Let $(x, y)\in \sI_{\iota}^0$, and $(x',y')\in \sI_{\iota'}^0$ such that $[\pi(\ul{\phi}, \lambda, \iota)]=i^-(x)-i^+(y)$ and $[\pi(\ul{\phi},\lambda,\iota')]=i^-(x')-i^+(y')$. Hence $i^-(x-x')=i^+(y'-y)$. Similarly as in the proof of Lemma \ref{Lamal2}, by Proposition \ref{PExt2} (2), this implies $i^-(x-x')\in \Ext^1_{\GL_3(\Q_p)}(\pi_{\alg}(\ul{\phi},\lambda), \pi_{\alg}(\ul{\phi},\lambda))$. By  Proposition \ref{PExt3} (and the proof, see also Remark \ref{RGL3tri}), this implies  $x-x'\in \Ext^1_g(D_1,D_1)$ hence $\Ext^1_{\iota}(D_1,D_1)=\Ext^1_{\iota'}(D_1,D_1)$, contradicting Proposition \ref{Ppairing} (3).
	\end{proof}
	\begin{remark}
		(1) 	If $D$ is critical for some refinements or if $D$ has irregular Sen weights, one can associate to $D$ a semi-simple locally analytic representation of $\GL_3(\Q_p)$ that determines $D$ as in \cite{Br13I} \cite{Wu21}. Together with our construction of $\pi(\ul{\phi}, \lambda, \iota)$, this establishes a one-to-one correspondence between generic crystabelline $(\varphi, \Gamma)$-modules of rank $3$ and their corresponding locally analytic representations of $\GL_3(\Q_p)$.

		(2) Note that $\pi(\ul{\phi},\lambda, \iota_D)$ is  only a small piece of the hypothetical full locally analytic $\GL_3(\Q_p)$-representation $\pi(D)$ associated to $D$. We quickly discuss the relation between $\pi(\ul{\phi},\lambda, \iota_D)$ with the wall-crossing of $\pi(D)$ considered in \cite{Ding15}. Let $\theta_1:=(h_1-2, h_1-2,h_3)$ which is a partially singular weight. Consider the wall-crossing $R_{s_1} \pi(D):=T_{\theta_1}^{\lambda} T_{\lambda}^{\theta_1} \pi(D)$. We have a natural map $\pi(D) \xrightarrow{j} R_{s_1} \pi(D)$. Let $\pi_{s_2}^-(D):=\Ker (j)$ and $\pi_{s_2}^+(D):=\Ima(j)$. 
		Since $D$ is non-critical for all refinements, one may expect that $\pi_{s_2}^{\pm}(D)$ only depends on $\ul{\phi}$ and $\lambda$. Indeed, one may expect $\pi_{s_2}^-(D)$ to be the extension associated to $\Fil^{\max} D_{\rm{pcr}}(D)$ in \cite[(EXT)]{Br16}. One may furthermore expect that the Hodge parameter of $D$ is encoded in $\Ext^1_{\GL_3(\Q_p)}(\pi_{s_2}^+(D), \pi_{s_2}^-(D))$. Let \begin{eqnarray*}
			\pi_{s_2}^-(\ul{\phi},\lambda)&:=&\big[\pi_{\alg}(\ul{\phi},\lambda) \lin \big(\sC(s_2,1) \oplus \sC(s_2, s_2) \oplus \sC(s_2, s_2s_1)\big)\big]\\
			\pi_{s_2}^+(\ul{\phi},\lambda)&:=&\big[\big(\sC(s_1,1) \oplus \sC(s_1, s_1) \oplus \sC(s_1, s_1s_2)\big) \lin \pi_{\alg}(\ul{\phi},\lambda)\big].
		\end{eqnarray*}
		The (conjectural) injection $\pi(\ul{\phi},\lambda, \iota_D)\hookrightarrow \pi(D)$ fits into the following  commutative diagram
		\begin{equation*}
			\begin{CD}
				0 @>>>  	\pi_{s_2}^-(\ul{\phi},\lambda) @>>> \pi(\ul{\phi},\lambda, \iota_D) @>>> 	\pi_{s_2}^+(\ul{\phi},\lambda) @>>> 0\\
				@. @VVV @VVV @VVV @. \\
				0 @>>> \pi_{s_2}^-(D) @>>> \pi(D) @>>> \pi_{s_2}^+ (D) @>>> 0.
			\end{CD}
		\end{equation*}
		The Hodge parameter of $D$,  presumed to be encoded in the mysterious  $\Ext^1_{\GL_3(\Q_p)}(\pi_{s_2}^+(D), \pi_{s_2}^-(D))$, turns out to live in the much more transparent $\Ext^1_{\GL_3(\Q_p)}(\pi_{s_2}^+(\ul{\phi},\lambda), \pi_{s_2}^-(\ul{\phi},\lambda))$.
	\end{remark}
	\section{Local-global compatibility}
	\subsection{Universal extensions}
	Keep the notation. We collect some facts on  the action of the universal deformation ring of $D_1$ on universal extensions of $\pi_{\alg}(D_1)$ by $\pi(D_1)$. Keep  fixing injections $\pi_{\alg}(D_1) \hookrightarrow I_{\sF_i} \hookrightarrow \pi(D_1)$.

	Let $R_{D_1}$ (resp. $R_{D_1,\sF_i}$) be the universal deformation ring  of deformations of $D_1$ (resp. trianguline deformations of $D_1$ with respect to $\sF_i$) over artinian local $E$-algebras. It is known that $R_{D_1}$ and $R_{D_1, \sF_i}$ are both completed formally smooth local $E$-algebras, and there are natural quotient maps $R_{D_1} \twoheadrightarrow R_{D_1, \sF_i}$. We let $\fm$ (resp. $\fm_i$) be the maximal ideal of $R_{D_1}$ (resp. of $R_{D_1, \sF_i}$). 
	
	Let $\widetilde{\pi}(D_1)^{\univ}$ be the universal extension of $\Ext^1_{\GL_2(\Q_p)}(\pi_{\alg}(D_1), \pi(D_1)) \otimes_E \pi_{\alg}(D_1)$ by $\pi(D_1)$. Then $\widetilde{\pi}(D_1)^{\univ}$ can be equipped with a natural $E$-linear action of $R_{D_1}/\fm^2$ as follows. For $x\in \fm/\fm^2$, we view $x$ as a map $\Ext^1_{\GL_2(\Q_p)}(\pi_{\alg}(D_1), \pi(D_1)) \ra E$ using 
	\begin{equation*}
		(\fm/\fm^2)^{\vee} \cong \Ext^1_{(\varphi, \Gamma)}(D_1,D_1) \xlongrightarrow{\rec} \Ext^1_{\GL_2(\Q_p)}(\pi_{\alg}(D_1), \pi(D_1)).
	\end{equation*}
	Then $x$ acts on $\widetilde{\pi}(D_1)^{\univ}$ via 
	\begin{equation}\label{Exaction}
		\widetilde{\pi}(D_1)^{\univ} \twoheadlongrightarrow \Ext^1_{\GL_2(\Q_p)}(\pi_{\alg}(D_1), \pi(D_1)) \otimes_E \pi_{\alg}(D_1) \xlongrightarrow{x} \pi_{\alg}(D_1) \hooklongrightarrow \pi(D_1) \hooklongrightarrow \widetilde{\pi}(D_1)^{\univ}.
	\end{equation}
	The  lemma below follows by definition, where ``$[\cJ]$" denotes the subspace annihilated by the ideal $\cJ$. 
	\begin{lemma}\label{Ltan1}
		(1) $\widetilde{\pi}(D_1)^{\univ}[\fm]=\pi(D_1)$.
		
		(2) Let $v\in \Spec E[\epsilon]/\epsilon^2 \ra \Spf R_{D_1}$ be a non-zero element in the tangent space of $\Spf R_{D_1}$ at $\fm$, $I_v\supset \fm^2$ be the associated ideal. Let $\widetilde{D}_{1,v}$ be the associated deformation of $D_1$ over $\cR_{E[\epsilon]/\epsilon^2}$. Then $\widetilde{\pi}(D_1)^{\univ}[I_v]\cong \rec(\widetilde{D}_{1,v})$. 
	\end{lemma}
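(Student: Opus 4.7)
The plan is to unpack the defining formula \eqref{Exaction} for the $R_{D_1}/\fm^2$-action and track how it interacts with the canonical surjection $\widetilde{\pi}(D_1)^{\univ} \twoheadrightarrow \Ext^1_{\GL_2(\Q_p)}(\pi_{\alg}(D_1), \pi(D_1)) \otimes_E \pi_{\alg}(D_1)$ with kernel $\pi(D_1)$. Both parts are then essentially linear algebra once one combines this with the fact that $\rec$ is an isomorphism.

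For (1), I note that since the action of $R_{D_1}$ factors through $R_{D_1}/\fm^2$, being annihilated by $\fm$ is the same as being annihilated by every $x \in \fm/\fm^2$. By \eqref{Exaction}, an element $y \in \widetilde{\pi}(D_1)^{\univ}$ is annihilated by $x$ iff $x$ applied to the image $\bar y \in \Ext^1_{\GL_2(\Q_p)}(\pi_{\alg}(D_1),\pi(D_1)) \otimes_E \pi_{\alg}(D_1)$ vanishes. Writing $\bar y = \sum_i e_i \otimes v_i$ with the $v_i$ linearly independent in $\pi_{\alg}(D_1)$, the condition $x \cdot y = \sum_i x(e_i) v_i = 0$ for all $x$ forces $x(e_i) = 0$ for every $i$ and every $x \in \fm/\fm^2$. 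Since $\rec$ identifies $(\fm/\fm^2)^{\vee}$ with $\Ext^1_{\GL_2(\Q_p)}(\pi_{\alg}(D_1),\pi(D_1))$, so every linear functional on the latter is realised, this forces each $e_i = 0$, i.e. $\bar y = 0$, hence $y \in \pi(D_1)$. The reverse inclusion is immediate.

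For (2), the tangent vector $v$ corresponds dually to a line $E[\widetilde{D}_{1,v}] \subset \Ext^1_{(\varphi,\Gamma)}(D_1,D_1) \cong (\fm/\fm^2)^{\vee}$, and $I_v/\fm^2 \subset \fm/\fm^2$ is exactly the hyperplane annihilating this line under the duality pairing. Running the same computation as in (1) but with $x$ restricted to $I_v/\fm^2$, I see that $y \in \widetilde{\pi}(D_1)^{\univ}[I_v]$ iff, writing $\bar y = \sum_i e_i \otimes v_i$ with linearly independent $v_i$, each $e_i$ annihilates $I_v/\fm^2$, equivalently each $e_i \in E \cdot \rec(\widetilde{D}_{1,v})$. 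Hence the image of $\widetilde{\pi}(D_1)^{\univ}[I_v]$ in $\Ext^1_{\GL_2(\Q_p)}(\pi_{\alg}(D_1),\pi(D_1)) \otimes_E \pi_{\alg}(D_1)$ is exactly the line $E\rec(\widetilde{D}_{1,v}) \otimes_E \pi_{\alg}(D_1)$, while the kernel of that map restricted to $\widetilde{\pi}(D_1)^{\univ}[I_v]$ is $\widetilde{\pi}(D_1)^{\univ}[I_v] \cap \pi(D_1) = \pi(D_1)$ by part (1) (as $\fm \supset I_v$ is false but $\pi(D_1) = \widetilde{\pi}(D_1)^{\univ}[\fm] \subset \widetilde{\pi}(D_1)^{\univ}[I_v]$). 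The resulting short exact sequence exhibits $\widetilde{\pi}(D_1)^{\univ}[I_v]$ as the sub-extension of $\widetilde{\pi}(D_1)^{\univ}$ corresponding to the line $E\rec(\widetilde{D}_{1,v})$, which by the very construction of the universal extension is a representative of the class $\rec(\widetilde{D}_{1,v}) \in \Ext^1_{\GL_2(\Q_p)}(\pi_{\alg}(D_1),\pi(D_1))$.

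Nothing in this argument is deep: the only mildly subtle point is keeping straight the two dualities (the pairing $(\fm/\fm^2) \times \Ext^1_{(\varphi,\Gamma)}(D_1,D_1) \to E$ from deformation theory, and the identification $\rec: (\fm/\fm^2)^{\vee} \xrightarrow{\sim} \Ext^1_{\GL_2(\Q_p)}(\pi_{\alg}(D_1),\pi(D_1))$) and checking they are compatible with the formula \eqref{Exaction} so that the ``line annihilated by $I_v$'' on the Galois side matches the ``line in $\Ext^1_{\GL_2(\Q_p)}$'' that carves out the correct sub-extension.
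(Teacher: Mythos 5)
Your proof is correct and is essentially the same unwinding-of-definitions that the paper has in mind when it writes that the lemma ``follows by definition'': you pair the formula (\ref{Exaction}) with the universal property of $\widetilde{\pi}(D_1)^{\univ}$ and chase the duality. One small slip to correct: the parenthetical ``$\fm \supset I_v$ is false'' is backwards --- $I_v \subset \fm$ always holds, and that containment is precisely why $\widetilde{\pi}(D_1)^{\univ}[\fm] \subset \widetilde{\pi}(D_1)^{\univ}[I_v]$; the inclusion you then assert is nevertheless the correct one, so the argument goes through.
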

	For our application, we  give an ``alternative" construction of the $R_{D_1}\times \GL_2(\Q_p)$-module $\widetilde{\pi}(D_1)^{\univ}$ using trianguline deformations.   Let $\fm_{\delta_{\sF_i}}$ be the maximal ideal of $\widehat{T}$ at $\delta_{\sF_i}$, and   $\widehat{\co}_{\widehat{T}, \fm_{\delta_{\sF_i}}}$ be the completion  of $\widehat{T}$ at $\fm_{\delta_{\sF_i}}$. We have a natural  isomorphism of completed $E$-algebras: $\widehat{\co}_{\widehat{T}, \fm_{\delta_{\sF_i}}} \xrightarrow{\sim} R_{D_1,\sF_i}$ sending a trianguline deformation to its corresponding trianguline parameter. Let $\widetilde{\delta}_{\sF_i}^{\univ}$ be the  the universal deformation of $\delta_{\sF_i}$ over $ \widehat{\co}_{\widehat{T}, \fm_{\delta_{\sF_i}}}/\fm_{\delta_{\sF_i}}^2\cong R_{D_1,\sF_i}/\fm_i^2$, which is a free $R_{D_1,\sF_i}/\fm^2_i$-module  of rank $1$. Consider $\widetilde{\delta}_{\sF_i}^{\univ,\vee}:=\Hom_E(\widetilde{\delta}_{\sF_i}^{\univ}, E)$, equipped with a natural action of $T(\Q_p) \times R_{D_1,\sF_i}/\fm_i^2$  given by $(af)(x)=f(ax)$ for $a\in T(\Q_p) \times R_{D_1,\sF_i}/\fm^2_i$. It is straightforward to see  $\widetilde{\delta}_{\sF_i}^{\univ, \vee}$ is  isomorphic, as $T(\Q_p)$-representation, to the universal  extension of $\Ext^1_{T(\Q_p)}(\delta_{\sF_i},\delta_{\sF_i}) \otimes_E \delta_{\sF_i}$ by $\delta_{\sF_i}$. And the $R_{D_1,\sF_i}/\fm_i^2$-action on $\widetilde{\delta}_{\sF_i}^{\univ, \vee}$  admits a similar description as in (\ref{Exaction}) (using Proposition \ref{PGL21} (3)). Consider the $\GL_2(\Q_p)$-representation $I_{B^-}^{\GL_2} \big(\widetilde{\delta}_{\sF_i}^{\univ, \vee} (\varepsilon^{-1} \boxtimes 1)\big)$, which is equipped with an induced $R_{D_1,\sF_i}/\fm_i^2$-action. 
	\begin{lemma}\label{LRmod1}
		$I_{B^-}^{\GL_2}\big(\widetilde{\delta}_{\sF_i}^{\univ, \vee} (\varepsilon^{-1} \boxtimes 1)\big)$ is the universal extension of $\Ext^1_{\GL_2(\Q_p)}(I_{\sF_i,0}, I_{\sF_i}) \otimes_E I_{\sF_i,0}$ by $I_{\sF_i}$. Moreover, the action of  $x\in \fm_i/\fm_i^2$-action is given by 
		\begin{equation*}
			I_{B^-}^{\GL_2}\big(\widetilde{\delta}_{\sF_i}^{\univ, \vee} (\varepsilon^{-1} \boxtimes 1)\big)\twoheadlongrightarrow \Ext^1_{\GL_2(\Q_p)}(I_{\sF_i,0}, I_{\sF_i}) \otimes_E I_{\sF_i,0} \xlongrightarrow{x\circ \rec_i}  I_{\sF_i,0} \hooklongrightarrow  I_{B^-}^{\GL_2}  \big(\widetilde{\delta}_{\sF_i}^{\univ, \vee} (\varepsilon^{-1} \boxtimes 1)\big).
		\end{equation*} 
	\end{lemma}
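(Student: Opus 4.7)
The plan is to reduce the lemma to the non-universal picture already discussed after (\ref{EindGL2}), applied uniformly across the tangent space. First, I would unpack $\widetilde{\delta}_{\sF_i}^{\univ,\vee}$ as a $T(\Q_p)$-representation: by construction and Proposition \ref{PGL21} (3), it fits in a universal self-extension
\begin{equation*}
0 \lra \delta_{\sF_i} \lra \widetilde{\delta}_{\sF_i}^{\univ,\vee} \lra V \otimes_E \delta_{\sF_i} \lra 0,
\end{equation*}
where $V := \Ext^1_{T(\Q_p)}(\delta_{\sF_i}, \delta_{\sF_i}) \cong (\fm_i/\fm_i^2)^{\vee}$, and where $x\in \fm_i/\fm_i^2$ acts on $\widetilde{\delta}_{\sF_i}^{\univ,\vee}$ via the $T(\Q_p)$-analogue of (\ref{Exaction}) (composition of projection to $V \otimes_E \delta_{\sF_i}$, evaluation $x$ on the $V$-factor, and the inclusion of $\delta_{\sF_i}$).

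Next I would apply the functor $I_{B^-}^{\GL_2}(-(\varepsilon^{-1}\boxtimes 1))$. Since this functor (defined as a closed subrepresentation of parabolic induction via Jacquet--Emerton generators) commutes with finite direct sums of torus characters and is left exact, and since Schraen's spectral sequence already forces the dimension count to be compatible, the above sequence produces an exact sequence of $\GL_2(\Q_p)$-representations
\begin{equation*}
0 \lra I_{\sF_i} \lra I_{B^-}^{\GL_2}\!\big(\widetilde{\delta}_{\sF_i}^{\univ,\vee}(\varepsilon^{-1}\boxtimes 1)\big) \lra V \otimes_E I_{\sF_i,0} \lra 0,
\end{equation*}
equipped with an induced $R_{D_1,\sF_i}/\fm_i^2$-action. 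The identification of the right term uses the surjection $I_{B^-}^{\GL_2}(\delta_{\sF_i}(\varepsilon^{-1}\boxtimes 1)) \twoheadrightarrow I_{\sF_i,0}$ recalled after (\ref{EindGL2}), applied $V$-many times in parallel. The classifying map $V \to \Ext^1_{\GL_2(\Q_p)}(I_{\sF_i,0}, I_{\sF_i})$ of this extension is then, by the very construction of $\zeta_i$ given after (\ref{EindGL2}) (the image of $\widetilde{\delta}\in V$ is the push-forward of $I_{B^-}^{\GL_2}(\widetilde{\delta}(\varepsilon^{-1}\boxtimes 1))$ along $W\hookrightarrow I_{\sF_i}$), equal to $\zeta_i$ itself. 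Since $\zeta_i$ is bijective, the extension is universal as claimed, and under the identification $\rec_i = \zeta_i\circ(\text{trianguline-parameter})$ of Proposition \ref{PGL21} (3) we recover the formulation in the lemma.

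Finally, the description of the $x$-action follows from functoriality of $I_{B^-}^{\GL_2}(-(\varepsilon^{-1}\boxtimes 1))$: applying this functor to the $T(\Q_p)$-level formula for the $x$-action on $\widetilde{\delta}_{\sF_i}^{\univ,\vee}$ and translating via $\zeta_i$ produces exactly the composition stated in the lemma. The main technical point I expect to need care is the identification, in the middle paragraph, that the left term of the short exact sequence is the whole of $I_{\sF_i}$ rather than merely the proper subrepresentation $I_{B^-}^{\GL_2}(\delta_{\sF_i}(\varepsilon^{-1}\boxtimes 1))\subseteq I_{\sF_i}$; this must be established by examining $J_B$ of the middle term and observing that since every class in $\Ext^1_{\GL_2(\Q_p)}(I_{\sF_i,0}, I_{\sF_i})$ is realized inside it (by surjectivity of $\zeta_i$), every nonzero push-forward $W\hookrightarrow I_{\sF_i}$ actually appears, forcing the left term to be all of $I_{\sF_i}$.
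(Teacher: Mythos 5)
Your approach is essentially the same as the paper's: both start from the short exact sequence $0 \to \delta_{\sF_i} \to \widetilde{\delta}_{\sF_i}^{\univ,\vee} \to \Ext^1_{T(\Q_p)}(\delta_{\sF_i},\delta_{\sF_i})\otimes_E \delta_{\sF_i}\to 0$, apply the Emerton functor $I_{B^-}^{\GL_2}(-\,(\varepsilon^{-1}\boxtimes 1))$, and then use the bijectivity of $\zeta_i$ from (\ref{EindGL2}) together with universality of $\widetilde{\delta}_{\sF_i}^{\univ,\vee}$ to conclude. Two small differences: the paper cites \cite[Lem.~4.12]{Ding7} to justify that the induced sequence is actually exact (in particular right-exact), whereas you invoke left exactness plus a dimension count via Schraen's spectral sequence — this is a plausible alternative but it is a genuine substitute step that should be spelled out. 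For the key technical point you correctly flag — showing the left term is all of $I_{\sF_i}$ and not just $I_{\sF_i,0}$ — your argument ("every nonzero push-forward $W\hookrightarrow I_{\sF_i}$ actually appears") is stated vaguely and reads as if it might be circular: the cleanest version, and what the paper's terse sentence is gesturing at, is a dimension/image comparison: if the kernel were $I_{\sF_i,0}$ (the only other option, since $I_{\sF_i,0}$ is the unique maximal proper subrepresentation of $I_{\sF_i}$), the classifying map would factor through the $3$-dimensional $\Ext^1_{\GL_2(\Q_p)}(I_{\sF_i,0},I_{\sF_i,0})$, contradicting the fact that the push-forward to $\Ext^1_{\GL_2(\Q_p)}(I_{\sF_i,0},I_{\sF_i})$ is the injective $4$-dimensional $\zeta_i$. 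Making this precise would close the gap; otherwise the overall structure of your proof matches the paper's.
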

	\begin{proof}By the exact sequence 
		\begin{equation*}
			0 \lra \delta_{\sF_i}  \lra \widetilde{\delta}_{\sF_i}^{\univ, \vee} \lra \Ext^1_{T(\Q_p)}(\delta_{\sF_i}, \delta_{\sF_i}) \otimes_E \delta_{\sF_i} \lra 0, 
		\end{equation*}
		and \cite[Lem.~4.12]{Ding7}, it is not difficult to see $I_{B^-}^{\GL_2}\big(\widetilde{\delta}_{\sF_i}^{\univ, \vee} (\varepsilon^{-1} \boxtimes 1)\big)$ sits in  an exact sequence
		\begin{equation*}
			0 \lra I_{\sF_i} \lra I_{B^-}^{\GL_2}\big(\widetilde{\delta}_{\sF_i}^{\univ, \vee} (\varepsilon^{-1} \boxtimes 1)\big)  \xlongrightarrow{f} \Ext^1_{T(\Q_p)}(\delta_{\sF_i}, \delta_{\sF_i})  \otimes_E I_{\sF_i,0} \lra 0.
		\end{equation*}
		Indeed, the kernel of  $f$ is obviously a subrepresentation of $I_{\sF_i}$, which can not be $I_{\sF_i,0}$ as for $\iota_0$, $\iota_i$ in Proposition \ref{PGL21}, we have $\Ima(\iota_i)\supsetneq \Ima(\iota_0)$.
		Using the natural isomorphism $\Ext^1_{T(\Q_p)}(\delta_{\sF_i}, \delta_{\sF_i}) \cong \Ext^1_{\GL_2(\Q_p)}(I_{\sF_i,0}, I_{\sF_i})$ (cf. (\ref{EindGL2})) and the fact that $ \widetilde{\delta}_{\sF_i}^{\univ, \vee}$ is universal, the first part of the lemma follows. The second is straightforward to check.
	\end{proof}
	In particular, $I_{B^-}^{\GL_2}\big(\widetilde{\delta}_{\sF_i}^{\univ, \vee} (\varepsilon^{-1} \boxtimes 1)\big)[\fm_i]=I_{\sF_i}$. We equip $\pi(D_1)$ with an action of $R_{D_1, \sF_i}$ via $R_{D_1, \sF_i}\twoheadrightarrow R_{D_1, \sF_i}/\fm_i$. The amalgamated sum for both the action of $\GL_2(\Q_p)$ and of $R_{D_1, \sF_i}$ (recalling we have fixed $I_{\sF_i}\hookrightarrow \pi(D_1)$)
	\begin{equation}\label{Ealm1}
		\widetilde{\pi}(D_1)^{\univ}_{\sF_i}:=	I_{B^-}^{\GL_2}\big(\widetilde{\delta}_{\sF_i}^{\univ, \vee} (\varepsilon^{-1} \boxtimes 1)\big) \oplus_{I_{\sF_i}} \pi(D_1)
	\end{equation}
	is  no other than the universal extension of $\Ext^1_{\sF_i}(\pi_{\alg}(D_1), \pi(D_1)) \otimes_E\pi_{\alg}(D_1)$ by $\pi(D_1)$ (using the fixed isomorphism $\pi_{\alg}(D_1)\cong I_{\sF_i,0}$). And the $R_{D_1,\sF_i}$-action on it (via (\ref{Ealm1})) coincides with  the one given in a similar way as in (\ref{Exaction}) using $\rec_i$. 
	
	Let $\widehat{\co}^{g'}_{\widehat{T}, \fm_{\delta_{\sF_i}}}$ be the completion at $\fm_{\delta_{\sF_i}}$ of the fibre of the composition $$\widehat{T} \lra \bA^2 \xlongrightarrow{(x-y,\id)} \bA^1 $$ at $h_1-h_2$, where the first map is the weight map. Denote by $R_{D_1,g'}$ the universal deformation ring of those deformations which are de Rham up to twist by characters (over artinian $E$-algebras) with $\fm_0$ its maximal ideal. We have a natural quotient map $R_{D_1, \sF_i} \twoheadrightarrow R_{D_1,g'}$, which induces
	an isomorphism $\widehat{\co}^{g'}_{\widehat{T}, \fm_{\delta_{\sF_i}}} \xrightarrow{\sim} R_{D_1,g'}$. Denote by $\widetilde{\delta}_{\sF_i,g'}^{\univ}$ the  universal deformation of $\delta_{\sF_i}$ over $\widehat{\co}^{g'}_{\widehat{T}, \fm_{\delta_{\sF_i}}}/\fm_{\delta_{\sF_i}}^2\cong R_{D_1,g'}/\fm_0^2$. Similarly as above, its dual $\widetilde{\delta}_{\sF_i,g'}^{\univ, \vee}$  is the universal extension of $\Ext^1_{g'}(\delta_{\sF_i}, \delta_{\sF_i}) \otimes_E \delta_{\sF_i}$ by $\delta_{\sF_i}$, with the $\fm_0/\fm_0^2$-action given in a similar way as in (\ref{Exaction}). 
	\begin{lemma}\label{Ldeiso}
		(1) 	$I_{B^-}^{\GL_2} \big(\widetilde{\delta}_{\sF_i,g'}^{\univ, \vee}(\varepsilon^{-1} \boxtimes 1)\big)$ is the universal extension of $\Ext^1_{\GL_2(\Q_p)}(I_{\sF_i,0}, I_{\sF_i,0}) \otimes_E I_{\sF_i,0}$ by $I_{\sF_i,0}$. Moreover the action of $x\in \fm_0/\fm_0^2$ is given by  	\begin{equation*}
			I_{B^-}^{\GL_2}\big(\widetilde{\delta}_{\sF_i, g'}^{\univ, \vee} (\varepsilon^{-1} \boxtimes 1)\big)\twoheadlongrightarrow \Ext^1_{\GL_2(\Q_p)}(I_{\sF_i,0}, I_{\sF_i,0}) \otimes_E I_{\sF_i,0} \xlongrightarrow{x\circ \rec_i}  I_{\sF_i,0} \hooklongrightarrow I_{B^-}^{\GL_2} \big(\widetilde{\delta}_{\sF_i,g'}^{\univ, \vee}(\varepsilon^{-1} \boxtimes 1)\big).
		\end{equation*} 
		
		(2) The natural injection $I_{B^-}^{\GL_2} \big(\widetilde{\delta}_{\sF_i,g'}^{\univ, \vee}(\varepsilon^{-1} \boxtimes 1)\big) \hookrightarrow I_{B^-}^{\GL_2} \big(\widetilde{\delta}_{\sF_i}^{\univ, \vee}(\varepsilon^{-1} \boxtimes 1)\big)$ is $R_{D_1, \sF_i}\times \GL_2(\Q_p)$-equivariant.
		
		(3) We have a $\GL_2(\Q_p)\times R_{D_1,g'}/\fm_0^2$-equivariant isomorphism 
		$$I_{B^-}^{\GL_2} \big(\widetilde{\delta}_{\sF_1,g'}^{\univ, \vee}(\varepsilon^{-1} \boxtimes 1)\big) \cong I_{B^-}^{\GL_2} \big(\widetilde{\delta}_{\sF_2,g'}^{\univ, \vee}(\varepsilon^{-1} \boxtimes 1)\big).$$ 
	\end{lemma}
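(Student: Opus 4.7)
The plan is to prove the three parts in order, following the template established by Lemma~\ref{LRmod1}. Part (1) is obtained by applying the Orlik--Schraen functor $I_{B^-}^{\GL_2}(-\otimes (\varepsilon^{-1}\boxtimes 1))$ to the defining short exact sequence of $\widetilde{\delta}_{\sF_i,g'}^{\univ,\vee}$ and identifying the output with the universal \emph{locally algebraic} self-extension of $I_{\sF_i,0}$. Parts (2) and (3) are then formal consequences of universality once the construction in (1) is explicit; in particular, (3) will rest on Lemma~\ref{LdRnorm1}.

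For part (1), dualizing the universal $g'$-deformation and using Proposition~\ref{PGL21}(3) yields the short exact sequence
\begin{equation*}
0 \lra \delta_{\sF_i} \lra \widetilde{\delta}_{\sF_i,g'}^{\univ,\vee} \lra \Ext^1_{g'}(\delta_{\sF_i}, \delta_{\sF_i}) \otimes_E \delta_{\sF_i} \lra 0,
\end{equation*}
in which, by the very definition of $\Ext^1_{g'}$, the middle term is locally algebraic up to twist by a character of $Z(\Q_p)$. Applying $I_{B^-}^{\GL_2}(-\otimes(\varepsilon^{-1}\boxtimes 1))$ and invoking \cite[Lem.~4.12]{Ding7} (as in the proof of Lemma~\ref{LRmod1}) produces an exact sequence with sub $I_{\sF_i,0}$ and quotient $\Ext^1_{g'}(\delta_{\sF_i},\delta_{\sF_i})\otimes_E I_{\sF_i,0}$; the key point is that the locally-algebraic-up-to-twist hypothesis forces both pieces to land in $I_{\sF_i,0}$ rather than $I_{\sF_i}$. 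Universality then follows from the restriction of (\ref{EindGL2}) to $\Ext^1_{g'}$: by naturality of $\zeta_i$, the Yoneda class of the produced extension in $\Ext^1_{\GL_2(\Q_p)}(I_{\sF_i,0},I_{\sF_i,0})\otimes_E\Ext^1_{g'}(\delta_{\sF_i},\delta_{\sF_i})^{\vee}$ corresponds to the identity map. The description of the $\fm_0/\fm_0^2$-action is then transported verbatim from Lemma~\ref{LRmod1} via the identification $\rec_i=\zeta_i$.

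For part (2), the surjection $R_{D_1,\sF_i}/\fm_i^2 \twoheadrightarrow R_{D_1,g'}/\fm_0^2$ dualizes to a $T(\Q_p)\times R_{D_1,\sF_i}$-equivariant injection $\widetilde{\delta}_{\sF_i,g'}^{\univ,\vee}\hookrightarrow \widetilde{\delta}_{\sF_i}^{\univ,\vee}$; applying $I_{B^-}^{\GL_2}(-\otimes(\varepsilon^{-1}\boxtimes 1))$ and using its exactness (once more through \cite[Lem.~4.12]{Ding7}) gives the required $R_{D_1,\sF_i}\times\GL_2(\Q_p)$-equivariant inclusion. For part (3), by part (1) both $I_{B^-}^{\GL_2}(\widetilde{\delta}_{\sF_i,g'}^{\univ,\vee}(\varepsilon^{-1}\boxtimes 1))$ for $i=1,2$ are universal extensions of $\Ext^1_{\GL_2(\Q_p)}(\pi_{\alg}(D_1),\pi_{\alg}(D_1))\otimes_E\pi_{\alg}(D_1)$ by $\pi_{\alg}(D_1)$; the $R_{D_1,g'}/\fm_0^2$-action on each is prescribed through $\rec_i$. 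Since Lemma~\ref{LdRnorm1} gives $\rec_1|_{\Ext^1_{g'}}=\rec_2|_{\Ext^1_{g'}}$, universality produces a unique $\GL_2(\Q_p)$-equivariant isomorphism intertwining the two $R_{D_1,g'}/\fm_0^2$-structures.

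I expect the main obstacle to lie in part (1): verifying that $I_{B^-}^{\GL_2}(\widetilde{\delta}_{\sF_i,g'}^{\univ,\vee}(\varepsilon^{-1}\boxtimes 1))$ lands inside the locally algebraic subrepresentation rather than spilling into the full locally analytic $I_{\sF_i}$. This is the only step where the subtlety of the $Z(\Q_p)$-twist built into the definition of $\Ext^1_{g'}$ really enters, and it requires a careful tracking of weights and central characters through the IW-functor construction. Once this is in hand, both the comparison with the universal trianguline extension of Lemma~\ref{LRmod1} in part (2) and the refinement-independence in part (3) become formal.
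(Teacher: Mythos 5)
Your proposal is correct and follows essentially the same route as the paper, which disposes of the lemma in one line: ``(1) follows by similar arguments as in the proof of Lemma~\ref{LRmod1}. (2) is clear. (3) follows from (1), (\ref{Eint00}) and Lemma~\ref{LdRnorm1}.'' Your unpacking of all three parts (the exact sequence via \cite[Lem.~4.12]{Ding7} and the universality via $\zeta_i$ for (1), dualization of the quotient of deformation rings for (2), gluing via $\rec_1|_{\Ext^1_{g'}}=\rec_2|_{\Ext^1_{g'}}$ for (3)) matches the paper's intent; the only minor imprecision is that the mechanism forcing the kernel to be $I_{\sF_i,0}$ rather than $I_{\sF_i}$ is not really a central-character or $Z(\Q_p)$-twist issue but rather that $\zeta_i$ sends $\Ext^1_{g'}(\delta_{\sF_i},\delta_{\sF_i})$ into $\Ext^1_{\GL_2(\Q_p)}(I_{\sF_i,0},I_{\sF_i,0})\subsetneq\Ext^1_{\GL_2(\Q_p)}(I_{\sF_i,0},I_{\sF_i})$, so no extension class pushes into the $\sC(s,\sF_i)$-direction, which therefore cannot be generated by the eigencharacter --- the exact reverse of the dimension-count argument used in Lemma~\ref{LRmod1} to conclude the kernel there is the full $I_{\sF_i}$.
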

	\begin{proof}
		(1) follows by similar arguments as in the proof of Lemma \ref{LRmod1}. (2) is clear. (3) follows from (1), (\ref{Eint00}) and Lemma \ref{LdRnorm1}.
	\end{proof}
	We have a $\GL_2(\Q_p) \times R_{D_1}$-equivariant injection (where  the  amalgamated sum is for the $\GL_2(\Q_p)\times R_{D_1,g'}/\fm_0^2$-action, noting the fixed injection $I_{\sF_i,0}\hookrightarrow \pi(D_1)$ is obviously $R_{D_1}$-equivariant)
	\begin{equation*}
		\widetilde{\pi}(D_1)_{\sF_i,g'}^{\univ}:=I_{B^-}^{\GL_2} \big(\widetilde{\delta}_{\sF_1,g'}^{\univ, \vee}(\varepsilon^{-1} \boxtimes 1)\big) \oplus_{I_{\sF_i,0}} \pi(D_1)\hooklongrightarrow \widetilde{\pi}(D_1)_{\sF_i}^{\univ}.
	\end{equation*}
	By Lemma \ref{Ldeiso}, we have a $\GL_2(\Q_p)\times R_{D_1}$-equivariant isomorphism $\widetilde{\pi}(D_1)_{\sF_1,g'}^{\univ} \cong \widetilde{\pi}(D_1)_{\sF_2,g'}^{\univ}=: \widetilde{\pi}(D_1)_{g'}^{\univ}$. Comparing the $R_{D_1}$-action and using the construction of $\rec$ (\ref{Erec}), we obtain
	\begin{proposition}\label{Puniv2}
		We have a $\GL_2(\Q_p)\times R_{D_1}/\fm^2$-equivariant isomorphism
		\begin{equation*}
			\widetilde{\pi}(D_1)^{\univ} \cong \widetilde{\pi}(D_1)_{\sF_1}^{\univ} \oplus_{\widetilde{\pi}(D_1)_{g'}^{\univ}} \widetilde{\pi}(D_1)_{\sF_2}^{\univ}.
		\end{equation*}
	\end{proposition}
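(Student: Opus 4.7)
The plan is to show that the right-hand side, call it
\[
\Pi \ :=\ \widetilde{\pi}(D_1)_{\sF_1}^{\univ} \oplus_{\widetilde{\pi}(D_1)_{g'}^{\univ}} \widetilde{\pi}(D_1)_{\sF_2}^{\univ},
\]
satisfies the same universal property (together with the same $R_{D_1}/\fm^2$-action) as $\widetilde{\pi}(D_1)^{\univ}$. Both carry $\pi(D_1)$ as a distinguished subrepresentation, so it suffices to produce a canonical $\GL_2(\Q_p)$-equivariant isomorphism of extensions and then match the ring action.

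First, I would compute the $\GL_2(\Q_p)$-structure of $\Pi$. By construction, $\widetilde{\pi}(D_1)_{\sF_i}^{\univ}$ is an extension of $\Ext^1_{\sF_i}(\pi_{\alg}(D_1),\pi(D_1))\otimes_E \pi_{\alg}(D_1)$ by $\pi(D_1)$, and $\widetilde{\pi}(D_1)_{g'}^{\univ}\hookrightarrow \widetilde{\pi}(D_1)_{\sF_i}^{\univ}$ is the identity on $\pi(D_1)$ and corresponds, on the quotient, to the inclusion $\Ext^1_{g'}(\pi_{\alg}(D_1),\pi(D_1))\hookrightarrow \Ext^1_{\sF_i}(\pi_{\alg}(D_1),\pi(D_1))$ (by Lemma \ref{Ldeiso}~(2) and its analogue). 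Passing to the amalgamated sum and using the short exact sequence (\ref{Efern1}), we find that $\Pi$ is canonically an extension of $\Ext^1_{\GL_2(\Q_p)}(\pi_{\alg}(D_1),\pi(D_1))\otimes_E\pi_{\alg}(D_1)$ by $\pi(D_1)$.

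Next, I would verify that $\Pi$ is the \emph{universal} such extension, i.e.\ that its classifying map $\Ext^1_{\GL_2(\Q_p)}(\pi_{\alg}(D_1),\pi(D_1))\to \Ext^1_{\GL_2(\Q_p)}(\pi_{\alg}(D_1),\pi(D_1))$ is the identity. For any class $\xi\in \Ext^1_{\GL_2(\Q_p)}(\pi_{\alg}(D_1),\pi(D_1))$, lift $\xi$ via (\ref{Efern1}) to some $(\xi_1,\xi_2)\in \Ext^1_{\sF_1}\oplus \Ext^1_{\sF_2}$; the pullback of $\widetilde{\pi}(D_1)^{\univ}_{\sF_i}$ along $\xi_i$ is the extension corresponding to $\xi_i$ (by the universality of the trianguline universal extension, which in turn follows from Lemma~\ref{LRmod1}). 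Since $\widetilde{\pi}(D_1)^{\univ}_{g'}$ provides precisely the coherence data needed to identify the two contributions in the amalgamation, the pullback of $\Pi$ along $\xi$ is the extension with class $\xi$ in $\Pi/\pi(D_1)$. This produces the desired canonical $\GL_2(\Q_p)$-equivariant isomorphism $\Pi\xrightarrow{\sim}\widetilde{\pi}(D_1)^{\univ}$.

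Finally, I would check $R_{D_1}/\fm^2$-equivariance. Dualizing Proposition~\ref{PGL21}~(2) gives an isomorphism of local $E$-algebras
\[
R_{D_1}/\fm^2 \ \cong\ R_{D_1,\sF_1}/\fm_1^2 \times_{R_{D_1,g'}/\fm_0^2} R_{D_1,\sF_2}/\fm_2^2,
\]
so $\Pi$ naturally carries an $R_{D_1}/\fm^2$-action assembled from the $R_{D_1,\sF_i}/\fm_i^2$-actions on $\widetilde{\pi}(D_1)^{\univ}_{\sF_i}$ (which agree on $\widetilde{\pi}(D_1)^{\univ}_{g'}$ by Lemma~\ref{Ldeiso}~(2),(3)). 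To match this with (\ref{Exaction}) on $\widetilde{\pi}(D_1)^{\univ}$, note that any $x\in\fm/\fm^2$ corresponds under $(\fm/\fm^2)^\vee\cong \Ext^1_{(\varphi,\Gamma)}(D_1,D_1)$ to components $x_i\in (\fm_i/\fm_i^2)^\vee$ that coincide on $\Ext^1_{g'}(D_1,D_1)$; by Lemma~\ref{LRmod1} (and its $g'$-analogue) $x_i$ acts through $x\circ \rec_i^{-1}$, and since $\rec$ is defined in (\ref{Erec}) precisely by gluing $\rec_1$ and $\rec_2$ along $\Ext^1_{g'}(D_1,D_1)$ via Lemma~\ref{LdRnorm1}, these patch to the action of $x$ on $\widetilde{\pi}(D_1)^{\univ}$ prescribed by (\ref{Exaction}). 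The main step (though not really an obstacle, given the preparatory lemmas) is this last bookkeeping: ensuring that the amalgamation on the representation side, the amalgamation on the ring side, and the construction of $\rec$ all fit into one coherent pushout square; everything else is formal from the already-established exact sequences.
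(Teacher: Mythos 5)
Your proposal is correct and follows essentially the same route the paper intends (which it leaves largely implicit, saying only ``Comparing the $R_{D_1}$-action and using the construction of $\rec$''). You fill in the details in the expected way: computing the quotient of the amalgamated sum via the exact sequence (\ref{Efern1}), checking universality by pulling back along lifts of classes, and matching the ring actions by dualizing Proposition \ref{PGL21}~(2) to the fiber-product description of $R_{D_1}/\fm^2$ and appealing to Lemmas \ref{LRmod1}, \ref{Ldeiso}, and the gluing definition of $\rec$ from (\ref{Erec}).
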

	
	\subsection{Local-global compatibility on the parabolic Jacquet-Emerton modules}
	We prove a local-global compatibility result on the parabolic Jacquet-Emerton module of the patched locally analytic representation. In this section, let $T_1$ be the torus subgroup of $\GL_2$, $B_1\supset T_1$ be the subgroup of upper triangular matrices of $\GL_2$.
	
	Let 
	$\Pi_{\infty}$ be the patched Banach representation in \cite{CEGGPS1} (for $\GL_n(F)=\GL_3(\Q_p)$), which is equipped with an action of the patched Galois deformation ring $R_{\infty}\cong R_{\overline{\rho}}^{\square} \widehat{\otimes}_{\co_E} R_{\infty}^{\wp}$ (where $\wp$ is ``$\widetilde{\fp}$" and $\overline{\rho}$ is the local Galois representation $\overline{r}$ of \textit{loc. cit.}). We refer to \textit{loc. cit.} for details. Let 
	\begin{equation}\label{Eeigen}
		\cE \hooklongrightarrow (\Spf R_{\overline{\rho}}^{\square})^{\rig} \times \widehat{T} \times (\Spf R_{\infty}^{\wp})^{\rig}
	\end{equation}be the associated patched eigenvariety (see \cite[\S~4.1.2]{Ding7}, that is an easy variation of those introduced in \cite{BHS1}), $\cM$ be the natural coherent sheaf on $\cE$ such that there is an $T(\Q_p)\times R_{\infty}$-equivariant isomorphism (see \cite[\S~3.1]{BHS1} for ``$R_{\infty}-\an$")
	\begin{equation*}
		\Gamma(\cE, \cM)^{\vee} \cong J_B(\Pi_{\infty}^{R_{\infty}-\an}).
	\end{equation*}
	Let  $X_{\tri}^{\square}(\overline{\rho})$ be the trianguline variety \cite[\S~2.2]{BHS1}. Recall (\ref{Eeigen}) factors through an embedding $\cE \hookrightarrow X_{\tri}^{\square}(\overline{\rho}) \times (\Spf R_{\infty}^{\wp})^{\rig}$ (cf. \cite[Thm.~1.1]{BHS1}, see also \cite{KPX}\cite{Liu}).  Finally, let $n_1:=\dim (\Spf R_{\infty}^{\wp})^{\rig}$, then  $\dim \cE=(3^2+\frac{3(3+1)}{2})+n_1$.
	
	Let $\rho: \Gal_{\Q_p}\ra \GL_3(\Q_p)$ such that $\rho$ has a modulo $p$ reduction equal to $\overline{\rho}$ and $D_{\rig}(\rho)=:D$ is given as  in \S~\ref{S1}. We use the notation there, in particular, $D$ is determined by $\ul{\phi}$, $h$, and $\iota_D: D_1 \hookrightarrow C_1$. Let $\fm_{\rho}\subset R_{\overline{\rho}}^{\square}[1/p]$ be the maximal ideal associated to $\rho$, and suppose there exists a maximal ideal $\fm^{\wp}$ of $R_{\infty}^{\wp}[1/p]$ such that $\Pi_{\infty}[\fm]^{\lalg}\neq 0$ for $\fm=(\fm_{\rho}, \fm^{\wp})$, the corresponding maximal ideal of $R_{\infty}[1/p]$. By \cite[\S~4]{CEGGPS1}, we have $\Pi_{\infty}[\fm]^{\lalg} \cong \pi_{\alg}(\ul{\phi}, \lambda)$. 
	This implies that for any refinement $w(\ul{\phi})$,   $x_w:=(x_{w,\wp}, \fm^{\wp})=(\rho, \jmath(w(\ul{\phi}))\delta_Bz^{\lambda}, \fm^{\wp})\in \cE$. By Hypothesis \ref{Hnonc}, all the points are non-critical. In particular, $X_{\tri}^{\square}(\overline{\rho})$ is smooth at the points $x_{w,\wp}$ (cf. \cite[Thm.~2.6 (iii)]{BHS1}). As $(\Spf R_{\infty}^{\wp})^{\rig}$ is also smooth at $\fm^{\wp}$, $\cE$ is smooth at all $x_{w}$. By  \cite[Lem.~3.8]{BHS2} and the multiplicity one property in the construction in \cite{CEGGPS1},  we see  $\cM$ is locally free of rank one at all $x_w$.  Let $\cI:=(\fm_{\rho}^2, \fm^{\wp})\subset R_{\infty}[1/p]$. 

Let $\kappa_3$ denote the composition $\cE \ra \widehat{T} \xrightarrow{\pr_3} \widehat{\Q_p^{\times}}$, where $\pr_3$ denotes the projection induced by the restriction map $\chi\mapsto \chi|_{\diag(1,1,\Q_p^{\times})}$. 
Let $\overline{\cE}$ be the fibre of $\cE$ at $\phi_3 z^{h_3}|\cdot|^{-2}$ via $\kappa_3$. Thus $\overline{\cE}$ consists of points of the form $(\rho', \delta_1' \boxtimes \delta_2' \boxtimes \phi_3 z^{h_3}|\cdot|^{-2}, \fm'^{\wp})$. We first show a local-global compatibility result for the tangent space of $\overline{\cE}$ at the point $x:=x_1$ (which clearly generalizes to $x_w$ for $w\in S_3$). One may obtain similar results for the tangent space of $\cE$ at $x$, but the results for $\overline{\cE}$ are enough for our application.

	The embedding $\cE \hookrightarrow (\Spf R_{\overline{\rho}}^{\square})^{\rig} \times \widehat{T} \times (\Spf R_{\infty}^{\wp})^{\rig}$ induces  $\overline{\cE} \hookrightarrow (\Spf R_{\overline{\rho}}^{\square})^{\rig} \times \widehat{T}_1\times (\Spf R_{\infty}^{\wp})^{\rig}$.  Let $\cU$ be an affinoid smooth neighbourhood of $x$ in $\overline{\cE}$ of the form $\cU_{\wp} \times \cU^{\wp}$ where $\cU_{\wp}\subset  (\Spf R_{\overline{\rho}}^{\square})^{\rig} \times \widehat{T}_1$ and $\cU^{\wp} \subset (\Spf R_{\infty}^{\wp})^{\rig}$. Let $\fm_{x, \wp}$ be the maximal ideal of $\co(\cU_{\wp})$ associated to $x_{\wp}$. Let $\cI_1:=(\fm_{x,\wp}^2, \fm^{\wp})$ (which is also the closed ideal generated by $\cI$ via $R_{\infty}[1/p]\ra \co(\cU)$). Denote by $\cM_{\tilde{x}}$ the fibre of $\cM$ at $\co(\cU)/\cI_1$ (which does not depend the choice of $\cU$), which is equipped with a natural action of $R_{\infty} \times T(\Q_p)$ (where $\diag(1,1,\Q_p^{\times})$ acts by $\phi_3 z^{h_3}|\cdot|^{-2}$). We have natural $R_{\infty} \times T(\Q_p)$-equivariant injections
	\begin{equation}\label{Efibres}
		\cM_{x}^{\vee} \hooklongrightarrow \cM_{\tilde{x}}^{\vee} \hooklongrightarrow J_B(\Pi_{\infty}^{R_{\infty}-\an}[\cI]).
	\end{equation}
	\begin{lemma}\label{Lbala0}
		The map $\cM_{\tilde{x}}^{\vee} \hookrightarrow J_B(\Pi_{\infty}^{R_{\infty}-\an}[\cI])$ is balanced.
	\end{lemma}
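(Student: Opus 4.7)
The plan is to verify the balancedness criterion of Emerton \cite{Em11} for the injection $\cM_{\tilde{x}}^{\vee}\hookrightarrow J_B(\Pi_{\infty}^{R_{\infty}-\an}[\cI])$. Recall that a continuous $T(\Q_p)$-equivariant map from a finite-dimensional locally analytic $T(\Q_p)$-representation $V$ into $J_B(\pi)$ is balanced when each character appearing in $V$, after an appropriate twist by $\delta_B$, has its algebraic component in the relevant dominance chamber; for such balanced maps, Emerton's adjunction provides a $\GL_3(\Q_p)$-equivariant morphism from a locally analytic principal series into $\pi$, which is precisely the kind of output required for the later local-global compatibility arguments.

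First I would make $\cM_{\tilde{x}}^{\vee}$ explicit. The local freeness of $\cM$ at $x$, established above using Hypothesis \ref{Hnonc}, the multiplicity one property in \cite{CEGGPS1} and \cite[Lem.~3.8]{BHS2}, together with the smoothness of $\overline{\cE}$ at $x$, implies that $\cM_{\tilde{x}}^{\vee}$ is a successive self-extension of the character $\delta_x := \jmath(\ul{\phi})\delta_B z^{\lambda}$ indexed by the tangent space $T_x \overline{\cE}$. Using the closed immersion $\overline{\cE}\hookrightarrow (\Spf R_{\overline{\rho}}^{\square})^{\rig}\times \widehat{T}_1 \times (\Spf R_{\infty}^{\wp})^{\rig}$ composed with the projection to $\widehat{T}_1$, the deformation characters appearing in $\cM_{\tilde{x}}^{\vee}$ take the form $\delta_x\cdot((1+\psi_1\epsilon)\boxtimes(1+\psi_2\epsilon)\boxtimes 1)$ for continuous characters $\psi_1, \psi_2: \Q_p^{\times}\to E$, with the third coordinate frozen by the fibre condition defining $\overline{\cE}$ via $\kappa_3$.

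Then I would check balancedness for each such character extension. Since $\lambda = (h_1-2, h_2-1, h_3)$ is strictly dominant (from $h_1 > h_2 > h_3$) and the infinitesimal perturbations $\psi_i$ do not alter the algebraic component of $\delta_x$, every character appearing in $\cM_{\tilde{x}}^{\vee}$ has the same algebraic weight as $\delta_x$, and the balanced condition reduces to the classical dominance of $\lambda$, following the style of verifications in \cite[\S 4]{Ding7} and \cite[\S 3]{BHS2}. The main obstacle is bookkeeping of normalizations — the $\delta_B$-twist implicit in $J_B$, the shift $\jmath$ relating classical points on the eigenvariety to locally algebraic characters, and the convention of inducing from $B^-$ rather than $B$; once these are carefully aligned, the check becomes essentially immediate from the dominance of $\lambda$ and the fact that the deformation characters $\psi_i$ are only continuous (hence contribute zero algebraic weight).
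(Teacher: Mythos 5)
Your approach is the right one and aligns with the paper's own (which is a bare citation of \cite[Lem.~4.11]{Ding7}, so you are essentially fleshing out that reference). Your identification of $\cM_{\tilde{x}}^{\vee}$ as a successive self-extension of $\delta_x$ via local freeness, and your observation that the deformation characters $\psi_1, \psi_2 \in \Hom(\Q_p^{\times}, E)$ only perturb in the $\epsilon$-direction (so the mod-$\epsilon$ algebraic weight remains $\lambda$), are both correct and are the essential points.

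However, the sentence asserting that ``the balanced condition reduces to the classical dominance of $\lambda$'' is imprecise in a way worth flagging. Balancedness in the sense of \cite[Def.~0.8]{Em2} is a property of the \emph{map} $\cM_{\tilde{x}}^{\vee} \hookrightarrow J_B(\Pi_{\infty}^{R_{\infty}-\an}[\cI])$, not an intrinsic property of the characters in $\cM_{\tilde{x}}^{\vee}$. Dominance of $\lambda$ is necessary but by itself does not guarantee the required factorization of the induced $U(\gl_3)$-module map into $\Pi_{\infty}^{R_{\infty}-\an}[\cI]$; one also needs to know that the corresponding generalized eigenvector(s) inside $\Pi_{\infty}^{R_{\infty}-\an}[\cI]$ have no contributions at the linked weights $s\cdot\lambda$, $s\neq 1$. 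This is precisely where the non-criticality of all refinements (Hypothesis~\ref{Hnonc}) enters again: it forces the relevant slices of the Jacquet--Emerton module to be concentrated at weight $\lambda$, which is what makes the factorization automatic. You do invoke Hypothesis~\ref{Hnonc}, but only in your discussion of local freeness of $\cM$; it also has to be explicitly used in the balancedness check itself, as in the argument of \cite[Lem.~4.11]{Ding7}. With that caveat made explicit, your sketch is sound and follows the same route as the paper.
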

	\begin{proof}
		It follows by the same argument as in \cite[Lem.~4.11]{Ding7}.
	\end{proof}
	By \cite[Thm.~0.13]{Em2}, the maps in (\ref{Efibres}) induce $R_{\infty} \times \GL_3(\Q_p)$-equivariant injections:
	\begin{equation}\label{Einj1}
		I_{B^-}^{\GL_3} (\cM_{x}^{\vee}\delta_B^{-1}) \hooklongrightarrow I_{B^-}^{\GL_3}(\cM_{\tilde{x}}^{\vee}\delta_B^{-1})\hooklongrightarrow \Pi_{\infty}^{R_{\infty}-\an}[\cI].
	\end{equation} 
	\begin{lemma}\label{LJPinj}
		The map (\ref{Efibres}) induces balanced $R_{\infty} \times L_{P_1}(\Q_p)$-equivariant injections
		\begin{equation}\label{Eind2}
			(I_{B_1^-}^{\GL_2} \cM_x^{\vee} \delta_{B_1}^{-1}) \boxtimes \phi_3z^{h_3} |\cdot|^{-2} \hooklongrightarrow	(I_{B_1^-}^{\GL_2} (\cM_{\tilde{x}}^{\vee}\delta_{B_1}^{-1})) \boxtimes \phi_3 z^{h_3}|\cdot|^{-2} \hooklongrightarrow J_{P_1}(\Pi_{\infty}^{R_{\infty}-\an}[\cI]).
		\end{equation}
	\end{lemma}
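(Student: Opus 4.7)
The plan is to derive this lemma from Emerton's parabolic adjunction \cite[Thm.~0.13]{Em2} (used in both directions), combined with the transitivity of parabolic induction and of Jacquet--Emerton functors. By Lemma \ref{Lbala0}, the injection $\cM_{\tilde{x}}^{\vee} \hookrightarrow J_B(\Pi_{\infty}^{R_{\infty}-\an}[\cI])$ is balanced, and the corresponding $R_\infty \times \GL_3(\Q_p)$-equivariant injection $I_{B^-}^{\GL_3}(\cM_{\tilde x}^\vee \delta_B^{-1}) \hookrightarrow \Pi_\infty^{R_\infty-\an}[\cI]$ is already in hand as (\ref{Einj1}). The strategy is to reinterpret this latter injection through the intermediate parabolic $P_1$, and then invert the adjunction to read off a map into $J_{P_1}$.

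By construction of $\cI_1$, the third diagonal $\diag(1,1,\Q_p^\times)$ acts on $\cM_{\tilde x}^\vee$ through the character $\phi_3 z^{h_3}|\cdot|^{-2}$, so $\cM_{\tilde x}^\vee$ factors as a $T_1(\Q_p)$-representation times that character on the third factor. Combining this with the modular-character identity $\delta_B = \delta_{B_1}\cdot\delta_{P_1}|_T$ and the transitivity $I_{B^-}^{\GL_3} \cong I_{P_1^-}^{\GL_3} \circ I_{B^-\cap L_{P_1}}^{L_{P_1}}$ in the locally analytic setting, the injection (\ref{Einj1}) rewrites, after absorbing the $\delta_{P_1}$-twist into the third component, as
\[ I_{P_1^-}^{\GL_3}\bigl( (I_{B_1^-}^{\GL_2}(\cM_{\tilde x}^\vee \delta_{B_1}^{-1})) \boxtimes \phi_3 z^{h_3}|\cdot|^{-2} \bigr) \hooklongrightarrow \Pi_\infty^{R_\infty-\an}[\cI]. \]
Applying Emerton's adjunction in the converse direction for $P_1$ then yields a balanced $R_\infty\times L_{P_1}(\Q_p)$-equivariant map $(I_{B_1^-}^{\GL_2}(\cM_{\tilde x}^\vee \delta_{B_1}^{-1})) \boxtimes \phi_3 z^{h_3}|\cdot|^{-2} \to J_{P_1}(\Pi_\infty^{R_\infty-\an}[\cI])$, the balancedness being automatic from being in the image of the $P_1$-adjunction.

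Injectivity of this map I would verify by using the transitivity $J_B \cong J_{B_1\cap L_{P_1}}\circ J_{P_1}$ (up to modular twist): applying $J_{B_1\cap L_{P_1}}$ to the constructed map and invoking the standard computation that Emerton's Jacquet functor of a $\GL_2$-principal series $I_{B_1^-}^{\GL_2}(N\delta_{B_1}^{-1})$ canonically contains $N$ as its ``ordinary'' subspace, one recovers the known injection $\cM_{\tilde x}^\vee \hookrightarrow J_B(\Pi_\infty^{R_\infty-\an}[\cI])$, which forces the original $P_1$-map to be injective. The first arrow in (\ref{Eind2}) then follows from the canonical inclusion $\cM_x^\vee \hookrightarrow \cM_{\tilde x}^\vee$ by applying the exact functor $(I_{B_1^-}^{\GL_2}(-\,\delta_{B_1}^{-1})) \boxtimes \phi_3 z^{h_3}|\cdot|^{-2}$, with $R_\infty$-equivariance inherited throughout. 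The main obstacle I anticipate is the careful bookkeeping of modular characters in passing between the full Borel $B$ and the parabolic $P_1$, and ensuring that the $R_\infty$-action is compatible with each adjunction and transitivity invoked; this should amount to adapting the argument of \cite[Lem.~4.11]{Ding7} (used in the proof of Lemma \ref{Lbala0}) to the $P_1$-setting.
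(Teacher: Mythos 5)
Your proposal follows essentially the same route as the paper: factor $I_{B^-}^{\GL_3}$ through the intermediate parabolic $P_1$ via the transitivity $I_{B^-}^{\GL_3} W \cong I_{P_1^-}^{\GL_3}(I_{B^-\cap L_{P_1}}^{L_{P_1}} W)$, feed in the already-established injection (\ref{Einj1}), and read off the map into $J_{P_1}$ together with its balancedness from Emerton's adjunction. You spell out the modulus-character bookkeeping ($\delta_B = \delta_{B_1}\cdot\delta_{P_1}|_T$) and add a separate verification of injectivity via the transitivity of Jacquet--Emerton functors; the paper's one-line proof leaves these implicit and cites \cite[(0.10)]{Em2} rather than \cite[Thm.~0.13]{Em2}, but these are two facets of the same adjunction machinery and the substance is the same.
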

	\begin{proof}
		For an admissible locally analytic representation $W$ of $T(\Q_p)$, using $$(\Ind_{B^-}^{\GL_3}W)^{\an} \cong \big(\Ind_{P_1^-}^{\GL_3} (\Ind_{B^-\cap L_{P_1}}^{L_{P_1}} W)^{\an}\big)^{\an},$$ we have by definition 
		$	I_{P_1^-}^{\GL_3} (I_{B^- \cap L_{P_1}}^{L_{P_1}} W) \xrightarrow{\sim} I_{B^-}^{\GL_3} W.
		$
		The lemma then follows from (\ref{Einj1}) and \cite[(0.10)]{Em2}. 
	\end{proof}
	We discuss the $R_{\infty}$-action. Let $R_{\rho}^{\square}$ be the universal  framed deformation ring of $\rho$ that is a formally smooth completed local $E$-algebra. Let $R_{\rho,\sF}^{\square}$ be the universal framed $\sF$-deformation ring, and $R_{\rho,\sF_1}^{\square}$ be the universal framed trianguline deformation ring of $\rho$ with respect to the refinement $\ul{\phi}$ (that we also denote by $\sF_1$). Denote by $\overline{R}_{\rho,\sF}^{\square}$ the universal framed deformation ring of $\sF$-deformations of the form $[\widetilde{D}_{1,A} \lin \cR_A(\phi_3 z^{h_3})]$, and $\overline{R}_{\rho, \sF_1}^{\square}$ the universal framed deformation ring of trianguline deformations with respect to $\sF_1$ of the form $[\widetilde{D}_{1,A} \lin \cR_A(\phi_3 z^{h_3})]$.
	We have a commutative diagram of formally smooth completed local $E$-algebras:
	\begin{equation*}
		\begin{CD}
			R_{\rho}^{\square} @>>>  \overline{R}_{\rho,\sF}^{\square}@>>> \overline{R}_{\rho,\sF_1}^{\square} \\ 
			@. @AAA @AAA \\
			@. R_{D_1} @>>> R_{D_1, \sF_1}
		\end{CD}
	\end{equation*}
	where all the horizontal maps are surjective. 
	Recall that the completion of $R_{\overline{\rho}}^{\square}[1/p]$ at $\rho$ is naturally isomorphic to $R_{\rho}^{\square}$, and consequently the completion of $X_{\tri}^{\square}(\overline{\rho})$ at $x_{\wp}$ is naturally isomorphic to $R_{\rho, \sF_1}^{\square}$ (e.g. see the proof of \cite[Thm.~2.6]{BHS1}). We see the $R_{\overline{\rho}}^{\square}$-action on $\cM_{\tilde{x}}$ factors through $\overline{R}_{\rho, \sF_1}^{\square}$ hence also factors through $\overline{R}_{\rho, \sF}^{\square}$. Moreover, $\cM_{\tilde{x}}$ is free of rank one over $\overline{R}_{\rho,\sF_1}^{\square}/\fm_{\rho}^2$, which  is thus isomorphic to $\cN_0 \otimes_{R_{D_1,\sF_1}/\fm_{D_1}^2} \overline{R}_{\rho,\sF_1}^{\square}/\fm_{\rho}^2$ for a free $R_{D_1, \sF_1}/\fm_{D_1}^2$-module $\cN_0$. We equip $\cN_0$ with the natural $T_1(\Q_p)$-action (induced by $ \Spf R_{D,\sF_1}\ra \widehat{T}_1$), then $\cN_0^{\vee}$ as $R_{D_1, \sF_1}\times T_1(\Q_p)$-module is no other than $\widetilde{\delta}_{\sF_1}^{\univ, \vee}$ in the precedent section. We have thus a $T_1(\Q_p)$-equivariant isomorphism of $\overline{R}_{\rho,\sF_1}^{\square}$-module:
	\begin{equation*}
		(\cN_0  \otimes_{R_{D_1,\sF_1}/\fm_{D_1}^2} \overline{R}_{\rho,\sF_1}^{\square}/\fm_{\rho}^2)\otimes_E (z^{-2} \boxtimes \varepsilon^{-1})\xlongrightarrow{\sim} 	\cM_{\tilde{x}}.
	\end{equation*}
	Consider the weight map 
	$$\kappa: \overline{\cE} \lra \widehat{T}_1 \lra \bA^2\xlongrightarrow{(a,b)\mapsto a-b} \bA^1,$$ and let $\cM_{\tilde{x}, g'}$ be the fibre of $\cM_{\tilde{x}}$ at   $h_1-h_2-1$. The action of $\overline{R}^{\square}_{\rho,\sF_1}$ on $\cM_{\tilde{x},g'}$ factors through $\overline{R}^{\square}_{\rho,g'}$, the universal deformation ring of deformations of the form $[\widetilde{D}_{1,A} \lin \cR_A(\phi_3z^{h_3})]$ with $\widetilde{D}_{1,A}$ de Rham up to twist by a certain character over $A$. We have  $\overline{R}^{\square}_{\rho,g'}\cong \overline{R}^{\square}_{\rho,\sF_i} \otimes_{R_{D_1,\sF_i}} R_{D_1,g'}$ and $\cM_{\tilde{x},g'}$ is a free $\overline{R}^{\square}_{\rho,g'}/\fm_{\rho}^2$-module. Letting $\cN_{0,g'}:=\cN_0 \otimes_{R_{D_1, \sF_1}} R_{D_1,g'}$, we have  an isomorphism $(\cN_{0,g'} \otimes_{R_{D_1,g'}/\fm_{D_1}^2} \overline{R}^{\square}_{\rho,g'}/\fm_{\rho}^2) \otimes_E (z^{-2} \boxtimes \varepsilon^{-1}) \xrightarrow{\sim}\ \cM_{\tilde{x},g'}$, and $\cN_{0,g'}^{\vee}\cong \widetilde{\delta}_{\sF_1,g'}^{\univ, \vee}$. In summary, we have $R_{D_1}\times T_1(\Q_p)$-equivariant maps:
	\begin{equation*}
		\begin{CD}
			\cN_{0}\otimes_E (z^{-2} \boxtimes \varepsilon^{-1}) @>>> \cM_{\tilde{x}} \\ 
			@VVV @VVV \\
			\cN_{0,g'} \otimes_E (z^{-2} \boxtimes \varepsilon^{-1}) @>>>  \cM_{\tilde{x}, g'} @>>> \cM_x
		\end{CD}
	\end{equation*}
	where the bottom composition is surjective.
	We then deduce:
	\begin{lemma}\label{LdR2}
		We have an $R_{D_1} \times \GL_2(\Q_p)$-equivariant commutative diagram
		\begin{equation*}
			\begin{CD}
				I_{B_1^-}^{\GL_2} (\cM_x^{\vee}\delta_{B_1}^{-1}) @>>>		I_{B_1^-}^{\GL_2} (\cM_{\tilde{x},g'}^{\vee}\delta_{B_1}^{-1}) @>>>	I_{B_1^-}^{\GL_2} (\cM_{\tilde{x}}^{\vee}\delta_{B_1}^{-1}) \\
				@V \sim VV 	@VVV @VVV\\
				(I_{\sF_1,0}) \otimes_E z^{-1} \circ \dett	@>>> 	I_{B^-_1}^{\GL_2} (\widetilde{\delta}_{\sF_1,g'}^{\univ} (\varepsilon^{-1}z^{-1} \boxtimes z^{-1}))@>>> 	I_{B^-_1}^{\GL_2} (\widetilde{\delta}_{\sF_1}^{\univ}(\varepsilon^{-1}z^{-1} \boxtimes z^{-1})).
			\end{CD}
		\end{equation*}
	\end{lemma}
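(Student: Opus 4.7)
The plan is to unwind the structural isomorphisms for $\cM_{\tilde{x}}$, $\cM_{\tilde{x},g'}$, and $\cM_x$ established just before the lemma statement, dualize them, and apply Emerton's functor $I_{B_1^-}^{\GL_2}$ to each column of the diagram, verifying commutativity by functoriality.

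For the right two columns, the inputs are the given identifications
\[
\cM_{\tilde{x}} \cong \bigl(\cN_0 \otimes_{R_{D_1,\sF_1}/\fm_{D_1}^2} \overline{R}^\square_{\rho,\sF_1}/\fm_\rho^2\bigr) \otimes_E (z^{-2} \boxtimes \varepsilon^{-1})
\]
together with $\cN_0^\vee \cong \widetilde{\delta}_{\sF_1}^{\univ,\vee}$, and their $g'$-analogues. First I would dualize these to present $\cM_{\tilde{x}}^\vee$ and $\cM_{\tilde{x},g'}^\vee$ as $R_{D_1,\sF_1} \times T_1(\Q_p)$-equivariant twists of $\widetilde{\delta}_{\sF_1}^{\univ,\vee}$ and $\widetilde{\delta}_{\sF_1,g'}^{\univ,\vee}$. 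Multiplying by $\delta_{B_1}^{-1} = (z\varepsilon^{-1}) \boxtimes (z^{-1}\varepsilon)$ collapses the combined twist $(z^{-2} \boxtimes \varepsilon^{-1}) \delta_{B_1}^{-1}$ to $(\varepsilon^{-1}z^{-1}) \boxtimes z^{-1}$, which is precisely the twist appearing in the bottom-right two entries. Applying $I_{B_1^-}^{\GL_2}$ to these equivariant identifications yields the right two vertical arrows, and the right square commutes because the inclusion $\cM_{\tilde{x},g'}^\vee \hookrightarrow \cM_{\tilde{x}}^\vee$ corresponds on the $\widetilde{\delta}$-side to the natural inclusion induced by the quotient $R_{D_1,\sF_1}/\fm_{D_1}^2 \twoheadrightarrow R_{D_1,g'}/\fm_{D_1}^2$.

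For the left column, $\cM_x$ is the fibre at the classical point, so the $R_{D_1,\sF_1}$-action factors through the residue field and $\cM_x$ is the one-dimensional character $\delta_{\sF_1} \cdot (z^{-2} \boxtimes \varepsilon^{-1})$ of $T_1(\Q_p)$. After twisting by $\delta_{B_1}^{-1}$ the resulting character is locally algebraic, and the identification $I_{B_1^-}^{\GL_2}(\cM_x^\vee \delta_{B_1}^{-1}) \cong I_{\sF_1,0} \otimes_E (z^{-1} \circ \dett)$ reduces to a direct character computation, using that $(z^{-1}\circ\dett)|_{T_1(\Q_p)} = z^{-1} \boxtimes z^{-1}$ absorbs the surplus twist from the preceding identifications; this is in essence the same identification already invoked in the discussion around Lemma \ref{LRmod1}. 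The left square commutes because the surjection $\cM_{\tilde{x},g'}^\vee \twoheadrightarrow \cM_x^\vee$ corresponds to the natural surjection $\widetilde{\delta}_{\sF_1,g'}^{\univ,\vee} \twoheadrightarrow \delta_{\sF_1}$ killing the deformation, and by Lemma \ref{Ldeiso}(1) the image of $I_{B_1^-}^{\GL_2}$ applied to this surjection recovers the embedding of the locally algebraic subrepresentation $I_{\sF_1,0}$ into the universal extension.

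The main subtlety lies in the bookkeeping of character twists, especially verifying that $(z^{-2} \boxtimes \varepsilon^{-1})\delta_{B_1}^{-1}$ telescopes to $(\varepsilon^{-1}z^{-1}) \boxtimes z^{-1}$ on the $g'$ and $\tilde{x}$ columns, and that the residue character on the classical column gives rise, after the twist and induction, to $I_{\sF_1,0} \otimes_E (z^{-1} \circ \dett)$. Once these shifts are verified, both squares commute formally from the functoriality of $I_{B_1^-}^{\GL_2}$ applied to the $R_{D_1,\sF_1} \times T_1(\Q_p)$-equivariant maps among $\cN_0$, $\cN_{0,g'}$, and the residue character.
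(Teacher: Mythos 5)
Your proposal follows the paper's own (implicit) argument: the paper literally says ``We then deduce'' after recording the $R_{D_1}\times T_1(\Q_p)$-equivariant commutative square relating $\cN_0$, $\cN_{0,g'}$, $\cM_{\tilde{x}}$, $\cM_{\tilde{x},g'}$, $\cM_x$ and the identifications $\cN_0^\vee\cong\widetilde{\delta}_{\sF_1}^{\univ,\vee}$, $\cN_{0,g'}^\vee\cong\widetilde{\delta}_{\sF_1,g'}^{\univ,\vee}$, so the intended proof is exactly what you wrote: dualize, twist by $\delta_{B_1}^{-1}$, apply $I_{B_1^-}^{\GL_2}$, and invoke functoriality. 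Your twist bookkeeping $(z^{-2}\boxtimes\varepsilon^{-1})\delta_{B_1}^{-1}=(\varepsilon^{-1}z^{-1})\boxtimes z^{-1}$ and the identification of the left column with $I_{\sF_1,0}\otimes_E z^{-1}\circ\dett$ are exactly the steps that have to be checked, and both are correct (noting the paper's convention that the $T_1(\Q_p)$-action on $\widetilde{\delta}_{\sF_i}^{\univ,\vee}$ is $(af)(x)=f(ax)$, so dualizing preserves rather than inverts the $T_1$-twist).
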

	Denote by $J_{P_1}(\Pi_{\infty}^{R_{\infty}-\an}[\cI])_0$ the maximal subrepresentation of $J_{P_1}(\Pi_{\infty}^{R_{\infty}-\an}[\cI])$ on which the group  $Z_3:=\diag(1,1,\Q_p^{\times})$ acts by $\phi_3 z^{h_3}|\cdot|^{-2}$ and which, as $\GL_2(\Q_p)$-representation, is isomorphic to an extension of a certain copy of   $\pi_{\alg}(D_1) \otimes_E z^{-1}\circ \dett$ by a certain copy of  $\pi_{\alg}(D_1) \otimes_E z^{-1}\circ \dett$. The following proposition follows from the fact $\cM$ is locally free of rank $1$ at $x$.
	\begin{proposition}\label{PdR1}
		The morphism $I_{B_1^-}^{\GL_2} (\cM_{\tilde{x},g'}^{\vee}\delta_{B_1}^{-1}) \boxtimes \phi_3z^{h_3} |\cdot|^{-2} \ra J_{P_1}(\Pi_{\infty}^{R_{\infty}-\an}[\cI])$ factors through an $R_{\infty} \times \GL_2(\Q_p)$-equivariant isomorphism
		\begin{equation*}
			I_{B_1^-}^{\GL_2} (\cM_{\tilde{x},g'}^{\vee}\delta_{B_1}^{-1}) \xlongrightarrow{\sim } J_{P_1}(\Pi_{\infty}^{R_{\infty}-\an}[\cI])_0.
		\end{equation*}
	\end{proposition}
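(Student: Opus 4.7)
My plan is to proceed in three steps: show the image lies in the $0$-subspace, prove injectivity, and then prove surjectivity. For the first step, combining Lemma \ref{LdR2} with Lemma \ref{Ldeiso}(1) identifies the $\GL_2(\Q_p)$-representation $I_{B_1^-}^{\GL_2}(\cM_{\tilde{x},g'}^\vee \delta_{B_1}^{-1})$ with the universal extension of $\Ext^1_{\GL_2(\Q_p)}(\pi_{\alg}(D_1), \pi_{\alg}(D_1)) \otimes_E (\pi_{\alg}(D_1) \otimes_E z^{-1}\circ\dett)$ by $\pi_{\alg}(D_1) \otimes_E z^{-1}\circ\dett$, with the $R_{D_1,g'}/\fm_0^2$-action matching the one inherent in $\cM_{\tilde{x},g'}$. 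Combined with the outer twist by $\phi_3 z^{h_3}|\cdot|^{-2}$ on $Z_3$, the image of the map in the proposition is forced by the defining conditions into the subrepresentation $J_{P_1}(\Pi_\infty^{R_\infty-\an}[\cI])_0$.

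For injectivity, I would observe that the composition $\cM_{\tilde{x},g'}^\vee \hookrightarrow \cM_{\tilde{x}}^\vee \hookrightarrow J_B(\Pi_\infty^{R_\infty-\an}[\cI])$ inherits the balanced property from Lemma \ref{Lbala0} (the property passes to subrepresentations), so exactly the argument of Lemma \ref{LJPinj} produces an injective map
$$I_{B_1^-}^{\GL_2}(\cM_{\tilde{x},g'}^\vee \delta_{B_1}^{-1}) \boxtimes \phi_3 z^{h_3}|\cdot|^{-2} \hooklongrightarrow J_{P_1}(\Pi_\infty^{R_\infty-\an}[\cI]),$$
whose image lies in $J_{P_1}(\cdots)_0$ by the first step.

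The key step is surjectivity, which is where the rank one local freeness of $\cM$ at $x$ enters crucially. Denote $V_0 := J_{P_1}(\Pi_\infty^{R_\infty-\an}[\cI])_0 \cong \widetilde{V} \boxtimes \phi_3 z^{h_3}|\cdot|^{-2}$. Since $\widetilde V$ is by definition an extension of copies of $\pi_{\alg}(D_1) \otimes_E z^{-1}\circ\dett$ by itself, and $\Ext^1_{\GL_2(\Q_p)}(\pi_{\alg}(D_1), \pi_{\alg}(D_1)) \cong \Ext^1_{g'}(\delta_{\sF_1}, \delta_{\sF_1})$ via $\zeta_1$, the $R_\infty$-action on $\widetilde V$ factors through the quotient $\overline{R}_{\rho,g'}^\square/\fm_\rho^2$ controlling $g'$-deformations. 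Applying $J_{B_1}$ to $\widetilde V$ and invoking $J_B = J_{B_1}\circ J_{P_1}$, one obtains a $T_1(\Q_p)$-subrepresentation of the generalized $\delta_{\sF_1}(\varepsilon^{-1}z^{-1}\boxtimes z^{-1})\delta_{B_1}$-eigenspace of $J_B(\Pi_\infty^{R_\infty-\an}[\cI])$ (with $Z_3$ acting by $\phi_3 z^{h_3}|\cdot|^{-2}$); by the local freeness of $\cM$ of rank one at $x$, this generalized eigenspace is identified with $\cM_{\tilde{x}}^\vee$ up to the standard $\delta_B$-normalization, and the $g'$-factoring forces the image into $\cM_{\tilde{x},g'}^\vee$. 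Emerton's adjunction \cite[Thm.~0.13]{Em2} then produces the inverse map $V_0 \to I_{B_1^-}^{\GL_2}(\cM_{\tilde{x},g'}^\vee \delta_{B_1}^{-1}) \boxtimes \phi_3 z^{h_3}|\cdot|^{-2}$, completing the proof. The main obstacle will be to precisely match the $R_\infty$-action on $V_0$ with that on $\cM_{\tilde{x},g'}^\vee$ and to handle the $\sF_2$-characters appearing in $J_{B_1}(\pi_{\alg}(D_1))$, for which Lemma \ref{Ldeiso}(3) provides the required consistency between the $\sF_1$- and $\sF_2$-descriptions along the $g'$-locus.
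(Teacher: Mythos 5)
Your overall plan (show the image lands in the $0$-subspace, prove injectivity by balancedness, prove surjectivity by rank-one local freeness and Emerton adjunction) is consistent with the paper, which only sketches the argument via the one-line remark that the proposition follows from $\cM$ being locally free of rank $1$ at $x$. However, there is a concrete error in your first step.

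You claim that Lemmas \ref{LdR2} and \ref{Ldeiso}(1) \emph{identify} $I_{B_1^-}^{\GL_2}(\cM_{\tilde{x},g'}^\vee\delta_{B_1}^{-1})$ with the universal extension of $\Ext^1_{\GL_2(\Q_p)}(\pi_{\alg}(D_1),\pi_{\alg}(D_1))\otimes_E(\pi_{\alg}(D_1)\otimes_E z^{-1}\circ\dett)$ by $\pi_{\alg}(D_1)\otimes_E z^{-1}\circ\dett$. This is not what Lemma \ref{LdR2} gives. The middle vertical arrow in the diagram of Lemma \ref{LdR2} is not an isomorphism (only the left one is marked $\sim$); it is the map induced by the surjection $\cM_{\tilde{x},g'}^\vee\twoheadrightarrow\cN_{0,g'}^\vee\cong\widetilde{\delta}_{\sF_1,g'}^{\univ,\vee}$, dual to the injection $\cN_{0,g'}\otimes_E(z^{-2}\boxtimes\varepsilon^{-1})\hookrightarrow\cM_{\tilde{x},g'}$. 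The point you are losing is that $\cM_{\tilde{x},g'}$ is free of rank one over $\overline{R}^{\square}_{\rho,g'}/\fm_\rho^2$, which is strictly larger than $R_{D_1,g'}/\fm_0^2$: it carries the framing variables, the directions coming from $\Ext^1(\cR_E(\phi_3 z^{h_3}),D_1)$, and whatever survives from $R_\infty^{\wp}$. So $I_{B_1^-}^{\GL_2}(\cM_{\tilde{x},g'}^\vee\delta_{B_1}^{-1})$ has \emph{many more} copies of $\pi_{\alg}(D_1)\otimes z^{-1}\circ\dett$ in its cosocle than the universal extension does, and the proposition is precisely matching this larger object with $J_{P_1}(\Pi_\infty^{R_\infty-\an}[\cI])_0$. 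If your step 1 were literally true, the proposition would be asserting that $J_{P_1}(\Pi_\infty^{R_\infty-\an}[\cI])_0$ is the universal extension, which is false.

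The good news is that you do not need the false identification. For step 1, all you need is that every Jordan--H\"older constituent of $I_{B_1^-}^{\GL_2}(\cM_{\tilde{x},g'}^\vee\delta_{B_1}^{-1})$ is $\pi_{\alg}(D_1)\otimes_E z^{-1}\circ\dett$; this holds because $\cM_{\tilde{x},g'}^\vee$, as a $T_1(\Q_p)$-module, is a successive self-extension of a locally algebraic character at the $g'$-weight, so Emerton's $I_{B_1^-}^{\GL_2}$ produces only locally algebraic output. Together with the $Z_3$-eigencharacter being $\phi_3z^{h_3}|\cdot|^{-2}$, maximality of $J_{P_1}(\Pi_\infty^{R_\infty-\an}[\cI])_0$ gives the containment. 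For surjectivity, the $R_\infty$-module structure is exactly the content that is tracked by the rank-one local freeness of $\cM$: every character in $J_{B_1}(J_{P_1}(\Pi_\infty^{R_\infty-\an}[\cI])_0)\subset J_B(\Pi_\infty^{R_\infty-\an}[\cI])$ with the fixed $Z_3$-action and locally algebraic weight must lie in $\cM_{\tilde{x},g'}^\vee$ (or $\cM_{\tilde{x}_{s_1},g'}^\vee$), not merely because the $R^{\square}_{\overline\rho}$-action factors through $\overline{R}^{\square}_{\rho,g'}$, but because it is identified with a fibre of the rank-one sheaf $\cM$ along the weight-constrained locus. With those two adjustments your proof goes through, but as written step 1 introduces a mismatch that would contradict the statement you are trying to prove.
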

	It is clear that  $I_{B_1^-}^{\GL_2} (\cM_{\tilde{x}}^{\vee}\delta_{B_1}^{-1})[\fm_{\rho}]\cong I_{\sF_i} \otimes_E z^{-1} \circ \dett$, and we put 
	\begin{eqnarray*}
		\pi_{\tilde{x}}&:=&I_{B_1^-}^{\GL_2} (\cM_{\tilde{x}}^{\vee}\delta_{B_1}^{-1}) \oplus_{I_{\sF_i}\otimes_E z^{-1} \circ \dett} \big(\pi(D_1)\otimes_E z^{-1} \circ \dett\big), \\
		\pi_{\tilde{x},g'}&:=&I_{B_1^-}^{\GL_2} (\cM_{\tilde{x},g'}^{\vee}\delta_{B_1}^{-1}) \oplus_{\pi_{\alg}(D_1)\otimes_E z^{-1} \circ \dett} \big(\pi(D_1)\otimes_E z^{-1} \circ \dett\big), 
	\end{eqnarray*}where the amalgamated sums are both for the action of $R_{\infty}$ and $\GL_2(\Q_p)$. Let $\sE_{g',P_1}$ be the $\GL_2(\Q_p)$-subrepresentation of $J_{P_1}(\Pi_{\infty}^{R_{\infty}-\an}[\cI])[Z_3=\phi_3z^{h_3}|\cdot|^{-2}]$ generated by $\pi(D_1)\otimes_E z^{-1} \circ \dett \hookrightarrow J_{P_1}(\Pi_{\infty}^{R_{\infty}-\an}[\fm_{\rho}])[Z_3=\phi_3z^{h_3}|\cdot|^{-2}]$ and $J_{P_1}(\Pi_{\infty}^{R_{\infty}-\an}[\cI])_0$. By Lemma \ref{LdR2} and Proposition \ref{PdR1}, we have 
	\begin{proposition}We have an $R_{D_1} \times \GL_2(\Q_p)$-equivariant commutative diagram
			\begin{equation*}
			\begindc{\commdiag}[300] 
			\obj(0,0)[a]{$	\pi(D_1)\otimes z^{-1} $}
			\obj(3,0)[b]{$\widetilde{\pi}(D_1)^{\univ}_{g'}\otimes z^{-1} $}
			\obj(7,0)[c]{$\widetilde{\pi}(D_1)^{\univ}_{\sF_1}\otimes z^{-1}$}
			\obj(0,2)[d]{$	\pi(D_1) \otimes z^{-1} $}
			\obj(3,2)[e]{$	\pi_{\tilde{x},g'} $}
			\obj(7,2)[f]{$\pi_{\tilde{x}} $}
			\obj(12,2)[g]{$J_{P_1}(\Pi_{\infty}^{R_{\infty}-\an}[\cI])[Z_3=\phi_3z^{h_3}|\cdot|^{-2}]$}
			\mor{a}{b}{}[+1,6]
			\mor{b}{c}{}[+1,6]
			\mor{d}{a}{}[+1,9]
			\mor{e}{b}{}[+1,8]
			\mor{f}{c}{}[+1,8]
			\mor{d}{e}{}[+1,6]
			\mor{e}{f}{}[+1,6]
			\mor{f}{g}{}[+1,6]
			\enddc
		\end{equation*}
		Moreover, the composition $\pi_{\tilde{x},g'} \ra 	\pi_{\tilde{x}} \ra J_{P_1}(\Pi_{\infty}^{R_{\infty}-\an}[\cI])$ factors through an $R_{\infty} \times \GL_2(\Q_p)$-equivariant isomorphism $\pi_{\tilde{x},g'} \xrightarrow{\sim} \sE_{g',P_1}$. 
	\end{proposition}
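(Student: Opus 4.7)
The plan is to build the commutative diagram by combining Lemma \ref{LdR2} with Lemma \ref{LRmod1}, Lemma \ref{Ldeiso} and Proposition \ref{Puniv2}, and then to deduce the isomorphism $\pi_{\tilde{x},g'}\xrightarrow{\sim}\sE_{g',P_1}$ from Proposition \ref{PdR1} together with an intersection argument inside the parabolic Jacquet--Emerton module.

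For the diagram, the identifications given by Lemma \ref{LdR2} recognise, up to the twist by $z^{-1}\circ\dett$, the inductions $I_{B_1^-}^{\GL_2}(\cM_{\tilde{x}}^{\vee}\delta_{B_1}^{-1})$ and $I_{B_1^-}^{\GL_2}(\cM_{\tilde{x},g'}^{\vee}\delta_{B_1}^{-1})$ as the universal extensions $\widetilde{\pi}(D_1)^{\univ}_{\sF_1}$ and $\widetilde{\pi}(D_1)^{\univ}_{g'}$ produced by Lemma \ref{LRmod1} and Lemma \ref{Ldeiso}. The $R_{D_1}$-equivariance is granted by the observation, established in the paragraphs just before Lemma \ref{LdR2}, that the $R_{\overline{\rho}}^{\square}$-action on $\cM_{\tilde{x}}$ (resp. on $\cM_{\tilde{x},g'}$) factors through $\overline{R}_{\rho,\sF_1}^{\square}$ (resp. $\overline{R}_{\rho,g'}^{\square}$) and that the accompanying free-module isomorphism identifies the $R_{D_1,\sF_1}/\fm_1^2$-action (resp.\ $R_{D_1,g'}/\fm_0^2$-action) on $\widetilde{\delta}_{\sF_1}^{\univ,\vee}$ (resp.\ on $\widetilde{\delta}_{\sF_1,g'}^{\univ,\vee}$) with the action on $\cN_0$ (resp.\ on $\cN_{0,g'}$). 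The middle row is then obtained formally by passing to the amalgamated sum with $\pi(D_1)\otimes_E z^{-1}\circ\dett$ over $\pi_{\alg}(D_1)\otimes_E z^{-1}\circ\dett$, which preserves $R_{D_1}\times \GL_2(\Q_p)$-equivariance; commutativity of all faces is automatic.

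For the isomorphism, by Proposition \ref{PdR1} the component $I_{B_1^-}^{\GL_2}(\cM_{\tilde{x},g'}^{\vee}\delta_{B_1}^{-1})\boxtimes \phi_3z^{h_3}|\cdot|^{-2}$ embeds isomorphically onto $J_{P_1}(\Pi_{\infty}^{R_{\infty}-\an}[\cI])_0$, while the component $\pi(D_1)\otimes_E z^{-1}\circ\dett$ embeds into $J_{P_1}(\Pi_{\infty}^{R_{\infty}-\an}[\fm_\rho])[Z_3=\phi_3z^{h_3}|\cdot|^{-2}]$ by the very definition of $\sE_{g',P_1}$. These two maps restrict to the same embedding on the amalgamating subobject $\pi_{\alg}(D_1)\otimes_E z^{-1}\circ\dett$, since both realise the unique (by multiplicity one, $\cM$ being locally free of rank one at $x$) $\pi_{\alg}(D_1)\otimes z^{-1}\circ\dett$-isotypic embedding into the Jacquet--Emerton module on the relevant central-character eigenspace. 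By the universal property of the pushout they therefore glue to an $R_{\infty}\times \GL_2(\Q_p)$-equivariant morphism out of $\pi_{\tilde{x},g'}$, whose image is tautologically the subrepresentation generated by the two pieces, namely $\sE_{g',P_1}$; surjectivity is immediate.

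The hard part will be injectivity of this glued map. By the standard property of pushouts, it is equivalent to showing that, inside $J_{P_1}(\Pi_{\infty}^{R_{\infty}-\an}[\cI])$, the intersection of $J_{P_1}(\Pi_{\infty}^{R_{\infty}-\an}[\cI])_0$ with $\pi(D_1)\otimes_E z^{-1}\circ\dett$ is exactly $\pi_{\alg}(D_1)\otimes_E z^{-1}\circ\dett$. The inclusion $\supset$ is tautological; for the reverse inclusion, the key point is that by the defining condition every irreducible constituent of $J_{P_1}(\Pi_{\infty}^{R_{\infty}-\an}[\cI])_0$ is isomorphic to $\pi_{\alg}(D_1)\otimes_E z^{-1}\circ\dett$, whereas the irreducible constituents of $\pi(D_1)/\pi_{\alg}(D_1)$ are the $\sC(s,\sF_i)\otimes_E z^{-1}\circ\dett$ recalled in \S\ref{S3.1}, none of which is locally algebraic. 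Hence any vector in the intersection must actually lie in the locally algebraic part of $\pi(D_1)\otimes_E z^{-1}\circ\dett$, which finishes the argument and yields the claimed isomorphism onto $\sE_{g',P_1}$.
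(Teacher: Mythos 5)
Your proof is correct and takes essentially the same approach as the paper, which only asserts the proposition as a consequence of Lemma \ref{LdR2} and Proposition \ref{PdR1} without spelling out the argument; you have filled in precisely the implicit details (the twisted identifications, passage to amalgamated sums, and the intersection argument that forces the glued map to be injective, relying on the fact that every Jordan--H\"older constituent of $J_{P_1}(\Pi_{\infty}^{R_{\infty}-\an}[\cI])_0$ is locally algebraic while none of those of $\pi(D_1)/\pi_{\alg}(D_1)$ are). One small imprecision: for the column involving $\sF_1$ the amalgamated sum defining $\pi_{\tilde{x}}$ and $\widetilde{\pi}(D_1)^{\univ}_{\sF_1}$ is taken over $I_{\sF_1}\otimes_E z^{-1}\circ\dett$ rather than over $\pi_{\alg}(D_1)\otimes_E z^{-1}\circ\dett$, but this does not affect the argument.
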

	Consider the point $x_{s_1}$. By the same argument, the statement in the above proposition holds with $x$ replaced by $x_{s_1}$, and $\sF_1$ replaced by $\sF_2$. By taking amalgamated sum and using Proposition \ref{Puniv2}, we finally obtain
	\begin{corollary}\label{CLG}
		We have an $R_{D_1} \times \GL_2(\Q_p)$-equivariant commutative diagram
			\begin{equation*}
			\begindc{\commdiag}[300] 
			\obj(0,0)[a]{$	\pi(D_1)\otimes z^{-1} $}
			\obj(3,0)[b]{$\widetilde{\pi}(D_1)^{\univ}_{g'}\otimes z^{-1} $}
			\obj(7,0)[c]{$\widetilde{\pi}(D_1)^{\univ}\otimes z^{-1}$}
			\obj(0,2)[d]{$	\pi(D_1) \otimes z^{-1} $}
			\obj(3,2)[e]{$\sE_{g',P_1}$}
			\obj(7,2)[f]{$	\pi_{\tilde{x}} \oplus_{\sE_{g',P_1}} \pi_{\tilde{x}_{s_1}} $}
			\obj(12,2)[g]{$J_{P_1}(\Pi_{\infty}^{R_{\infty}-\an}[\cI])[Z_3=\phi_3z^{h_3}|\cdot|^{-2}]$}
			\mor{a}{b}{}[+1,6]
			\mor{b}{c}{}[+1,6]
			\mor{d}{a}{}[+1,9]
			\mor{e}{b}{}[+1,8]
			\mor{f}{c}{}[+1,8]
			\mor{d}{e}{}[+1,6]
			\mor{e}{f}{}[+1,6]
			\mor{f}{g}{}[+1,6]
			\enddc
		\end{equation*}
		Moreover, the map $j: (\pi_{\tilde{x}} \oplus_{\sE_{g',P_1}} \pi_{\tilde{x}_{s_1}}) \boxtimes \phi_3 z^{h_3} |\cdot|^{-2}\ra J_{P_1}(\Pi_{\infty}^{R_{\infty}-\an}[\cI])$ is balanced. 
	\end{corollary}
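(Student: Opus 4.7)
The plan is to apply the preceding proposition twice, once at $x = x_1$ and once at $x_{s_1}$, and then to glue the two resulting diagrams along their common $g'$-part, invoking Proposition \ref{Puniv2} to identify the glued universal extension on the automorphic side with $\widetilde{\pi}(D_1)^{\univ}$.

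First, applying the preceding proposition at $x$ with the refinement $\sF_1$ yields the right half of the sought diagram together with an $R_{\infty} \times \GL_2(\Q_p)$-equivariant isomorphism $\pi_{\tilde{x}, g'} \xrightarrow{\sim} \sE_{g', P_1}$, and in particular the composition $\widetilde{\pi}(D_1)^{\univ}_{\sF_1} \otimes z^{-1} \hookrightarrow \pi_{\tilde{x}} \hookrightarrow J_{P_1}(\Pi_{\infty}^{R_{\infty}-\an}[\cI])$. Running the identical argument at $x_{s_1}$ (noting that $s_1(\ul{\phi})$ is still a non-critical refinement by Hypothesis \ref{Hnonc}, so $\cM$ is locally free of rank one at $x_{s_1}$ by the same argument as for $x$, and that the central character condition on $\diag(1,1,\Q_p^\times)$ is preserved) produces the analogous statement with $\sF_1$ replaced by $\sF_2$ and $\pi_{\tilde{x}}$ replaced by $\pi_{\tilde{x}_{s_1}}$.

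Second, the crucial point is that the intermediate $g'$-object $\sE_{g', P_1}$ coming from $x$ coincides with the one coming from $x_{s_1}$: on the Galois side this follows from Lemma \ref{Ldeiso}(3) (the identification $I_{B^-_1}^{\GL_2}(\widetilde{\delta}_{\sF_1,g'}^{\univ,\vee}(\varepsilon^{-1}\boxtimes 1)) \cong I_{B^-_1}^{\GL_2}(\widetilde{\delta}_{\sF_2,g'}^{\univ,\vee}(\varepsilon^{-1}\boxtimes 1))$, i.e.\ Lemma \ref{LdRnorm1}), and on the automorphic side $\sE_{g', P_1}$ is intrinsically defined as a subrepresentation of $J_{P_1}(\Pi_{\infty}^{R_{\infty}-\an}[\cI])[Z_3=\phi_3z^{h_3}|\cdot|^{-2}]$, so the two copies land in the same subspace. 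Amalgamating the two diagrams along this common object yields an $R_{D_1}\times\GL_2(\Q_p)$-equivariant map $(\pi_{\tilde{x}} \oplus_{\sE_{g', P_1}} \pi_{\tilde{x}_{s_1}}) \boxtimes \phi_3 z^{h_3}|\cdot|^{-2} \to J_{P_1}(\Pi_{\infty}^{R_{\infty}-\an}[\cI])$. Proposition \ref{Puniv2} identifies $\widetilde{\pi}(D_1)^{\univ}_{\sF_1} \oplus_{\widetilde{\pi}(D_1)^{\univ}_{g'}} \widetilde{\pi}(D_1)^{\univ}_{\sF_2}$ with $\widetilde{\pi}(D_1)^{\univ}$, filling in the upper-right corner and closing the commutative diagram.

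The main obstacle I anticipate is verifying that the map $j$ is balanced (in Emerton's sense of \cite{Em2}). For each individual summand $\pi_{\tilde{x}} \boxtimes \phi_3z^{h_3}|\cdot|^{-2} \to J_{P_1}(\Pi_{\infty}^{R_{\infty}-\an}[\cI])$, this follows by combining Lemma \ref{Lbala0} with Lemma \ref{LJPinj}: the balanced property descends from the Borel Jacquet functor to the parabolic one by the compatibility $I_{P_1^-}^{\GL_3}\circ I_{B_1^-\cap L_{P_1}}^{L_{P_1}} \cong I_{B^-}^{\GL_3}$ used there, and the extension to $\pi(D_1)\otimes z^{-1}\circ\det$ via $I_{\sF_i}\otimes z^{-1}\circ\det$ is balanced because the added ``directions'' (trianguline deformations coming from the $\sF_j$ for $j\neq i$) already factor through the locally algebraic cokernel $\pi_{\alg}(D_1)\otimes z^{-1}\circ\det$. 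The balancedness of the amalgamated sum then reduces to the separate balancedness on each factor since the amalgamation is taken along a common subrepresentation on which the two maps agree, so Emerton's balanced condition (which is checked on the constituent pieces of the Jacquet module relative to the codomain) is preserved under the pushout.
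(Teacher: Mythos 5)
Your outline for the commutative diagram is essentially the paper's: apply the preceding proposition at $x$ (with $\sF_1$) and at $x_{s_1}$ (with $\sF_2$), note the $g'$-parts coincide both because $\sE_{g',P_1}$ is intrinsically defined inside $J_{P_1}(\Pi_{\infty}^{R_{\infty}-\an}[\cI])[Z_3=\phi_3z^{h_3}|\cdot|^{-2}]$ and because of Lemma \ref{Ldeiso}(3), then glue and invoke Proposition \ref{Puniv2}. That part is fine.

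The balancedness argument, however, has a genuine gap. You want to reduce to Lemmas \ref{Lbala0} and \ref{LJPinj} (which give balancedness of $I_{B_1^-}^{\GL_2}(\cM_{\tilde{x}}^{\vee}\delta_{B_1}^{-1}) \boxtimes \phi_3 z^{h_3}|\cdot|^{-2}\hookrightarrow J_{P_1}$), then extend across the pushout with $\pi(D_1)\otimes z^{-1}\circ\dett$, claiming the added constituents ``factor through the locally algebraic cokernel $\pi_{\alg}(D_1)\otimes z^{-1}\circ\dett$.'' But the cokernel of $I_{\sF_i}\otimes z^{-1}\circ\dett\hookrightarrow\pi(D_1)\otimes z^{-1}\circ\dett$ is $\sC(s,\sF_j)\otimes z^{-1}\circ\dett$ for $j\neq i$, which is \emph{not} locally algebraic, so the stated reason is false. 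Moreover, the assertion that balancedness is ``preserved under the pushout'' because the condition is ``checked on the constituent pieces of the Jacquet module'' is not what balancedness means, and is not established.

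The paper's proof avoids these issues entirely: balancedness is established by exhibiting a lift through Emerton's $I_{P_1^-}^{\GL_3}$ and then invoking the equivalence in \cite[(0.10)]{Em2}. Concretely, one first observes that $(\pi(D_1)\otimes z^{-1})\boxtimes\phi_3 z^{h_3}|\cdot|^{-2}\hookrightarrow J_{P_1}(\Pi_{\infty}^{R_{\infty}-\an}[\fm])$ comes from an injection $I_{P_1^-}^{\GL_3}\big(((\pi(D_1)\otimes z^{-1})\boxtimes\phi_3 z^{h_3}|\cdot|^{-2})\delta_{P_1}^{-1}\big)\hookrightarrow\Pi_{\infty}^{R_{\infty}-\an}[\fm]$ — this is the step you have not addressed at all — and then combines it with (\ref{Einj1}) to lift the $\pi_{\tilde{x}}$-piece, does the same for $\pi_{\tilde{x}_{s_1}}$, and assembles the lift of $j$ on the amalgamated sum. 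You should replace the piecewise-balancedness-plus-pushout heuristic with this explicit construction of the lift of $j$, starting from the existence of the $I_{P_1^-}^{\GL_3}$-lift for $\pi(D_1)\otimes z^{-1}$ at the level of $[\fm]$.
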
 
	\begin{proof}
		It rests to show the map is balanced.  It is clear that  $$(\pi(D_1) \otimes z^{-1}) \boxtimes \phi_3 z^{h_3}|\cdot|^{-2} \hooklongrightarrow J_{P_1}(\Pi_{\infty}^{R_{\infty}-\an}[\fm])$$ comes from an injection $I_{P_1^-}^{\GL_3} \big(((\pi(D_1) \otimes z^{-1} )\boxtimes \phi_3 z^{h_3}|\cdot|^{-2}) \delta_{P_1}^{-1}\big)\hookrightarrow \Pi_{\infty}^{R_{\infty}-\an}[\fm]$.  Together with  (\ref{Einj1}), we see $\pi_{\widetilde{x}} \boxtimes \phi_3 z^{h_3} |\cdot|^{-2}\hookrightarrow J_{P_1}(\Pi_{\infty}^{R_{\infty}-\an}[\cI])$ comes from the corresponding map  $I_{P_1^-}^{\GL_3}((\pi_{\widetilde{x}} \boxtimes \phi_3 z^{h_3}|\cdot|^{-2})\delta_{P_1}^{-1}) \ra \Pi_{\infty}^{R_{\infty}-\an}[\cI]$. The same holds with $\pi_{\widetilde{x}}$ replaced by $\pi_{\widetilde{x}_{s_1}}$. It is then not difficult to see $j$ corresponds to $I_{P_1^-}^{\GL_3}\big(\big((\pi_{\tilde{x}} \oplus_{\sE_{g',P_1}} \pi_{\tilde{x}_{s_1}}) \boxtimes \phi_3 z^{h_3} |\cdot|^{-2}\big)\delta_{P_1}^{-1}\big)\ra \Pi_{\infty}^{R_{\infty}-\an}[\cI]$. So $j$ is balanced by \cite[(0.10)]{Em2}.  \end{proof}
	\begin{remark}\label{Rlocglob}
		Note that any extension of $\pi_{\alg}(D_1) \otimes_E z^{-1} \circ \dett$ by $\pi(D_1)\otimes_E z^{-1} \circ \dett\hookrightarrow J_{P_1}(\Pi_{\infty}^{R_{\infty}-\an}[\fm_{\rho}])[Z_3=\phi_3z^{h_3}|\cdot|^{-2}]$, which is contained in $J_{P_1}(\Pi_{\infty}^{R_{\infty}-\an}[\fm_{\rho}])[Z_3=\phi_3 z^{h_3} |\cdot|^{-2}]$,  lies in $	\pi_{\tilde{x}} \oplus_{\sE_{g',P_1}} \pi_{\tilde{x}_{s_1}} $. Indeed, let $V$ be such an extension. As $	\pi_{\tilde{x}} \oplus_{\sE_{g',P_1}} \pi_{\tilde{x}_{s_1}} $ contains the universal extension of $\pi_{\alg}(D_1) \otimes_E z^{-1} \circ \dett$ by $\pi(D_1)\otimes_E z^{-1} \circ \dett$ (by the above commutative diagram), amalgamating $V$ with certain subrepresentation of $	\pi_{\tilde{x}} \oplus_{\sE_{g',P_1}} \pi_{\tilde{x}_{s_1}} $, we reduce to the case where $V$ is split. But in this case, $V$ is already contained in $\sE_{g',P_1}$. Consequently, for such an extension, the induced $R^{\square}_{\rho}/\fm_{\rho}^2$-action factors through $\overline{R}^{\square}_{\rho, \sF}$. 
	\end{remark}
Replacing the points $\{x, x_{s_1}\}$ by $\{x_{s_1s_2}, x_{s_2s_1s_2}\}$, $P_1$ by $P_2$, $Z_3$ by $Z_1:=\diag(\Q_p^{\times},1,1)$ and using the same arguments, we get
	\begin{corollary}
		We have an $R_{C_1}\times \GL_2(\Q_p)$-equivariant commutative diagram
					\begin{equation*}
			\begindc{\commdiag}[300] 
			\obj(0,0)[a]{$	\pi(C_1) \otimes |\cdot|^{-1} $}
			\obj(3,0)[b]{$ \widetilde{\pi}(C_1)^{\univ}_{g'}  \otimes |\cdot|^{-1}$}
			\obj(7,0)[c]{$\widetilde{\pi}(C_1)^{\univ} \otimes |\cdot|^{-1}$}
			\obj(0,2)[d]{$	\pi(C_1) \otimes |\cdot|^{-1}$}
			\obj(3,2)[e]{$	\sE_{g', P_2}$}
			\obj(7,2)[f]{$	\pi_{\tilde{x}_{s_1s_2}} \oplus_{\sE_{g',P_2}} \pi_{\tilde{x}_{s_2s_1s_2}} $}
			\obj(12,2)[g]{$J_{P_2}(\Pi_{\infty}^{R_{\infty}-\an}[\cI])[Z_1=\phi_3z^{h_1-2}]$}
			\mor{a}{b}{}[+1,6]
			\mor{b}{c}{}[+1,6]
			\mor{d}{a}{}[+1,9]
			\mor{e}{b}{}[+1,8]
			\mor{f}{c}{}[+1,8]
			\mor{d}{e}{}[+1,6]
			\mor{e}{f}{}[+1,6]
			\mor{f}{g}{}[+1,6]
			\enddc
		\end{equation*}
		Moreover, the map $\phi_3 z^{h_1-2} \boxtimes  \big(	\pi_{\tilde{x}_{s_1s_2}} \oplus_{\sE_{g',P_2}} \pi_{\tilde{x}_{s_2s_1s_2}} \big) \ra J_{P_2}(\Pi_{\infty}^{R_{\infty}-\an}[\cI])$ is balanced. 
	\end{corollary}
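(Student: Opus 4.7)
The plan is to run the proof of Corollary \ref{CLG} mutatis mutandis, with every piece of data dualized by exchanging the roles of the subobject $D_1 \subset D$ and the quotient $D \twoheadrightarrow C_1$. Concretely, replace the parabolic $P_1$ by $P_2$, the Levi $L_{P_1} = \GL_2 \times \GL_1$ by $L_{P_2} = \GL_1 \times \GL_2$, the central subgroup $Z_3 = \diag(1,1,\Q_p^\times)$ by $Z_1 = \diag(\Q_p^\times,1,1)$, and the refinements $(\sF_1, \sF_2)$ of $D_1$ (with associated points $x, x_{s_1}$) by the refinements $(\sG_1, \sG_2)$ of $C_1$ (with associated points $x_{s_1 s_2}, x_{s_2 s_1 s_2}$). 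A direct computation from the formula $x_w = (\rho, \jmath(w(\ul{\phi})) \delta_B z^{\lambda}, \fm^{\wp})$ shows that at both $x_{s_1 s_2}$ and $x_{s_2 s_1 s_2}$ the first torus coordinate of the trianguline parameter equals $\phi_3 z^{h_1-2}$, so both points lie in the fibre $\overline{\cE}$ of $\cE$ under the restriction map $\kappa_1: \cE \to \widehat{T} \to \widehat{\Q_p^\times}$ to $Z_1$ evaluated at $\phi_3 z^{h_1-2}$.

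The first step is to study the thickenings $\cM_{\tilde{x}_{s_1 s_2}}$, $\cM_{\tilde{x}_{s_2 s_1 s_2}}$ attached to $\overline{\cE}$. By Hypothesis \ref{Hnonc} the trianguline variety $X_{\tri}^\square(\overline{\rho})$ is smooth at the two underlying points, so the completions of $\overline{\cE}$ are formally smooth and, after composing with the natural surjection to $\overline{R}^\square_{\rho, \sG_i}$ (the $C_1$-analogues of $\overline{R}^\square_{\rho, \sF_i}$), each $\cM_{\tilde{x}}$ is free of rank one over $\overline{R}^\square_{\rho, \sG_i}/\fm_\rho^2$. The $C_1$-analogues of Lemma \ref{LRmod1} and Lemma \ref{Ldeiso}, obtained by replacing every occurrence of $\widetilde{\delta}^{\univ,\vee}_{\sF_i}$ and $\widetilde{\delta}^{\univ,\vee}_{\sF_i, g'}$ by the corresponding universal duals $\widetilde{\delta}^{\univ,\vee}_{\sG_i}$ and $\widetilde{\delta}^{\univ,\vee}_{\sG_i, g'}$ of the trianguline parameters of $C_1$, identify $I_{B_2^-}^{\GL_2} (\cM_{\tilde{x}}^\vee \delta_{B_2}^{-1})$ with the universal deformation $\widetilde{\pi}(C_1)^{\univ}_{\sG_i}$ up to the appropriate central twist.

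The second step is to apply Emerton's parabolic adjunction \cite[Thm.~0.13]{Em2}, together with the analogue of Lemma \ref{Lbala0}, to the injection $\cM_{\tilde{x}}^\vee \hookrightarrow J_B(\Pi_\infty^{R_\infty-\an}[\cI])$ to obtain, for each $\tilde{x} \in \{\tilde{x}_{s_1 s_2}, \tilde{x}_{s_2 s_1 s_2}\}$, a balanced embedding $\phi_3 z^{h_1-2} \boxtimes (I_{B_2^-}^{\GL_2} \cM_{\tilde{x}}^\vee \delta_{B_2}^{-1}) \hookrightarrow J_{P_2}(\Pi_\infty^{R_\infty-\an}[\cI])$, and then to amalgamate these two embeddings along their common de Rham piece $\sE_{g', P_2}$, using the $C_1$-version of Proposition \ref{Puniv2}. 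The resulting diagram and its commutativity give the claim; balancedness of $j$ is then verified exactly as in the proof of Corollary \ref{CLG}, by identifying $j$ with the $J_{P_2}$-image of a $\GL_3(\Q_p)$-equivariant map out of $I_{P_2^-}^{\GL_3}$ applied to the left-hand side and invoking \cite[(0.10)]{Em2}. No genuinely new obstacle appears; the main point requiring vigilance is the bookkeeping of twists (the different $\delta_B$-shifts and $|\cdot|^s$-factors coming from $P_2^-$ versus $P_1^-$), which are determined once and for all by evaluating $\jmath(w(\ul{\phi})) \delta_B z^\lambda$ at $w \in \{s_1 s_2, s_2 s_1 s_2\}$.
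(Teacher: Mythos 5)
Your proposal is correct and matches the paper's approach exactly: the paper itself disposes of this corollary in one sentence by invoking symmetry ("Replacing the points $\{x, x_{s_1}\}$ by $\{x_{s_1s_2}, x_{s_2s_1s_2}\}$, $P_1$ by $P_2$, $Z_3$ by $Z_1$ and using the same arguments"), and you have simply unpacked that substitution, correctly tracking that both $x_{s_1s_2}$ and $x_{s_2s_1s_2}$ have first torus coordinate $\phi_3 z^{h_1-2}$ so they lie in the relevant $Z_1$-fibre, and that Emerton's adjunction together with the $C_1$-analogue of Proposition \ref{Puniv2} produce the commutative diagram and balancedness.
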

	\subsection{Surplus locally algebraic constituents and local-global compatibility}
	We prove our main local-global compatibility result. We keep the notation of the precedent section.
	
	\begin{lemma}\label{Lkey1}
		For any extension $\pi \in \Ext^1_{\sF}(\pi_{\alg}(\ul{\phi},\lambda), \pi_1(\ul{\phi},\lambda))$ or $\pi \in \Ext^1_{\sG}(\pi_{\alg}(\ul{\phi},\lambda), \pi_1(\ul{\phi},\lambda))$, $\pi$ is not a subrepresentation of $\Pi_{\infty}[\fm]$.
	\end{lemma}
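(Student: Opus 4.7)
The plan is to argue by contradiction using Emerton's adjunction between parabolic induction and the Jacquet-Emerton functor, combined with the rank-one local freeness of the coherent sheaf $\cM$ on the patched eigenvariety at the classical point $x$ (which follows from Hypothesis \ref{Hnonc} and the multiplicity-one property from \cite{CEGGPS1}). I treat the $\sF$-case; the $\sG$-case is entirely symmetric, replacing $P_1$, $D_1$, $Z_3$, $\phi_3 z^{h_3}$ by $P_2$, $C_1$, $Z_1$, $\phi_3 z^{h_1-2}$ and invoking the parallel structural results for $J_{P_2}$ derived in the previous section.

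Suppose for contradiction that a non-zero $\pi \in \Ext^1_{\sF}(\pi_{\alg}(\ul{\phi},\lambda), \pi_1(\ul{\phi},\lambda))$ embeds into $\Pi_{\infty}[\fm]$. By the definition of $\Ext^1_\sF$, $\pi$ is the push-out along $\pi_1(\ul{\phi},\lambda)^- \hookrightarrow \pi_1(\ul{\phi},\lambda)$ of a non-zero $\pi_0 \in \Ext^1_{\GL_3(\Q_p)}(\pi_{\alg}(\ul{\phi},\lambda), \pi_1(\ul{\phi},\lambda)^-)$; restricting the embedding yields $\pi_0 \hookrightarrow \Pi_\infty[\fm]$. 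By the isomorphism (\ref{Epar2}) combined with (\ref{Etwist}), $\pi_0$ corresponds to a non-zero element
\begin{equation*}
\pi_0^{L_1} \in \Ext^1_{L_{P_1}(\Q_p)}\bigl((\pi_{\alg}(D_1)\otimes_E\varepsilon^{-1}\circ\dett)\boxtimes\phi_3 z^{h_3},\ (\pi(D_1)\otimes_E\varepsilon^{-1}\circ\dett)\boxtimes\phi_3 z^{h_3}\bigr),
\end{equation*}
and $\pi_0$ identifies with a subrepresentation of $(\Ind_{P_1^-}^{\GL_3} \pi_0^{L_1})^{\an}$ that contains $\pi_1(\ul{\phi},\lambda)^-$ as a subobject and surjects onto $\pi_{\alg}(\ul{\phi},\lambda)$.

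Applying Emerton's adjunction (\cite[Thm.~0.13]{Em2}) to $\pi_0 \hookrightarrow \Pi_\infty^{R_\infty-\an}[\fm]$, in the spirit of Lemma \ref{LJPinj}, produces a balanced $L_{P_1}(\Q_p)$-equivariant map $f : \pi_0^{L_1} \to J_{P_1}(\Pi_\infty^{R_\infty-\an}[\fm])[Z_3 = \phi_3 z^{h_3}|\cdot|^{-2}]$ whose restriction to the sub $(\pi(D_1)\otimes_E \varepsilon^{-1}\circ\dett)\boxtimes \phi_3 z^{h_3}$ agrees with the canonical map already induced by $\pi_1(\ul{\phi},\lambda)^- \hookrightarrow \Pi_\infty[\fm]$. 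The next task is to analyze the target: by running the arguments of Proposition \ref{PdR1} with $\cI$ replaced by the smaller ideal $\fm$ (where the analysis becomes strictly easier, since $\cM_x^\vee$ is one-dimensional over $E$ by the local freeness of rank one), one concludes that the maximal subrepresentation of $J_{P_1}(\Pi_\infty^{R_\infty-\an}[\fm])[Z_3 = \phi_3 z^{h_3}|\cdot|^{-2}]$ which is a successive extension of copies of $(\pi_{\alg}(D_1)\otimes_E z^{-1}\circ\dett)\boxtimes\phi_3 z^{h_3}|\cdot|^{-2}$ consists of a single such copy. Consequently the image under $f$ of the cosocle of $\pi_0^{L_1}$ must coincide with the image of the sub $(\pi_{\alg}(D_1)\otimes_E\varepsilon^{-1}\circ\dett)\boxtimes\phi_3 z^{h_3} \hookrightarrow (\pi(D_1)\otimes_E\varepsilon^{-1}\circ\dett)\boxtimes \phi_3 z^{h_3}$; by a variation of the argument in Remark \ref{Rlocglob}, this forces $\pi_0^{L_1}$ to split in its $\Ext^1$-group, contradicting its non-triviality.

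The main obstacle I anticipate is twofold: first, formulating and proving carefully the ``$[\fm]$-version'' of Proposition \ref{PdR1}, ruling out the appearance of any non-trivial self-extension among the locally algebraic constituents of the relevant isotypic piece of $J_{P_1}(\Pi_\infty^{R_\infty-\an}[\fm])$ — the rank-one statement at $x$ and the balancedness of the Jacquet-Emerton map (Lemma \ref{Lbala0} adapted to $[\fm]$) should suffice but still requires a careful multiplicity-one bookkeeping with the central characters and twists; second, verifying that the map $f$ produced by adjunction genuinely detects the non-splitness of $\pi_0^{L_1}$ rather than having its non-trivial content hidden in a kernel, which should follow from the compatibility with the canonical embedding of $\pi_1(\ul{\phi},\lambda)^-$ already accounted for on the sub.
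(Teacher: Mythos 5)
There is a genuine gap in the final inference ``this forces $\pi_0^{L_1}$ to split.'' Your map $f$ is indeed injective (it is the canonical lift $\pi_0^{L_1}\delta_{P_1}\hookrightarrow J_{P_1}(\pi_0)$ composed with the left-exact $J_{P_1}$ applied to $\pi_0\hookrightarrow\Pi_{\infty}^{R_{\infty}-\an}[\fm]$, using $\pi_0=I_{P_1^-}^{\GL_3}(\pi_0^{L_1}\delta_{P_1}^{-1})$). Combined with your ``one copy'' assertion about the target, the only thing this yields is that $\pi_0^{L_1}$ has \emph{no subrepresentation} that is a length-two successive extension of $(\pi_{\alg}(D_1)\otimes_E z^{-1}\circ\dett)\boxtimes\phi_3 z^{h_3}$ by itself. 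But since $\pi(D_1)\otimes_E z^{-1}\circ\dett$ has two constituents $\sC(s,\sF_1)$, $\sC(s,\sF_2)$ (twisted) sitting above its socle $\pi_{\alg}(D_1)\otimes_E z^{-1}\circ\dett$, such a length-two subextension exists in $\pi_0^{L_1}$ \emph{if and only if} the image of $[\pi_0^{L_1}]$ in $\Ext^1_{L_{P_1}(\Q_p)}\bigl((\pi_{\alg}(D_1)\otimes z^{-1})\boxtimes\phi_3 z^{h_3},((\sC(s,\sF_1)\oplus\sC(s,\sF_2))\otimes z^{-1})\boxtimes\phi_3 z^{h_3}\bigr)$ vanishes, i.e.\ (via (\ref{Etwist})) if and only if the $\Ext^1_{\GL_2(\Q_p)}(\pi_{\alg}(D_1),\pi(D_1))$-component of $[\pi_0^{L_1}]$ lies in $\Ext^1_{g'}(\pi_{\alg}(D_1),\pi(D_1))$. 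Thus your argument rules out $\pi\in j^-\bigl(\Ext^1_{g'}(\pi_{\alg}(D_1),\pi(D_1))\times\Hom(\Q_p^{\times},E)\bigr)$, a $5$-dimensional subspace of the $7$-dimensional $\Ext^1_{\sF}(\pi_{\alg}(\ul{\phi},\lambda),\pi_1(\ul{\phi},\lambda))$, but says nothing when the $\sC(s,\sF_i)$-components of the extension class are non-zero: in that case $\pi_0^{L_1}$ genuinely has only one $\pi_{\alg}(D_1)$ in any subextension, so its image does not violate the ``one copy'' bound and you obtain no contradiction.

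The paper closes the gap differently, working with the Borel Jacquet module $J_B$ rather than $J_{P_1}$. It first rules out $\pi\in\Ext^1_1\cup\Ext^1_{s_1}$ using the one-dimensionality of $J_B(\Pi_{\infty}^{R_{\infty}-\an}[\fm])[T(\Q_p)=\jmath(w(\ul{\phi}))\delta_B z^{\lambda}]$ from \cite[Lem.~4.16]{BHS2} together with Proposition \ref{PExt2}~(3) and Remark \ref{RGL3tri}; this already covers strictly more than your $\Ext^1_{g'}$-case. For the remaining $\pi$, it writes $[\pi]=[\pi']+[\pi'']$ with $\pi'\in\Ext^1_1$, $\pi''\in\Ext^1_{s_1}$, lifts $\pi'$ along the tangent space of the eigenvariety $\cE$ at $x$ (getting a thickened ideal $\cI_v$ and a deformation $\widetilde{D}_v$), then deduces $\pi''\hookrightarrow\Pi_{\infty}^{R_{\infty}-\an}[\cI_v]$ and observes via the construction of $\cE$ that the Jacquet-module deformations $\widetilde{\chi}$, $\widetilde{\chi}'$ of both $\pi'$ and $\pi''$ must be trianguline parameters of the same $\widetilde{D}_v$; Proposition \ref{PGL21}~(2) then forces both into $\Ext^1_1$, contradicting the first step. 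To make your parabolic route close, you would need the stronger statement that $J_{P_1}(\Pi_{\infty}^{R_{\infty}-\an}[\fm])[Z_3=\phi_3 z^{h_3}|\cdot|^{-2}]$ contains \emph{no} non-trivial extension of $(\pi_{\alg}(D_1)\otimes z^{-1}\circ\dett)\boxtimes\phi_3z^{h_3}|\cdot|^{-2}$ by $(\pi(D_1)\otimes z^{-1}\circ\dett)\boxtimes\phi_3z^{h_3}|\cdot|^{-2}$, not merely no non-trivial self-extension of the locally algebraic piece; that is a nontrivial enhancement (essentially the content of Remark \ref{Rlocglob} together with the identification $(\pi_{\tilde{x}}\oplus_{\sE_{g',P_1}}\pi_{\tilde{x}_{s_1}})[\fm]\cong\pi(D_1)\otimes z^{-1}\circ\dett$) that your proposal does not supply.
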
	
	\begin{proof}
		We only prove the case for $\sF$, the case for $\sG$ being the same. Suppose $\pi \hookrightarrow \Pi_{\infty}[\fm]$. 
		By \cite[Lem.~4.16]{BHS2},  for all $w\in S_3$, we have (``$\{-\}$" denoting the corresponding generalized eigenspace)
		\begin{equation*}
			\dim_E J_B(\Pi_{\infty}^{R_{\infty}-\an}[\fm])[T(\Q_p)=\jmath(w(\ul{\phi}))\delta_B z^{\lambda}]=\dim_E J_B(\Pi_{\infty}^{R_{\infty}-\an}[\fm])\{T(\Q_p)=\jmath(w(\ul{\phi}))\delta_B z^{\lambda}\}=1.
		\end{equation*}
		Together with Proposition \ref{PExt2} (3) and Remark \ref{RGL3tri}, we deduce $\pi\notin \Ext^1_{\sF_i}(\pi_{\alg}(\ul{\phi},\lambda), \pi_1(\ul{\phi},\lambda))$. It is clear (by (\ref{Efern1}), Proposition \ref{PExt3})
		\begin{equation*}
			\Ext^1_{1}(\pi_{\alg}(\ul{\phi},\lambda), \pi_1(\ul{\phi},\lambda))+\Ext^1_{s_1}(\pi_{\alg}(\ul{\phi},\lambda), \pi_1(\ul{\phi},\lambda))=\Ext^1_{\sF}(\pi_{\alg}(\ul{\phi},\lambda), \pi_1(\ul{\phi},\lambda)).
		\end{equation*}
		There exists hence $\pi'\in \Ext^1_{1}(\pi_{\alg}(\ul{\phi},\lambda), \pi_1(\ul{\phi},\lambda))$ such that $$[\pi'']:=[\pi]-[\pi']\in \Ext^1_{s_1}(\pi_{\alg}(\ul{\phi},\lambda), \pi_1(\ul{\phi},\lambda)).$$
		By  Remark \ref{RGL3tri},  there exists a deformation $\widetilde{\chi}$ of $\jmath(\ul{\phi})z^{\lambda} \delta_B$ (resp. $\widetilde{\chi}'$ of $\jmath(s_1(\ul{\phi}))z^{\lambda} \delta_B$) such that 
		\begin{equation*}
			\widetilde{\chi}\hooklongrightarrow J_B(\pi') \text{ (resp. } \widetilde{\chi}' \hooklongrightarrow J_B(\pi'')).
		\end{equation*}
		Let $v$ be a non-zero element in the tangent space of $\cE$ at $x$  such that the associated character of $T(\Q_p)$ is $\widetilde{\chi}$, and $\cI_v$ be its associated ideal of $R_{\infty}[1/p]$. Let $\widetilde{D}_v$ be the associated deformation of $D$. Then $\widetilde{\chi}^{\sharp}:=\widetilde{\chi}\delta_B^{-1}(\varepsilon^2 \boxtimes \varepsilon \boxtimes 1)$ is a trianguline parameter of $\widetilde{D}_v$. Note  $\widetilde{\chi}^{\sharp}$ has the form $\phi_1z^{h_1}(1+\psi_1\epsilon)\boxtimes \phi_2 z^{h_2}(1+\psi_2 \epsilon) \boxtimes \phi_3 z^{h_3}(1+(\psi_3)\epsilon)$. Using an easy variation of (\ref{Einj1}) for $\cE$ instead of $\overline{\cE}$ (see also the proof of \cite[Prop.~C.5]{Ding15}), we have $\pi'\hookrightarrow \Pi_{\infty}^{R_{\infty}-\an}[\cI_v]$. As $\pi \in \Pi_{\infty}^{R_{\infty}-\an}[\cI_v]$, we deduce $\pi''\in \Pi_{\infty}^{R_{\infty}-\an}[\cI_v]$, hence $\widetilde{\chi}'\hookrightarrow J_B(\Pi_{\infty}^{R_{\infty}-\an}[\cI_v])$. By construction of $\cE$, we see $(\cI_v, \widetilde{\chi}')\hookrightarrow \cE$. But this implies $(\widetilde{\chi}')^{\sharp}$ is a trianguline parameter of $\widetilde{D}_v'$ where $\widetilde{D}_v' \cong \widetilde{D}_v$ as self-extension of $D$. Note $(\widetilde{\chi}')^{\sharp}$ has the form $\phi_2z^{h_1}(1+\psi_2'\epsilon)\boxtimes \phi_1 z^{h_2}(1+\psi_1' \epsilon) \boxtimes \phi_3 z^{h_3}(1+\psi'_3\epsilon)$. Using Proposition  \ref{PGL21} (2), it is not difficult to see  there exists $a\in E^{\times}$ such that $\psi_3'=a\psi_3$,  and  $\psi_1-\psi_2, \psi_1'-\psi_2' \in \Hom_{\sm}(\Q_p^{\times},E)$. By Remark \ref{RGL3tri}, both $\pi'$ and $\pi''$ lie in $\Ext^1_1(\pi_{\alg}(\ul{\phi},\lambda), \pi_1(\ul{\phi},\lambda))$, hence so does $\pi$, a contradiction.
	\end{proof}
	\begin{theorem}\label{Tmain2}
		For an injection $\iota\in \Hom_{(\varphi, \Gamma)}(D_1, C_1)$, $\pi(\ul{\phi},\lambda, \iota)\hookrightarrow \Pi_{\infty}^{R_{\infty}-\an}[\fm]$ if and only if $\iota=\iota_D$.
	\end{theorem}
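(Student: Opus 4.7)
The plan is to prove the nontrivial ``if'' direction first, by constructing the desired embedding using the higher intertwining of Theorem \ref{ThIW1} together with a thickening trick, and then deduce the converse from the rigidity of $\pi(\ul{\phi},\lambda,\iota)$ in $\iota$ provided by Proposition \ref{Pinter} and Proposition \ref{Ppairing}~(3).

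First I would pick any non-de Rham pair $(\widetilde{D}_1, \widetilde{C}_1) \in \sI_{\iota_D}^0$. By Theorem \ref{ThIW1} this corresponds to a single deformation $\widetilde{D} \in \Ext^1_{\sF}(D,D) \cap \Ext^1_{\sG}(D,D)$ of $D$ with $\kappa_{\sF,1}(\widetilde{D}) = \widetilde{D}_1$ and $\kappa_{\sG,1}(\widetilde{D}) = \widetilde{C}_1$ up to the explicit twist by $1+\psi\epsilon$ produced by the theorem. Pushed to the patched side via the identification between the completion of $R_{\overline{\rho}}^{\square}[1/p]$ at $\rho$ and $R_\rho^{\square}$, this $\widetilde{D}$ lifts to a single tangent direction at each of the smooth points $x, x_{s_1}, x_{s_1s_2}, x_{s_2s_1s_2}$ of $\cE$ (smooth by Hypothesis \ref{Hnonc} and \cite[Thm.~2.6]{BHS1}), cutting out an ideal $\cI\supset (\fm_\rho^2,\fm^\wp)$ that fits in the exact sequence
\[
0 \lra \Pi_\infty[\fm] \lra \Pi_\infty[\cI] \lra \Pi_\infty[\fm].
\]

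Next I apply Corollary \ref{CLG} at the pair $\{x, x_{s_1}\}$: the tangent vector extracted from $\widetilde{D}$ produces a $\GL_3(\Q_p)$-equivariant embedding into $\Pi_\infty^{R_\infty-\an}[\cI]$ of the parabolic induction containing $j^-(\widetilde{D}_1)$ (twisted appropriately by $\phi_3 z^{h_3} |\cdot|^{-2}$). Running the analogous $P_2$-corollary at $\{x_{s_1s_2}, x_{s_2s_1s_2}\}$ with $\widetilde{C}_1$ gives a second embedding containing $j^+(\widetilde{C}_1)$. Since both arise from the same ideal $\cI$ and agree on the locally algebraic vectors $\Pi_\infty[\fm]^{\lalg} \cong \pi_{\alg}(\ul{\phi},\lambda)$, they glue to
\[
j^-(\widetilde{D}_1) \oplus_{\pi_1(\ul{\phi},\lambda)} j^+(\widetilde{C}_1) \hooklongrightarrow \Pi_\infty^{R_\infty-\an}[\cI].
\]
By Lemma \ref{Lkey1} together with the fact that the quotient $\Pi_\infty[\cI]/\Pi_\infty[\fm] \hookrightarrow \Pi_\infty[\fm]$ has a unique locally algebraic line $\pi_{\alg}(\ul{\phi},\lambda)$, the two copies of $\pi_{\alg}(\ul{\phi},\lambda)$ sitting in the cosocle of the amalgamation (see \eqref{Eipic2}) must collapse onto the same image in this quotient. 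Consequently the kernel of the composition onto the second $\Pi_\infty[\fm]$ is a sub-extension of $\pi_{\alg}(\ul{\phi},\lambda)$ by $\pi_1(\ul{\phi},\lambda)$ sitting inside $j^-(\widetilde{D}_1) \oplus_{\pi_1(\ul{\phi},\lambda)} j^+(\widetilde{C}_1)$ that lands inside $\Pi_\infty^{R_\infty-\an}[\fm]$. Letting $(\widetilde{D}_1, \widetilde{C}_1)$ vary over $\sI_{\iota_D}^0$ and intersecting, Proposition \ref{Pinter} identifies this common subrepresentation with $\pi(\ul{\phi},\lambda,\iota_D)$, yielding the desired embedding.

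For the converse, suppose $\pi(\ul{\phi},\lambda,\iota) \hookrightarrow \Pi_\infty^{R_\infty-\an}[\fm]$. By Lemma \ref{Lkey1} this embedding cannot be confined to $\Ext^1_{\sF}$ or $\Ext^1_{\sG}$ alone, so it genuinely contributes on both the $P_1$- and $P_2$-Jacquet-Emerton modules. Reversing the analysis of Corollary \ref{CLG} and its $P_2$-analogue, these contributions recover a higher intertwining pair $(\widetilde{D}_1, \widetilde{C}_1)$ together with a compatible deformation of $D$; since the amalgamated representation already lives in $\Pi_\infty[\fm]$, Theorem \ref{ThIW1} forces $(\widetilde{D}_1, \widetilde{C}_1) \in \sI_{\iota_D}$, and then Proposition \ref{Ppairing}~(3) together with Proposition \ref{Pinter} gives $E[\iota] = E[\iota_D]$, i.e.\ $\iota=\iota_D$ up to scalar. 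The main obstacle will be in the amalgamation step in the forward direction: one has to match the normalizations of $j^-$ and $j^+$ produced by Corollary \ref{CLG} and its $P_2$-analogue and carefully track the twist $1+\psi\epsilon$ appearing in Theorem \ref{ThIW1}, so that the two parabolic embeddings into $\Pi_\infty^{R_\infty-\an}[\cI]$ really glue into a single subrepresentation, rather than merely sitting inside a common larger extension.
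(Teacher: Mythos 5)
Your proposal correctly identifies the skeleton of the paper's proof---the use of Theorem~\ref{ThIW1} to attach a deformation $\widetilde{D}$ to a non-de Rham pair $(\widetilde{D}_1,\widetilde{C}_1)\in\sI_{\iota_D}^0$, the thickening ideal $\cI_v$ and the resulting three-term sequence, the parallel embeddings via Corollary~\ref{CLG} at $\{x,x_{s_1}\}$ (for $P_1$) and its $P_2$-analogue at $\{x_{s_1s_2},x_{s_2s_1s_2}\}$, the amalgamation, and Lemma~\ref{Lkey1} to force the cosocle copies of $\pi_{\alg}$ into the single image copy. However, several steps that carry the real weight of the argument are elided or replaced with things that do not quite work.

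In the ``if'' direction you conclude with ``Letting $(\widetilde{D}_1,\widetilde{C}_1)$ vary over $\sI_{\iota_D}^0$ and intersecting, Proposition~\ref{Pinter} identifies this common subrepresentation with $\pi(\ul{\phi},\lambda,\iota_D)$.'' This is not how Proposition~\ref{Pinter} applies: that intersection is taken inside the universal extension, not inside $\Pi_\infty^{R_\infty-\an}[\fm]$, and you have not shown that the sub-extension $\pi$ produced by different pairs (which a priori sit inside different $\Pi_\infty^{R_\infty-\an}[\cI_v]$) give the same sub-extension of $\pi_{\alg}$ by $\pi_1$. The paper's argument instead \emph{fixes} the pair $(\widetilde{D}_1,\widetilde{C}_1)$ and shows the resulting $\pi$ must already be $\pi(\ul{\phi},\lambda,\iota_D)$ by contradiction: taking an auxiliary pair $(\widetilde{D}_1',\widetilde{C}_1') = (\widetilde{D}_1 + x_0, \widetilde{C}_1 + y_0)$ with $(x_0,y_0)$ de Rham and non-zero, and using $\pi\subset\Pi_\infty^{R_\infty-\an}[\cI_{v'}]$, one shows that if $\pi\not\cong\pi(\ul{\phi},\lambda,\iota_D)$ then $i^-([x_0])$ injects into $\Pi_\infty^{R_\infty-\an}[\cI_{v'}]$, forcing $\widetilde{D}'$ trianguline via $J_B$ and \cite[Lem.~4.16]{BHS2}---a contradiction. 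This specific chain of reasoning is missing from your sketch. Relatedly, you also skip the ``Claim'' that $(\pi_{\tilde{x}}\oplus_{\sE_{g',P_1}}\pi_{\tilde{x}_{s_1}})[\cI_v]\cong\rec(\widetilde{D}_1)\otimes_E z^{-1}\circ\dett$, whose proof crucially exploits that $\widetilde{D}_1$ is non-trianguline; without this the embedding $i^-(\widetilde{D}_1)\hookrightarrow\Pi_\infty^{R_\infty-\an}[\cI_v]$ is not justified.

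The ``only if'' direction is the weaker part: ``Reversing the analysis\ldots forces $(\widetilde{D}_1,\widetilde{C}_1)\in\sI_{\iota_D}$'' is essentially a restatement of what is to be proved. The paper's argument here does not ``recover'' a pair---it starts from a pair $(\widetilde{D}_1,\widetilde{C}_1)\in\sI_\iota^0$ already built into the definition of $\pi(\ul{\phi},\lambda,\iota)$, notes via Proposition~\ref{Ppairing} that $\widetilde{C}_1\notin\Ext^1_{\iota_D}(C_1,C_1)$, takes $\widetilde{D}$ with $\kappa_\sG(\widetilde{D})=(\widetilde{C}_1,0)$ to land $i^+(\widetilde{C}_1)$ in $\Pi_\infty^{R_\infty-\an}[\cI_v]$ (hence also $i^-(\widetilde{D}_1)$), and then invokes the crucial Remark~\ref{Rlocglob} (which your sketch never cites) to conclude that the $R^{\square}_\rho$-action on $i^-(\widetilde{D}_1)$, and therefore on $i^+(\widetilde{C}_1)$, factors through $\overline{R}^{\square}_{\rho,\sF}$. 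Since $\widetilde{D}\notin\Ext^1_\sF(D,D)$ by Theorem~\ref{ThIW1}, the image of $\cI_{v,\wp}$ in $\overline{R}^{\square}_{\rho,\sF}$ is $\fm$, so $i^+(\widetilde{C}_1)$ must be annihilated by $\fm$, contradicting Lemma~\ref{Lkey1}. This deformation-ring argument is the heart of the converse and is genuinely absent from your proposal.
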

	\begin{proof}
		Let $(\widetilde{D}_1, \widetilde{C}_1)\in \sI_D^0$ and let $\widetilde{D}$ be an associated deformation of $D$ with $\kappa_{\sF}(\widetilde{D})=(\widetilde{D}_1,0)$ and $\kappa_{\sG}(\widetilde{D})=(\widetilde{C}_1, 0)$ (as in Theorem \ref{ThIW1}). Let $v$ be a non-zero  element in the tangent space of $R_{\rho,\sF}^{\square}$ associated to $\widetilde{D}$, $\cI_{v,\wp} \subset R_{\overline{\rho}}^{\square}[1/p]$ be the associated ideal, and $\cI_v:=\cI_{v,\wp}+\fm^{\wp}\subset R_{\infty}[1/p]$ (which corresponds to the element $(v,0)$ in the tangent space of $(\Spf R_{\overline{\rho}}^{\square})^{\rig} \times (\Spf R_{\infty}^{\wp})^{\rig}$ at $\fm$). We have a natural exact sequence (associated to the non-zero $v$)
		\begin{equation*}
			0 \lra \Pi_{\infty}^{R_{\infty}-\an}[\fm] \lra \Pi_{\infty}^{R_{\infty}-\an}[\cI_v] \xlongrightarrow{\kappa} \Pi_{\infty}^{R_{\infty}-\an}[\fm].
		\end{equation*}
		
		\noindent \textbf{Claim:} $(\pi_{\tilde{x}} \oplus_{\sE_{g',P_1}} \pi_{\tilde{x}_{s_1}})[\cI_v]\cong \rec(\widetilde{D}_1) \otimes_E z^{-1}\circ \dett$.
		
		\noindent \textit{Proof of the claim.} Let $\cJ_v \subset R_{D_1}$ be the kernel of the natural map $R_{D_1} \ra R_{\overline{\rho}}^{\square}[1/p]/\cI_{v,\wp}$.  The right vertical map in Corollary \ref{CLG} induces $f: (\pi_{\tilde{x}} \oplus_{\sE_{g',P_1}} \pi_{\tilde{x}_{s_1}})[\cI_v] \ra \widetilde{\pi}(D_1)^{\univ}[\cJ_v] \otimes_E z^{-1} \circ \dett\cong \rec(\widetilde{D}_1)\otimes_E z^{-1} \circ \dett$. Note the kernel of the map is a finite copy of $\pi_{\alg}(D_1) \otimes_E z^{-1} \circ \dett$. If $f$ is not an isomorphism,  $(\pi_{\tilde{x}} \oplus_{\sE_{g',P_1}} \pi_{\tilde{x}_{s_1}})[\cI_v] $ has to contain $(\pi_{\alg}(D_1) \oplus \pi(D_1)) \otimes_E z^{-1} \circ \dett$. Applying the  Jacquet-Emerton functor $J_{B\cap L_{P_1}}(-)$, and using \cite[Lem.~4.16]{BHS2} and  similar arguments as in the proof of Lemma \ref{Lkey1}, this implies that $\widetilde{D}$ is trianguline, a contradiction. The claim follows.

		We obtain hence an injection $(\rec(\widetilde{D}_1)\otimes_E z^{-1} \circ \dett) \boxtimes (\phi_3 z^{h_3}|\cdot|^{-2}) \hookrightarrow J_{P_1}(\Pi_{\infty}^{R_{\infty}-\an}[\cI_v])$, which is balanced by Corollary \ref{CLG}. By \cite[Thm.~0.13]{Em2} (noting similarly as in the discussion below Proposition \ref{PExt1}, $j^-$ can also be obtained by applying Emerton's $I_{P_1^-}^{\GL_3}(-)$), it  induces  $i^-(\widetilde{D}_1)\hookrightarrow \Pi_{\infty}^{R_{\infty}-\an}[\cI_v]$. Note that the composition 
		\begin{equation*}
			i^-(\widetilde{D}_1) \hooklongrightarrow \Pi_{\infty}^{R_{\infty}-\an}[\cI_v] \xlongrightarrow{\kappa} \Pi_{\infty}^{R_{\infty}-\an}[\fm]
		\end{equation*} 
		has image exactly equal to $\pi_{\alg}(\ul{\phi},\lambda)$. Indeed, the inclusion is clear. However,  the composition can not have zero image by Lemma \ref{Lkey1}. Similarly, we have $i^+(\widetilde{C}_1)\hookrightarrow \Pi_{\infty}^{R_{\infty}-\an}[\cI_v]$ whose composition with $\kappa$ has image equal to $\pi_{\alg}(\ul{\phi},\lambda)$ as well (noting $v$ can also be viewed as an element in the tangent space of $R_{\rho, \sG}^{\square}$ by Theorem \ref{ThIW1}).
		So there is an extension $\pi\subset i^-(\widetilde{D}_1) \oplus_{\pi_1(\ul{\phi},\lambda)} i^+(\widetilde{C}_1)$ of $\pi_{\alg}(\ul{\phi}, \lambda)$ by $\pi_1(\ul{\phi},\lambda)$ such that $\pi\hookrightarrow \Pi_{\infty}^{R_{\infty}-\an}[\fm]$. 
		
		We show $\pi$ has to be isomorphic to $\pi(\ul{\phi},\lambda, \iota_D)$. As in the proof of Proposition \ref{Pinter}, let $(x_0,y_0)\in \sI_D$ be non-zero de Rham, and $([\widetilde{D}_1']=[\widetilde{D}_1]+x_0, [\widetilde{C}_1']=[\widetilde{C}_1]+y_0)\in \sI_D^0$. Let $v'$, $\widetilde{D}'$, $\cI_{v'}$ be the similar objects associated to the pair. By the same argument, $i^-(\widetilde{D}_1')$, $i^+(\widetilde{C}_1')$ are both subrepresentations of $ \Pi_{\infty}^{R_{\infty}-\an}[\cI_{v'}]$. 
		We also know $\pi\subset \Pi_{\infty}^{R_{\infty}-\an}[\fm]\subset  \Pi_{\infty}^{R_{\infty}-\an}[\cI_{v'}]$. If $\pi$ is not isomorphic to $\pi(\ul{\phi},\lambda, \iota_D)$,  $[\pi]$ has the form $a i^-([\widetilde{D}_1])-b i^+([\widetilde{C}_1])$ with $a\neq b$. Moreover, by Proposition \ref{Pinter} (and the proof), $\pi$ is not contained in $i^-(\widetilde{D}_1') \oplus_{\pi_1(\ul{\phi},\lambda)} i^+(\widetilde{C}_1')$, we have hence an injection
		\begin{equation*}
			\pi \oplus_{\pi_1(\ul{\phi},\lambda)} i^-(\widetilde{D}_1') \oplus_{\pi_1(\ul{\phi},\lambda)} i^+(\widetilde{C}_1') \hooklongrightarrow \Pi_{\infty}^{R_{\infty}-\an}[\cI_{v'}].
		\end{equation*}
Using 	$i^-([x_0])=i^+([y_0])$ (for example see the proof of Lemma \ref{Lindep2}), it is not difficult to deduce   (the representation corresponding to) $i^-([x_0])$ injects into $\Pi_{\infty}^{R_{\infty}-\an}[\cI_{v'}]$.
 However, applying $J_B(-)$ and (again) using \cite[Lem.~4.16]{BHS2} and similar arguments in the proof of Lemma \ref{Lkey1}, this will imply  that $\widetilde{D}'$ is trianguline, a contradiction.

We prove the  ``only if" part of the theorem. 	Suppose there exists an injection $\iota\notin E[\iota_D]$ such that $\pi(\ul{\phi},\lambda, \iota)\hookrightarrow \Pi_{\infty}^{R_{\infty}-\an}[\fm]$. Recall the extension class $[\pi(\ul{\phi},\lambda, \iota)]$ has the form $i^-([\widetilde{D}_1])-i^+([\widetilde{C}_1])$ with $(\widetilde{D}_1,\widetilde{C}_1)\in \sI_{\iota}^0$. By Proposition \ref{Ppairing}, $\widetilde{D}_1\notin \Ext^1_{\iota_D}(D_1,D_1)$ and $\widetilde{C}_1\notin \Ext^1_{\iota_D}(C_1,C_1)$.  Let $\widetilde{D}$ be a deformation of $D$ such that $\kappa_{\sG}(\widetilde{D})=(\widetilde{C}_1,0)$ (cf. Remark \ref{Rkappf}), and $v$, $\cI_v$ be associated to $\widetilde{D}$ similarly as above. By the same argument (using the claim with $D_1$ replaced by $C_1$), we have $i^+(\widetilde{C}_1)\hookrightarrow \Pi_{\infty}^{R_{\infty}-\an}[\cI_v]$. As $\pi(\ul{\phi},\lambda, \iota)\hookrightarrow \Pi_{\infty}^{R_{\infty}-\an}[\fm]\subset  \Pi_{\infty}^{R_{\infty}-\an}[\cI_v]$, we deduce $i^-(\widetilde{D}_1)\hookrightarrow  \Pi_{\infty}^{R_{\infty}-\an}[\cI_v]$ hence $\rec(\widetilde{D}_1)\otimes_E z^{-1} \circ \dett \hookrightarrow J_{P_1} (\Pi_{\infty}^{R_{\infty}-\an}[\cI_v])[Z_3=\phi_3 z^{h_3}|\cdot|^{-2}]$. By Remark \ref{Rlocglob}, the  $R^{\square}_{\rho}$-action on $i^-(\widetilde{D}_1)$ factors through $\overline{R}^{\square}_{\rho, \sF}$, hence so does its action on $i^+(\widetilde{C}_1)$ (whose extension class is equal to $[\pi(\ul{\phi}, \lambda, \iota)]+i^-([\widetilde{D}_1])$, noting the $R^{\square}_{\rho}$-action on $\pi(\ul{\phi}, \lambda, \iota)$ factors through $R^{\square}_{\rho}/\fm_{\rho}$). However, as $\widetilde{C}_1\notin \Ext^1_{\iota_D}(C_1,C_1)$ hence $\widetilde{D}\notin \Ext^1_{\sF}(D,D)$ (Theorem \ref{ThIW1}), we see the image of $\cI_{v,\wp}$ in $\overline{R}^{\square}_{\rho,\sF}$ is $\fm$. Thus $i^+(\widetilde{C}_1)$ ($\subset \Pi_{\infty}^{R_{\infty}}[\cI_v]$) has to be  annihilated by $\fm$, contradicting Lemma \ref{Lkey1}.
	\end{proof}

\end{document}